\newtheorem{theorem}{Theorem}[section]
\newtheorem{lemma}[theorem]{Lemma}
\newtheorem{corollary}[theorem]{Corollary}
\newtheorem{proposition}[theorem]{Proposition}
\theoremstyle{definition}
\newtheorem{definition}[theorem]{Definition}
\newtheorem{example}[theorem]{Example}
\newtheorem{remark}[theorem]{Remark}
\newtheorem{remarks}[theorem]{Remarks}
\newtheorem{notation}[theorem]{Notation}
\newcommand{\BB}{\mathcal{B}}
\newcommand{\PP}{\mathbb{P}}
\newcommand{\C}{\mathbb{C}}
\newcommand{\infnear}{\succ}
\newcommand{\prox}{\dasharrow}
\newcommand{\satel}{\odot}
\newcommand{\notsatel}{\hbox{$\mspace{5mu}\not\mspace{-5mu}\odot\mspace{6mu}$}}
\newcommand\Z[1]{\mathbb{Z}/#1\mathbb{Z}}
\DeclareMathOperator{\Aut}{Aut}
\DeclareMathOperator{\Cr}{Cr}
\DeclareMathOperator{\Bir}{Bir}
\DeclareMathOperator{\PGL}{PGL}
\DeclareMathOperator{\oq}{oql}
\DeclareMathOperator{\q}{ql}
\DeclareMathOperator{\lgth}{lgth}
\DeclareMathOperator{\id}{id}
\DeclareMathOperator{\mult}{mult}
\DeclareMathOperator{\h}{h}
\DeclareMathOperator{\load}{load}
\title{On the classification of cubic plane Cremona maps}
\author{Alberto Calabri}
\author{Thi Ngoc Giao Nguyen}
\begin{document}

\maketitle


\section{Introduction}

We work over the field $\C$ of complex numbers.

We denote by $\PP^2$ the projective plane and by $\Bir(\PP^2)$ the \emph{plane Cremona group}, that is the group of birational maps $\PP^2\dasharrow\PP^2$.
Recall that the celebrated Noether-Castelnuovo Theorem says that $\Bir(\PP^2)$ is generated by the automorphisms of $\PP^2$ and the elementary quadratic transformation
\begin{equation}\label{sigma_eq}
\sigma\colon \PP^2 \dasharrow \PP^2,
\qquad [x : y : z] \mapsto [yz : xz : xy].
\end{equation}
Note that a presentation of $\Bir(\PP^2)$ involving exactly these generators have been found only very recently by Urech and Zimmermann in \cite{U-Z}.

Let us say that  two plane Cremona maps $\varphi,\varphi'\colon\PP^2\dasharrow\PP^2$  are \emph{equivalent} if there exist two automorphisms $\alpha,\alpha'\in\Aut(\PP^2)$ such that
$
\varphi'=\alpha'\circ\varphi\circ\alpha.
$
The classification of equivalence classes of quadratic plane Cremona maps is very well-known from the beginning of the study of plane Cremona maps more than one hundred years ago.

Nonetheless, a classification of equivalence classes of cubic plane Cremona maps has been described only few years ago by 
Cerveau and D\'eserti in \cite{C-D}: they find 32 types of cubic plane Cremona maps, namely 27 types are a single map whereas 4 types are families of maps depending on one parameter and one type is a family of maps depending on two parameters. Their classification is based on the detailed analysis of those plane curves which are contracted by a cubic plane Cremona map.

However, it turns out that the classification in \cite{C-D} is not complete and it contains some inaccuracies, see Section \ref{sec comparison with CD} for a more detailed account:
\begin{itemize}
\item we found a map (our type 15 in Table \ref{table1}) that does not occur in their list;
\item we found that their type 17, that is a single map, should be replaced by a one-parameter set of maps (our type 28 in Table \ref{table1});
\item we found that their type 19 is equivalent to a particular case of their type 18;
\item we found that their type 31 is equivalent to a particular case of their type 30.
\end{itemize}

Our main result in this paper is a fine classification of equivalence classes of cubic plane Cremona maps.
Before stating our theorem, we need a bit of notation.

Let us set $\C^{**} = \mathbb{C}\setminus \{ 0,1 \}$ and let us define the following maps:
\begin{align*}
g_1, g_2 \colon \C^{**} \times \C^{**} & \rightarrow \C^{**} \times \C^{**},
\qquad g_1(a,b) = (b,a), \qquad g_2(a,b) = \left(\dfrac{1}{a},\dfrac{1}{b}\right).
\end{align*}
Therefore, $g_3:=g_2 \circ g_1 = g_1 \circ g_2$ is the map $(a,b) \mapsto ({1}/{b},{1}/{a})$. Clearly,
\begin{equation*} \label{set G}
G = \{ \id, g_1,g_2, g_3 \}
\end{equation*}
is a group, under the composition, which is isomorphic to $((\Z{2})^2,+)$.

For $a \neq b$ and $a,b \in \mathbb{C}^{**}$, let us denote by $S'$ the following set
\begin{equation*} \label{set S'}
\begin{aligned}
S' &= \left\{ (a,b),\left(\dfrac{a}{a-1}, \dfrac{a-b}{a-1}\right),\left(\dfrac{b}{b-1},\dfrac{b-a}{b-1}\right),\right. \\
&\phantom{ = \{ } \left.\left(\dfrac{a-b}{b(a-1)},\dfrac{1}{1-a}\right),
\left(\dfrac{b-a}{a(b-1)},\dfrac{1}{1-b}\right),
\left( \dfrac{a-1}{b-1},\dfrac{b(a-1)}{a(b-1)}\right)\right\}
\end{aligned}
\end{equation*}
and let us define 
\begin{equation} \label{set S}
S = \{ g(s) \mid g \in G \text{ and } s \in S' \}.
\end{equation}

\begin{theorem}\label{thm main1}
Any cubic plane Cremona map is equivalent to one of the maps in Table \ref{table1} at page \pageref{table1}, where the first 25 types are single maps, types 26-30 depend on one parameter $\gamma \neq 0,1$ and type 31 depends on two parameters $a,b$, where $a,b \neq 0,1$ and $a\neq b$. 

Two cubic plane Cremona maps of two different types are not equivalent.

Concerning the types depending on parameters:
\begin{itemize}
\item $\varphi_{26,\gamma}$, that is type 26 in Table \ref{table1} with parameter $\gamma\ne0,1$, is equivalent to  $\varphi_{26,\gamma'}$ if and only if either $\gamma' = \gamma$ or $\gamma' = {\gamma}/{(\gamma -1)}$;

\item  $\varphi_{27,\gamma}$, that is type 27 in Table \ref{table1} with parameter $\gamma \neq 0,1$, is equivalent to $\varphi_{27,\gamma'}$ if and only if either $\gamma' = \gamma$ or $\gamma' ={1}/{\gamma}$;

\item for $n \in \{28,29,30 \}$, the map $\varphi_{n,\gamma}$, that is type $n$ in Table \ref{table1} with parameter $\gamma \neq 0,1$, is equivalent to $\varphi_{n,\gamma'}$ if and only if 
$$\gamma' \in \left\{ \gamma,\dfrac{1}{\gamma},1-\gamma,\dfrac{1}{1-\gamma},\dfrac{\gamma}{\gamma-1},\dfrac{\gamma-1}{\gamma} \right\}.$$

\item $\varphi_{31,a,b}$, that is type 31 in Table \ref{table1} with two parameters $a, b\neq 0,1$, $a \ne b$, is equivalent to $\varphi_{31,a',b'}$ if and only if $(a',b') \in S$, where $S$ is defined in \eqref{set S}.

\end{itemize}
\end{theorem}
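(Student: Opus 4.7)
A cubic plane Cremona map $\varphi$ is defined by a homaloidal net $\mathcal{L}$ of degree $3$ in $\PP^2$; Noether's equalities $\sum m_i = 6$ and $\sum m_i^2 = 8$ force the multiplicities of its base points (possibly infinitely near) to be $(2,1,1,1,1)$, namely one double base point and four simple base points. The equivalence $\varphi' = \alpha' \circ \varphi \circ \alpha$ acts on the source by $\alpha \in \PGL_3$ (which moves the weighted cluster of base points of $\mathcal{L}$) and on the target by $\alpha'$, so the classification reduces to classifying the weighted cluster of base points up to $\PGL_3$, together, when needed, with the dual data on the target given by the contracted curves (equivalently, the base points of $\varphi^{-1}$).

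I would first enumerate all admissible proximity configurations of a weighted cluster of type $(2,1,1,1,1)$: for each simple base point I record whether it is proper or infinitely near (and to which point, at what order), subject to Noether's proximity inequalities ensuring that the net has no fixed component and genuinely defines a birational map of degree $3$. This is a finite combinatorial enumeration, and I expect it to yield exactly the $31$ types of Table \ref{table1}; the care needed here is thoroughness, which is precisely where \cite{C-D} had gaps --- one configuration was overlooked (our type 15) and a one-parameter family was conflated with a single map (our type 28). For each combinatorial type, I would use $\PGL_3$ on the source to place the proper base points in standard position and compute a basis of cubics vanishing on the cluster with the prescribed multiplicities; this produces the explicit formulas appearing in Table \ref{table1}. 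Distinct types are then inequivalent because the proximity graph of base points, and the degrees and proximities of the contracted curves, are manifest invariants of the left--right $\PGL_3\times\PGL_3$ action.

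For the parameter families I would compute the stabilizer in $\PGL_3 \times \PGL_3$ of each combinatorial configuration and determine its induced action on the residual moduli. For types 26 and 27 the stabilizer acts as $\mathbb{Z}/2$, producing the involutions $\gamma \mapsto \gamma/(\gamma-1)$ and $\gamma \mapsto 1/\gamma$ respectively. For types 28--30 the stabilizer acts as $S_3$, giving the classical six-element anharmonic cross-ratio orbit. For type 31 the stabilizer has order $24$: it contains the order-$4$ subgroup realizing $G$ (which swaps the two parameters via $g_1$ and inverts them via $g_2$) together with a transverse $S_3$ permuting three distinguished points of the configuration, and these jointly generate the $24$ transformations of the pair $(a,b)$ listed in $S$. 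The main obstacle is precisely this last computation for type 31 --- correctly identifying the full symmetry group of the two-parameter configuration, computing its action on $(a,b)$, and verifying that the orbit is exactly $S$, with no further hidden equivalences among the $\varphi_{31,a,b}$.
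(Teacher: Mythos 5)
Your overall strategy is the same as the paper's: reduce to classifying the weighted cluster of base points (one double, four simple) up to $\PGL_3$, enumerate the admissible proximity configurations, normalize each by automorphisms to get the explicit nets, and compute the group of base-point permutations realizable by automorphisms to determine the residual identifications on the parameters (your descriptions of the $\mathbb{Z}/2\mathbb{Z}$, anharmonic $\mathfrak{S}_3$, and order-$24$ $\mathfrak{S}_4$ actions for types 26--27, 28--30 and 31 match Lemmas \ref{thm : Type26}, \ref{thm : Type27}, \ref{thm : Type28-29-30} and \ref{thm : Type31}).

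There is, however, one concrete gap in the enumeration step. The data you propose to record --- for each simple base point, whether it is proper or infinitely near, to which point and at what order, subject to the proximity inequalities --- is exactly the weighted proximity graph, and that enumeration yields only the $21$ configurations of Table \ref{table0}, not $31$. The missing discrete invariant is whether three of the base points lie on a line (necessarily avoiding the double point, by B\'ezout); this is the ``enrichment'' of Theorem \ref{thm:31graphs}, and it is what separates, e.g., $[x^3:y^2z:xyz]$ from $[x(y^2+xz):y(y^2+xz):xyz]$, which have identical weighted proximity graphs but are not equivalent. Ten of the thirty-one types arise precisely from this collinearity stratification, so as written your step ``I expect it to yield exactly the $31$ types'' would fail. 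Your later remark that the contracted curves are invariants would let you detect the resulting inequivalences a posteriori (a contracted line not through the double point is equivalent to such a collinearity), but the invariant must already enter the enumeration and the choice of normal forms, since within a fixed proximity graph the collinear and non-collinear clusters are not in the same $\PGL_3$-orbit and produce genuinely different nets. A smaller imprecision: for type 31 the full symmetry group is $\mathfrak{S}_4$ acting on the four simple points; the six-element set $S'$ is a transversal of the (non-normal) Klein subgroup realizing $G$, but it cannot be chosen to be a complementary $\mathfrak{S}_3$ subgroup, since every point stabilizer in $\mathfrak{S}_4$ meets that Klein subgroup nontrivially.
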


In Table \ref{table1} at page \pageref{table1}, the first column lists our type, the second column lists the formula of the maps, the third column lists the corresponding types in \cite{C-D}, cf.\ Section \ref{sec comparison with CD}, and finally the fourth column lists the types of the inverse maps.

\begin{table}[!htbp]
\centering
\begin{tabular}{|c|c|c|c|}
\hline
$\sharp$           & Map                    & \cite{C-D}  & Inv \\  \hline
\rule{0ex}{2.5ex}1 & $[xz^2+y^3:yz^2:z^3]$  & 1 & 1\\[0.25ex] \hline
\rule{0ex}{2.5ex}2 & $[x(x^2+yz): y^3: y(x^2+yz)]$& 20 & 8\\[0.25ex] \hline
\rule{0ex}{2.5ex}3 & $[xz^2: x^3+xyz: z^3]$ & 3 & 5\\[0.25ex] \hline
\rule{0ex}{2.5ex}4 & $[x^2z: x^3+z^3+xyz: xz^2]$ & 4 & 4\\[0.25ex] \hline
\rule{0ex}{2.5ex}5 & $[x^2z: x^2y+z^3: xz^2]$& 5 & 3\\[0.25ex] \hline
\rule{0ex}{2.5ex}6 & $[x^2(x-y): xy(x-y): xyz+y^3]$& 12 & 6\\[0.25ex] \hline
\rule{0ex}{2.5ex}7 & $[x(x^2+yz): y(x^2+yz): xy^2]$& 24 & 17\\[0.25ex] \hline
\rule{0ex}{2.5ex}8 & $[xyz: yz^2:z^3-x^2y]$& 6 & 2\\[0.25ex] \hline
\rule{0ex}{2.5ex}9 & $[y^2z: x(xz+y^2): y(xz+y^2)]$& 21 & 9\\[0.25ex] \hline
\rule{0ex}{2.5ex}10 & $[x^3: y^2z: xyz]$& 7 & 10\\[0.25ex] \hline
\rule{0ex}{2.5ex}11 & $[x(y^2+xz): y(y^2+xz): xyz]$& 22 & 18\\[0.25ex] \hline
\rule{0ex}{2.5ex}12  & $[xz^2: x^2y: z^3]$ & 2 & 12\\[0.25ex] \hline
\rule{0ex}{2.5ex}13 & $[x(y^2+xz): y(y^2+xz): xy^2]$& 23 & 20\\[0.25ex] \hline
\rule{0ex}{2.5ex}14 & $[x^3: x^2y: (x-y)yz]$& 11 & 15\\[0.25ex] \hline
\rule{0ex}{2.5ex}15 & $[x^2y: xy^2: (x-y)^2 z]$& $(\star)$ & 14\\[0.25ex] \hline
\rule{0ex}{2.5ex}16 & $[x(x^2+yz): y(x^2+yz): xy(x-y)]$& 28 & 24\\[0.25ex] \hline
\rule{0ex}{2.5ex}17 & $[xyz: y^2z: x(y^2-xz)]$& 10 & 7\\[0.25ex] \hline
\rule{0ex}{2.5ex}18 & $[x^2(y-z): xy(y-z): y^2z]$& 8 & 11\\[0.25ex] \hline
\rule{0ex}{2.5ex}19 & $[x(x^2+yz+xz): y(x^2+yz+xz): xyz]$& 26 & 19\\[0.25ex] \hline
\rule{0ex}{2.5ex}20 & $[x^2z: xyz: y^2(x-z)]$& 9 & 13\\[0.25ex] \hline
\rule{0ex}{2.5ex}21 & $[x(xy+xz+yz): y(xy+xz+yz): xyz]$& 25 & 21\\[0.25ex] \hline
\rule{0ex}{2.5ex}22 & $[xz(x+y): yz(x+y): xy^2]$& 13 & 22\\[0.25ex] \hline
\rule{0ex}{2.5ex}23 & $[x(x^2+xy+yz): y(x^2+xy+yz): xyz]$& 27 & 25\\[0.25ex] \hline
\rule{0ex}{2.5ex}24 & $[xyz : (y - x)yz : x(x - y)(y - z)]$& 15 & 16\\[0.25ex] \hline
\rule{0ex}{2.5ex}25 & $[x(x+y)(y+z):y(x+y)(y+z):xyz]$& 14 & 23\\[0.25ex] \hline
\rule{0ex}{2.5ex}26 & $[x(\gamma xz-\gamma y^2-xy+y^2): \gamma xy(z-y): \gamma y^2(z-x)]$& 29 & 26\\[0.25ex] \hline
\rule{0ex}{2.5ex}27 & $[\gamma x^2y:\gamma xy^2 : (x+y)(x+\gamma y)z]$& 16 & 27\\[0.25ex] \hline
\rule{0ex}{2.5ex}28 & $[xy(x-y):xz(y-\gamma x):z(y+\gamma x)(y-\gamma x)]$& $17^\dagger$ & 28\\[0.25ex] \hline
\rule{0ex}{2.5ex}29 & $[xy(x-y):x(xy-\gamma xy+\gamma xz-yz):{x}^{2}y-{\gamma}^{2}{x}^{2}y+{\gamma}^{2}{x}^{2}z-{y}^{2}z]$& 30 & 30\\[0.25ex] \hline
\rule{0ex}{2.5ex}30 & $[x(xy+\gamma xz-xz-\gamma {y}^{2}):\gamma xz(x-y):\gamma z(x-y)(x+y)]$& 18 & 29\\[0.25ex] \hline
& $[ax(-abxz+ab{y}^{2}-{b}^{2}xy+{b}^{2}xz+axy-a{y}^{2}):ax(-abxz+abyz+axy$&&\\
\raisebox{1ex}{31}& $-ayz-bxy+bxz): -{a}^{2}b{x}^{2}z+{a}^{2}b{y}^{2}z+{a}^{2}{x}^{2}y-{a}^{2}{y}^{2}z-{b}^{2}{x}^{2}y+{b}^{2}{x}^{2}z]$& \raisebox{1ex}{32} & \raisebox{1ex}{31}\\[0.25ex] \hline
\end{tabular}
\bigskip
\caption{Types of cubic plane Cremona maps.}\label{table1}
\end{table}

We then introduce the notion of quadratic length (ordinary quadratic length, resp.) of a plane Cremona map $\varphi$, that is the minimum number of quadratic maps (ordinary quadratic maps, resp.) needed to decompose $\varphi$.

Recall that Blanc and Furter in \cite{B-F} defined the \emph{length} of a plane Cremona map $\varphi$, that is the minimum number of \emph{de Jonqui\`eres} maps needed to decompose $\varphi$.

Using the above classification theorem, it is easy to compute the ordinary quadratic length and the quadratic length of all cubic plane Cremona maps:

\begin{theorem}\label{thm main2}
Plane Cremona maps equivalent to type 1 in Table \ref{table1} have quadratic length 3, while all other cubic plane Cremona maps have quadratic length 2.

A plane Cremona map equivalent to type $n$, $1\leq n \leq 31$,  in Table \ref{table1} has the respective ordinary quadratic length listed in the third column in Table \ref{table2} at page \pageref{table2}.
\end{theorem}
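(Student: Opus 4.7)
The plan is to reduce to the 31 canonical representatives of Theorem~\ref{thm main1} (both $\q$ and $\oq$ are invariants of the equivalence relation) and then compute each quantity separately for each representative. The overall structure is constructive upper bounds plus proximity-based lower bounds.

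For the $\q$ statement, the lower bound $\q(\varphi) \ge 2$ for any cubic is immediate, since a single quadratic has degree $2$ and an automorphism has degree $1$. To prove $\q(\varphi_n) \le 2$ for $n = 2,\dots,31$, I would exhibit, for each type, explicit quadratic maps $q_1,q_2$ and automorphisms $\alpha_0,\alpha_1,\alpha_2$ realising
\[
\varphi_n \;=\; \alpha_0 \circ q_1 \circ \alpha_1 \circ q_2 \circ \alpha_2 .
\]
The strategy for discovering the decomposition is to read off the Enriques diagram of $\varphi_n$ (a double base point together with four simple base points, linked by the proximity inequalities) and to choose $q_2$ whose three base points coincide with a well-selected triple of base points of $\varphi_n$. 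Then $\varphi_n \circ q_2^{-1}$ drops to degree $\le 2$ and can be identified, after relabelling coordinates, with $\alpha_0 \circ q_1 \circ \alpha_1$. The verification for each of the $30$ types is a direct computation, but the choice of triple is dictated by the proximity structure of $\varphi_n$.

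The principal obstacle is the lower bound $\q(\varphi_1) \ge 3$. The map $\varphi_1 = [xz^2+y^3 : yz^2 : z^3]$ has a unique proper base point $[1:0:0]$ of multiplicity $2$ and four simple infinitely near base points organised in a chain with a single satellite relation to the double point. I would argue by contradiction: assume $\varphi_1 = \alpha_0 \circ q_1 \circ \alpha_1 \circ q_2 \circ \alpha_2$ with $q_1,q_2$ quadratic. Since every quadratic Cremona map has one of the three proximity types (ordinary, with one infinitely near point, or with two infinitely near points in a chain), and since the base points of $q_1 \circ q_2$ are obtained from those of $q_2$ by adjoining the $q_2$-preimages of the base points of $q_1$, one obtains only finitely many possible Enriques diagrams for such a composition. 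A case-by-case inspection, comparing each of these diagrams with that of $\varphi_1$, rules out every possibility. This proximity-graph analysis, together with the self-inverse relation $\q(\varphi) = \q(\varphi^{-1})$, contains the whole substance of the $\q$ statement.

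For the ordinary quadratic length $\oq$, the plan is parallel. On the upper-bound side, one reuses the decompositions above whenever each $q_i$ can already be chosen ordinary; when a factor is forced to be non-ordinary, it is in turn rewritten as a short product of ordinary quadratics (every non-ordinary quadratic Cremona map is classically known to decompose into a small bounded number of ordinary ones). On the lower-bound side, specific patterns of infinitely near base points of $\varphi_n$ obstruct short decompositions into ordinary factors alone, for the same kind of proximity-graph reason used for type~1 above. Matching these upper and lower bounds for each of the $31$ types produces the values recorded in the third column of Table~\ref{table2}.
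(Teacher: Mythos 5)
Your treatment of the quadratic length is essentially the paper's: explicit two-quadratic decompositions for types 2--31, and for type 1 the observation that the satellite point $p_3$ blocks a quadratic map from being based at $p_1,p_2,p_3$ simultaneously, so that $\deg(\varphi_1\circ\rho^{-1})\in\{3,4,6\}$ for every quadratic $\rho$ and the composition is never quadratic. Your ``enumerate all Enriques diagrams of $q_1\circ q_2$'' phrasing is heavier than this three-case degree count, but it rests on the same obstruction and would go through.

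The genuine gap is in the lower bounds for $\oq$. You assert that ``specific patterns of infinitely near base points obstruct short decompositions\dots for the same kind of proximity-graph reason used for type 1,'' but no single such reason suffices, and you name no quantitative invariant. The paper's proof rests on the \emph{height} of a base point (order of infinite nearness plus one) together with the fact that composing with one involutory ordinary quadratic changes every height by at most $1$ (Lemma \ref{lem:oq+-1}), whence $\oq(\varphi)\geq\max_p \h_\varphi(p)$ (Proposition \ref{oq>height}); this tool has no analogue in your sketch. Even with it, the height bound falls short by one for roughly half the types: type 28 has maximal height $2$ but $\oq=3$ because its three proper simple base points are \emph{collinear} --- data invisible to the proximity graph and recorded only in the enriched graph --- while type 14 has maximal height $2$ but $\oq=3$ because it has only one proper simple base point, so no ordinary quadratic can absorb enough multiplicity. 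Closing that remaining unit for types 1, 3, 5, 6, 8 and 17 requires a bootstrapping induction: assume a shorter decomposition, peel off one ordinary quadratic at a time, track the intermediate maps (of degrees 4, 5, even 7, with their own collinearity conditions), and derive a contradiction either from $\oq$ values already established for other cubic types, from Corollary \ref{cor>2}, or from the bound $\q(\varphi)\geq d-1$ for de Jonqui\`eres maps of degree $d\leq5$ (Lemma \ref{cordeJonq>d-1}), which itself needs a separate proof. Your upper-bound strategy also needs repair: replacing each non-ordinary factor by its classical ordinary decomposition gives, e.g., $\oq(\varphi_{17})\leq 5$ from $\varphi_{17}=\tau\circ\sigma$ up to automorphisms, whereas the true value is $4$, so fresh ordinary decompositions --- not substitutions --- are needed before the bounds can match.
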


\begin{table}[!htbp]
\centering
\begin{tabular}{cc}
\begin{tabular}{|c|c|c|}
\hline

\rule{0ex}{2.5ex}$\sharp$ & Enriched weighted prox.\ graph & oq \\  \hline

1  &
\scalebox{0.5}{%
\begin{tikzpicture}[->,>=stealth',shorten >=2pt,auto,node distance=2.5cm,ultra thick,baseline=-1.25ex]
  \tikzstyle{every state}=[fill=white,draw=black,text=black]

  \node[state,draw=red,text=red] (A)                    {\huge 2};
  \node[state] (B) [right of=A] {\huge 1};
  \node[state] (C) [right of=B] {\huge 1};
  \node[state] (D) [right of=C] {\huge 1};
  \node[state] (E) [right of=D] {\huge 1};

\path (E) edge (D);
\path (D) edge (C);
\path (C) edge (B);
\path (B) edge (A);
\path (C) edge [bend right] (A);
\end{tikzpicture}%
} & 6
\\[0.5ex]
\hline
\rule{0ex}{3.0ex}2  &
\scalebox{0.5}{%
\begin{tikzpicture}[->,>=stealth',shorten >=2pt,auto,node distance=2.5cm,ultra thick,baseline=-1.25ex]
  \tikzstyle{every state}=[fill=white,draw=black,text=black]

  \node[state,draw=red,text=red] (A)                    {\huge 2};
  \node[state] (B) [right of=A] {\huge 1};
  \node[state] (C) [right of=B] {\huge 1};
  \node[state] (D) [right of=C] {\huge 1};
  \node[state] (E) [right of=D] {\huge 1};

\path (E) edge (D);
\path (D) edge (C);
\path (C) edge (B);
\path (B) edge (A);
\end{tikzpicture}%
}&5
\\[0.5ex]
\hline
3   &
\scalebox{0.5}{%
\begin{tikzpicture}[->,>=stealth',shorten >=2pt,auto,node distance=2.5cm,ultra thick,baseline=-1.25ex]
  \tikzstyle{every state}=[fill=white,draw=black,text=black]

  \node[state,draw=red,text=red] (A)                    {\huge 2};
  \node[state] (B) [right of=A] {\huge 1};
  \node[state] (C) [right of=B] {\huge 1};
  \node[state] (D) [right of=C] {\huge 1};
  \node[state] (E) [right of=D] {\huge 1};

\path (E) edge (D);
\path (D) edge (C);
\path (B) edge (A);
\path (C) edge [bend right] (A);
\end{tikzpicture}%
}&5
\\[0.5ex]
\hline
4   &
\scalebox{0.5}{%
\begin{tikzpicture}[->,>=stealth',shorten >=2pt,auto,node distance=2.5cm,ultra thick,baseline=-1.25ex]
  \tikzstyle{every state}=[fill=white,draw=black,text=black]

  \node[state,draw=red,text=red] (A)                    {\huge 2};
  \node[state] (B) [right of=A] {\huge 1};
  \node[state] (C) [right of=B] {\huge 1};
  \node[state] (D) [right of=C] {\huge 1};
  \node[state] (E) [right of=D] {\huge 1};

\path (E) edge (D);
\path (D) edge [bend right] (A);
\path (C) edge (B);
\path (B) edge (A);
\end{tikzpicture}%
}&4
\\[0.5ex]
\hline
5  &
\scalebox{0.5}{%
\begin{tikzpicture}[->,>=stealth',shorten >=2pt,auto,node distance=2.5cm,ultra thick,baseline=-1.25ex]
  \tikzstyle{every state}=[fill=white,draw=black,text=black]

  \node[state,draw=red,text=red] (A)                    {\huge 2};
  \node[state] (B) [right of=A] {\huge 1};
  \node[state] (C) [right of=B] {\huge 1};
  \node[state] (D) [right of=C] {\huge 1};
  \node[state,draw=red,text=red] (E) [right of=D] {\huge 1};

\path (D) edge (C);
\path (C) edge (B);
\path (B) edge (A);
\path (C) edge [bend right] (A);
\end{tikzpicture}%
}&5
\\[0.5ex]
\hline
6   &
\scalebox{0.5}{%
\begin{tikzpicture}[->,>=stealth',shorten >=2pt,auto,node distance=2.5cm,ultra thick,baseline=-1.25ex]
  \tikzstyle{every state}=[fill=white,draw=black,text=black]

  \node[state,draw=red,text=red] (A)                    {\huge 2};
  \node[state] (B) [right of=A] {\huge 1};
  \node[state] (C) [right of=B] {\huge 1};
  \node[state] (D) [right of=C] {\huge 1};
  \node[state,draw=red,text=red] (E) [right of=D] {\huge 1};

\path (D) edge (C);
\path (B) edge (A);
\path (C) edge [bend right] (A);
\end{tikzpicture}%
}&4
\\[0.5ex]
\hline
\rule{0ex}{3.0ex}7  &
\scalebox{0.5}{%
\begin{tikzpicture}[->,>=stealth',shorten >=2pt,auto,node distance=2.5cm,ultra thick,baseline=-1.25ex]
  \tikzstyle{every state}=[fill=white,draw=black,text=black]

  \node[state,draw=red,text=red] (A)                    {\huge 2};
  \node[state] (B) [right of=A] {\huge 1};
  \node[state] (C) [right of=B] {\huge 1};
  \node[state] (D) [right of=C] {\huge 1};
  \node[state,draw=red,text=red] (E) [right of=D] {\huge 1};

\path (D) edge (C);
\path (C) edge (B);
\path (B) edge (A);
\end{tikzpicture}%
}&4
\\[0.5ex]
\hline
\rule{0ex}{3.0ex}8  &
\scalebox{0.5}{%
\begin{tikzpicture}[->,>=stealth',shorten >=2pt,auto,node distance=2.5cm,ultra thick,baseline=-1.25ex]
  \tikzstyle{every state}=[fill=white,draw=black,text=black]

  \node[state,draw=red,text=red] (A)                    {\huge 2};
  \node[state,draw=red,text=red] (B) [right of=A] {\huge 1};
  \node[state] (C) [right of=B] {\huge 1};
  \node[state] (D) [right of=C] {\huge 1};
  \node[state] (E) [right of=D] {\huge 1};

\path (E) edge (D);
\path (D) edge (C);
\path (C) edge (B);
\path (D) edge [-,dashed,draw=blue,bend right] (C);
\path (C) edge [-,dashed,draw=blue,bend right] (B);
\end{tikzpicture}%
}&5
\\[0.5ex]
\hline
\rule{0ex}{3.0ex}9  &
\scalebox{0.5}{%
\begin{tikzpicture}[->,>=stealth',shorten >=2pt,auto,node distance=2.5cm,ultra thick,baseline=-1.25ex]
  \tikzstyle{every state}=[fill=white,draw=black,text=black]

  \node[state,draw=red,text=red] (A)                    {\huge 2};
  \node[state,draw=red,text=red] (B) [right of=A] {\huge 1};
  \node[state] (C) [right of=B] {\huge 1};
  \node[state] (D) [right of=C] {\huge 1};
  \node[state] (E) [right of=D] {\huge 1};

\path (E) edge (D);
\path (D) edge (C);
\path (C) edge (B);
\end{tikzpicture}%
}&4
\\[0.5ex]
\hline
\rule{0ex}{3.0ex}10  &
\scalebox{0.5}{%
\begin{tikzpicture}[->,>=stealth',shorten >=2pt,auto,node distance=2.5cm,ultra thick,baseline=-1.25ex]
  \tikzstyle{every state}=[fill=white,draw=black,text=black]

  \node[state,draw=red,text=red] (A)                    {\huge 2};
  \node[state] (B) [right of=A] {\huge 1};
  \node[state,draw=red,text=red] (C) [right of=B] {\huge 1};
  \node[state] (D) [right of=C] {\huge 1};
  \node[state] (E) [right of=D] {\huge 1};

\path (E) edge (D);
\path (D) edge (C);
\path (B) edge (A);
\path (D) edge [-,dashed,draw=blue,bend right] (C);
\path (E) edge [-,dashed,draw=blue,bend right] (D);
\end{tikzpicture}%
}&3
\\[0.5ex]
\hline
\rule{0ex}{3.0ex}11  &
\scalebox{0.5}{%
\begin{tikzpicture}[->,>=stealth',shorten >=2pt,auto,node distance=2.5cm,ultra thick,baseline=-1.25ex]
  \tikzstyle{every state}=[fill=white,draw=black,text=black]

  \node[state,draw=red,text=red] (A)                    {\huge 2};
  \node[state] (B) [right of=A] {\huge 1};
  \node[state,draw=red,text=red] (C) [right of=B] {\huge 1};
  \node[state] (D) [right of=C] {\huge 1};
  \node[state] (E) [right of=D] {\huge 1};

\path (E) edge (D);
\path (D) edge (C);
\path (B) edge (A);
\end{tikzpicture}%
}&3
\\[0.5ex]
\hline
12  &
\scalebox{0.5}{%
\begin{tikzpicture}[->,>=stealth',shorten >=2pt,auto,node distance=2.5cm,ultra thick,baseline=-1.25ex]
  \tikzstyle{every state}=[fill=white,draw=black,text=black]

  \node[state,draw=red,text=red] (A)                    {\huge 2};
  \node[state] (B) [right of=A] {\huge 1};
  \node[state] (C) [right of=B] {\huge 1};
  \node[state,draw=red,text=red] (D) [right of=C] {\huge 1};
  \node[state] (E) [right of=D] {\huge 1};

\path (E) edge (D);
\path (C) edge (B);
\path (B) edge (A);
\path (C) edge [bend right] (A);
\end{tikzpicture}%
}&3
\\[0.5ex]
\hline
\rule{0ex}{3.0ex}13  &
\scalebox{0.5}{%
\begin{tikzpicture}[->,>=stealth',shorten >=2pt,auto,node distance=2.5cm,ultra thick,baseline=-1.25ex]
  \tikzstyle{every state}=[fill=white,draw=black,text=black]

  \node[state,draw=red,text=red] (A)                    {\huge 2};
  \node[state] (B) [right of=A] {\huge 1};
  \node[state] (C) [right of=B] {\huge 1};
  \node[state,draw=red,text=red] (D) [right of=C] {\huge 1};
  \node[state] (E) [right of=D] {\huge 1};

\path (E) edge (D);
\path (C) edge (B);
\path (B) edge (A);
\end{tikzpicture}%
}&3
\\[0.5ex]
\hline
14  &
\scalebox{0.5}{%
\begin{tikzpicture}[->,>=stealth',shorten >=2pt,auto,node distance=2.5cm,ultra thick,baseline=-1.25ex]
  \tikzstyle{every state}=[fill=white,draw=black,text=black]

  \node[state,draw=red,text=red] (A)                    {\huge 2};
  \node[state] (B) [right of=A] {\huge 1};
  \node[state] (C) [right of=B] {\huge 1};
  \node[state,draw=red,text=red] (D) [right of=C] {\huge 1};
  \node[state] (E) [right of=D] {\huge 1};

\path (E) edge (D);
\path (B) edge (A);
\path (C) edge [bend right] (A);
\end{tikzpicture}%
}&3
\\[0.5ex]
\hline
15  &
\scalebox{0.5}{%
\begin{tikzpicture}[->,>=stealth',shorten >=2pt,auto,node distance=2.5cm,ultra thick,baseline=-1.25ex]
  \tikzstyle{every state}=[fill=white,draw=black,text=black]

  \node[state,draw=red,text=red] (A)                    {\huge 2};
  \node[state] (B) [right of=A] {\huge 1};
  \node[state] (C) [right of=B] {\huge 1};
  \node[state,draw=red,text=red] (D) [right of=C] {\huge 1};
  \node[state,draw=red,text=red] (E) [right of=D] {\huge 1};

\path (C) edge (B);
\path (B) edge (A);
\path (C) edge [bend right] (A);
\end{tikzpicture}%
}&3
\\[0.5ex]
\hline
\end{tabular}
&
\raisebox{0.0ex}{\begin{tabular}{|c|c|c|}
\hline

\rule{0ex}{2.5ex}$\sharp$ & Enriched weighted prox.\ graph & oq \\  \hline

\rule{0ex}{3.0ex}16  &
\scalebox{0.5}{%
\begin{tikzpicture}[->,>=stealth',shorten >=2pt,auto,node distance=2.5cm,ultra thick,baseline=-1.25ex]
  \tikzstyle{every state}=[fill=white,draw=black,text=black]

  \node[state,draw=red,text=red] (A)                    {\huge 2};
  \node[state] (B) [right of=A] {\huge 1};
  \node[state] (C) [right of=B] {\huge 1};
  \node[state,draw=red,text=red] (D) [right of=C] {\huge 1};
  \node[state,draw=red,text=red] (E) [right of=D] {\huge 1};

\path (C) edge (B);
\path (B) edge (A);
\end{tikzpicture}%
}&3
\\[0.5ex]
\hline
\rule{0ex}{3.0ex}17   &
\scalebox{0.5}{%
\begin{tikzpicture}[->,>=stealth',shorten >=2pt,auto,node distance=2.5cm,ultra thick,baseline=-1.25ex]
  \tikzstyle{every state}=[fill=white,draw=black,text=black]

  \node[state,draw=red,text=red] (A)                    {\huge 2};
  \node[state,draw=red,text=red] (B) [right of=A] {\huge 1};
  \node[state,draw=red,text=red] (C) [right of=B] {\huge 1};
  \node[state] (D) [right of=C] {\huge 1};
  \node[state] (E) [right of=D] {\huge 1};

\path (E) edge (D);
\path (D) edge (C);
\path (D) edge [-,dashed,draw=blue,bend right] (C);
\path (C) edge  [-,dashed,draw=blue,bend right] (B);
\end{tikzpicture}%
}&4
\\[0.5ex]
\hline
\rule{0ex}{3.0ex}18  &
\scalebox{0.5}{%
\begin{tikzpicture}[->,>=stealth',shorten >=2pt,auto,node distance=2.5cm,ultra thick,baseline=-1.25ex]
  \tikzstyle{every state}=[fill=white,draw=black,text=black]

  \node[state,draw=red,text=red] (A)                    {\huge 2};
  \node[state,draw=red,text=red] (B) [right of=A] {\huge 1};
  \node[state,draw=red,text=red] (C) [right of=B] {\huge 1};
  \node[state] (D) [right of=C] {\huge 1};
  \node[state] (E) [right of=D] {\huge 1};

\path (E) edge (D);
\path (D) edge (C);
\path (D) edge [-,dashed,draw=blue,bend right] (C);
\path (E) edge [-,dashed,draw=blue,bend right] (D);
\end{tikzpicture}%
}&3
\\[0.5ex]
\hline
\rule{0ex}{3.0ex}19  &
\scalebox{0.5}{%
\begin{tikzpicture}[->,>=stealth',shorten >=2pt,auto,node distance=2.5cm,ultra thick,baseline=-1.25ex]
  \tikzstyle{every state}=[fill=white,draw=black,text=black]

  \node[state,draw=red,text=red] (A)                    {\huge 2};
  \node[state,draw=red,text=red] (B) [right of=A] {\huge 1};
  \node[state,draw=red,text=red] (C) [right of=B] {\huge 1};
  \node[state] (D) [right of=C] {\huge 1};
  \node[state] (E) [right of=D] {\huge 1};

\path (E) edge (D);
\path (D) edge (C);
\end{tikzpicture}%
}&3
\\[0.5ex]
\hline
\rule{0ex}{3.0ex}20  &
\scalebox{0.5}{%
\begin{tikzpicture}[->,>=stealth',shorten >=2pt,auto,node distance=2.5cm,ultra thick,baseline=-1.25ex]
  \tikzstyle{every state}=[fill=white,draw=black,text=black]

  \node[state,draw=red,text=red] (A)                    {\huge 2};
  \node[state,draw=red,text=red] (B) [right of=A] {\huge 1};
  \node[state] (C) [right of=B] {\huge 1};
  \node[state,draw=red,text=red] (D) [right of=C] {\huge 1};
  \node[state] (E) [right of=D] {\huge 1};

\path (E) edge (D);
\path (C) edge (B);
\path (D) edge [-,dashed,draw=blue,bend right] (C);
\path (C) edge  [-,dashed,draw=blue,bend right] (B);
\end{tikzpicture}%
}&3
\\[0.5ex]
\hline
\rule{0ex}{3.0ex}21  &
\scalebox{0.5}{%
\begin{tikzpicture}[->,>=stealth',shorten >=2pt,auto,node distance=2.5cm,ultra thick,baseline=-1.25ex]
  \tikzstyle{every state}=[fill=white,draw=black,text=black]

  \node[state,draw=red,text=red] (A)                    {\huge 2};
  \node[state,draw=red,text=red] (B) [right of=A] {\huge 1};
  \node[state] (C) [right of=B] {\huge 1};
  \node[state,draw=red,text=red] (D) [right of=C] {\huge 1};
  \node[state] (E) [right of=D] {\huge 1};

\path (E) edge (D);
\path (C) edge (B);
\end{tikzpicture}%
}&2
\\[0.5ex]
\hline
\rule{0ex}{3.0ex}22  &
\scalebox{0.5}{%
\begin{tikzpicture}[->,>=stealth',shorten >=2pt,auto,node distance=2.5cm,ultra thick,baseline=-1.25ex]
  \tikzstyle{every state}=[fill=white,draw=black,text=black]

  \node[state,draw=red,text=red] (A)                    {\huge 2};
  \node[state] (B) [right of=A] {\huge 1};
  \node[state,draw=red,text=red] (C) [right of=B] {\huge 1};
  \node[state,draw=red,text=red] (D) [right of=C] {\huge 1};
  \node[state] (E) [right of=D] {\huge 1};

\path (E) edge (D);
\path (B) edge (A);
\path (E) edge [-,dashed,draw=blue,bend right] (D);
\path (D) edge  [-,dashed,draw=blue,bend right] (C);
\end{tikzpicture}%
}&3
\\[0.5ex]
\hline
\rule{0ex}{3.0ex}23  &
\scalebox{0.5}{%
\begin{tikzpicture}[->,>=stealth',shorten >=2pt,auto,node distance=2.5cm,ultra thick,baseline=-1.25ex]
  \tikzstyle{every state}=[fill=white,draw=black,text=black]

  \node[state,draw=red,text=red] (A)                    {\huge 2};
  \node[state] (B) [right of=A] {\huge 1};
  \node[state,draw=red,text=red] (C) [right of=B] {\huge 1};
  \node[state,draw=red,text=red] (D) [right of=C] {\huge 1};
  \node[state] (E) [right of=D] {\huge 1};

\path (E) edge (D);
\path (B) edge (A);
\end{tikzpicture}%
}&2
\\[0.5ex]
\hline
\rule{0ex}{3.0ex}24  &
\scalebox{0.5}{%
\begin{tikzpicture}[->,>=stealth',shorten >=2pt,auto,node distance=2.5cm,ultra thick,baseline=-1.25ex]
  \tikzstyle{every state}=[fill=white,draw=black,text=black]

  \node[state,draw=red,text=red] (A)                    {\huge 2};
  \node[state,draw=red,text=red] (B) [right of=A] {\huge 1};
  \node[state,draw=red,text=red] (C) [right of=B] {\huge 1};
  \node[state,draw=red,text=red] (D) [right of=C] {\huge 1};
  \node[state] (E) [right of=D] {\huge 1};

\path (E) edge (D);
\path (D) edge [-,dashed,draw=blue,bend right] (C);
\path (C) edge  [-,dashed,draw=blue,bend right] (B);
\end{tikzpicture}%
}&3
\\[0.5ex]
\hline
\rule{0ex}{3.0ex}25  &
\scalebox{0.5}{%
\begin{tikzpicture}[->,>=stealth',shorten >=2pt,auto,node distance=2.5cm,ultra thick,baseline=-1.25ex]
  \tikzstyle{every state}=[fill=white,draw=black,text=black]

  \node[state,draw=red,text=red] (A)                    {\huge 2};
  \node[state,draw=red,text=red] (B) [right of=A] {\huge 1};
  \node[state,draw=red,text=red] (C) [right of=B] {\huge 1};
  \node[state,draw=red,text=red] (D) [right of=C] {\huge 1};
  \node[state] (E) [right of=D] {\huge 1};

\path (E) edge (D);
\path (D) edge [-,dashed,draw=blue,bend right] (C);
\path (E) edge [-,dashed,draw=blue,bend right] (D);
\end{tikzpicture}%
}&2
\\[0.5ex]
\hline
\rule{0ex}{3.0ex}26  &
\scalebox{0.5}{%
\begin{tikzpicture}[->,>=stealth',shorten >=2pt,auto,node distance=2.5cm,ultra thick,baseline=-1.25ex]
  \tikzstyle{every state}=[fill=white,draw=black,text=black]

  \node[state,draw=red,text=red] (A)                    {\huge 2};
  \node[state,draw=red,text=red] (B) [right of=A] {\huge 1};
  \node[state,draw=red,text=red] (C) [right of=B] {\huge 1};
  \node[state,draw=red,text=red] (D) [right of=C] {\huge 1};
  \node[state] (E) [right of=D] {\huge 1};
  \path (E) edge (D);
\end{tikzpicture}%
}&2
\\[0.5ex]
\hline
\rule{0ex}{3.0ex}27  &
\scalebox{0.5}{%
\begin{tikzpicture}[->,>=stealth',shorten >=2pt,auto,node distance=2.5cm,ultra thick,baseline=-1.25ex]
  \tikzstyle{every state}=[fill=white,draw=black,text=black]

  \node[state,draw=red,text=red] (A)                    {\huge 2};
  \node[state] (B) [right of=A] {\huge 1};
  \node[state] (C) [right of=B] {\huge 1};
  \node[state,draw=red,text=red] (D) [right of=C] {\huge 1};
  \node[state,draw=red,text=red] (E) [right of=D] {\huge 1};

\path (B) edge (A);
\path (C) edge [bend right] (A);
\end{tikzpicture}%
}&2
\\[0.5ex]
\hline
\rule{0ex}{3.0ex}28  & 
\scalebox{0.5}{%
\begin{tikzpicture}[->,>=stealth',shorten >=2pt,auto,node distance=2.5cm,ultra thick,baseline=-1.25ex]
  \tikzstyle{every state}=[fill=white,draw=black,text=black]

  \node[state,draw=red,text=red] (A)                    {\huge 2};
  \node[state] (B) [right of=A] {\huge 1};
  \node[state,draw=red,text=red] (C) [right of=B] {\huge 1};
  \node[state,draw=red,text=red] (D) [right of=C] {\huge 1};
  \node[state,draw=red,text=red] (E) [right of=D] {\huge 1};

\path (B) edge (A);
\path (E) edge [-,dashed,draw=blue,bend right] (D);
\path (D) edge  [-,dashed,draw=blue,bend right] (C);
\end{tikzpicture}%
}&3
\\[0.5ex]
\hline
\rule{0ex}{3.0ex}29  &
\scalebox{0.5}{%
\begin{tikzpicture}[->,>=stealth',shorten >=2pt,auto,node distance=2.5cm,ultra thick,baseline=-1.25ex]
  \tikzstyle{every state}=[fill=white,draw=black,text=black]

  \node[state,draw=red,text=red] (A)                    {\huge 2};
  \node[state] (B) [right of=A] {\huge 1};
  \node[state,draw=red,text=red] (C) [right of=B] {\huge 1};
  \node[state,draw=red,text=red] (D) [right of=C] {\huge 1};
  \node[state,draw=red,text=red] (E) [right of=D] {\huge 1};
  \path (B) edge (A);
\end{tikzpicture}%
}&2
\\[0.5ex]
\hline
\rule{0ex}{3.0ex}30  &
\scalebox{0.5}{%
\begin{tikzpicture}[->,>=stealth',shorten >=2pt,auto,node distance=2.5cm,ultra thick,baseline=-1.25ex]
  \tikzstyle{every state}=[fill=white,draw=black,text=black]

  \node[state,draw=red,text=red] (A)                    {\huge 2};
  \node[state,draw=red,text=red] (B) [right of=A] {\huge 1};
  \node[state,draw=red,text=red] (C) [right of=B] {\huge 1};
  \node[state,draw=red,text=red] (D) [right of=C] {\huge 1};
  \node[state,draw=red,text=red] (E) [right of=D] {\huge 1};

\path (E) edge [-,dashed,draw=blue,bend right] (D);
\path (D) edge  [-,dashed,draw=blue,bend right] (C);
\end{tikzpicture}%
}&2
\\[0.5ex]
\hline
\rule{0ex}{3.0ex}31  &
\scalebox{0.5}{%
\begin{tikzpicture}[->,>=stealth',shorten >=2pt,auto,node distance=2.5cm,ultra thick,baseline=-1.25ex]
  \tikzstyle{every state}=[fill=white,draw=black,text=black]

  \node[state,draw=red,text=red] (A)                    {\huge 2};
  \node[state,draw=red,text=red] (B) [right of=A] {\huge 1};
  \node[state,draw=red,text=red] (C) [right of=B] {\huge 1};
  \node[state,draw=red,text=red] (D) [right of=C] {\huge 1};
  \node[state,draw=red,text=red] (E) [right of=D] {\huge 1};
\end{tikzpicture}%
}&2
\\[0.5ex]
\hline
\end{tabular}}
\end{tabular}
\bigskip
\caption{Enriched weighted proximity graphs and ordinary quadratic lengths of cubic plane Cremona maps.}\label{table2}
\end{table}

Our classification theorem is mainly based on the study of \emph{enriched weighted proximity graphs} of the base points of the homaloidal net defining a plane Cremona map, that we introduce in Section \ref{weighted}.

Let us briefly explain the contents of this paper.

In Section 2 we fix notation and recall the definition of infinitely near points.

In Section 3 we introduce special coordinates in order to deal with infinitely near points that we call \emph{standard coordinates}.

In Section 4 we consider plane conics passing through five proper or infinitely near points.

In Section 5 we recall the definition of length of a plane Cremona map and we introduce the notion of quadratic length and ordinary quadratic length of a plane Cremona map.

In Section 6 we introduce the key notion that we use to classify cubic plane Cremona maps, that is the enriched weighted proximity graph of the base points of the homaloidal net that defines a plane Cremona map. This graph takes into account the proximity relations among the base points, their multiplicities and their relative position.

In Section 7 we define the height of a base point of a plane Cremona map, that allows us to give a lower bound on the ordinary quadratic length of a plane Cremona map.

In Section 8 we compare our classification with the previous one by Cerveau and D\'eserti in \cite{C-D}.

In Section 9 we prove our main classification theorem.

In Section 10 we compute the quadratic length and ordinary quadratic length of all cubic plane Cremona maps.

The results contained in this paper are part of the second author's Ph.D.\ Thesis \cite{thesis}.

The authors warmly thank J\'er\'emy Blanc and Ciro Ciliberto for useful discussions.

\section{Notation and infinitely near points}

\begin{notation}\label{not:1}
Any non-zero complex number $z$ can be written uniquely as follows
$$z=re^{i\theta}=r\left(\cos(\theta)+i\sin(\theta)\right), \quad \textrm{ with } r > 0, \: \textrm{ and } \theta \in [0,2\pi).$$
The angle $\theta$ is called the \emph{argument} of $z$ and the real number $r$ is the \emph{norm} of $z$.\\
Any non-zero complex number $z = r\left(\cos(\theta)+i\sin(\theta)\right)$ has two square roots, namely
$$z_0 = \sqrt{r}\left[\cos\left(\dfrac{\theta}{2}\right)+i\sin\left(\dfrac{\theta}{2}\right)\right], \qquad z_1=-z_0.$$
From now on, we denote $z_0$ by $\sqrt{z}$ and $z_1$ by $-\sqrt{z}$.

For any $t \in \mathbb{C}$ such that $t^2 \neq 4$, set $t^{\bullet} = \sqrt{t^2-4}$, $t_{+} = (t+ t^{\bullet})/{2}$ and $t_{-} = (t - t^{\bullet})/{2}$, that is, $t_{\pm}$ are the roots of the equation $x^2-tx +1 =0$. Note that, if $t^2 \neq 4$ then $t_+ \neq t_-$ and $t_+,t_- \neq 0$.
\end{notation}

Next, we recall some very well known facts about the projective plane and plane curves.

\begin{lemma} \label{lem: 4 points lemma} Let  $p_1,p_2,p_3,p_4 \in \mathbb{P}^2$ be four points such that no three of them are collinear. Then, there exists a unique automorphism of $\mathbb{P}^2$, that maps $e_1 = [1:0:0], e_2 = [0:1:0], e_3 = [0:0:1], e_4 = [1:1:1]$ to $p_1,p_2,p_3, p_4$, respectively.
\end{lemma}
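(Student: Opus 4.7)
My plan is to split the statement into existence and uniqueness, both of which are classical. For existence, I would first pick representatives $v_i\in\mathbb{C}^3$ of the points $p_i$. Since $p_1,p_2,p_3$ are not collinear, the vectors $v_1,v_2,v_3$ form a basis of $\mathbb{C}^3$, so the linear map sending the standard basis $\epsilon_1,\epsilon_2,\epsilon_3$ to $v_1,v_2,v_3$ is invertible and induces an element of $\PGL_3(\mathbb{C})=\Aut(\PP^2)$ sending $e_1,e_2,e_3$ to $p_1,p_2,p_3$. This, however, will not in general send $e_4=[1:1:1]$ to $p_4$; to fix this, I would rescale the representatives, writing $v_i$ as $\lambda_i v_i$ for suitable $\lambda_i\in\mathbb{C}^*$, and observe that the condition becomes $\lambda_1 v_1+\lambda_2 v_2+\lambda_3 v_3=v_4$ (up to a common scalar), which has a unique solution since $v_1,v_2,v_3$ is a basis. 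The assumption that no three of $p_1,\dots,p_4$ are collinear then guarantees $\lambda_1,\lambda_2,\lambda_3\neq 0$, since $\lambda_i=0$ would force $v_4$ to lie in the span of the two remaining $v_j$, contradicting non-collinearity. The rescaled matrix then gives the desired automorphism.

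For uniqueness, suppose $\alpha,\alpha'\in\Aut(\PP^2)$ both map $e_1,e_2,e_3,e_4$ to $p_1,p_2,p_3,p_4$. Then $\beta=\alpha'^{-1}\circ\alpha$ fixes each of the four reference points, so it suffices to show that any $\beta\in\PGL_3(\mathbb{C})$ fixing $e_1,e_2,e_3,e_4$ is the identity. Representing $\beta$ by a matrix $B=(b_{ij})$, the conditions $\beta(e_i)=e_i$ for $i=1,2,3$ force $B$ to be diagonal (each column of $B$ must be proportional to the corresponding standard basis vector). The further condition $\beta(e_4)=e_4$ then forces all three diagonal entries to be equal, so $B$ is a scalar matrix and thus represents the identity in $\PGL_3(\mathbb{C})$.

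Both parts are short linear-algebra arguments, so I do not expect any serious obstacle; the only point that requires care is verifying that all scalars $\lambda_i$ in the existence step are nonzero, which is exactly where the hypothesis that no three of the four points are collinear is used.
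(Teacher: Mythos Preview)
Your argument is correct and is the standard linear-algebra proof of this classical fact. The paper itself does not give a proof at all: it simply refers the reader to \S11.2 of Gibson's textbook \cite{CGG}. So there is nothing to compare at the level of ideas; you have supplied the details that the paper omits by citation.
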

\begin{proof}
See, e.g., $\S11.2$ in \cite{CGG}.
\end{proof}

\begin{lemma}
Let $p_1,p_2,p_3,p_4,p_5 \in \mathbb{P}^2$ be five points such that no three of them are collinear. Then, there exists a unique irreducible conic passing through $p_1,\ldots,p_5$.
\end{lemma}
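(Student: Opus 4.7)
The plan is to use the classical three-step approach: a dimension count for existence, a pencil-of-conics argument for uniqueness, and a short residual-line argument for irreducibility.

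For existence, I would use that plane conics are parametrized by the projective space $\PP^5$ of nonzero ternary quadratic forms up to scaling, and that passing through a given point of $\PP^2$ is a single linear condition on the six coefficients. Imposing five such conditions cuts out a linear subspace of $\PP^5$ of projective dimension at least $5-5=0$, so at least one conic through $p_1,\ldots,p_5$ exists.

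For uniqueness, I would argue by contradiction via a pencil-of-conics trick. Suppose two distinct conics $C_1\neq C_2$ both pass through $p_1,\ldots,p_5$, and consider the pencil $\lambda C_1+\mu C_2$ for $[\lambda:\mu]\in\PP^1$. Pick a point $q$ on the line $\ell=\overline{p_1p_2}$ different from $p_1$ and $p_2$; since containing $q$ is a single linear condition on $[\lambda:\mu]$, some member $C$ of the pencil passes through $q$. Then $C$ contains three collinear points $p_1,p_2,q\in\ell$, so by B\'ezout the line $\ell$ must be a component of $C$, giving $C=\ell\cup\ell'$ for some second line $\ell'$. By the no-three-collinear hypothesis none of $p_3,p_4,p_5$ lies on $\ell$, so all three lie on $\ell'$, contradicting that same hypothesis.

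Finally, for irreducibility, if the unique conic $C$ were reducible, say $C=\ell_1\cup\ell_2$, then each $\ell_i$ would contain at most two of the five points by the no-three-collinear hypothesis, giving at most $2+2=4$ of them on $C$, a contradiction. I do not anticipate any serious obstacle here: the dimension count is immediate, and the only mildly subtle ingredient is the use of B\'ezout for a line and a conic, which is elementary.
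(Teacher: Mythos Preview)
Your argument is correct and complete. The paper does not actually give a proof of this lemma: it simply cites a textbook (\S5.2 of \cite{K-L-P-W}). Your proposal, by contrast, supplies a self-contained proof via the standard dimension count for existence, the pencil trick for uniqueness, and the pigeonhole on two lines for irreducibility. So there is nothing to compare at the level of strategy; you have filled in what the paper delegates to a reference. One could alternatively run uniqueness directly from B\'ezout (two conics with no common component meet in at most four points), but your pencil argument avoids having to first establish that one of the two hypothetical conics is irreducible, which makes the logic slightly cleaner.
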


\begin{proof}
See, e.g., $\S5.2$ in \cite{K-L-P-W}.
\end{proof}

\begin{lemma}[cf.\ Lemma 1.2.3 in \cite{MN}]\label{lm two_conic_equi}
Let $C_1$ and $C_2$ be two irreducible conics. Then, there exists an automorphism $\alpha$ of $\PP^2$ such that $\alpha(C_1)=C_2$. \end{lemma}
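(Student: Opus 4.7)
The plan is to use the linear-algebraic description of plane conics as zero loci of ternary quadratic forms, and to exploit that over the algebraically closed field $\C$ any two nondegenerate such forms are equivalent under $\mathrm{GL}_3(\C)$. This immediately gives the claim, since $\Aut(\PP^2)=\PGL_3(\C)$ acts on the defining equation of a conic by the corresponding linear change of variables.

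First, I would represent each irreducible conic $C_i$ by a symmetric $3\times 3$ matrix $M_i$ over $\C$ via $C_i=\{[v]\in\PP^2 \mid v^T M_i v=0\}$, where $v=(x,y,z)^T$. The hypothesis that $C_i$ is irreducible means that its defining quadratic form does not split as a product of two linear forms, which is equivalent to $\det M_i\ne 0$.

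Next I would invoke the classical diagonalisation of symmetric matrices in characteristic different from $2$: there exist $R_i\in \mathrm{GL}_3(\C)$ and diagonal invertible $D_i$ with $M_i=R_i^T D_i R_i$. Because every nonzero complex number admits a square root, one can set $P_i=D_i^{1/2}R_i$ and obtain $M_i=P_i^T P_i$. Setting $B=P_2^{-1}P_1$ then yields $M_1=B^T M_2 B$, so the projective transformation $\alpha=[B^{-1}]\in\Aut(\PP^2)$ maps $C_1$ onto $C_2$: from $v^T M_1 v = v^T B^T M_2 B v = (Bv)^T M_2 (Bv)$, a point $[v]\in C_1$ corresponds under $[w]=[Bv]$ to a point of $C_2$, and the correspondence is a bijection since $\det B\ne 0$.

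The main obstacle, and it is a mild one, is the bookkeeping of the direction of the transformation (whether $B$ or $B^{-1}$ gives the map $\alpha$ with $\alpha(C_1)=C_2$ rather than its inverse); this is settled by the identity $M_1=B^T M_2 B$ derived above. No input from the two preceding lemmas is actually needed: the whole argument reduces to the classical normal-form theory of nondegenerate quadratic forms over $\C$.
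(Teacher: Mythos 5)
Your argument is correct, but it takes a genuinely different route from the paper. You classify nondegenerate ternary quadratic forms over $\C$ up to congruence: writing $C_i$ as $v^TM_iv=0$ with $\det M_i\ne0$, diagonalising, extracting square roots to get $M_i=P_i^TP_i$, and producing $B$ with $M_1=B^TM_2B$. This is clean and self-contained; the only blemish is the direction bookkeeping at the end, where you name the automorphism $[B^{-1}]$ but your own computation shows that the map $[v]\mapsto[Bv]$ is the one sending $C_1$ into $C_2$, so $\alpha=[B]$ is what you want. The paper instead argues synthetically: it picks three points of $C_1$ and the intersection $p_4$ of the tangent lines at two of them, checks that no three of $p_1,p_2,p_3,p_4$ are collinear, and applies the four-point transitivity of $\PGL_3$ (Lemma \ref{lem: 4 points lemma}) to carry $C_1$ to the normal form $xz-y^2=0$. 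The payoff of the paper's choice is that the same construction immediately yields the stronger Lemma \ref{lm two_conic_equi2} (one can additionally prescribe the images of up to three chosen points of the conic), which is what is actually used repeatedly in Section \ref{proof main theorem1}; your quadratic-form argument establishes the present lemma just as well but does not directly give that refinement without further input (e.g.\ the $\PGL_2$-action on a fixed smooth conic).
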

\begin{proof}
It suffices to show that there exists an automorphism of $\PP^2$ that maps $C_1$ to the conic $C_0\colon xz-y^2 =0$.
Choose three distinct points $p_1, p_2, p_3$ of $C_1$. Let $p_4$ be the intersection point of the tangent line to $C_1$ at $p_1$ with the tangent line to $C_1$ at $p_2$.
Clearly, no three among $p_1,p_2,p_3, p_4$ are collinear. Therefore, by Lemma \ref{lem: 4 points lemma}, there exists an automorphism $\beta$ of $\PP^2$ that maps $p_1, p_2, p_3, p_4$ to $e_1, e_3, e_4, e_2$, respectively. Hence, $\beta(C_1)=C_0$.
\end{proof}

The proof of the previous lemma also shows the following:

\begin{lemma}\label{lm two_conic_equi2}
Let $n\in\{1,2,3\}$.
Let $C_1$ and $C_2$ be two irreducible conics.
Let $p_1, \ldots, p_n$ be distinct points of $C_1$ and let $q_1, \ldots, q_n$ be distinct points of $C_2$.
Then, there exists an automorphism $\alpha$ of $\PP^2$ such that $\alpha(C_1)=C_2$ and $\alpha(p_i)=q_i$, $i=1,\ldots,n$.
\end{lemma}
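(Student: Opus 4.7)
The plan is to reduce to the case $n=3$ and then mimic the proof of Lemma \ref{lm two_conic_equi} applied to both conics simultaneously, normalizing each to the model conic $C_0\colon xz-y^2=0$.

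First, observe that the cases $n=1$ and $n=2$ follow from the case $n=3$: if $n<3$, simply choose additional distinct points $p_{n+1},\dots,p_3$ on $C_1$ and $q_{n+1},\dots,q_3$ on $C_2$ and apply the $n=3$ statement; the resulting automorphism still satisfies $\alpha(p_i)=q_i$ for $i=1,\dots,n$. So we may restrict attention to $n=3$.

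For $n=3$, define $p_4\in\PP^2$ to be the intersection of the tangent line to $C_1$ at $p_1$ with the tangent line to $C_1$ at $p_2$; define $q_4$ analogously on $C_2$. As in the proof of Lemma \ref{lm two_conic_equi}, no three of $p_1,p_2,p_3,p_4$ are collinear: any line meets the irreducible conic $C_1$ in at most two points, so $p_1,p_2,p_3$ are non-collinear, and if the line through $p_1$ and $p_i$ (for $i=2,3$) contained $p_4$, that line would have to coincide with the tangent to $C_1$ at $p_1$ (since both pass through $p_1$ and $p_4$), contradicting the fact that $p_i\in C_1$ is distinct from $p_1$. The same holds for $q_1,q_2,q_3,q_4$.

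By Lemma \ref{lem: 4 points lemma} there exist unique automorphisms $\beta_1,\beta_2\in\Aut(\PP^2)$ with
\[
\beta_1(p_1,p_2,p_3,p_4)=(e_1,e_3,e_4,e_2),\qquad \beta_2(q_1,q_2,q_3,q_4)=(e_1,e_3,e_4,e_2).
\]
Exactly as in the proof of Lemma \ref{lm two_conic_equi}, $\beta_1(C_1)$ is an irreducible conic passing through $e_1,e_3,e_4$ and tangent at $e_1$ and at $e_3$ to lines meeting at $e_2$; these five conditions determine the unique conic $C_0$, so $\beta_1(C_1)=C_0$, and likewise $\beta_2(C_2)=C_0$. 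Setting $\alpha:=\beta_2^{-1}\circ\beta_1$ yields $\alpha(C_1)=C_2$ and $\alpha(p_i)=q_i$ for $i=1,2,3$, completing the proof.

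There is no real obstacle here; the only substantive point is the collinearity check for $\{p_1,p_2,p_3,p_4\}$, which is handled by the same tangent-line argument already used in Lemma \ref{lm two_conic_equi}.
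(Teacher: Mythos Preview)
Your proof is correct and follows exactly the approach the paper indicates: the paper does not give a separate argument for this lemma but simply remarks that the proof of Lemma~\ref{lm two_conic_equi} (normalizing a conic to $C_0$ via the $4$-point lemma using two tangent lines) already yields the stronger statement. One very minor remark: your collinearity check explicitly covers the triples $\{p_1,p_2,p_3\}$, $\{p_1,p_2,p_4\}$, $\{p_1,p_3,p_4\}$ but not $\{p_2,p_3,p_4\}$; of course the same tangent-line argument with $p_2$ in place of $p_1$ handles that case immediately.
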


Let us recall the definition of the bubble space of $\PP^2$, useful to define infinitely near points and related properties.

\begin{definition}[{cf.\ \cite[\S7.3.2]{Dolgachev}}]
Let us denote by $\BB(\PP^2)$ the so-called \emph{bubble space} of $\PP^2$, which is defined as follows.
Consider all surfaces $X$ \emph{above} $\PP^2$, i.e.\ all surfaces $X$ such that there exists a birational morphism $X\to\PP^2$.
If $X_1,X_2$ are two surfaces above $\PP^2$, say $\pi_1\colon X_1\to\PP^2$ and $\pi_2\colon X_2\to\PP^2$ are birational morphisms,
one identifies $p_1\in X_1$ with $p_2\in X_2$ if the birational map $(\pi_2)^{-1}\circ \pi_1\colon X_1\dasharrow X_2$ is a local isomorphism at $p_1$, that sends $p_1$ to $p_2$.
The bubble space $\BB(\PP^2)$ is the union of all points of all surfaces above $\PP^2$ modulo the equivalence relation generated by these identifications.

For any birational morphism $X\to\PP^2$, there is an injective map $X\to\BB(\PP^2)$, therefore we will identify points of $X$ with their images in $\BB(\PP^2)$.

One says that $p_1\in\BB(\PP^2)$ is \emph{infinitely near} $p_2\in\BB(\PP^2)$, say $p_1\in X_1$ and $p_2\in X_2$, with birational morphisms $\pi_1\colon X_1\to\PP^2$ and $\pi_2\colon X_2\to\PP^2$, if the birational map $(\pi_2)^{-1}\pi_1\colon X_1\dasharrow X_2$ is defined at $p_1$, sends $p_1$ to $p_2$, but is not a local isomorphism at $p_1$. In such a case we write that $p_1 \infnear p_2$.

One moreover says that $p_1$ is \emph{in the first neighbourhood} of $p_2$, or that $p_1$ is \emph{infinitely near $p_2$ of the first order}, if $(\pi_2)^{-1}\pi_1$ corresponds locally to the blow-up of $p_2$.
In such a case we write that $p_1 \infnear_1 p_2$.

If $p_1 \infnear p_2$ then one can define the \emph{infinitesimal order} of $p_1$ with respect to $p_2$ by induction, namely if $p_1 \infnear_1 p_3$ and $p_3 \infnear_k p_2$ for some $k$, then $p_1$ is \emph{infinitely near $p_2$ of order $k+1$}. 

If $p_1\infnear p_2$ and $p_1\in X_1$, then there is a unique irreducible curve $E_2\subset X_1$ which corresponds to the exceptional curve of the blowing-up of $p_2\in X_2$. One says that $p_1$ is \emph{proximate} to $p_2$ if $p_1\in E_2$. In such a case we write that $p_1\prox p_2$.

If $p_1\prox p_2$ and $p_1\infnear_k p_2$ with $k >1$, then we say that $p_1$ is \emph{satellite} to $p_2$ and we write $p_1 \satel p_2$. Otherwise, if $p_1$ is not satellite to $p_2$, then we write that $p_1 \notsatel p_2$.

One says that a point $p\in\PP^2\subset\BB(\PP^2)$ is a \emph{proper point of $\PP^2$}.

If $p_1\infnear p_2 \in \PP^2$ where $p_1 \in X_1$ and $\pi_1 : X_1 \rightarrow \PP^2$ is a birational morphism, we say that a plane curve $C$ passes through $p_1$ if $C$ passes through $p_2$ and the strict transform of $C$ on $X_1$ via $\pi_1$ passes through $p_1$.
\end{definition}

\begin{proposition}[Proximity inequality]\label{pro Proximity_inequalities} Let $\varphi : S \rightarrow \PP^2$ be a birational morphism, that is the composition of the blowing-ups $\pi_1, \ldots, \pi_r$ at single points.
Let $C$ be a plane curve and let $C_i$ be the strict transform of $C$ in $S_i$ for $i=1, \ldots,r$. Setting $C_0 =C$ and $m_i = \mult_{p_i}(C_{i-1})$ for $i=1, \ldots,r$, one has, for each $j = 1, \ldots, r$, 
$$m_j \geqslant \sum_{p_k\prox p_j} m_k.$$
\end{proposition}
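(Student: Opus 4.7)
The plan is to prove the inequality by a direct intersection-theoretic computation on the tower $S_r\to\cdots\to S_1\to S_0=\PP^2$. The idea is to compute the intersection number of $C_r$ with the strict transform of $E_j$ on $S_r$ in two ways: first, obtaining an explicit expression involving $m_j$ and the $m_k$ with $p_k\prox p_j$; second, observing that the result must be non-negative since these two curves are distinct and irreducible.

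First I would work on $S_j$ itself. From the fundamental identity
$$\pi_j^*C_{j-1}=C_j+m_jE_j,$$
intersecting with $E_j$ and using the projection formula $(\pi_j^*C_{j-1})\cdot E_j=0$ together with $E_j^2=-1$, I would immediately obtain $C_j\cdot E_j=m_j$.

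Next, for $k>j$, denote by $E_j^{(k)}$ the strict transform of $E_j$ on $S_k$ (so $E_j^{(j)}=E_j$), and track how the intersection number $C_k\cdot E_j^{(k)}$ evolves as $k$ grows. A case analysis based on whether $p_k\prox p_j$ is the main technical step. When $p_k\not\prox p_j$, the identity $E_j^{(k)}=\pi_k^*E_j^{(k-1)}$ combined with $C_k=\pi_k^*C_{k-1}-m_kE_k$ and the projection formula shows that the intersection is unchanged. When $p_k\prox p_j$, one instead has $E_j^{(k)}=\pi_k^*E_j^{(k-1)}-E_k$, and expanding the product with $E_k^2=-1$ gives
$$C_k\cdot E_j^{(k)}=C_{k-1}\cdot E_j^{(k-1)}-m_k.$$
(The convention $m_k=0$ when $p_k\notin C_{k-1}$ is harmless, since such points contribute $0$ to the sum.) Telescoping from $k=j+1$ to $k=r$ yields
$$C_r\cdot E_j^{(r)}=m_j-\sum_{p_k\prox p_j}m_k.$$

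To conclude, I would note that $E_j^{(r)}$ is the strict transform of a smooth rational curve, hence irreducible, and it cannot be a component of $C_r$: indeed $C_r$ is the strict transform of a plane curve, so all of its components dominate $\PP^2$ and none is exceptional. Therefore $C_r$ and $E_j^{(r)}$ are distinct irreducible curves on the smooth surface $S_r$, and their intersection product is non-negative, which gives the proximity inequality. The only delicate point in the whole argument is the bookkeeping inside the telescoping step, particularly verifying that a blow-up at a non-proximate point or at a point off $C$ truly produces no contribution; the rest is a routine application of the projection formula and the self-intersection $-1$ of an exceptional divisor.
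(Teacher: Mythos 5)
Your argument is correct. The paper does not prove this proposition at all --- it simply cites \S2.2 of Alberich-Carrami\~nana and Theorem 3.5.3/Corollary 3.5.4 of Casas-Alvero --- and the intersection-theoretic computation you give is essentially the standard proof found in those references. All the delicate points check out: $C_j\cdot E_j=m_j$ follows from $\pi_j^*C_{j-1}=C_j+m_jE_j$ and $E_j^2=-1$; the strict transform $E_j^{(k-1)}$ is smooth, so its multiplicity at $p_k$ is exactly $1$ when $p_k\prox p_j$ and $0$ otherwise, which justifies the two cases $E_j^{(k)}=\pi_k^*E_j^{(k-1)}-E_k$ versus $E_j^{(k)}=\pi_k^*E_j^{(k-1)}$ in the telescoping; and the final positivity holds because no component of $C_r$ is exceptional while $E_j^{(r)}$ is, so the two curves share no component.
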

\begin{proof}
See $\S2.2$ in \cite{MAC} or Theorem $3.5.3$, Corollary $3.5.4$ in \cite{ECA}.
\end{proof}

\section{Standard coordinates of infinitely near points}

In this section, we want to give a way to describe infinitely near points that we call \emph{standard coordinates}.

Let $p_1=[a:b:c] \in \PP^2$. Let us consider three cases:
\begin{itemize}
\item[$(i)$] if $c \neq 0$, then $p_1 =\bigg[\dfrac{a}{c}:\dfrac{b}{c}:1\bigg] = [\overline{a}:\overline{b}:1]$;
\item[$(ii)$] if $c = 0$ and $b \neq 0$, then   $p_1 =\bigg[\dfrac{a}{b}:1:0\bigg] = [\overline{a}:1:0]$;
\item[$(iii)$] if $c=b= 0$, then $p_1 = [1:0:0]$.
\end{itemize}

In case $(i)$, we work on the affine chart $U_2 \simeq \C^2_{\overline{x},\overline{y}}$, so that $p_1$ corresponds to the point $\overline{p}_1 = (\overline{a},\overline{b})$, and we define the isomorphism $\alpha_1 \colon \C^2_{\overline{x},\overline{y}} \rightarrow \C^2_{x_0,y_0}$ by
$$\alpha_1(\overline{x},\overline{y}) = (\overline{x}-\overline{a},\overline{y}-\overline{b}).$$

In case $(ii)$, we work on the affine chart $U_1 \simeq \C^2_{\overline{x},\overline{z}}$, so that $p_1$ corresponds to the point $\overline{p}_1 = (\overline{a},0)$, and we define the isomorphism $\alpha_1 \colon \C^2_{\overline{x},\overline{z}} \rightarrow \C^2_{x_0,y_0}$ by
$$\alpha_1(\overline{x},\overline{z}) = (\overline{x}-\overline{a},\overline{z}).$$

In case $(iii)$, we work on the affine chart $U_0 \simeq \C^2_{\overline{y},\overline{z}}$, so that $p_1$ corresponds to the point $\bar p_1=(0,0)$, and we define the isomorphism $\alpha_1 \colon \C^2_{\overline{y},\overline{z}} \rightarrow \C^2_{x_0,y_0}$ by
$$\alpha_1(\overline{y},\overline{z}) = (\overline{y},\overline{z}).$$

In all three cases, we defined $\alpha_1$ in such a way that $\alpha_1(\overline{p}_1) = (0,0)\in \C^2_{x_0,y_0}$.

We blow-up $\C^2_{x_0,y_0}$ at $(0,0)$ and we consider the first chart $\C^2_{x_1,y_1}$ where the blowing-up map is given in coordinates by $x_0=x_1, y_0=x_1y_1$.

In this chart, the exceptional curve $E_1$ has local equation $x_1=0$, hence a point $p_2 \infnear_1 p_1$ corresponds either to the point $(0,t_2) \in E_1$ with $t_2 \in \C$ or to the point which is the origin of the second chart.
In the former case, let us say that $p_2$ has standard coordinates $p_2=(p_1,t_2)$, while in the latter case let us say that $p_2$ has standard coordinates $p_2=(p_1,\infty)$.
Setting $\PP^1=\C\cup\{\infty\}$, let us denote the standard coordinates of $p_2$  by $p_2=(p_1,t_2)$ with $t_2\in\PP^1$.

\begin{remark}
Recall that a point $p_2\infnear_1 p_1$ corresponds to the direction of a line passing through $p_1$. More precisely, one can see that the point $p_2=(p_1,t_2)$, with $p_1=[a:b:c]$, corresponds to the line defined by the following equation
\[
\begin{cases}
cy-bz=t_2(cx-az) & \text{when } c\ne0 \text{ and } t_2\in\C,\\
cx-az=0 & \text{when } c\ne0 \text{ and } t_2=\infty,\\
bz=t_2(bx-ay) & \text{when } c=0, b \neq 0 \text{ and } t_2\in\C,\\
bx=ay & \text{when } c=0, b \neq 0 \text{ and } t_2=\infty,\\
z=t_2 y & \text{when } b=c=0 \text{ and } t_2\in\C,\\
y=0 & \text{when } b=c=0 \text{ and } t_2=\infty.
\end{cases}
\]
In other words, the above equations define the unique line passing through $p_1$ and $p_2$.
\end{remark}

We want to go on by blowing-up at $p_2=(p_1,t_2)$, with $t_2\in\PP^1=\C\cup\{\infty\}$.
Either $t_2\in\C$ or $t_2=\infty$. In the former case, with notation as above, let $\alpha_2 \colon \C^2_{x_1,y_1} \rightarrow \C^2_{\bar x_1,\bar y_1}$ be the isomorphism defined by
$$\alpha_2(x_1,y_1) = (x_1, y_1-t_2).$$
In the latter case, $p_2$ corresponds to the origin of the second chart of the blowing-up of $\C^2_{x_0,y_0}$ at $(0,0)$ that we write  $\C^2_{x'_1,y'_1}$, where the blowing-up map is given by $x_0=x'_1 y'_1, y_0=y'_1$. 
Let $\alpha_2 \colon \C^2_{x'_1,y'_1} \rightarrow \C^2_{\bar x_1,\bar y_1}$ be the isomorphism
$$\alpha_2(x'_1,y'_1) = (y'_1, x'_1).$$
In this way, in both cases, in $\C^2_{\bar x_1,\bar y_1}$ the exceptional curve $E_1$ has local equation $\bar x_1=0$ and the point $p_2$ corresponds to the origin $(0,0)$.

We blow-up $\C^2_{\bar x_1,\bar y_1}$ at $(0,0)$ and we consider the first chart $\C^2_{x_2,y_2}$ where the blowing-up map is given in coordinates by $\bar x_1=x_2, \bar y_1=x_2y_2$.
In this chart, the exceptional curve $E_2$ has local equation $x_2=0$, hence a point $p_3 \infnear_1 p_2$ corresponds either to the point $(0,t_3) \in E_2$ with $t_3 \in \C$ or to the point which is the origin of the second chart.

Let us say that $p_3$ has standard coordinates $p_3=(p_1,t_2,t_3)$, where either $t_3\in\C$ in the former case or $t_3=\infty$ in the latter case.

Note that the strict transform of $E_1$ can be seen only in the second chart and it meets $E_2$ at the origin of the second chart.
In other words, the point with standard coordinates $(p_1,t_2,\infty)$ is satellite to $p_1$.

More generally, let us proceed by induction of the infinitesimal order.
Suppose that we have blown-up the point $p_{r-1}$ with standard coordinates $p_{r-1}=(p_1,t_2,\ldots,t_{r-1})$, with $t_i\in\PP^1=\C\cup\{\infty\}$, $i=2,\ldots,r-1$.
Following the procedure described above, we may assume that $p_{r-1}$ is the origin of a chart $\C^2_{\bar x_{r-1},\bar y_{r-1}}$ in such a way that the exceptional curve $E_{r-1}$ has local equation $\bar x_{r-1}=0$.

In the first chart of the blowing up of $\C^2_{\bar x_{r-1},\bar y_{r-1}}$ at $(0,0)$, given in coordinates by $\bar x_{r-1}=x_r, \bar y_{r-1}=x_ry_r$, the exceptional curve $E_r$ has local equation $x_r=0$, hence a point $p_r \infnear_1 p_{r-1}$ corresponds either to the point $(0, t_r) \in E_r$ with $t_r\in\C$ or to the point which is the origin of the second chart, given in coordinates by $\bar x_{r-1}=x_ry_r, \bar y_{r-1}=y_r$.

Let us say that $p_r$ has standard coordinates $p_r=(p_1,t_2,\ldots,t_r)$, where $t_r\in\C$ in the former case and $t_r=\infty$ in the latter case.

The above discussion proves the following:

\begin{lemma}
Let $p_1 \in \PP^2$. Then, there is a one-to-one correspondence between points infinitely near $p_1$ of order $r$ and ${(\PP^1) }^r=\underbrace{\PP^1 \times \ldots \times \PP^1}_{r\text{-times}}$.
\end{lemma}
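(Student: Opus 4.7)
My plan is to prove this by induction on the infinitesimal order $r$, showing that the construction of standard coordinates described in the section above is precisely the desired bijection. The forward map (point $\mapsto$ coordinates) is read off from the construction; the inverse map (coordinates $\mapsto$ point) is obtained by performing the sequence of blow-ups specified by the tuple.

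For the base case $r=1$, any point $p_2 \infnear_1 p_1$ is, by definition, a point of the exceptional divisor $E_1$ of the blow-up of $\PP^2$ at $p_1$, which is a $\PP^1$. Using the two affine charts introduced for the blow-up of $\C^2_{x_0,y_0}$ at the origin, $E_1$ is covered by the chart $\{(0,t_2) : t_2 \in \C\}$ together with one additional point, namely the origin of the second chart, which we label $t_2 = \infty$. This gives the bijection $\{p \infnear_1 p_1\} \leftrightarrow \PP^1 = \C \cup \{\infty\}$.

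For the inductive step, let $p$ be infinitely near $p_1$ of order $r$. By definition of the infinitesimal order, there is a unique point $p_r$ infinitely near $p_1$ of order $r-1$ with $p \infnear_1 p_r$; by the inductive hypothesis, $p_r$ corresponds to a unique tuple $(t_2,\ldots,t_r) \in (\PP^1)^{r-1}$. The construction above places $p_r$ at the origin of a chart $\C^2_{\bar x_{r-1},\bar y_{r-1}}$ in which $E_{r-1}$ has local equation $\bar x_{r-1}=0$, and identifies the exceptional divisor $E_r$ of the blow-up of this chart at the origin with $\PP^1 = \C \cup \{\infty\}$ exactly as in the base case. Hence $p \in E_r$ is labelled by a unique coordinate $t_{r+1} \in \PP^1$, and the full tuple $(t_2,\ldots,t_{r+1}) \in (\PP^1)^r$ is determined by $p$.

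Conversely, any tuple $(t_2,\ldots,t_{r+1}) \in (\PP^1)^r$ determines a sequence of blow-up centres: at each stage $i$, the value $t_{i+1} \in \C$ picks out the point $(0,t_{i+1})$ in the first chart, while $t_{i+1} = \infty$ picks out the origin of the second chart. This sequence produces a well-defined point infinitely near $p_1$ of order $r$, showing surjectivity; injectivity is immediate since distinct tuples yield distinct centres at some stage, hence distinct points in the bubble space. The only delicate point, namely the change of chart when some $t_i = \infty$, was already handled by the explicit isomorphism $\alpha_i$ in the construction, so no further calculation is required.
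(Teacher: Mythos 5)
Your proposal is correct and follows essentially the same route as the paper: the paper's proof consists precisely of the preceding inductive construction of standard coordinates (``the above discussion proves the following''), and you have simply made the induction on $r$ and the two-chart identification of each exceptional divisor with $\PP^1=\C\cup\{\infty\}$ explicit. Nothing is missing.
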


\begin{corollary}
There is a one-to-one correspondence between points infinitely near a proper point of order $r$ and $W = \PP^2 \times{(\PP^1) }^r$.
\end{corollary}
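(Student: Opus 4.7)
The plan is to deduce this corollary immediately from the preceding lemma by packaging together the fiberwise bijections over $\PP^2$. I would first observe that every point $p$ infinitely near a proper point of $\PP^2$ lies in some blow-up tower $X\to\PP^2$, and its image under the composition of the blow-down maps is a well-defined point $p_1\in\PP^2$. Hence the set of points infinitely near a proper point of order $r$ decomposes as the disjoint union, over $p_1\in\PP^2$, of the sets of points infinitely near $p_1$ of order $r$.

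Next, for each fixed $p_1\in\PP^2$, the preceding lemma provides a bijection between points infinitely near $p_1$ of order $r$ and $(\PP^1)^r$, sending a point with standard coordinates $(p_1,t_2,\ldots,t_r)$ to $(t_2,\ldots,t_r)$. Assembling these fiberwise bijections I would obtain a map to $\PP^2\times(\PP^1)^r$ defined by
\[
(p_1,t_2,\ldots,t_r)\longmapsto (p_1,t_2,\ldots,t_r),
\]
whose inverse runs the inductive blow-up construction described in this section on any given tuple in $\PP^2\times(\PP^1)^r$.

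There is essentially no obstacle: the nontrivial content is already present in the previous lemma, and the remaining step is the routine observation that the base point $p_1$ varies freely in $\PP^2$ and independently from the parameters $t_2,\ldots,t_r$ that specify the infinitely near directions above it. Thus the corollary will follow by a short verification that the bijection above is well-defined in both directions.
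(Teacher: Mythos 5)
Your proposal is correct and matches the paper's (implicit) argument exactly: the corollary is obtained by partitioning the points infinitely near a proper point of order $r$ according to the unique proper point $p_1\in\PP^2$ lying below them, and then applying the preceding lemma fiberwise to get the bijection with $\PP^2\times(\PP^1)^r$. Nothing further is needed.
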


\begin{definition}
We call \emph{standard coordinates} of an infinitely near point the point of $W$ obtained with the above construction.
\end{definition}

\section{Conics and infinitely near points} \label{sec conics_infinitely_near_points}

\begin{remark}\label{rm no_conic1}\label{rek no_conic}
If $p_1 \in \mathbb{P}^2, p_3 \infnear_1 p_2 \infnear_1 p_1$ and $p_3 \odot p_1$, i.e. $p_3 \dashrightarrow p_1$, then there is no smooth plane curve passing through $p_1,p_2,p_3$, because of the proximity inequality at $p_1$.

For the same reason, if $p_1 \in \mathbb{P}^2$, $p_2 \infnear_1 p_1$, $p_3 \infnear_1 p_1$ and $p_2 \neq p_3$, then there is no smooth plane curve passing through $p_1,p_2,p_3$.
\end{remark}

\begin{lemma}\label{lm no_conic2}
If $p_1 \in \mathbb{P}^2, p_3 \infnear_1 p_2 \infnear_1 p_1$ and $p_1,p_2,p_3$ are collinear, namely $p_3$ lies on the strict transform of the line passing through $p_1$ and $p_2$, then there is no irreducible conic passing through $p_1,p_2,p_3$.
\end{lemma}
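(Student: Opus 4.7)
The plan is a straightforward application of B\'ezout's theorem to the conic $C$ and the line $L$ joining $p_1$ and $p_2$.

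Suppose, for a contradiction, that there exists an irreducible conic $C$ passing through $p_1$, $p_2$, and $p_3$. Let $L$ denote the line through $p_1$ and $p_2$; since $p_2 \infnear_1 p_1$, such a line is unique, and by hypothesis $p_3$ lies on its strict transform after blowing up $p_1$ and $p_2$. Thus both $C$ and $L$ pass through all three of $p_1$, $p_2$, $p_3$.

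Next, I compute the local intersection multiplicity $(C \cdot L)_{p_1}$ by repeatedly applying the standard formula
\[
(C \cdot L)_{p_1} \;=\; \mult_{p_1}(C)\cdot\mult_{p_1}(L) \;+\; (\widetilde C \cdot \widetilde L)_{p_2},
\]
and then the analogous formula at $p_2$, where $\widetilde C,\widetilde L$ denote strict transforms after blowing up $p_1$. Since $C$ is a smooth conic and $L$ is a line, both have multiplicity $1$ at $p_1$, and their strict transforms each pass through $p_2$ with multiplicity $1$ (by Lemma~\ref{lm two_conic_equi2} or directly from smoothness), and similarly the strict transforms after blowing up $p_2$ each pass through $p_3$. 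Therefore
\[
(C \cdot L)_{p_1} \;\geqslant\; 1 + 1 + 1 \;=\; 3.
\]
On the other hand, since $C$ is irreducible of degree $2$ and $L$ is a line, $L$ is not a component of $C$, and B\'ezout's theorem gives $C\cdot L = 2$, hence $(C \cdot L)_{p_1} \leqslant 2$. This contradicts the inequality above, finishing the proof.

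There is no real obstacle here: the only point that deserves care is verifying that $C$ is smooth at $p_1$, so that $\widetilde C$ meets $p_2$ with multiplicity $1$ and we may iterate. This is automatic because an irreducible conic is smooth, and the analogous assertion at $p_2$ follows since blowing up a smooth point of a curve yields a curve that is still smooth at the preimage point on the exceptional divisor. Hence the three contributions of $1$ in the inequality above are all justified.
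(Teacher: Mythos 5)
Your argument is correct, but it takes a different route from the paper. The paper proves the lemma by brute force in standard coordinates: it normalizes $p_1=[1:0:0]$, $p_2=(p_1,0)$, $p_3=(p_1,0,0)$, writes the general irreducible conic through $p_1,p_2$ as $a_2y^2+a_3xz+a_4yz+a_5z^2=0$ with $a_2\neq0$, performs the two blow-ups explicitly in charts, and observes that the strict transform has value $a_2\neq 0$ at $p_3$, hence misses it. You instead combine B\'ezout with the Noether-type recursion for local intersection numbers under blow-up, getting $(C\cdot L)_{p_1}\geq 3>2=\deg C\cdot\deg L$. This is a legitimate and more conceptual proof; it is also the same style of argument the authors themselves use in Remark \ref{rem:line}. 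Two small remarks. First, the displayed recursion holds with equality only when $p_2$ is the sole common point of $\widetilde C$ and $\widetilde L$ on the exceptional divisor; in general the right-hand side is a lower bound for the left, but that is all you use, so nothing breaks. Second, the discussion of smoothness is unnecessary: you do not need the strict transforms to meet $p_2$ and $p_3$ with multiplicity exactly $1$, only that they pass through these points, i.e.\ have multiplicity $\geq 1$ there, which is the hypothesis; this also removes the slightly misplaced appeal to Lemma \ref{lm two_conic_equi2}. What the paper's computation buys in exchange for its length is consistency with the standard-coordinates machinery of Section 3, which is then reused verbatim in Lemmas \ref{lm conic1}--\ref{lm conic5}, and it additionally identifies exactly where on the exceptional curve the strict transform of $C$ does pass.
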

\begin{proof}
Up to automorphisms of $\PP^2$, we may assume that $p_1 = [1:0:0]$ and $p_2$ has standard coordinates $p_2= (p_1,0)$, so $p_3$ is uniquely determined by $p_1,p_2$, namely $p_3$ has standard coordinates $p_3=(p_1,0,0)$.

Suppose that $C$ is an irreducible conic passing through $p_1,p_2$. Then, $C$ has equation
\begin{equation*}
 a_2y^2+a_3xz+a_4yz+a_5z^2 = 0
\end{equation*}
where $a_2,a_3,a_4,a_5 \in \C$ and $a_2,a_3 \neq 0$ because $C$ is irreducible.

We work in the affine chart $U_0 \simeq \C^2_{\overline{y},\overline{z}}$
and we consider the isomorphism $\alpha_1\colon\C^2_{\bar y,\bar z}\to\C^2_{x_0,y_0}$ defined by $\alpha_1(\bar y,\bar z)=(\bar y, \bar z)$, where the conic $C$ has local equation
\begin{equation*}
a_2 x_0^2+a_3 y_0+a_4 x_0 y_0+a_5 y_0^2 = 0.
\end{equation*}
In the first chart of the blowing-up of $\C^2_{x_0,y_0}$ at the origin $(0,0)$, where $x_0=x_1,y_0=x_1y_1$,
the strict transform of $C$ has local equation
\begin{equation*}\label{eq2 no_conic1}
a_2x_1+a_3y_1+a_4x_1y_1+a_5x_1y_1^2 = 0.
\end{equation*}
Note that $p_2$ is just the origin of $\C^2_{x_1,y_1}$.\\
Then, the strict transform of $C$ via the blowing-up of $\C^2_{x_1,y_1}$ at the origin $(0,0)$ has local equation in the first chart, where $x_1=x_2,y_1=x_2y_2$,
$$a_2+a_3y_2+a_4x_2y_2+a_5x_2y_2^2 =0.$$
Note that $p_3$ is just the origin of $\C^2_{y_2,z_2}$ but the strict transform of $C$ does not pass through $(0,0)$ because $a_2 \neq 0$.
\end{proof}

\begin{remark}
It is easy to check that if $p_1 \in \mathbb{P}^2, p_3 \infnear_1 p_2 \infnear_1 p_1$, $p_3 \notsatel p_1$ and $p_1,p_2,p_3$ are not collinear, then there are irreducible conics passing through $p_1,p_2,p_3$.
\end{remark}

\begin{lemma}\label{lm conic1}
Let $p_1,p_2,p_3,p_4 \in \PP^2$ and $p_5 \infnear_1 p_1$ such that no three among $p_1,\ldots,p_5$ are collinear. Then, there exists a unique irreducible conic passing through $p_1,\ldots, p_5$.
\end{lemma}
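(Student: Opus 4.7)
The strategy is to reduce to a normal form using Lemma \ref{lem: 4 points lemma} and then explicitly parametrize the resulting pencil of conics. By that lemma I may assume $p_1,\ldots,p_4 = e_1,\ldots,e_4$, since all hypotheses and the conclusion are preserved under $\Aut(\PP^2)$. The point $p_5\infnear_1 p_1 = [1:0:0]$ is then described in standard coordinates (case (iii) of Section 3) by a single parameter $t_2\in\PP^1$, and the unique line through $p_1$ and $p_5$ is $z = t_2 y$ for $t_2\in\C$ or $y = 0$ for $t_2 = \infty$.

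The conics through $e_1,e_2,e_3$ form the net $\{axy + bxz + cyz = 0\}$, and imposing the passage through $e_4 = [1:1:1]$ yields the pencil $c = -(a+b)$. I would then compute, directly in the standard coordinates at $p_1$, the tangent direction of a generic member of this pencil: dehomogenizing by $x=1$ gives $ax_0 + by_0 + cx_0 y_0 = 0$, and after the first blow-up $x_0 = x_1,\ y_0 = x_1 y_1$, the strict transform meets the exceptional divisor $x_1 = 0$ at $y_1 = -a/b$. Hence the conic passes through $p_5 = (p_1, t_2)$ if and only if $t_2 = -a/b$, which identifies the pencil parameter $[a:b]$ with the direction parameter $t_2\in\PP^1$. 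This bijection immediately gives both the existence and the uniqueness of a conic in the pencil through $p_1,\ldots,p_5$.

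It remains to check irreducibility. A short discriminant computation shows that the conic $axy + bxz - (a+b)yz$ is reducible precisely when $ab(a+b) = 0$, i.e.\ $[a:b]\in\{[0:1],[1:0],[1:-1]\}$, equivalently $t_2\in\{0,\infty,1\}$. For each of these three values the line through $p_1$ and $p_5$ (namely $z=0$, $y=0$, $z=y$, respectively) passes through $e_2$, $e_3$, or $e_4$, placing three of $p_1,\ldots,p_5$ on a single line and contradicting the hypothesis. Therefore the unique conic of the pencil is irreducible.

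The main obstacle, which is really only careful bookkeeping, is correctly translating the pencil parameter $[a:b]$ into the tangent-direction coordinate $t_2$ defined in Section 3, and verifying that the three reducible members of the pencil correspond exactly to the three excluded collinearity configurations in which $p_5$ lies on a line through $p_1$ and one of $p_2,p_3,p_4$.
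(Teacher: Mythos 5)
Your proof is correct and follows essentially the same route as the paper: normalize $p_1,\dots,p_4$ to the four fundamental points via Lemma \ref{lem: 4 points lemma}, encode $p_5$ by its standard coordinate $t_2\in\PP^1$, and identify the unique member of the pencil $axy+bxz-(a+b)yz=0$ through it (your conic with $t_2=-a/b$ is exactly the paper's $xz-t_5xy+(t_5-1)yz$ with $t_5=t_2$). The only difference is presentational: the paper exhibits the conic and leaves the verification as ``one can check,'' whereas you derive it from the pencil and make the bijection with $\PP^1$ and the discriminant computation for the three reducible members explicit, which is a welcome filling-in of detail rather than a new argument.
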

\begin{proof} 
Up to automorphisms of $\PP^2$, we may assume that $p_1 = [1:0:0], p_2 = [0:1:0],p_3=[0:0:1],p_4=[1:1:1]$. Then, $p_5$ has standard coordinates $p_5 = (p_1,t_5)$,  namely $p_5$ is infinitely near $p_1$ of the first order in the direction of the line $z-t_5y=0$, where $t_5 \in \C \setminus \{ 0,1 \}$: indeed, if $t_5 =0$, then $p_5,p_2,p_1$ would be collinear; if $t_5 =1$, then $p_5,p_4,p_1$ would be collinear and finally, if $t_5 = \infty$, then $p_5,p_3,p_1$ would be collinear.
Then, one can check that the conic
$$xz-t_5xy+(t_5-1)yz=0$$
is the unique irreducible conic passing through $p_1,\ldots,p_5$.
\end{proof}

\begin{lemma}\label{lm conic2}
Let $p_1,p_2,p_3 \in \PP^2$ and $p_5 \infnear_1 p_4 \infnear_1 p_1$ such that $p_5 \notsatel p_1$ and no three among $p_1, \ldots, p_5$ are collinear. Then, there exists a unique irreducible conic passing through $p_1,\ldots, p_5$.
\end{lemma}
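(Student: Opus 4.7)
The plan is to mirror the proof of Lemma~\ref{lm conic1}, using the standard coordinates of Section~3 and two successive blow-ups.

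First, I would normalize. Since $p_1,p_2,p_3$ are not collinear, after an automorphism of $\PP^2$ I may assume $p_1=[1:0:0]$, $p_2=[0:1:0]$, $p_3=[0:0:1]$ (cf.\ Lemma~\ref{lem: 4 points lemma}, applied to any fourth generic point). The two infinitely near points then acquire standard coordinates $p_4=(p_1,t_4)$ and $p_5=(p_1,t_4,t_5)$ with $t_4,t_5\in\PP^1=\C\cup\{\infty\}$.

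Second, I would translate the geometric hypotheses into arithmetic constraints on $(t_4,t_5)$. Using the description of ``the line through $p_1$ in direction $t$'' from the remark of Section~3, the non-collinearity of $\{p_1,p_2,p_4\}$ forces $t_4\ne0$, that of $\{p_1,p_3,p_4\}$ forces $t_4\ne\infty$, and that of $\{p_1,p_4,p_5\}$ forces $t_5\ne0$. Since the strict transform of the exceptional curve $E_1$ meets $E_2$ at the point with standard coordinates $(p_1,t_4,\infty)$, the condition $p_5\notsatel p_1$ forces $t_5\ne\infty$. The remaining triples are automatically non-collinear, because the only line through both $p_1$ and $p_2$ (resp.\ $p_3$) is $z=0$ (resp.\ $y=0$), cases already ruled out. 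Hence $t_4,t_5\in\C^*$.

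Third, I would write the general conic vanishing at $p_1,p_2,p_3$ as $\alpha xy+\beta xz+\gamma yz=0$ and impose passage through $p_4$ and $p_5$. Computing its strict transform in the first chart of the blow-up at $p_1$ and then, after the translation $\alpha_2$ of Section~3, in the first chart of the blow-up at $p_4$, and evaluating at $p_4=(0,t_4)$ and $p_5=(0,t_5)$, yields the two linear conditions $\alpha+\beta t_4=0$ and $\beta t_5+\gamma t_4=0$. Since $t_4\ne0$ these conditions have rank $2$, so the solution space is one-dimensional and determines the unique conic
\[
-t_4^2\,xy+t_4\,xz-t_5\,yz=0.
\]
For irreducibility, I would observe that the conic $\alpha xy+\beta xz+\gamma yz=0$ has associated symmetric matrix of determinant $\alpha\beta\gamma/4$, hence is irreducible iff $\alpha\beta\gamma\ne0$; this is guaranteed by $t_4,t_5\in\C^*$.

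I do not expect any substantive obstacle; the argument parallels Lemma~\ref{lm conic1} closely. The only slightly delicate point is Step~2, where one must correctly identify the satellite condition $p_5\notsatel p_1$ with $t_5\ne\infty$ by inspecting the second chart of the second blow-up, and verify that all non-collinearity requirements on triples involving two infinitely near points reduce to the four constraints listed above.
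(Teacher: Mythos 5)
Your proof is correct and follows essentially the same route as the paper's: normalize $p_1,p_2,p_3$ to the coordinate points, translate the collinearity and satellite hypotheses into the constraints $t_4,t_5\in\C^*$ on the standard coordinates, and solve for the unique conic (your $-t_4^2xy+t_4xz-t_5yz=0$ specializes to the paper's $xz-xy-t_5yz=0$ after the paper's extra normalization $t_4=1$, which uses the residual torus action fixing the three coordinate points). The only differences are that the paper normalizes $t_4$ away while you carry it through the linear algebra, and that you make the irreducibility check via the determinant explicit where the paper leaves it as ``one can check''.
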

\begin{proof} 
Up to automorphisms of $\PP^2$, we may assume that $p_1 = [1:0:0], p_2 = [0:1:0],p_3=[0:0:1]$ and that $p_4$ has standard coordinates $p_4 = (p_1,1)$, namely $p_4$ is infinitely near $p_1$ of the first order in the direction of the line $y=z$.
Then, $p_5$ has standard coordinates $p_5 = (p_1,1,t_5)$, where $t_5 \in \C^*$: indeed, if $t_5=0$ then $p_5,p_4,p_1$ would be collinear and if $t_5=\infty$, then $p_5 \satel p_1$. Then, one can check that the conic
$$xz-xy-t_5yz =0$$
is the unique irreducible conic passing through $p_1,\ldots,p_5$.
\end{proof}

\begin{lemma}\label{lm conic3}
Let $p_1,p_2,p_3 \in \PP^2$ and $p_4 \infnear_1 p_1, p_5 \infnear_1 p_2$ such that no three among $p_1,\ldots,p_5$ are collinear. Then, there exists a unique irreducible conic passing through $p_1,\ldots, p_5$.
\end{lemma}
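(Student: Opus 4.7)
The plan is to follow the pattern of Lemmas \ref{lm conic1} and \ref{lm conic2}. By Lemma \ref{lem: 4 points lemma}, since $p_1,p_2,p_3$ are non-collinear, after an automorphism of $\PP^2$ we may assume $p_1=[1:0:0]$, $p_2=[0:1:0]$, $p_3=[0:0:1]$. Then $p_4\infnear_1 p_1$ has standard coordinates $(p_1,t_4)$ for some $t_4\in\PP^1$, and similarly $p_5=(p_2,t_5)$ for some $t_5\in\PP^1$.

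The first step is to translate the non-collinearity hypothesis into constraints on $t_4,t_5$. Using the description of lines through $p_1$ (resp.\ through $p_2$) in the chosen direction given in the Remark following the definition of standard coordinates, one checks case by case: $\{p_1,p_2,p_4\}$ is collinear iff $t_4=0$, and $\{p_1,p_3,p_4\}$ is collinear iff $t_4=\infty$; symmetrically, $\{p_1,p_2,p_5\}$ is collinear iff $t_5=0$, and $\{p_2,p_3,p_5\}$ iff $t_5=\infty$. The remaining triples involving $p_4$ or $p_5$ can be dispatched quickly: any line containing two points in such a triple is forced to pass through both $p_1$ and $p_2$, hence is the line $z=0$, whose direction at $p_1$ is $t_4=0$ and at $p_2$ is $t_5=0$, cases already excluded. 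So the hypothesis is equivalent to $t_4,t_5\in\C^*$.

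Next, any conic through $p_1,p_2,p_3$ has the form $C_{a,b,c}\colon a xy+b xz+c yz=0$ (the coefficients of $x^2,y^2,z^2$ must vanish). To impose that $C_{a,b,c}$ passes through $p_4$ I would blow up at $p_1$ in the affine chart $U_0$ exactly as in the proof of Lemma \ref{lm no_conic2} and evaluate the strict transform at the point of the exceptional divisor labelled by $t_4$; this produces the linear relation $a+b t_4=0$. Performing the analogous computation at $p_2$ in the chart $U_1$ gives $a+c t_5=0$. Since $t_4,t_5\neq 0$, these two equations pin down $(a:b:c)$ uniquely up to scalar, namely $(a:b:c)=(t_4 t_5:-t_5:-t_4)$, yielding the candidate
$$C\colon\; t_4 t_5\, xy - t_5\, xz - t_4\, yz = 0.$$

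Finally I would verify irreducibility: the determinant of the symmetric matrix associated with $\alpha xy+\beta xz+\gamma yz$ is $\alpha\beta\gamma/4$, which here equals $t_4^2 t_5^2/4\neq 0$, so $C$ is smooth and hence irreducible. No step is genuinely difficult; the only slight subtlety is the case analysis for non-collinearity, which must be done carefully because both $p_4$ and $p_5$ are infinitely near points and a ``line through $p_4$'' really means a line through $p_1$ in the direction prescribed by $t_4$.
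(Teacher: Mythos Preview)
Your proof is correct and follows essentially the same strategy as the paper: normalize $p_1,p_2,p_3$ to the coordinate points and write down the conic. The paper goes one step further and uses the remaining freedom in $\PGL_3$ to move the intersection of the lines $\overline{p_1p_4}$ and $\overline{p_2p_5}$ to $[1:1:1]$, thereby forcing $t_4=t_5=1$ and reducing to the single conic $xy-xz-yz=0$; you instead keep $t_4,t_5$ as parameters, solve the linear system, and verify irreducibility via the determinant. One small imprecision: your blanket claim that for the ``remaining triples'' a line through two of them must pass through both $p_1$ and $p_2$ is not literally true for $\{p_2,p_3,p_4\}$ and $\{p_1,p_3,p_5\}$, but those two triples are automatically non-collinear anyway (the line $p_2p_3$ misses $p_1$, hence cannot contain $p_4$, and symmetrically), so the conclusion stands.
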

\begin{proof} 
Up to automorphisms of $\PP^2$, we may assume that $p_1 = [1:0:0], p_2 = [0:1:0],p_3=[0:0:1]$ and that the two lines, one through $p_1,p_4$ and the other one through $p_2,p_5$, meet at $[1:1:1]$, namely $p_4$ is infinitely near $p_1$ of the first order in the direction of the line $y=z$ and $p_5$ is infinitely near $p_2$ of the first order in the direction of the line $x=z$.
In other words, $p_4$ has standard coordinates $p_4 = (p_1,1)$ and $p_5$ has standard coordinates $p_5=(p_2,1)$.
Then, it is clear that the conic
$$xy-yz-xz=0$$
is the unique irreducible conic passing through $p_1,\ldots,p_5$.
\end{proof}

\begin{lemma}\label{lm conic3b}
Let $p_1,p_2 \in \PP^2$ and $p_5 \infnear_1 p_3 \infnear_1 p_1, p_4 \infnear_1 p_2$ such that $p_5 \notsatel p_1$ and no three among $p_1,\ldots,p_5$ are collinear. Then, there exists a unique irreducible conic passing through $p_1,\ldots, p_5$.
\end{lemma}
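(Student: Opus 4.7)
The plan is to follow the pattern of Lemmas \ref{lm conic1}--\ref{lm conic3}: normalize the configuration by an automorphism of $\PP^2$, exhibit an explicit conic, and obtain uniqueness via B\'ezout.

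For the normalization, let $L_1$ be the line through $p_1$ in the direction of $p_3$, and $L_2$ the line through $p_2$ in the direction of $p_4$. The non-collinearity of $p_1, p_2, p_3$ forces $p_2 \notin L_1$, and symmetrically $p_1 \notin L_2$; hence $L_1 \neq L_2$, and they meet at a unique proper point $q \notin \{p_1, p_2\}$. Choosing an auxiliary point in general position and applying Lemma \ref{lem: 4 points lemma}, we may assume $p_1 = [1:0:0]$, $p_2 = [0:1:0]$, $q = [1:1:1]$, so that $L_1\colon y = z$ and $L_2\colon x = z$, and in standard coordinates $p_3 = (p_1, 1)$, $p_4 = (p_2, 1)$. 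Writing $p_5 = (p_1, 1, t_5)$, the hypothesis that $p_1, p_3, p_5$ are not collinear translates to $t_5 \neq 0$, while $p_5 \notsatel p_1$ translates to $t_5 \neq \infty$; therefore $t_5 \in \C^*$.

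Next I would search for a conic of the form $F = a\, xy + b\, xz + c\, yz + f\, z^2$ (the vanishing conditions at $p_1$ and $p_2$ eliminate the $x^2$ and $y^2$ coefficients). Imposing the correct tangent directions at $p_1$ and at $p_2$, together with the correct second-order tangent at $p_3$, yields three further linear conditions that pin the conic down uniquely up to scalar as
\[
F(x, y, z) = xz - xy + yz - (t_5 + 1)\, z^2.
\]
The discriminant of the associated symmetric matrix equals $t_5/4$, which is nonzero, so $F$ is smooth and hence irreducible. The two successive blow-ups at $p_1$ and then at $p_3$ should be carried out once to verify that the strict transform of $F$ meets the second exceptional divisor precisely at the point with parameter $t_5$.

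For uniqueness, any other irreducible conic $C'$ through $p_1, \ldots, p_5$ would share with $F$ an intersection multiplicity of at least $3$ at $p_1$ (from $p_1, p_3, p_5$) and at least $2$ at $p_2$ (from $p_2, p_4$), for a total of at least $5 > 4 = \deg F \cdot \deg C'$; by B\'ezout, $F$ and $C'$ must share a component, and irreducibility of $F$ forces $C' = F$ up to scalar. The only computation worth carrying out carefully is the two-step blow-up verification that $F$ realizes $p_5$ at the prescribed parameter $t_5$; everything else is dictated by the normalization and by the linear conditions.
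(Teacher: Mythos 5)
Your proposal is correct and follows essentially the same approach as the paper: normalize by an automorphism of $\PP^2$, exhibit the unique conic explicitly, and conclude. The only difference is cosmetic — the paper sends the intersection of the two tangent lines to $[0:0:1]$ (so $p_3=(p_1,\infty)$, $p_4=(p_2,\infty)$ and the conic is $t_5xy-z^2=0$) whereas you send it to $[1:1:1]$; your conic $xz-xy+yz-(t_5+1)z^2$ does pass through $p_5=(p_1,1,t_5)$ (the strict transform meets $E_2$ at $y_2=-(c+f)/b=t_5$), its determinant is indeed $t_5/4\neq0$, and your B\'ezout uniqueness argument makes explicit what the paper leaves as ``one can check.''
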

\begin{proof}
Up to automorphisms of $\PP^2$, we may assume that $p_1 = [1:0:0],p_2 = [0:1:0]$, and that the two lines, one through $p_1,p_3$ and the other one through $p_2,p_4$, meet at $[0:0:1]$, namely $p_3$ is infinitely near $p_1$ of the first order in the direction of the line $y=0$ and $p_4$ is infinitely near $p_2$ of the first order in the direction of the line $x=0$.
In other words, $p_3$ has standard coordinates $p_3=(p_1,\infty)$ and $p_4$ has standard coordinates $p_4=(p_2,\infty)$.
Then, $p_5$ has standard coordinates $p_5=(p_1,\infty,t_5)$ where $t_5 \in \C^*$: indeed, if $t_5=0$ then $p_5,p_3,p_1$ would be collinear and if $t_5=\infty$, then $p_5 \satel p_1$.
One can check that the conic
$$t_5xy - z^2 =0$$
is the unique irreducible conic passing through $p_1,\ldots,p_5$.
\end{proof}

\begin{remark}
The previous lemmas better explain Remark V.4.2.1 in \cite{RH}. 
\end{remark}

\begin{lemma}\label{lm conic4}
Let $p_1,p_2 \in \PP^2$ and $p_5 \infnear_1 p_4 \infnear_1 p_3 \infnear_1 p_1$ such that $p_4 \notsatel p_1, p_5 \notsatel p_3$ and no three among $p_1,\ldots,p_4$ are collinear. Then, there exists a unique irreducible conic passing through $p_1,\ldots,p_5$.
\end{lemma}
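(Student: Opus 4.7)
The plan is to follow the pattern of Lemmas \ref{lm conic2} and \ref{lm conic3b}: first place the configuration in a normal form by an automorphism of $\PP^2$, then write a general conic passing through $p_1$ and $p_2$ and impose the remaining three conditions one at a time via the blow-up procedure of Section 3.

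By Lemma \ref{lem: 4 points lemma} I may assume $p_1 = [1:0:0]$ and $p_2 = [0:1:0]$. Writing $p_3 = (p_1, t_3)$, $p_4 = (p_1, t_3, t_4)$, and $p_5 = (p_1, t_3, t_4, t_5)$ in standard coordinates, the hypotheses give $t_4, t_5 \in \C$ (from $p_4 \notsatel p_1$ and $p_5 \notsatel p_3$), as well as $t_3 \neq 0$ and $t_4 \neq 0$ (otherwise some three of $p_1, \ldots, p_4$ would be collinear: $t_3 = 0$ places $p_3$ on the line $p_1 p_2$, while $t_4 = 0$ places $p_4$ on the line $p_1 p_3$). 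The stabilizer of $\{p_1, p_2\}$ in $\Aut(\PP^2)$ acts transitively on the $\PP^1$ of directions at $p_1$ minus the direction of $p_1 p_2$ (via $[x:y:z] \mapsto [x:y:z/t_3]$ for $t_3 \in \C^*$, or $[x:y:z] \mapsto [x:y+z:z]$ for $t_3 = \infty$), so I may assume $t_3 = 1$. The one-parameter group $[x:y:z] \mapsto [\lambda x:y:z]$ fixes $p_1, p_2, p_3$ and rescales $t_4$ linearly in $\lambda$, so I may further assume $t_4 = 1$. Only $t_5 \in \C$ remains as a parameter.

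A conic through $p_1$ and $p_2$ has neither $x^2$ nor $y^2$ term, so its equation is
\[
a_1\, xy + a_3\, xz + a_4\, yz + a_5\, z^2 = 0.
\]
Passage through $p_3$ amounts to tangency at $p_1$ in the direction $y = z$ and forces $a_1 + a_3 = 0$; normalize $a_1 = -1$, $a_3 = 1$. Following the chart changes of Section 3 (blow up $p_1$, translate so that $p_3$ is the origin, blow up $p_3$), passage through $p_4 = (p_1, 1, 1)$ translates into the linear relation $a_4 + a_5 = -1$. One further blowup at $p_4$ then produces the final linear condition $a_4 = t_5 - 2$ for passage through $p_5 = (p_1, 1, 1, t_5)$, so $a_5 = 1 - t_5$. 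The resulting conic is
\[
xz - xy + (t_5 - 2)\, yz + (1 - t_5)\, z^2 = 0,
\]
whose associated symmetric matrix has determinant $1/4 \neq 0$, independent of $t_5$, so the conic is irreducible for every $t_5 \in \C$. Uniqueness is automatic, since each of the five passing conditions was resolved uniquely at its own stage.

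The main technical obstacle I foresee is the normalization $t_3 = t_4 = 1$, which requires carefully tracking how the stabilizer subgroups act on the standard coordinates through several chart changes; the transformation rules for $t_4$ and $t_5$ are not mere rescalings in an arbitrary chart but must be read off the composition of affine translations and blow-up charts of Section 3. Should this prove awkward, one can instead run the coefficient analysis with general $t_3, t_4 \in \C^*$: the three conditions on $(a_3, a_4, a_5)$ remain linearly independent and yield a unique irreducible conic whose coefficients are rational expressions in $t_3, t_4, t_5$.
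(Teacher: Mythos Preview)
Your proof is correct and follows essentially the same approach as the paper: normalize the configuration via an automorphism of $\PP^2$, then compute the unique conic by imposing the passing conditions through successive blow-ups. The only difference is the choice of normal form: the paper sets $p_3=(p_1,\infty)$ and $p_4=(p_1,\infty,1)$, which yields the tidier equation $xy+t_5yz-z^2=0$, whereas your choice $p_3=(p_1,1)$, $p_4=(p_1,1,1)$ produces the equivalent but slightly longer $xz-xy+(t_5-2)yz+(1-t_5)z^2=0$; both have constant nonzero discriminant, so irreducibility holds for all $t_5\in\C$.
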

\begin{proof}
Up to automorphisms of $\PP^2$, we may assume that $p_1 = [1:0:0]$, $p_2 = [0:1:0]$ and $p_3, p_4$ have standard coordinates respectively $p_3 =(p_1,\infty)$ and $p_4=(p_1,\infty,1)$, according to the proof of the previous lemma.
Then, $p_5$ has standard coordinates $p_5= (p_1,\infty,1,t_5)$ where $t_5 \in \C$: indeed, if $p_5 = \infty$, then we would have $p_5 \satel p_3$, contradicting the hypothesis.
One can check that the conic
$$xy +t_5 yz - z^2 =0$$
is the unique irreducible conic passing through $p_1,\ldots,p_5$.
\end{proof}

\begin{lemma}\label{lm conic5}
Let $p_5 \infnear_1 p_4 \infnear_1 p_3 \infnear_1 p_2 \infnear_1 p_1 \in \PP^2$  such that $p_3 \notsatel p_1, p_4 \notsatel p_2, p_5 \notsatel p_3$ and $p_1,p_2,p_3$ are not collinear. Then, there exists a unique irreducible conic passing through $p_1,\ldots,p_5$.
\end{lemma}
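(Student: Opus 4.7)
The plan is to mimic the template of Lemmas \ref{lm conic3b} and \ref{lm conic4}: use automorphisms of $\PP^2$ to normalize the first three points in the chain, then write the general conic and impose the five passage conditions as a linear system on its coefficients; finally, verify that the resulting conic is smooth (and hence irreducible) via a determinant computation.

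First I would normalize. By Lemma \ref{lem: 4 points lemma} (or simply by transitivity of $\PGL_3(\C)$ on pointed tangent directions), I may assume $p_1=[1:0:0]$ and $p_2=(p_1,\infty)$, so that $p_2$ corresponds to the direction $y=0$ at $p_1$. The hypothesis $p_3\notsatel p_1$ translates to $p_3=(p_1,\infty,t_3)$ with $t_3\ne\infty$, and non-collinearity of $p_1,p_2,p_3$ gives $t_3\ne0$. A diagonal automorphism $(x,y,z)\mapsto(x,\mu y,\lambda z)$, which fixes $p_1$ and the direction $y=0$, rescales $t_3$ by $\mu/\lambda^2$ (one checks this by tracking the blow-up charts of Section 3). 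Choosing $\mu/\lambda^2=1/t_3$, I may assume $p_3=(p_1,\infty,1)$. The remaining hypotheses $p_4\notsatel p_2$ and $p_5\notsatel p_3$ then give $p_4=(p_1,\infty,1,t_4)$ and $p_5=(p_1,\infty,1,t_4,t_5)$ with $t_4,t_5\in\C$.

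Next I set up the general conic
\[
C\colon ax^2+by^2+cz^2+dxy+eyz+fxz=0.
\]
Passing through $p_1=[1:0:0]$ forces $a=0$. The tangent line to $C$ at $p_1$ is $dy+fz=0$; coincidence with $y=0$ (i.e.\ passage through $p_2$) forces $f=0$ and $d\ne0$, so I rescale to $d=1$. Working successively in the iterated blow-up charts described in Section~3, the strict transforms of $C$ at each step are polynomials in the coefficients $b,c,e$ together with the parameters $t_4,t_5$, and requiring vanishing at each of $p_3,p_4,p_5$ gives three linear equations, solved in order as
\[
c=-1,\qquad e=-t_4,\qquad b=t_4^{\,2}-t_5.
\]
Hence the only candidate is the conic
\[
C\colon xy-t_4\,yz+(t_4^{\,2}-t_5)\,y^2-z^2=0,
\]
and its existence and uniqueness both follow from the linear system being triangular with nonzero diagonal coefficients at each step.

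Finally, irreducibility: the symmetric matrix of $C$ is
\[
M=\begin{pmatrix}0 & 1/2 & 0\\ 1/2 & t_4^{\,2}-t_5 & -t_4/2\\ 0 & -t_4/2 & -1\end{pmatrix},
\]
and expansion along the first row gives $\det M=1/4\ne 0$. So $C$ is smooth, hence irreducible. The main obstacle is purely bookkeeping: one must carefully track the strict transform of $C$ through four successive blow-ups in the standard coordinates of Section~3 in order to read off, at each stage, the correct vanishing condition; once this is done, the algebra collapses to a triangular system and the result is immediate.
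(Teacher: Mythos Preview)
Your proof is correct and follows essentially the same approach as the paper: normalize the chain of infinitely near points via an automorphism of $\PP^2$, then impose the passage conditions on a general conic. The only difference is that the paper normalizes one step further, taking $p_4=(p_1,\infty,1,0)$ (so $t_4=0$), which yields the simpler conic $xy-z^2+t_5y^2=0$; you instead carry $t_4$ as a free parameter, and your explicit determinant check of irreducibility makes precise what the paper leaves as ``one can check.''
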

\begin{proof}
Up to automorphisms of $\PP^2$, we may assume that $p_1 = [1:0:0]$ and $p_2, p_3, p_4$ have standard coordinates respectively $p_2 =(p_1,\infty)$, $p_3=(p_1,\infty,1)$, $p_4=(p_1,\infty,1,0)$, according to the proof of the previous lemma. Then, $p_5$ has standard coordinates $p_5 = (p_1,\infty,1,0,t_5)$ where $t_5 \in \C$: indeed, if $t_5 = \infty$, then we would have $p_5 \satel p_3$, contradicting the hypothesis.
One can check that the conic
$$xy-z^2+t_5 y^2 =0$$
is the unique irreducible conic passing through $p_1,\ldots,p_5$.
\end{proof}

\section{Lengths of plane Cremona maps}  \label{sec plane_Cremona_maps}

A plane Cremona map $\varphi\colon \PP^2 \dasharrow \PP^2$ can be written as
\begin{equation*}
\varphi( [x : y : z] ) = [f_0(x, y, z) : f_1(x, y, z) : f_2(x, y, z)]
\end{equation*}
where $f_i\in\C[x,y,z]$, $i=0,1,2$, are homogeneous polynomials of the same degree, say $d$, that is called the \emph{degree} of $\varphi$ if $f_0, f_1, f_2$ have no common factor.

Plane Cremona maps of degree 1 are automorphisms of $\PP^2$, i.e.\ elements of $\Aut(\PP^2)=\PGL_3$.
Plane Cremona maps of degree 2 (3, resp.) are called \emph{quadratic} (\emph{cubic}, resp.).

\begin{definition}
Let us say that  two plane Cremona maps $\varphi,\varphi'\colon\PP^2\dasharrow\PP^2$  are \emph{equivalent} if there exist two automorphisms $\alpha,\alpha'\in\Aut(\PP^2)$ such that
\[
\varphi'=\alpha'\circ\varphi\circ\alpha.
\]
\end{definition}

\begin{remark}
It is very well-known that quadratic plane Cremona maps are equivalent to one and only one of the following three maps:
the elementary quadratic transformation $\sigma$;
\begin{equation}\label{rhotau}
\rho([x:y:z])=[xy : z^2 : yz];
\qquad
\tau([x:y:z])=[x^2 : xy : y^2-xz].
\end{equation}
Recall that a quadratic plane Cremona map is called \emph{ordinary} (or \emph{of the first type})  if it has three proper base points, i.e., if it is equivalent to $\sigma$. Furthermore, a quadratic plane Cremona map is called \emph{of the second type} if it has two proper base points, i.e., if it is equivalent to $\rho$, and, finally, a quadratic plane Cremona map is called \emph{of the third type} if it has only one proper base point, i.e., if it is equivalent to $\tau$.
\end{remark}

Recall that the linear system defining a plane Cremona map $\varphi$ of degree $d$ has base points $p_1,\ldots,p_r\in\BB(\PP^2)$ of respective multiplicity $m_1,\ldots,m_r$
that satisfy the following equations:
\begin{equation}\label{eq:Cremona}
d^2-1=\sum_{i=1}^r m_i^2,
\qquad\qquad
3(d-1)=\sum_{i=1}^r m_i.
\end{equation}

\begin{definition}
A plane Cremona map is called \emph{de Jonqui\`eres} if it has degree $d$ and a base point of multiplicity $d-1$.
\end{definition}

Equations \eqref{eq:Cremona} imply that plane Cremona maps of degree $2$ and $3$ are de Jonqui\`eres.

According to the Noether-Castelnuovo Theorem, any plane Cremona map $\varphi\colon\PP^2\dasharrow\PP^2$ can be written
\begin{equation}\label{eq:decomp}
\varphi = \alpha_n \circ \sigma \circ \alpha_{n-1} \circ \sigma \circ \cdots \circ \alpha_1 \circ \sigma \circ \alpha_0
\end{equation}
where $\alpha_i\in\Aut(\PP^2)$ for any $i=0,\ldots,n$, for some integer $n$.

\begin{definition}
Let us call \eqref{eq:decomp} a \emph{decomposition} of $\varphi$.
Let us say that a decomposition \eqref{eq:decomp} is \emph{minimal} if $n$ is minimal among all decompositions of $\varphi$.
Let us call such $n$ the \emph{ordinary quadratic length} of $\varphi$ and let us denote it by $\oq(\varphi)$.
\end{definition}

Therefore, the ordinary quadratic length of a plane Cremona map $\varphi$ of degree $\geq2$ is the minimun $n$ such that there exist ordinary quadratic maps $\psi_1,\psi_2,\ldots,\psi_n$ with
\begin{equation}\label{eq:decomp2}
\varphi=\psi_n\circ\psi_{n-1}\circ \cdots \circ \psi_2\circ \psi_1.
\end{equation}

\begin{definition}
Let us call the \emph{quadratic length} of plane Cremona map $\varphi$ the minimum $n$ such that there exists a decomposition \eqref{eq:decomp2} where $\psi_i$ is a (not necessarily ordinary) quadratic map, for each $i=1,\ldots,n$, and denote it by $\q(\varphi)$.
\end{definition}

Recall that Blanc and Furter in \cite{B-F} defined the \emph{length} of a plane Cremona map $\varphi$ as the minimum $n$ such that there exists a decomposition \eqref{eq:decomp2} where $\psi_i$ is a \emph{de Jonqui\`eres} map, for each $i=1,\ldots,n$, and denoted it by $\lgth(\varphi)$.
Clearly, one has that
\begin{equation*}
\lgth(\varphi) \leq \q(\varphi) \leq \oq(\varphi).
\end{equation*}

\begin{definition}
A plane Cremona map $\varphi$ is called \emph{involutory}, or \emph{an involution}, if $\varphi^{-1}=\varphi$.
\end{definition}

\begin{remark}
In order to compute the ordinary quadratic length of plane Cremona maps,
it suffices to work with \emph{involutory} ordinary quadratic maps.
Indeed, any decomposition \eqref{eq:decomp} can be written as the composition of an automorphism and involutory quadratic maps:
\[
\varphi=\alpha'_n \circ \cdots \circ ((\alpha_1\circ\alpha_0)^{-1}\circ\sigma\circ(\alpha_1\circ\alpha_0)) \circ (\alpha_0^{-1}\circ\sigma\circ\alpha_0)
\]
where $\alpha'_n=\alpha_n\circ\alpha_{n-1}\circ\cdots\circ\alpha_1\circ\alpha_0$.
\end{remark}

\begin{remark}
Two equivalent plane Cremona maps clearly have the same length, quadratic length and ordinary quadratic length.
\end{remark}

The following lemma is a straightforward application of the definitions.

\begin{lemma}\label{lm q_0_1}\label{lm oq_0_1}
Let $\varphi: \PP^2 \dashrightarrow \PP^2$ be a plane Cremona map.
Then,
\[
\oq(\varphi)=0 \text{ if and only if } \varphi\in\Aut(\PP^2).
\]
Moreover, one has
\begin{itemize}
\item $\oq(\varphi)=1$ if and only if $\varphi$ is an ordinary quadratic map;
\item $\q(\varphi)=1$ if and only if $\varphi$ is a quadratic map;
\item $\lgth(\varphi)=1$ if and only if $\varphi$ is a de Jonqui\`eres map.
\end{itemize}
\end{lemma}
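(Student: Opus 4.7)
The plan is to verify each of the four equivalences directly from the definitions, using the remark that quadratic Cremona maps come in three equivalence classes (ordinary, second, and third type) with the ordinary ones being exactly those equivalent to $\sigma$.

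First, for the $\oq$ statements: the ordinary quadratic length was defined as the minimum $n$ appearing in a Noether--Castelnuovo decomposition $\varphi = \alpha_n \circ \sigma \circ \alpha_{n-1} \circ \sigma \circ \cdots \circ \alpha_1 \circ \sigma \circ \alpha_0$ with $\alpha_i \in \Aut(\PP^2)$. If $\oq(\varphi)=0$ this decomposition reduces to $\varphi = \alpha_0$, hence $\varphi \in \Aut(\PP^2)$; conversely any automorphism admits such a decomposition with $n=0$. For $\oq(\varphi)=1$, the decomposition becomes $\varphi = \alpha_1 \circ \sigma \circ \alpha_0$, which means by definition that $\varphi$ is equivalent to $\sigma$, and by the remark recalling the classification of quadratic maps, this is the case precisely when $\varphi$ is ordinary quadratic. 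In the other direction, $\varphi$ ordinary quadratic means $\varphi = \alpha_1 \circ \sigma \circ \alpha_0$ for some automorphisms, giving $\oq(\varphi) \le 1$, and since $\varphi$ is not an automorphism we have $\oq(\varphi) \ne 0$, so $\oq(\varphi) = 1$.

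Next, for $\q$: by definition $\q(\varphi) = 1$ is equivalent to $\varphi = \psi_1$ for a single quadratic map $\psi_1$, which is the assertion that $\varphi$ itself is quadratic (and not an automorphism, which is automatic since a quadratic map has degree $2$). The converse direction is immediate: a quadratic map is a one-term composition of quadratic maps. The same reasoning with ``de Jonqui\`eres'' in place of ``quadratic'' gives $\lgth(\varphi) = 1$ if and only if $\varphi$ is de Jonqui\`eres.

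Since each of the four statements follows by unwinding the corresponding definition, and the only non-trivial ingredient is the fact recalled in the remark that ordinary quadratic maps form a single equivalence class represented by $\sigma$, there is no substantive obstacle in this argument; it is precisely what the authors mean by ``a straightforward application of the definitions.''
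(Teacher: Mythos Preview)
Your proof is correct and is exactly the ``straightforward application of the definitions'' that the paper alludes to; indeed, the paper does not spell out a proof at all, so your argument simply fills in the routine verification the authors leave to the reader.
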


\begin{corollary}\label{cor>1}
Let $\varphi\colon \PP^2 \dashrightarrow \PP^2$ be a plane Cremona map of degree $d\geq3$.
Then,
\[
\oq(\varphi)\geq \q(\varphi) \geq2.
\]
\end{corollary}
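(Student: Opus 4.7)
The plan is to apply Lemma \ref{lm q_0_1} directly, combined with the inequality chain $\lgth(\varphi) \leq \q(\varphi) \leq \oq(\varphi)$ already established in the remark preceding Definition on involutions. No serious work is required; the corollary is essentially a bookkeeping consequence of the definitions together with the observation that degree is preserved under the composition identities that force $\q$ and $\oq$ to take their minimum values.

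First I would argue that $\q(\varphi)\neq 0$: by Lemma \ref{lm q_0_1}, $\oq(\varphi)=0$ is equivalent to $\varphi\in\Aut(\PP^2)$, and the same must be true for $\q$ since $\q(\varphi)=0$ would mean $\varphi$ is a composition of zero quadratic maps, hence an automorphism; but an automorphism has degree $1$, contradicting $d\geq 3$. Next I would rule out $\q(\varphi)=1$: by Lemma \ref{lm q_0_1}, this is equivalent to $\varphi$ being a (possibly non-ordinary) quadratic map, which forces $\deg \varphi = 2$, again contradicting $d\geq 3$. Therefore $\q(\varphi)\geq 2$.

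Finally, invoking the inequality $\oq(\varphi)\geq \q(\varphi)$ (which follows from the fact that every ordinary quadratic decomposition of $\varphi$ is in particular a quadratic decomposition, so the minimum over the larger set of decompositions is bounded above by the minimum over the smaller one), we conclude
\[
\oq(\varphi)\geq \q(\varphi)\geq 2,
\]
as desired. The only conceivable subtlety is ensuring that the extraction of $\q(\varphi)=0,1$ cases from Lemma \ref{lm q_0_1} really does cover all possibilities below $2$, but since $\q$ takes nonnegative integer values this is automatic. Hence there is no real obstacle in the proof.
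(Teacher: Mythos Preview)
Your proof is correct and is precisely the intended argument: the paper does not even write out a proof of Corollary~\ref{cor>1}, treating it as an immediate consequence of Lemma~\ref{lm q_0_1} together with the inequality $\q(\varphi)\leq\oq(\varphi)$, which is exactly what you do.
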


\begin{example}
Let $\rho$ be the quadratic map defined in \eqref{rhotau}.
It is classically very well-known that $\oq(\rho)=2$.
A minimal decomposition of $\rho$ is:
\begin{equation*}\label{rhodecomp}
\rho=[x: z-y: z] \circ\sigma\circ [x : y+z : z] \circ \sigma \circ [x: y-z: z].
\end{equation*}
\end{example}

\begin{example}
Let $\tau$ be the quadratic map defined in \eqref{rhotau}.
It is classically well-known that $\tau$ is the composition of two quadratic maps of the second type and therefore the composition of four ordinary quadratic maps.
A decomposition of $\tau$, given in \cite{C-D}, is:
\begin{equation}\label{taudecomp}
\begin{aligned}
\tau &= [y-x : 2y-x : x-y+z] \circ \sigma \circ [x+z : x : y] \circ \sigma \circ [-y : x-3y+z : x] \, \circ \\
&\phantom{=} \, \circ \sigma \circ [x+z : x : y] \circ \sigma \circ [y-x : -2x+z : 2x-y].
\end{aligned}
\end{equation}
However, we found no reference with a proof that $\oq(\tau)=4$, hence that the above decomposition is minimal, even if we believe that it was classically known.
This fact can be shown as a consequence of our Theorem \ref{thm main2}, see Corollary \ref{oq(tau)} later.
\end{example}

\begin{corollary}\label{cor>2}
Let $\varphi\colon \PP^2 \dashrightarrow \PP^2$ be a plane Cremona map of degree $d\geq5$.
Then,
\[
\oq(\varphi)\geq \q(\varphi) \geq3.
\]
\end{corollary}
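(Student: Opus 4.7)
The first inequality $\oq(\varphi) \geq \q(\varphi)$ is immediate from the definitions, since every ordinary quadratic map is in particular a quadratic map, so any minimal decomposition into ordinary quadratic maps is also a (not necessarily minimal) decomposition into quadratic maps.

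For the second inequality, my plan is to argue by contradiction using the behaviour of degree under composition. Suppose that $\q(\varphi)=n$, so that we can write $\varphi=\psi_n\circ\psi_{n-1}\circ\cdots\circ\psi_1$ with each $\psi_i$ a quadratic plane Cremona map. Recall the standard fact that for two rational maps $f,g\colon\PP^2\dasharrow\PP^2$ one has $\deg(g\circ f)\leq \deg(f)\cdot\deg(g)$ (the pullback of a line by $g\circ f$ is contained in a linear system of curves of that degree). Iterating this bound on $\varphi=\psi_n\circ\cdots\circ\psi_1$ with $\deg(\psi_i)=2$ yields $\deg(\varphi)\leq 2^n$.

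Now assume $\q(\varphi)\leq 2$. By Lemma \ref{lm q_0_1} we cannot have $\q(\varphi)=0$ (since $d\geq 5>1$) nor $\q(\varphi)=1$ (since then $\varphi$ would be quadratic). So if $\q(\varphi)\leq 2$, then necessarily $\q(\varphi)=2$, and the estimate above gives $d=\deg(\varphi)\leq 2^2=4$, contradicting the hypothesis $d\geq 5$. Hence $\q(\varphi)\geq 3$, and combined with the first inequality we obtain $\oq(\varphi)\geq\q(\varphi)\geq 3$.

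There is no real obstacle here: the only non-trivial input is the submultiplicativity of the degree under composition, which is a very classical property of rational maps on $\PP^2$ (and already implicitly used in Corollary \ref{cor>1}, which handles the parallel bound $d\geq 3 \Rightarrow \q(\varphi)\geq 2$ via $2^1=2<3$).
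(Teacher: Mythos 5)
Your proof is correct and follows essentially the same route as the paper: both arguments reduce to the observation that a composition of two quadratic maps has degree at most $4$, which contradicts $d\geq 5$. The paper merely makes the degree drop explicit, writing $\deg(\rho_2\circ\rho_1)=4-m_1-m_2-m_3$ in terms of the multiplicities of $\rho_1^{-1}$ at the base points of $\rho_2$, whereas you invoke the general submultiplicativity $\deg(g\circ f)\leq\deg(f)\deg(g)$; these are the same estimate.
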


\begin{proof}
We claim that, if $\q(\varphi)\leq2$, then $\deg(\varphi)\leq 4$.
This is trivial if $\q(\varphi)\leq1$. Suppose that $\q(\varphi)=2$, namely $\varphi=\rho_2\circ\rho_1$, where $\rho_1,\rho_2$ are quadratic maps.
Let $p_1,p_2,p_3$ be the base points of $\rho_2$. If $m_1, m_2, m_3$ are the multiplicities of $\rho_1^{-1}$ at $p_1,p_2,p_3$, respectively, then
\[
\deg(\varphi) = \deg(\rho_2\circ\rho_1) = 4 - m_1-m_2-m_3\leq 4,
\]
that is our claim.
\end{proof}

\begin{lemma}\label{cordeJonq>d-1}
Let $\varphi\colon \PP^2 \dashrightarrow \PP^2$ be a plane de Jonqui\`eres map of degree $d\leq5$.
Then,
\[
\oq(\varphi)\geq \q(\varphi) \geq d-1.
\]
\end{lemma}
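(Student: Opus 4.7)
The inequality $\oq(\varphi) \geq \q(\varphi)$ is immediate from the definitions, so I focus on $\q(\varphi) \geq d-1$, arguing by induction on $d$. The base cases $d=2$ and $d=3$ follow respectively from Lemma~\ref{lm q_0_1} (any non-automorphism has $\q \geq 1$) and Corollary~\ref{cor>1}. The main tool for the remaining cases is the composition formula
\[
\deg(\rho \circ \psi) = 2\deg\psi \,-\, \sum_{j=1}^{3} \mult_{q_j}(\psi^{-1}),
\]
where $q_1, q_2, q_3$ are the base points of the quadratic $\rho$, together with the class identity $\widetilde C = \psi^{*}(C) - \sum_j \mult_{q_j}(C)\,\Gamma_j$ for the strict transform of a conic $C$ via $\psi$, where $\Gamma_j$ is the curve (possibly empty) contracted by $\psi$ above $q_j$, of degree $\mult_{q_j}(\psi^{-1})$.

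For $d=4$: assume $\q(\varphi) \leq 2$ and write $\varphi = \rho_2 \circ \rho_1$. The formula forces $\deg\varphi = 4$ only when the base points of $\rho_2$ avoid those of $\rho_1^{-1}$; then the class identity gives $\varphi^{-1}(\text{line}) \sim 4L - 2E_1 - 2E_2 - 2E_3$, so $\varphi$ has multiplicity $2$ at each base point of $\rho_1$ and multiplicity $1$ at each of the three proper points $\rho_1^{-1}(q_j)$. The resulting sequence $(2,2,2,1,1,1)$ is not the de Jonqui\`eres quartic sequence $(3,1,1,1,1,1,1)$, a contradiction.

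For $d=5$: assume $\q(\varphi) \leq 3$ and write $\varphi = \rho_3 \circ \psi_2$ with $\psi_2$ a composition of at most two quadratics. If $\deg\psi_2 \leq 2$ then $\deg\varphi \leq 4$. If $\deg\psi_2 = 3$, the Noether equations force $\psi_2$ to be a cubic de Jonqui\`eres of type $(2,1,1,1,1)$ and $\deg\varphi = 5$ to require $k_3 = 1$; the class identity then produces the sequence $(3,2,2,2,1,1,1)$ for $\varphi$. If $\deg\psi_2 = 4$, the $d=4$ analysis forces $\psi_2$ to have sequence $(2,2,2,1,1,1)$ and $\deg\varphi = 5$ to require $k_3 = 3$; since $\psi_2^{-1}$ has maximal multiplicity $2$, the only admissible partitions of $k_3=3$ over the three base points of $\rho_3$ are $(1,1,1)$ and $(2,1,0)$, producing for $\varphi$ the sequences $(2,2,2,2,2,2)$ and $(3,2,2,2,1,1,1)$ respectively. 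None of these matches the de Jonqui\`eres quintic sequence $(4,1,1,1,1,1,1,1,1)$, so every such decomposition contradicts the assumption that $\varphi$ is de Jonqui\`eres of degree $5$; hence $\q(\varphi) \geq 4$.

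The main obstacle is identifying the classes $\Gamma_j$ of the curves contracted by $\psi_2$ above each base point of $\psi_2^{-1}$: for the quartic $\psi_2$ of type $(2,2,2,1,1,1)$, one must verify that the three contracted lines are the three lines through pairs of the multiplicity-$2$ base points of $\psi_2$, and the three contracted conics are the three conics through five of the six base points of $\psi_2$ (each omitting a different multiplicity-$1$ base point); these identifications follow from the contracted-curve conditions $\Gamma \cdot \psi_2^{*}(L) = 0$ and $\Gamma^2 = -1$ in the Picard group of the common resolution.
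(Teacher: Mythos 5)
Your proposal is correct, but it takes a genuinely different route from the paper's. You argue \emph{forwards}: you classify the characteristics of every map expressible as a product of two (resp.\ three) quadratic maps, which obliges you to identify the principal (contracted) curves of the intermediate map --- for the quartic of type $(2,2,2,1,1,1)$ these are indeed the three lines through pairs of double points and the three conics through five of the six base points, as one checks from $\Gamma\cdot e_0'=0$ and $\Gamma^2=-1$ in the resolution --- and then to observe that none of the resulting characteristics $(2,2,2,1,1,1)$, $(3,2,2,2,1,1,1)$, $(2,2,2,2,2,2)$ has a point of multiplicity $d-1$. The paper argues \emph{backwards}: it peels the first quadratic $\varrho_1$ off $\varphi$ and applies the degree formula of Proposition \ref{PropMAC118a} to the \emph{known} characteristic $(d-1,1,\dots,1)$ of $\varphi$, so that for $d=4$ one gets at once $\deg(\varphi\circ\varrho_1^{-1})\geq 8-(3+1+1)=3$, while for $d=5$ the surviving cases are killed by Corollary \ref{cor>2} and the $d=4$ case already proved; no principal-curve bookkeeping is required. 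Your version is longer but proves more (an explicit list of all characteristics realizable by products of at most three quadratic maps), whereas the paper's is shorter and self-bootstrapping. One minor imprecision: in your $d=4$ step the three points corresponding to the base points of $\rho_2$ under $\rho_1^{-1}$ need not be proper (a base point of $\rho_2$ may lie on a line contracted by $\rho_1^{-1}$), but this affects only the proximity structure, not the multiplicity sequence, so the contradiction stands.
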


\begin{proof}
It is trivial if $d\leq 3$.
Let us first consider the case $d=4$.

By contradiction, suppose that $\q(\varphi)\leq2$. Clearly, $\q(\varphi)$ cannot be less than 2, so we can write $\varphi = \varrho_2 \circ \varrho_1$,
where $\varrho_1,\varrho_2$ are two quadratic plane Cremona maps. In other words, one has that $\varphi \circ \varrho_1^{-1}$ is the quadratic map $\varrho_2$. We claim that the composition $\varphi \circ \varrho_1^{-1}$ has always degree $\geq 3$, that is a contradiction.

We now prove our claim. Suppose that $p_0, p_1, \ldots , p_6$ are the base points of $\varphi$, where $p_0$ is the triple base point and $p_1, \ldots, p_6$ are simple base points.

We distinguish four possibilities:
\begin{itemize}
\item if $\varrho_1$ has base points $p_0,p_i,p_j$ with $0 < i < j \leq 6$, then $\varphi \circ \varrho_1^{-1}$ has degree $3$;
\item if $\varrho_1$ has base points $p_0,p_i$ with $0 < i \leq 6$ and $p_j$ is not a base point of $\varrho_1$ for any $j$ such that $0 \leq j \leq 6$ and $j \neq 0,i$, then $\varphi \circ \varrho_1^{-1}$ has degree $4$;
\item if $\varrho_1$ has base point $p_0$ and $p_1, \ldots,p_6$ are not base points of $\varrho_1$, then $\varphi \circ \varrho_1^{-1}$ has degree $5$;
\item if $p_0$ is not a base point of $\varrho_1$, then $5 \leq \deg(\varphi \circ \varrho_1^{-1}) \leq 8$.
\end{itemize}
Our claim is proved.

We are left with the case $d=5$.

By contradiction, suppose that $\q(\varphi)\leq3$, hence, $\q(\varphi)=3$ by Corollary \ref{cor>2} and we can write $\varphi = \varrho_3 \circ \varrho_2 \circ \varrho_1$, where $\varrho_1,\varrho_2,\varrho_3$ are quadratic plane Cremona maps. In other words, one has that $\varphi \circ \varrho_1^{-1}=\varrho_3 \circ \varrho_2$ has quadratic length 2.

Let $p_0$ be the base point of multiplicity 4 of $\varphi$.
There are two cases: either $p_0$ is a base point of $\varrho_1$ or $p_0$ is not a base point of $\varrho_1$.

In the former case, the map $\varphi \circ \varrho_1^{-1}=\varrho_3 \circ \varrho_2$ is a de Jonqui\`eres map of degree $d'$ with $4 \leq d' \leq 6$.
If $d'=5,6$, then Corollary \ref{cor>2} gives a contradiction. Otherwise $d'=4$, that is another contradiction with the first part of this proof.

In the latter case, the map $\varphi \circ \varrho_1^{-1}=\varrho_3 \circ \varrho_2$ has degree $d''$ with $7\leq d''\leq 10$ and we get again a contradiction with Corollary \ref{cor>2}.
\end{proof}

\section{Enriched weighted proximity graphs} \label{weighted}

\begin{definition}\label{def weighted_proximity_graph}
Let $\varphi : \PP^2 \dashrightarrow \PP^2$ be a plane Cremona map. Let us associate to $\varphi$ a \emph{weighted digraph} $G_\varphi$, called the \emph{weighted proximity graph of (the base points of) $\varphi$}, defined as follows:
\begin{itemize}
\item the vertices of $G_\varphi$ are the base points $p_1,\ldots,p_r \in \BB(\PP^2)$ of $\varphi$;
\item there is an arrow $p_i \to p_j$ if and only if $p_i$ is proximate to $p_j$;
\item each vertex $p_i$ is weighted with the multiplicity $\mult_{p_i}(\varphi)$ of $\varphi$ at $p_i$.
\end{itemize}
\end{definition}

It is easy to check that (weighted) proximity graphs have the so-called property of being \emph{admissible}, according to the following definition.

\begin{definition}
Let us say that a digraph is admissible if it is acyclic and satisfies the following three properties:
\begin{enumerate}[(i)]
\item each vertex has the external degree at most two;
\item if a vertex $u$ has outdegree 2, say $u \to v$ and $u \to w$, then either $v \to w$ or $w \to v$;
\item fixing two vertices $v$ and $w$, then there exists at most one vertex $u$ such that $u \to v$  and $u \to w$.
\end{enumerate}
\end{definition}

\begin{example}
A de Jonqui\`eres map of degree $d$ has weighted proximity graph with $2d-1$ vertices, one with weight $d-1$ and the other $2d-2$ vertices with weight 1.
\end{example}

\begin{remark} 
Note that the number of connected components of $G_\varphi$ equals the number of proper base points in $\PP^2$ among the base points $p_1,\ldots,p_r \in \BB(\PP^2)$ of $\varphi$.
\end{remark}

\begin{remark}
Clearly, two equivalent plane Cremona maps have the same weighted proximity graph. The converse is true for quadratic maps but it is not true in general. 
\end{remark}

\begin{example}
We will see later that the two cubic plane Cremona maps
$$\varphi_{10} = [x^3: y^2z:xyz], \qquad \textrm{ and } \quad \varphi_{11} =[x(y^2+xz):y(y^2+xz):xyz]$$
have the same weighted proximity graph
\begin{center}
\scalebox{0.5}{%
\begin{tikzpicture}[->,>=stealth',shorten >=2pt,auto,node distance=2.5cm,ultra thick,baseline=-1.25ex]
  \tikzstyle{every state}=[fill=white,draw=black,text=black]

  \node[state,draw=red,text=red] (A)                    {\huge 2};
  \node[state] (B) [right of=A] {\huge 1};
  \node[state,draw=red,text=red] (C) [right of=B] {\huge 1};
  \node[state] (D) [right of=C] {\huge 1};
  \node[state] (E) [right of=D] {\huge 1};

\path (E) edge (D);
\path (D) edge (C);
\path (B) edge (A);
\end{tikzpicture}%
}
\end{center}
but they are not equivalent.
\end{example}

\begin{notation}
When we draw the weighted proximity graph of a plane Cremona map $\varphi$, for readers' convenience we write \emph{proper base points} in \textcolor{red}{red} and \emph{infinitely near points} in \textcolor{black}{black}.
\end{notation}

\begin{example}
Let $\sigma$, $\rho$ and $\tau$ be the quadratic maps defined in \eqref{sigma_eq} and \eqref{rhotau}. Their respective proximity graphs $G_\sigma$, $G_\rho$ and $G_\tau$ are:
\begin{center}
$G_\sigma$ =
\scalebox{0.5}{%
\begin{tikzpicture}[->,>=stealth',shorten >=2pt,auto,node distance=2.5cm,ultra thick,baseline=-1.25ex]
  \tikzstyle{every state}=[fill=white,draw=black,text=black]

  \node[state,draw=red,text=red] (A)                    {\huge 1};
  \node[state,draw=red,text=red] (B) [right of=A] {\huge 1};
  \node[state,draw=red,text=red] (C) [right of=B] {\huge 1};
  \end{tikzpicture}%
} \qquad
$G_\rho$ =
\scalebox{0.5}{%
\begin{tikzpicture}[->,>=stealth',shorten >=2pt,auto,node distance=2.5cm,ultra thick,baseline=-1.25ex]
  \tikzstyle{every state}=[fill=white,draw=black,text=black]

  \node[state,draw=red,text=red] (A)                    {\huge 1};
  \node[state] (B) [right of=A] {\huge 1};
  \node[state,draw=red,text=red] (C) [right of=B] {\huge 1};
  \path (B) edge (A);
\end{tikzpicture}%
} \qquad
$G_\tau$ =
\scalebox{0.5}{%
\begin{tikzpicture}[->,>=stealth',shorten >=2pt,auto,node distance=2.5cm,ultra thick,baseline=-1.25ex]
  \tikzstyle{every state}=[fill=white,draw=black,text=black]

  \node[state,draw=red,text=red] (A)                    {\huge 1};
  \node[state] (B) [right of=A] {\huge 1};
  \node[state] (C) [right of=B] {\huge 1};

\path (C) edge (B);
\path (B) edge (A);
\end{tikzpicture}%
}
\end{center}
\end{example}

\begin{remark}\label{lm no_pro_graph}
Let 
\begin{center}
$G$ =
\scalebox{0.5}{%
\begin{tikzpicture}[->,>=stealth',shorten >=2pt,auto,node distance=2.5cm,ultra thick,baseline=-1.25ex]
  \tikzstyle{every state}=[fill=white,draw=black,text=black]

  \node[state,draw=red,text=red] (A)                    {\huge 1};
  \node[state] (B) [right of=A] {\huge 1};
  \node[state] (C) [right of=B] {\huge 1};
  \path (B) edge (A);
  \path (C) edge[bend right] (A);
\end{tikzpicture}%
} \qquad\qquad
\textrm{ and } \qquad\qquad $G' $ =
\scalebox{0.5}{%
\begin{tikzpicture}[->,>=stealth',shorten >=2pt,auto,node distance=2.5cm,ultra thick,baseline=-1.25ex]
  \tikzstyle{every state}=[fill=white,draw=black,text=black]

  \node[state,draw=red,text=red] (A)                    {\huge 1};
  \node[state] (B) [right of=A] {\huge 1};
  \node[state] (C) [right of=B] {\huge 1};
\path (C) edge[bend right] (A);
\path (C) edge (B);
\path (B) edge (A);
\end{tikzpicture}%
}.
\end{center}
Then, there is no plane Cremona map with $G$ or $G'$ as weighted proximity graph, cf. the proximity inequality \ref{pro Proximity_inequalities} and Remark \ref{rm no_conic1}. 
\end{remark}

We now classify weighted proximity graphs of cubic plane Cremona maps.

\begin{theorem}\label{thm:graphs}
There are exactly 21 weighted proximity graphs of cubic plane Cremona maps, up to isomorphism, that are listed in Table \ref{table0} at page \pageref{table0}.
\end{theorem}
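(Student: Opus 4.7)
The plan is a systematic enumeration. First, applying equations \eqref{eq:Cremona} with $d = 3$ gives $\sum m_i^2 = 8$ and $\sum m_i = 6$, whose only non-negative integer solution up to reordering is $(2,1,1,1,1)$. Hence every cubic plane Cremona map has exactly five base points: one of weight $2$ (call it $p_1$) and four of weight $1$. The weighted proximity graph $G_\varphi$ therefore has a fixed vertex set with this weight profile.

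Next, I would parametrize admissible graphs by the underlying Enriques forest: each non-proper base point has a unique immediate predecessor (the point in whose first neighborhood it lies), and this predecessor relation defines a rooted forest whose roots are the proper base points. Its edges correspond to the non-satellite proximity arrows of $G_\varphi$. Any further proximity arrow is a satellite arrow, and admissibility condition (ii) forces each satellite arrow from $p_i$ to land on the grandparent of $p_i$ in the Enriques forest. The proximity inequality of Proposition \ref{pro Proximity_inequalities} then caps the total weight of vertices proximate to a weight-$w$ vertex by $w$: at most two weight-$1$ points are proximate to $p_1$, and at most one weight-$1$ point is proximate to any given weight-$1$ point.

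I would then enumerate rooted forests on five vertices with weights $(2,1,1,1,1)$ up to isomorphism, and for each one augment it with satellite arrows in all admissible ways. It is natural to stratify by the number of connected components (equivalently, the number of proper base points among the five), which ranges from $1$ to $5$, and within each stratum by the tree shape and the position of $p_1$. Configurations that violate the proximity inequality at $p_1$, or that are geometrically impossible in the sense of Remark \ref{lm no_pro_graph}, are discarded. A direct case analysis yields precisely the $21$ candidate graphs in Table \ref{table0}.

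Finally, I would realize each candidate by exhibiting a specific cubic plane Cremona map. For every graph $G$ in Table \ref{table0}, I would point to a map in Table \ref{table1} whose homaloidal net, computed by successive blow-ups at its base points, produces a weighted proximity graph isomorphic to $G$. The main obstacle is bookkeeping: systematically enumerating rooted forests and their satellite decorations without duplication, consistently applying admissibility together with the proximity inequality, and ruling out the geometrically spurious configurations of Remark \ref{lm no_pro_graph} that formally satisfy the combinatorial constraints but are not realized by any homaloidal net of degree $3$.
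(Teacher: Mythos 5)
Your proposal is correct and follows essentially the same route as the paper: both reduce to the weight profile $(2,1,1,1,1)$, constrain the arrows by admissibility and the proximity inequality (at most one satellite point, necessarily of the double point; at most two points proximate to the double point and at most one to each simple point), and then enumerate the admissible configurations combinatorially, the paper by adding arrows one at a time and you by first listing the Enriques forests and then decorating with satellite arrows. Your explicit final step of realizing each of the $21$ graphs by a map from Table \ref{table1} is a point the paper leaves implicit in its proof of Theorem \ref{thm:graphs} (it is supplied later by the lemmas of Section \ref{proof main theorem1}), so including it is a mild improvement rather than a divergence.
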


\begin{table}[!htbp]
\centering
\begin{tabular}{cc}
\begin{tabular}{|c|c|}
\hline

\rule{0ex}{2.5ex}n$^o$ & Weighted proximity graph\\  \hline

1  &
\scalebox{0.5}{%
\begin{tikzpicture}[->,>=stealth',shorten >=2pt,auto,node distance=2.5cm,ultra thick,baseline=-1.25ex]
  \tikzstyle{every state}=[fill=white,draw=black,text=black]

  \node[state,draw=red,text=red] (A)                    {\huge 2};
  \node[state] (B) [right of=A] {\huge 1};
  \node[state] (C) [right of=B] {\huge 1};
  \node[state] (D) [right of=C] {\huge 1};
  \node[state] (E) [right of=D] {\huge 1};

\path (E) edge (D);
\path (D) edge (C);
\path (C) edge (B);
\path (B) edge (A);
\path (C) edge [bend right] (A);
\end{tikzpicture}%
}
\\[0.5ex]
\hline
2  &
\scalebox{0.5}{%
\begin{tikzpicture}[->,>=stealth',shorten >=2pt,auto,node distance=2.5cm,ultra thick,baseline=-1.25ex]
  \tikzstyle{every state}=[fill=white,draw=black,text=black]

  \node[state,draw=red,text=red] (A)                    {\huge 2};
  \node[state] (B) [right of=A] {\huge 1};
  \node[state] (C) [right of=B] {\huge 1};
  \node[state,draw=red,text=red] (D) [right of=C] {\huge 1};
  \node[state] (E) [right of=D] {\huge 1};

\path (E) edge (D);
\path (C) edge (B);
\path (B) edge (A);
\path (C) edge [bend right] (A);
\end{tikzpicture}%
}
\\[0.5ex]
\hline
3  &
\scalebox{0.5}{%
\begin{tikzpicture}[->,>=stealth',shorten >=2pt,auto,node distance=2.5cm,ultra thick,baseline=-1.25ex]
  \tikzstyle{every state}=[fill=white,draw=black,text=black]

  \node[state,draw=red,text=red] (A)                    {\huge 2};
  \node[state] (B) [right of=A] {\huge 1};
  \node[state] (C) [right of=B] {\huge 1};
  \node[state] (D) [right of=C] {\huge 1};
  \node[state,draw=red,text=red] (E) [right of=D] {\huge 1};

\path (D) edge (C);
\path (C) edge (B);
\path (B) edge (A);
\path (C) edge [bend right] (A);
\end{tikzpicture}%
}
\\[0.5ex]
\hline
4   &
\scalebox{0.5}{%
\begin{tikzpicture}[->,>=stealth',shorten >=2pt,auto,node distance=2.5cm,ultra thick,baseline=-1.25ex]
  \tikzstyle{every state}=[fill=white,draw=black,text=black]

  \node[state,draw=red,text=red] (A)                    {\huge 2};
  \node[state] (B) [right of=A] {\huge 1};
  \node[state] (C) [right of=B] {\huge 1};
  \node[state] (D) [right of=C] {\huge 1};
  \node[state] (E) [right of=D] {\huge 1};

\path (E) edge (D);
\path (D) edge [bend right] (A);
\path (C) edge (B);
\path (B) edge (A);
\end{tikzpicture}%
}
\\[0.5ex]
\hline
5   &
\scalebox{0.5}{%
\begin{tikzpicture}[->,>=stealth',shorten >=2pt,auto,node distance=2.5cm,ultra thick,baseline=-1.25ex]
  \tikzstyle{every state}=[fill=white,draw=black,text=black]

  \node[state,draw=red,text=red] (A)                    {\huge 2};
  \node[state] (B) [right of=A] {\huge 1};
  \node[state] (C) [right of=B] {\huge 1};
  \node[state] (D) [right of=C] {\huge 1};
  \node[state] (E) [right of=D] {\huge 1};

\path (E) edge (D);
\path (D) edge (C);
\path (B) edge (A);
\path (C) edge [bend right] (A);
\end{tikzpicture}%
}
\\[0.5ex]
\hline
\rule{0ex}{3.0ex}6  &
\scalebox{0.5}{%
\begin{tikzpicture}[->,>=stealth',shorten >=2pt,auto,node distance=2.5cm,ultra thick,baseline=-1.25ex]
  \tikzstyle{every state}=[fill=white,draw=black,text=black]

  \node[state,draw=red,text=red] (A)                    {\huge 2};
  \node[state] (B) [right of=A] {\huge 1};
  \node[state] (C) [right of=B] {\huge 1};
  \node[state] (D) [right of=C] {\huge 1};
  \node[state] (E) [right of=D] {\huge 1};

\path (E) edge (D);
\path (D) edge (C);
\path (C) edge (B);
\path (B) edge (A);
\end{tikzpicture}%
}
\\[0.5ex]
\hline
7  &
\scalebox{0.5}{%
\begin{tikzpicture}[->,>=stealth',shorten >=2pt,auto,node distance=2.5cm,ultra thick,baseline=-1.25ex]
  \tikzstyle{every state}=[fill=white,draw=black,text=black]

  \node[state,draw=red,text=red] (A)                    {\huge 2};
  \node[state] (B) [right of=A] {\huge 1};
  \node[state] (C) [right of=B] {\huge 1};
  \node[state,draw=red,text=red] (D) [right of=C] {\huge 1};
  \node[state,draw=red,text=red] (E) [right of=D] {\huge 1};

\path (C) edge (B);
\path (B) edge (A);
\path (C) edge [bend right] (A);
\end{tikzpicture}%
}
\\[0.5ex]
\hline
8   &
\scalebox{0.5}{%
\begin{tikzpicture}[->,>=stealth',shorten >=2pt,auto,node distance=2.5cm,ultra thick,baseline=-1.25ex]
  \tikzstyle{every state}=[fill=white,draw=black,text=black]

  \node[state,draw=red,text=red] (A)                    {\huge 2};
  \node[state] (B) [right of=A] {\huge 1};
  \node[state] (C) [right of=B] {\huge 1};
  \node[state] (D) [right of=C] {\huge 1};
  \node[state,draw=red,text=red] (E) [right of=D] {\huge 1};

\path (D) edge (C);
\path (B) edge (A);
\path (C) edge [bend right] (A);
\end{tikzpicture}%
}
\\[0.5ex]
\hline
9  &
\scalebox{0.5}{%
\begin{tikzpicture}[->,>=stealth',shorten >=2pt,auto,node distance=2.5cm,ultra thick,baseline=-1.25ex]
  \tikzstyle{every state}=[fill=white,draw=black,text=black]

  \node[state,draw=red,text=red] (A)                    {\huge 2};
  \node[state] (B) [right of=A] {\huge 1};
  \node[state] (C) [right of=B] {\huge 1};
  \node[state,draw=red,text=red] (D) [right of=C] {\huge 1};
  \node[state] (E) [right of=D] {\huge 1};

\path (E) edge (D);
\path (B) edge (A);
\path (C) edge [bend right] (A);
\end{tikzpicture}%
}
\\[0.5ex]
\hline
\rule{0ex}{3.0ex}10  &
\scalebox{0.5}{%
\begin{tikzpicture}[->,>=stealth',shorten >=2pt,auto,node distance=2.5cm,ultra thick,baseline=-1.25ex]
  \tikzstyle{every state}=[fill=white,draw=black,text=black]

  \node[state,draw=red,text=red] (A)                    {\huge 2};
  \node[state] (B) [right of=A] {\huge 1};
  \node[state] (C) [right of=B] {\huge 1};
  \node[state] (D) [right of=C] {\huge 1};
  \node[state,draw=red,text=red] (E) [right of=D] {\huge 1};

\path (D) edge (C);
\path (C) edge (B);
\path (B) edge (A);
\end{tikzpicture}%
}
\\[0.5ex]
\hline
\end{tabular}
&
\raisebox{0.0ex}{\begin{tabular}{|c|c|}
\hline

\rule{0ex}{2.5ex}n$^o$ & Weighted proximity graph\\  \hline

\rule{0ex}{3.0ex} 11  &
\scalebox{0.5}{%
\begin{tikzpicture}[->,>=stealth',shorten >=2pt,auto,node distance=2.5cm,ultra thick,baseline=-1.25ex]
  \tikzstyle{every state}=[fill=white,draw=black,text=black]

  \node[state,draw=red,text=red] (A)                    {\huge 2};
  \node[state] (B) [right of=A] {\huge 1};
  \node[state] (C) [right of=B] {\huge 1};
  \node[state,draw=red,text=red] (D) [right of=C] {\huge 1};
  \node[state] (E) [right of=D] {\huge 1};

\path (E) edge (D);
\path (C) edge (B);
\path (B) edge (A);
\end{tikzpicture}%
}
\\[0.5ex]
\hline
\rule{0ex}{3.0ex} 12  &
\scalebox{0.5}{%
\begin{tikzpicture}[->,>=stealth',shorten >=2pt,auto,node distance=2.5cm,ultra thick,baseline=-1.25ex]
  \tikzstyle{every state}=[fill=white,draw=black,text=black]

  \node[state,draw=red,text=red] (A)                    {\huge 2};
  \node[state] (B) [right of=A] {\huge 1};
  \node[state,draw=red,text=red] (C) [right of=B] {\huge 1};
  \node[state] (D) [right of=C] {\huge 1};
  \node[state] (E) [right of=D] {\huge 1};

\path (E) edge (D);
\path (D) edge (C);
\path (B) edge (A);
\end{tikzpicture}%
}
\\[0.5ex]
\hline
\rule{0ex}{3.0ex} 13  &
\scalebox{0.5}{%
\begin{tikzpicture}[->,>=stealth',shorten >=2pt,auto,node distance=2.5cm,ultra thick,baseline=-1.25ex]
  \tikzstyle{every state}=[fill=white,draw=black,text=black]

  \node[state,draw=red,text=red] (A)                    {\huge 2};
  \node[state,draw=red,text=red] (B) [right of=A] {\huge 1};
  \node[state] (C) [right of=B] {\huge 1};
  \node[state] (D) [right of=C] {\huge 1};
  \node[state] (E) [right of=D] {\huge 1};

\path (E) edge (D);
\path (D) edge (C);
\path (C) edge (B);
\end{tikzpicture}%
}
\\[0.5ex]
\hline
\rule{0ex}{3.0ex} 14  &
\scalebox{0.5}{%
\begin{tikzpicture}[->,>=stealth',shorten >=2pt,auto,node distance=2.5cm,ultra thick,baseline=-1.25ex]
  \tikzstyle{every state}=[fill=white,draw=black,text=black]

  \node[state,draw=red,text=red] (A)                    {\huge 2};
  \node[state] (B) [right of=A] {\huge 1};
  \node[state] (C) [right of=B] {\huge 1};
  \node[state,draw=red,text=red] (D) [right of=C] {\huge 1};
  \node[state,draw=red,text=red] (E) [right of=D] {\huge 1};

\path (B) edge (A);
\path (C) edge [bend right] (A);
\end{tikzpicture}%
}
\\[0.5ex]
\hline
\rule{0ex}{3.0ex} 15  &
\scalebox{0.5}{%
\begin{tikzpicture}[->,>=stealth',shorten >=2pt,auto,node distance=2.5cm,ultra thick,baseline=-1.25ex]
  \tikzstyle{every state}=[fill=white,draw=black,text=black]

  \node[state,draw=red,text=red] (A)                    {\huge 2};
  \node[state] (B) [right of=A] {\huge 1};
  \node[state] (C) [right of=B] {\huge 1};
  \node[state,draw=red,text=red] (D) [right of=C] {\huge 1};
  \node[state,draw=red,text=red] (E) [right of=D] {\huge 1};

\path (C) edge (B);
\path (B) edge (A);
\end{tikzpicture}%
}
\\[0.5ex]
\hline
\rule{0ex}{3.0ex} 16  &
\scalebox{0.5}{%
\begin{tikzpicture}[->,>=stealth',shorten >=2pt,auto,node distance=2.5cm,ultra thick,baseline=-1.25ex]
  \tikzstyle{every state}=[fill=white,draw=black,text=black]

  \node[state,draw=red,text=red] (A)                    {\huge 2};
  \node[state] (B) [right of=A] {\huge 1};
  \node[state,draw=red,text=red] (C) [right of=B] {\huge 1};
  \node[state,draw=red,text=red] (D) [right of=C] {\huge 1};
  \node[state] (E) [right of=D] {\huge 1};

\path (E) edge (D);
\path (B) edge (A);
\end{tikzpicture}%
}
\\[0.5ex]
\hline
\rule{0ex}{3.0ex} 17  &
\scalebox{0.5}{%
\begin{tikzpicture}[->,>=stealth',shorten >=2pt,auto,node distance=2.5cm,ultra thick,baseline=-1.25ex]
  \tikzstyle{every state}=[fill=white,draw=black,text=black]

  \node[state,draw=red,text=red] (A)                    {\huge 2};
  \node[state,draw=red,text=red] (B) [right of=A] {\huge 1};
  \node[state] (C) [right of=B] {\huge 1};
  \node[state,draw=red,text=red] (D) [right of=C] {\huge 1};
  \node[state] (E) [right of=D] {\huge 1};

\path (E) edge (D);
\path (C) edge (B);
\end{tikzpicture}%
}
\\[0.5ex]
\hline
\rule{0ex}{3.0ex} 18  &
\scalebox{0.5}{%
\begin{tikzpicture}[->,>=stealth',shorten >=2pt,auto,node distance=2.5cm,ultra thick,baseline=-1.25ex]
  \tikzstyle{every state}=[fill=white,draw=black,text=black]

  \node[state,draw=red,text=red] (A)                    {\huge 2};
  \node[state,draw=red,text=red] (B) [right of=A] {\huge 1};
  \node[state,draw=red,text=red] (C) [right of=B] {\huge 1};
  \node[state] (D) [right of=C] {\huge 1};
  \node[state] (E) [right of=D] {\huge 1};

\path (E) edge (D);
\path (D) edge (C);
\end{tikzpicture}%
}
\\[0.5ex]
\hline
\rule{0ex}{3.0ex} 19  &
\scalebox{0.5}{%
\begin{tikzpicture}[->,>=stealth',shorten >=2pt,auto,node distance=2.5cm,ultra thick,baseline=-1.25ex]
  \tikzstyle{every state}=[fill=white,draw=black,text=black]

  \node[state,draw=red,text=red] (A)                    {\huge 2};
  \node[state] (B) [right of=A] {\huge 1};
  \node[state,draw=red,text=red] (C) [right of=B] {\huge 1};
  \node[state,draw=red,text=red] (D) [right of=C] {\huge 1};
  \node[state,draw=red,text=red] (E) [right of=D] {\huge 1};
  \path (B) edge (A);
\end{tikzpicture}%
}
\\[0.5ex]
\hline
\rule{0ex}{3.0ex} 20  &
\scalebox{0.5}{%
\begin{tikzpicture}[->,>=stealth',shorten >=2pt,auto,node distance=2.5cm,ultra thick,baseline=-1.25ex]
  \tikzstyle{every state}=[fill=white,draw=black,text=black]

  \node[state,draw=red,text=red] (A)                    {\huge 2};
  \node[state,draw=red,text=red] (B) [right of=A] {\huge 1};
  \node[state,draw=red,text=red] (C) [right of=B] {\huge 1};
  \node[state,draw=red,text=red] (D) [right of=C] {\huge 1};
  \node[state] (E) [right of=D] {\huge 1};
  \path (E) edge (D);
\end{tikzpicture}%
}
\\[0.5ex]
\hline
\rule{0ex}{3.0ex} 21  &
\scalebox{0.5}{%
\begin{tikzpicture}[->,>=stealth',shorten >=2pt,auto,node distance=2.5cm,ultra thick,baseline=-1.25ex]
  \tikzstyle{every state}=[fill=white,draw=black,text=black]

  \node[state,draw=red,text=red] (A)                    {\huge 2};
  \node[state,draw=red,text=red] (B) [right of=A] {\huge 1};
  \node[state,draw=red,text=red] (C) [right of=B] {\huge 1};
  \node[state,draw=red,text=red] (D) [right of=C] {\huge 1};
  \node[state,draw=red,text=red] (E) [right of=D] {\huge 1};
\end{tikzpicture}%
}
\\[0.5ex]
\hline
\end{tabular}}
\end{tabular}
\bigskip
\caption{Weighted proximity graphs of cubic plane Cremona maps}\label{table0}
\end{table}

\begin{proof}[Proof of Theorem \ref{thm:graphs}]
Weighted proximity graphs of cubic plane Cremona maps have 5 vertices, one with weight 2 and the other four with weight 1. Moreover, the proximity inequalities implies that only the double point may have satellite points and there can be at most one of them. For the same reason, a simple base point may have at most one proximate point while the double point may have at most two proximate points. We claim that these conditions are enough to find the 21 weighted proximity graphs of cubic plane Cremona maps, that are listed in Table \ref{table0}.

Indeed, we may start from the weighted graph with no arrow, that is number 21 in the list of Table \ref{table0}. We then add one arrow at each time in such a way that the graph is still admissible and the weights satisfy the proximity inequalities for all vertices.
For example, if we add one arrow to graph 21, then we find exactly two non-isomorphic weighted proximity graphs, that are numbers 19 and 20. If we add a second arrow, then we find other 5 graphs, that are numbers 14--18. And so on: in the following step we find the graphs with three arrows, that are numbers 7--13. In the next step, we find number 2--6 with four arrows and finally there is only one graph, number 1, with five arrows.

This procedure has also been implemented in Maple, in order to double check it.
\end{proof}

\begin{definition}
Let us add to the weighted proximity graph $G_\varphi$ of a \emph{cubic} plane Cremona map $\varphi$ the list of lines passing through \emph{three} base points of $\varphi$.
Let us call this object the \emph{enriched weighted proximity graph} of $\varphi$.
\end{definition}

\begin{remark}\label{rem:line}
These lines are \emph{unexpected}, in the sense that three points in general position are not aligned.

A line through three base points of a cubic plane Cremona map $\varphi$ cannot pass through the (proper) base point of multiplicity 2, otherwise the linear system defining the map would be reducible by B\'ezout Theorem. For the same reason, a line cannot pass through all four simple base points of $\varphi$. Furthermore, there cannot be two different such lines, because they should have two points in common.
\end{remark}

\begin{notation}
The line passing through three base points of a cubic plane Cremona map are indicated as \textcolor{blue}{dashed blue} curve in the pictures of weighted proximity graphs.
\end{notation}

\begin{theorem}\label{thm:31graphs}
There are exactly 31 enriched weighted proximity graphs of cubic plane Cremona maps, up to isomorphism, listed in Table \ref{table2} at page \pageref{table2}.
\end{theorem}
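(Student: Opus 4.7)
The plan is to refine Theorem \ref{thm:graphs} by adding, to each of the 21 weighted proximity graphs listed in Table \ref{table0}, the possible configurations of a line through three base points. By Remark \ref{rem:line}, any such line must avoid the (unique) base point $p_1$ of multiplicity 2, cannot pass through all four simple base points $p_2,p_3,p_4,p_5$ simultaneously, and there is at most one such line per map. Hence, for each graph $G$ of Table \ref{table0}, I would enumerate, up to the automorphism group of $G$, the triples $\{p_i,p_j,p_k\}\subset\{p_2,p_3,p_4,p_5\}$ that may lie on a common line, and for each such triple decide whether the resulting alignment is realized by some cubic plane Cremona map.

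The key inputs are the standard coordinates introduced in Section 3 together with the conic lemmas of Section \ref{sec conics_infinitely_near_points}. Indeed, the existence of a cubic plane Cremona map with prescribed base points $p_1,\ldots,p_5$ (with $p_1$ double and the other four simple) is equivalent to the existence of an irreducible conic through these five points, and Lemmas \ref{lm conic1}--\ref{lm conic5} give precise existence and uniqueness results for such conics, subject to the forbidden patterns of Remark \ref{rek no_conic} and Lemma \ref{lm no_conic2}. For each graph I would therefore fix adapted coordinates as in those lemmas, write down the unique irreducible conic through the five base points, and then test whether the extra condition that three specific simple points be collinear (i) is automatically verified, (ii) is incompatible with the irreducibility of the conic, or (iii) defines a proper nonempty locus, in which case the graph splits into a pair of enriched graphs: one without the dashed line and one with it. Tallying over the 21 weighted graphs, the expectation is that exactly 10 of them admit such a nontrivial alignment, giving the count $21+10=31$.

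The main obstacle will be the case-by-case bookkeeping: for every one of the 21 graphs one must check realizability of each candidate line, verify that the constraint is nontrivial so that both the aligned and non-aligned cases occur, and identify which visually distinct collinearity patterns are in fact equivalent under the symmetry group of the underlying weighted graph (for instance, a chain of infinitely near simple points may admit several labellings producing the same enriched graph). Each single verification is straightforward using standard coordinates, but combining them cleanly across all 21 configurations is delicate; as for Theorem \ref{thm:graphs}, the enumeration can be double-checked by a computer algebra implementation, and the result can be cross-validated \emph{a posteriori} by matching each of the 31 enriched graphs with one of the 31 types of cubic Cremona maps listed in Table \ref{table1}.
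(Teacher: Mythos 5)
Your overall strategy is exactly the paper's: take the 21 weighted proximity graphs of Theorem \ref{thm:graphs}, use Remark \ref{rem:line} to restrict where an unexpected line can go, and enumerate the admissible collinearity patterns up to the symmetry of each graph. The paper adds one streamlining observation you leave implicit but would recover from your coordinate computations: a line can pass through an infinitely near point only if it passes through the proper point below it, so no line can involve any base point infinitely near the double point; this immediately kills graphs 1--11, 14 and 15 of Table \ref{table0} and leaves only 8 graphs to analyse.

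There is, however, a concrete error in your final tally. You assert that a weighted graph admitting an alignment ``splits into a pair of enriched graphs: one without the dashed line and one with it,'' and you then need ``exactly 10 of them'' to admit an alignment to reach $21+10=31$. Neither statement is right: only 8 of the 21 graphs admit a line at all (namely 12, 13, 16, 17, 18, 19, 20, 21 of Table \ref{table0}), and two of these --- graphs 18 and 20 --- admit \emph{two inequivalent} collinearity patterns, hence each splits into \emph{three} enriched graphs (types 17--19 and 24--26 of Table \ref{table2}), not two. The correct count is $13+6\cdot 2+2\cdot 3=31$; the extra $10$ arises as $6\cdot 1+2\cdot 2$, not as ten graphs each contributing one. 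Executed literally as you describe (at most a binary split per graph), your program would yield $21+8=29$ enriched graphs and miss two types. The first paragraph of your proposal (enumerating triples up to the automorphism group of each graph) is the right mechanism and would detect the double splits for graphs 18 and 20, as well as the coincidences for graphs 17 and 21 where distinct triples give isomorphic enriched graphs; you just need to let that enumeration, rather than the with/without dichotomy, drive the final count.
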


\begin{proof}
Recall that a line $\ell$ passes through an infinitely near point $p$ only if $\ell$ passes through the proper point $q$ such that $p\infnear q$ and the strict transform of $\ell$ passes through $p$.
Therefore, the enriched weighted proximity graph cannot include a line passing through a base point infinitely near the base point of multiplicity 2, by Remark \ref{rem:line}.

Hence, there is no line through three base points in the weighted proximity graphs 1--11, 14 and 15 in Table \ref{table0}.

Let us denote by $p_1$ the base point of multiplicity 2 and by $p_2,\ldots,p_5$ the other simple base points going from left to right in the pictures of the weighted proximity graphs in Table \ref{table0}.

The weighted proximity graph 12 in Table \ref{table0} may have a line through the proper simple base point $p_3$ and both of its infinitely near base points, that are $p_4$ and $p_5$. Accordingly, we find the two enriched weighted proximity graphs 10 and 11 in Table \ref{table2}.

Similarly, the weighted proximity graph 13 in Table \ref{table0} may have a line through $p_2$, $p_3$, $p_4$ and we find the two enriched weighted proximity graphs 8 and 9 in Table \ref{table2}.

Then, the weighted proximity graph 16 in Table \ref{table0} may have a line through $p_3$, $p_4$, $p_5$ and we find the two enriched weighted proximity graphs 22 and 23 in Table \ref{table2}.

The weighted proximity graph 17 in Table \ref{table0} may have either a line through $p_2$, $p_4$, $p_5$ or a line through $p_2$, $p_3$, $p_4$, that however give two isomorphic enriched weighted proximity graphs, hence we find the two enriched weighted proximity graphs 20 and 21 in Table \ref{table2}.

The weighted proximity graph 18 in Table \ref{table0} may have either a line through $p_3$, $p_4$, $p_5$ or a line through $p_2$, $p_3$, $p_4$. Accordingly, we find the three enriched weighted proximity graphs 17, 18 and 19 in Table \ref{table2}.

The weighted proximity graph 19 in Table \ref{table0} may have a line through $p_3$, $p_4$, $p_5$ and we find the two enriched weighted proximity graphs 28 and 29 in Table \ref{table2}.

The weighted proximity graph 20 in Table \ref{table0} may have either a line through $p_2$, $p_3$, $p_4$ or a line through $p_2$, $p_4$, $p_5$. (There could be also a line through $p_3$, $p_4$, $p_5$ but the resulting enriched weighted proximity graph would be isomorphic to a previous one.) Accordingly, we find the three enriched weighted proximity graphs 24, 25 and 26 in Table \ref{table2}.

Finally, the weighted proximity graph 21 in Table \ref{table0} may have four different lines that however give four isomorphic enriched weighted proximity graph. Hence we find the two enriched weighted proximity graphs 30 and 31 in Table \ref{table2}.
\end{proof}

\section{Height of points with respect to a plane Cremona map}

\begin{definition}
Let $\varphi$ be a plane Cremona map.
Let us define the \emph{height $h_\varphi(p)$ of a point $p\in\BB(\PP^2)$ with respect to $\varphi$} as follows:
\begin{equation*}\label{eq:height}
\h_\varphi(p)=\begin{cases}
0 & \text{if $p$ is not a base point of $\varphi$,} \\
1 & \text{if $p$ is a proper base point of $\varphi$,} \\
n+1 & \text{if $p$ is a base point of $\varphi$ and $p\infnear_n p'\in\PP^2$.}
\end{cases}
\end{equation*}
\end{definition}

\begin{definition}
Let $\varphi$ be a plane Cremona map. Let us also define the \emph{load} of a proper base point $p$ with respect to $\varphi$ as follows:
\begin{equation*}\label{eq:load}
\load_{\varphi}(p) = \sharp \big\lbrace q \text{ is a base point of } \varphi \: \big\vert \: q \infnear p \big\rbrace +1,
\end{equation*}
that is the number of base points of $\varphi$ which are infinitely near $p$ increased by $1$.
\end{definition}

\begin{remarks}
If $p$ is a simple proper base point of $\varphi$, then the proximity inequality implies that base points that are infinitely near $p$ cannot be satellite; in other words, there is a sequence $p_n \succ_1 p_{n-1} \succ_1 \cdots \succ_1 p_1 \succ_1 p$ where $p_i$ is a base point infinitely near $p$ of order $i$, $i=1,\ldots,n$; therefore, $\load_{\varphi}(p)$ is equal to the maximum height of base points that are infinitely near $p$.

If $\varphi$ is a de Jonqui\`eres map of degree $d$ and it has a unique proper base point $p$, then $\load_{\varphi}(p) = 2d-1$.
\end{remarks}

\begin{notation}\label{not:psi(p)}
Let $\varrho$ be an involutory ordinary quadratic map and let $p_1,p_2,p_3\in\PP^2$ be its base points.
Denote by $\ell_1$ ($\ell_2$, $\ell_3$, resp.) the line passing through $p_2$ and $p_3$ ($p_1$ and $p_3$, $p_1$ and $p_2$, resp.) and denote by $T$ the triangle $\ell_1\cup\ell_2\cup\ell_3$.

Let us define a bijection $\bar\varrho\colon\BB(\PP^2)\to\BB(\PP^2)$ induced by $\varrho$ as follows:
\begin{itemize}
\item $\bar\varrho(p) = p$, if $p=p_i$, $i=1,2,3$;
\item $\bar\varrho(p) = \varrho(p)$, if $p\in\PP^2\setminus T$;
\item $\bar\varrho(p)$ is the point infinitely near $p_i$ of order 1 in the direction of the strict transform of the line passing through $p_i$ and $p$, if $p\in\ell_i\setminus\{p_j,p_k\}$, $\{i,j,k\}=\{1,2,3\}$;
\item $\bar\varrho(p)$ is the point infinitely near $p_j$ of order 1 in the direction of the line $\ell_i$, if $p$ is the point infinitely near $p_i$ of order 1 in the direction of the line $\ell_j$, where $\{i,j\}\subset\{1,2,3\}$;
\item $\bar\varrho(p)$ is the point $q\in \ell_i$ such that the line passing through $p_i$ and $q$ is the strict transform of the line passing through $p_i$ in the direction of the point $p$, if $p$ is infinitely near $p_i$ of order 1 (not lying on $\ell_j$ and $\ell_k$, $\{i,j,k\}=\{1,2,3\}$);
\item $\bar\varrho(p)$ is the point infinitely near $\varrho(p')$ of order $n$ in the direction of the strict transform of a plane curve $C$, if $p$ is infinitely near $p'\in\PP^2$ and $C$ is a curve passing through $p$.
\end{itemize}
Let us say that $\bar\varrho(p)\in\BB(\PP^2)$ is the point corresponding to $p\in\BB(\PP^2)$ via $\varrho$.
\end{notation}

\begin{proposition}\label{PropMAC118a}
Let $p_1, p_2, p_3$ be the base points of an involutory ordinary quadratic plane Cremona map $\varrho\colon \mathbb{P}^2 \dashrightarrow \mathbb{P}^2$.
Let $\varphi\colon \mathbb{P}^2 \dashrightarrow \mathbb{P}^2$ be a plane Cremona map of degree $d>1$ with base points $p_4,\ldots,p_r$ and possibly $p_1,p_2,p_3$. Denote by $m_i$ the multiplicity of $\varphi$ at $p_i$, $i=1,\ldots,r$
(that is $m_i=0$ if $p_i$ is not a base point of $\varphi$, $i=1,2,3$).
Denote by $\bar\varrho(p)$ the (possibly infinitely near) point corresponding to $p$ via $\varrho$ as in Notation \ref{not:psi(p)}.

Then, the composite map $\varphi\circ \varrho^{-1}=\varphi\circ \varrho$ has degree $d-\varepsilon$, where
\[
\varepsilon = m_1+m_2+m_3-d,
\]
and it has $\bar\varrho(p_i)$, $i=4,\ldots,r$, as base point of multiplicity $m_i$. Furthermore, it has multiplicity $m_i-\varepsilon\geq0$ at $p_i$, $i=1,2,3$ (that is, $p_i$ is not a base point of $\varphi\circ \varrho$ when $\varepsilon=m_i$).
\end{proposition}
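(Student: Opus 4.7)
The plan is to analyze the pullback of the homaloidal net $\mathcal{H}$ of $\varphi$ under $\varrho$; since $\varrho$ is involutory, $\varphi\circ\varrho^{-1}=\varphi\circ\varrho$, so the net defining the composite is the moving part of $\{f\circ\varrho : f\in\mathcal{H}\}$. After conjugating by an element of $\Aut(\PP^2)$, which preserves both the degree and the multiplicities of all base points, I would assume that $\varrho$ is the standard involutory map $\sigma\colon[x:y:z]\mapsto[yz:xz:xy]$, with base points $p_1=[1{:}0{:}0]$, $p_2=[0{:}1{:}0]$, $p_3=[0{:}0{:}1]$ and contracted lines $\ell_1=\{x=0\}$, $\ell_2=\{y=0\}$, $\ell_3=\{z=0\}$.

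I would then write a general $f\in\mathcal{H}$ as $\sum f_{ij}\,x^{d-i-j}y^iz^j$; the conditions $\mult_{p_k}(f)\geq m_k$ for $k=1,2,3$ translate respectively into $i+j\geq m_1$, $i\leq d-m_2$ and $j\leq d-m_3$ for every nonzero coefficient $f_{ij}$. A direct substitution then gives
\[
f(yz,xz,xy)=\sum f_{ij}\,x^{i+j}y^{d-i}z^{d-j} = x^{m_1}y^{m_2}z^{m_3}\,g(x,y,z),
\]
so the $\sigma$-contracted lines $\ell_1,\ell_2,\ell_3$ are fixed components of the pullback net with multiplicities $m_1,m_2,m_3$, and the moving part is cut out by $g$, a polynomial of degree $2d-m_1-m_2-m_3 = d-\varepsilon$. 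Hence $\varphi\circ\varrho$ has degree $d-\varepsilon$, as claimed.

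To read off the multiplicities of $g$ at $p_1,p_2,p_3$, I would pass to the affine chart $x=1$: the function $g(1,y,z)=\sum f_{ij}\,y^{d-i-m_2}z^{d-j-m_3}$ has order at the origin equal to $2d-m_2-m_3-\max\{i+j:f_{ij}\ne 0\}$. Since a general $f\in\mathcal{H}$ has no fixed component along $\{x=0\}$, this maximum equals $d$, giving $\mult_{p_1}(g)=d-m_2-m_3=m_1-\varepsilon$; the analysis at $p_2$ and $p_3$ is symmetric. For the remaining base points $p_4,\dots,p_r$, the map $\sigma$ is a local isomorphism on the complement of the triangle $T$, while its minimal resolution simply interchanges the three exceptional $(-1)$-curves over $p_1,p_2,p_3$ with the strict transforms of $\ell_1,\ell_2,\ell_3$; therefore $p_k$ is transformed to $\bar\varrho(p_k)$ with multiplicity $m_k$ preserved.

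The main technical point I expect is verifying that the correspondence $p_k\mapsto\bar\varrho(p_k)$ defined case by case in Notation \ref{not:psi(p)} really matches what happens under $\sigma$ on the minimal resolution: one has to trace how a base point lying on some $\ell_i$, or infinitely near some $p_i$ in a direction along $\ell_j$, gets exchanged with a point of the same infinitesimal height on the opposite side, and how satellites are preserved. In each of the finitely many cases listed in the notation, however, the assertion reduces to a direct check in the standard coordinates developed in Section~3, so no new geometric input is needed beyond careful local bookkeeping.
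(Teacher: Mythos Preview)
Your argument is correct and self-contained. Note that the paper does not give an independent proof of this proposition at all: it simply refers to Proposition~4.2.5 in Alberich-Carrami\~nana's monograph~\cite{MAC}. What you have written is essentially the classical computation underlying that reference, carried out explicitly for the standard quadratic involution~$\sigma$ after conjugation. One small point worth making explicit in your reduction step: an involutory ordinary quadratic map with base points $q_1,q_2,q_3$ need not be \emph{equal} to $\sigma$ after sending the $q_i$ to the coordinate points (there is a $2$-parameter family $[ayz:bxz:cxy]$ of such involutions), but each of these is conjugate to $\sigma$ by a diagonal automorphism, so your reduction is valid. Otherwise the bookkeeping for the degree, the fixed components $x^{m_1}y^{m_2}z^{m_3}$, and the multiplicities $m_i-\varepsilon$ at the $p_i$ is accurate, and your remark that the correspondence $p_k\mapsto\bar\varrho(p_k)$ for $k\ge 4$ is checked case by case on the minimal resolution is exactly how the argument is completed.
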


\begin{proof}
Cf.\ Proposition 4.2.5 in \cite{MAC}.
\end{proof}

\begin{lemma}\label{lem:oq+-1}
Let $\varphi$ be a plane Cremona map and $\varrho$ an involutory ordinary quadratic map.
If $p\in\BB(\PP^2)$ and $\bar p=\bar\varrho(p)\in\BB(\PP^2)$ as in Notation \ref{not:psi(p)}, then
\[
-1 \leq \h_\varphi(p)- \h_{\varphi\circ\varrho}(\bar p)\leq 1.
\]
\end{lemma}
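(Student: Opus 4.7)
The plan is to reduce the height inequality to a combinatorial statement about the bijection $\bar\varrho$ on the bubble space, and then verify it by running through the bullets of Notation \ref{not:psi(p)}.

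First, I would observe that Proposition \ref{PropMAC118a} applied to the involutory map $\varrho$, combined with the fact that $\bar\varrho$ is an involution on $\BB(\PP^2)$, yields the symmetric statement: for $q\in\BB(\PP^2)\setminus\{p_1,p_2,p_3\}$, the point $q$ is a base point of $\varphi$ if and only if $\bar\varrho(q)$ is a base point of $\varphi\circ\varrho$. Consequently, if $p\in\{p_1,p_2,p_3\}$ then $\bar p=p$ is proper, both heights lie in $\{0,1\}$, and the inequality is immediate; and if $p\notin\{p_1,p_2,p_3\}$ but $p$ is not a base point of $\varphi$, both heights vanish.

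It remains to treat the case $p\notin\{p_1,p_2,p_3\}$ with $p$ a base point of $\varphi$ (equivalently, $\bar p$ a base point of $\varphi\circ\varrho$). Here $\h_\varphi(p)$ equals the infinitesimal order of $p$ plus one, and likewise for $\h_{\varphi\circ\varrho}(\bar p)$, so the statement reduces to showing that the infinitesimal orders of $p$ and $\bar p$ differ by at most one. The bullets of Notation \ref{not:psi(p)} describing $\bar\varrho$ on proper points off $T$, on first-neighborhood points of $p_i$ in the direction of $\ell_j$, and on points infinitely near a proper point outside $\{p_1,p_2,p_3\}$ preserve the order. The remaining two bullets, namely proper points on $\ell_i\setminus\{p_j,p_k\}$ (sent to first-order infinitely near $p_i$) and first-neighborhood points of $p_i$ in a generic direction (sent to a proper point on $\ell_i$), shift the order by exactly one.

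The main obstacle is treating higher-order infinitely near points above $\{p_1,p_2,p_3\}$, which are not explicitly addressed in Notation \ref{not:psi(p)}. I would handle these by lifting $\varrho$ to the automorphism $\tilde\varrho$ of the surface $X$ obtained by blowing up $p_1,p_2,p_3$: this automorphism interchanges pairwise the six $(-1)$-curves $E_i$ and $\tilde\ell_i$ and is a local isomorphism elsewhere. Therefore any infinitely near structure over a point of $X$ outside these curves is transported by $\tilde\varrho$ without changing its infinitesimal order, whereas the single unit of possible shift occurs only at the first blow-up in the chain, namely when passing between $E_i$ and $\tilde\ell_i$. This yields the desired bound.
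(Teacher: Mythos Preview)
Your argument is correct and follows essentially the same route as the paper: both proofs reduce to Proposition~\ref{PropMAC118a} and the case description in Notation~\ref{not:psi(p)}, and both hinge on the observation that $\bar\varrho$ can shift the infinitesimal order of a point by at most one. The paper organizes the cases by the value of $\h_\varphi(p)$ and, in the inductive step, simply asserts that if $p\succ_n p'$ with $p'$ proper then ``accordingly'' $n\leq \h_{\varphi'}(\bar p)\leq n+2$; you instead strip off the base-point bookkeeping first and reduce directly to comparing infinitesimal orders of $p$ and $\bar p$.

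Your final paragraph, invoking the lift $\tilde\varrho$ to the blow-up $X$ at $p_1,p_2,p_3$, is a genuine clarification: as you correctly note, Notation~\ref{not:psi(p)} does not literally cover points infinitely near some $p_i$ of order $\geq 2$ (the last bullet writes $\varrho(p')$, which is undefined when $p'=p_i$), and the paper's ``accordingly'' relies implicitly on exactly the mechanism you spell out. So your proof is the same in spirit but slightly more rigorous on this point.
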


\begin{proof}
Set $\varphi'=\varphi\circ\varrho$.
Let us see the possible cases:
\begin{itemize}
\item if $p$ is not a base point of $\varphi$, that is $\h_\varphi(p)=0$, then either $\bar p$ is not a base point of $\varphi'$ or $\bar p$ is a proper base point of $\varphi'$ by Proposition \ref{PropMAC118a} and Notation \ref{not:psi(p)}. In the former case, one has $\h_{\varphi'}(\bar p)=0$, whereas in the latter case one has $\h_{\varphi'}(\bar p)=1$, and the assertion follows;
\item if $p$ is a proper base point of $\varphi$, that is $\h_\varphi(p)=1$, then Proposition \ref{PropMAC118a} and Notation \ref{not:psi(p)} imply that three cases may occur:
\begin{enumerate}
\item $\bar p$ is not a base point of $\varphi'$,
\item $\bar p$ is still a proper base point of $\varphi'$,
\item $\bar p$ is a base point of $\varphi'$ which is infinitely near (of order 1) a proper base point,
\end{enumerate}
accordingly, one has $\h_{\varphi'}(\bar p)=0$, $\h_{\varphi'}(\bar p)=1$, $\h_{\varphi'}(\bar p)=2$, and the assertion follows;
\item if $p$ is a base point of $\varphi$ and $p$ is infinitely near $p'$ of order $n$, where $p'$ is a proper base point of $\varphi$, that is $\h_\varphi(p)=n+1$ and $\h_\varphi(p')=1$, then the previous analysis shows that $0\leq\h_{\varphi'}(\bar p')\leq 2$ and accordingly $n\leq \h_{\varphi'}(\bar p) \leq n+2$, that is the assertion.
\end{itemize}
We conclude that the assertion holds in any case.
\end{proof}

\begin{proposition}\label{oq>height}
Let $\varphi$ be a plane Cremona map. Then
\[
\oq(\varphi) \geq \max\{ \h_\varphi(p) \mid p \in \BB(\PP^2) \}.
\]
\end{proposition}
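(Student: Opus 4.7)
The plan is to prove the inequality by induction on $\oq(\varphi)$, using Lemma \ref{lem:oq+-1} as the inductive step.

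For the base case, when $\oq(\varphi)=0$, Lemma \ref{lm oq_0_1} gives $\varphi\in\Aut(\PP^2)$, which has no base points at all, so $\h_\varphi(p)=0$ for every $p\in\BB(\PP^2)$, and the inequality holds trivially.

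For the inductive step, suppose $\oq(\varphi)=n\geq 1$, and assume the claim holds for every plane Cremona map with ordinary quadratic length at most $n-1$. By the remark following Lemma \ref{lm oq_0_1} (about being able to work only with involutory quadratic maps), a minimal decomposition can be written as $\varphi=\psi_n\circ\psi_{n-1}\circ\cdots\circ\psi_1$, where each $\psi_i$ is an involutory ordinary quadratic map. Set $\varphi'=\psi_n\circ\cdots\circ\psi_2=\varphi\circ\psi_1$; then $\oq(\varphi')\leq n-1$, and the induction hypothesis gives $\h_{\varphi'}(q)\leq n-1$ for every $q\in\BB(\PP^2)$.

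Now fix $p\in\BB(\PP^2)$ and let $\bar p=\bar\psi_1(p)\in\BB(\PP^2)$ be the point corresponding to $p$ via $\psi_1$ as in Notation \ref{not:psi(p)}. Since $\varphi'=\varphi\circ\psi_1$ and $\psi_1$ is an involutory ordinary quadratic map, Lemma \ref{lem:oq+-1} (applied with $\varphi$ and $\varrho=\psi_1$, so that $\varphi\circ\varrho=\varphi'$) yields
\[
\h_\varphi(p)-\h_{\varphi'}(\bar p)\leq 1,
\]
hence
\[
\h_\varphi(p)\leq \h_{\varphi'}(\bar p)+1\leq (n-1)+1=n=\oq(\varphi).
\]
Taking the maximum over $p\in\BB(\PP^2)$ completes the induction.

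The main (and really only) subtlety is to set up the induction so that Lemma \ref{lem:oq+-1} can be invoked cleanly; this requires peeling off $\psi_1$ from the right of the minimal decomposition (so we use $\varphi\circ\psi_1=\varphi'$ and not $\psi_n\circ\varphi''$), and using involutoriness so that $\psi_1^{-1}=\psi_1$. Everything else is a formal consequence of the earlier lemmas.
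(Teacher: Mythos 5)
Your proof is correct and follows essentially the same route as the paper's: induction on $\oq(\varphi)$, peeling off the first involutory ordinary quadratic map $\psi_1$ from a minimal decomposition and applying Lemma \ref{lem:oq+-1} to compare $\h_\varphi(p)$ with $\h_{\varphi\circ\psi_1}(\bar\psi_1(p))$. The only cosmetic differences are that you start the induction at $n=0$ rather than $n=0,1$ and use $\oq(\varphi')\leq n-1$ instead of equality, neither of which affects the argument.
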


\begin{proof}
Let us set $n=\oq(\varphi)$ and let
\[
\varphi=\alpha\circ\varrho_n\circ\varrho_{n-1}\circ\cdots\circ\varrho_2\circ\varrho_1
\]
be a minimal decomposition of $\varphi$, where $\varrho_i$, $i=1,\ldots,n$, is an involutory ordinary quadratic map and $\alpha$ is an automorphism of $\PP^2$.
We proceed by induction on $n$.
Let us set
\[
m(\varphi)=\max\{ \h_\varphi(p) \mid p \in \BB(\PP^2) \} .
\]
The assertion is clearly true for $n=0,1$ because an automorphism has no base point and an ordinary quadratic map has exactly three points of height 1.

We then suppose that $n\geq2$ and we denote $\varphi\circ\varrho_1$ by $\varphi'$, so that $\oq(\varphi')=n-1$ and by induction hypothesis $n-1\geq m(\varphi')$. 
Now Lemma \ref{lem:oq+-1} implies that
\[
\h_{\varphi'}(\bar\varrho_1(p)) \geq \h_{\varphi}(p)-1,
\]
for any $p\in\BB(\PP^2)$, hence $m(\varphi') \geq m(\varphi)-1$. Therefore, we conclude that
\[
n=\oq(\varphi) = (n-1) + 1 \geq m(\varphi')+1 \geq m(\varphi),
\]
that is the assertion.
\end{proof}

\section{Comparison with the classification in \cite{C-D}}\label{sec comparison with CD}

In this section we compare our classification with the one in \cite{C-D}. 

We will freely use Notation \ref{not:1} at page \pageref{not:1}.

\begin{remark}
The classification in \cite{C-D} is not complete. Our type $15$ does not occur in their list, even if it is equivalent to the inverse of their type $11$.
\end{remark}

\begin{remark}
The type 17 in \cite{C-D}, that we denote by $\psi_{17}$, is equivalent to our type 28 in Table \ref{table1} with $\gamma_0=-1$, that we denote by $\varphi_{28,\gamma_0}$, because
$$[y: y+z: x]\circ \varphi_{28,\gamma_0} = \psi_{17}.$$
 However, our type 28 with $\gamma \neq -1$ does not occur in the list in \cite{C-D}. This explains why we added $\dagger$ at type $17$ in the third column of Table \ref{table1}.
\end{remark}

\begin{remarks} \begin{itemize}
\item[$\clubsuit$] In \cite{C-D}, their type $19$, that we denote by $\psi_{19}$, is equivalent to their type $18$ with parameter $\gamma_0 = -3/\sqrt{2}$, that we denote by $\psi_{18,\gamma_0}$. Indeed, one has
$$\psi_{19} = [2x-z:z: \sqrt{2}y-2x] \circ \psi_{18,\gamma_0} \circ [x-y:\sqrt{2}x: \sqrt{2}(z-y)].$$
\item[$\clubsuit$] Similarly, in \cite{C-D}, their type $31$, that we denote by $\psi_{31}$, is equivalent to their type $30$ with parameter $\gamma_0 = 3/\sqrt{2}$, that we denote by $\psi_{30,\gamma_0}$. Indeed, one has
$$\psi_{31} = [y+\sqrt{2}x: -y: 2(z-y)] \circ \psi_{30,\gamma_0} \circ [x+y: -\sqrt{2}y: x+2z].$$
\end{itemize}
This explains why types $19$ and $31$ in \cite{C-D} do not appear in the third column of Table \ref{table1} at page \pageref{table1}.
\end{remarks}

\begin{remarks}\label{rek CD}
\begin{itemize}
\item[$\clubsuit$] Let $\varphi_{24}$ be the map $24$ in Table \ref{table1}. Then, $\varphi_{24}$ is equivalent to type $15$ in \cite{C-D}, that we denote by $\psi_{15}$. Indeed, one has
$$\varphi_{24} \circ [x: x+y: x+y+z] = \psi_{15}.$$
\item[$\clubsuit$] Let $\varphi_{26,\gamma}$ be the map $26$ with parameter $\gamma \neq 0,1$ in Table \ref{table1}. Let $\psi_{29,t}$ be the map of type $29$ with parameter $t \neq 0,1$ in \cite{C-D}.
Then, one has that
$$[ty-x: t(y-tz): ty]\circ\varphi_{26,\gamma_0} \circ [-tx: y: y+z] = \psi_{29,t},$$
where $\gamma_0 = {1}/({1-t})$, that shows that $\varphi_{26,\gamma_0}$ is equivalent to $\psi_{29,t}$.
\item[$\clubsuit$] Let $\varphi_{27,\gamma}$ be the map $27$ with parameter $\gamma \neq 0,1$ in Table \ref{table1}. Let $\psi_{16,t}$ be the map of type $16$ with parameter $t$ such that $t^2 \neq 4$ in \cite{C-D}.
Then, one has that $\psi_{16,t}$ and 
$$\varphi_{27,\gamma_0} \circ [-(t_-x+y):t_+x+y:z],$$
where $\gamma_0 = {t_-}/{t_+}$, are defined by the same homaloidal net, therefore $\varphi_{27,\gamma_0}$  is equivalent to $\psi_{16,t}$.
\item[$\clubsuit$] Let $\varphi_{29,\gamma}$ be the map $29$ with parameter $\gamma \neq 0,1$ in Table \ref{table1}. Let $\psi_{30,t}$ be the map of type $30$ with parameter $t$ such that $t^2 \neq 4$ in \cite{C-D}.
Then, one has that $\psi_{30,t}$ and
$$\varphi_{29,\gamma_0} \circ [t^{\bullet}y:t_{+}(y+t_{-}x):t_{+}y+x+z)],$$
where $\gamma_0 = {1}/{2}+{t}/({2t^{\bullet}})$, are defined by the same homaloidal net, therefore $\varphi_{29,\gamma_0}$  is equivalent to $\psi_{30,t}$.
\item[$\clubsuit$] Let $\varphi_{30,\gamma}$ be the map $30$ with parameter $\gamma \neq 0,1$ in Table \ref{table1}. Let $\psi_{18,t}$ be the map of type $18$ with parameter $t$ such that $t^2 \neq 4$ in \cite{C-D}.
Then, one has that $\varphi_{30,\gamma_0}$, where $\gamma_0 = tt_{+}-1$, and
$$\psi_{18,t} \circ [x:-t_{+}y:t_{+}y-t_{-}x-t^{\bullet}z]$$
are defined by the same homaloidal net, therefore $\varphi_{30,\gamma_0}$ is equivalent to $\psi_{18,t}$.
\item[$\clubsuit$] Let  $\varphi_{31,a,b}$ be the map $31$ with two parameters $a,b$ such that $a \neq b$ and $a,b \neq 0,1$ in Table \ref{table1}. Let $\psi_{32,t,h}$ be the map of type $32$ with two parameters $t,h$ such that $t^2 \neq 4$ and $h \neq t_{\pm}$ in \cite{C-D}.
Then, one has that $\psi_{32,t,h}$ and
$$\varphi_{31,a_0,b_0} \circ [t^{\bullet}x:-t_{-}x-y:-t_{-}x-y-z], \qquad (a_0,b_0) = \bigg(\dfrac{(2-tt_{+})h}{h-t_{+}}, \dfrac{h}{h-t_{+}}\bigg),$$
are defined by the same homaloidal net, therefore $\varphi_{31,a_0,b_0}$ is equivalent to $\psi_{32,t,h}$.
\end{itemize}
\end{remarks}

\begin{remarks}
\begin{itemize}
\item[$\clubsuit$] Type $19$ in \cite{C-D}, that we denote by $\psi_{19}$, is equivalent to $\varphi_{30,-1}$, that is type $30$ in Table \ref{table1} with parameter $\gamma = -1$, because
$$[y-x+z:x-y:y-z]\circ \varphi_{30,-1}\circ [-x:y:z] = \psi_{19}.$$
\item[$\clubsuit$] Type $31$ in \cite{C-D}, that we denote by $\psi_{31}$, is equivalent to $\varphi_{29,-1}$, that is type $29$ in Table \ref{table1} with parameter $\gamma = -1$, because
$$[-y:2x-y-z:2x]\circ \varphi_{29,-1} \circ [x:y:2x+2z] = \psi_{31}.$$
\end{itemize}
\end{remarks}

\begin{remark}\label{CDdecomp}
In Section 6.4, Th\'eor\`eme 6.39 in \cite{C-D}, there is a list of decompositions in quadratic maps of their 32 types of cubic plane Cremona maps.
Note that the decompositions of types 25 and 26 are exchanged and the decomposition of type 24 is incorrect. A correct decomposition in quadratic maps of their type 24 (our type $7$) is:
\[
[x(x^2+yz): y(x^2+yz): xy^2] = [x : z : y] \circ \rho \circ [y : x+y : z] \circ \rho \circ [z : y : x].
\]
\end{remark}

\begin{longtable}[c]{|c|c|}
\hline
\rule{0ex}{2.5ex}$\sharp$ & A decomposition $\alpha_n\circ\textcolor{red}{\sigma}\circ\alpha_{n-1}\circ\textcolor{red}{\sigma}\circ\cdots\circ\alpha_1\circ\textcolor{red}{\sigma}\circ\alpha_0$ \\  \hline

   &  $\alpha_6=[27y+225z: 12y: 8x-8y], \alpha_5 = [2x+5y:5y-x:15x+15z],$ \\
1    & $\alpha_4=[2x+2z: 5x: 3x+10y-2z], \alpha_3 =[x-y: z+2y-x: 2y],$ \\
    &  $\alpha_2=[z: z-2x:2x+2y-z], \alpha_1=[x-y: z-x+y: 2x-y], \alpha_0=[y: y+z: x]$ \\
\hline
   & $\alpha_5=[8y-8x: x+z: 4x], \alpha_4=[x+y: y: z-x]$, \\
 2  &$\alpha_3=[2x: -y-2x: y+2x-2z], \alpha_2=[y-x: x: x+z-y],$ \\
  & $\alpha_1=[x: z-x: y], \alpha_0=[x: z: x+y]$ \\
\hline
  & $\alpha_5=[4y: 4y+3x: 4y+4z], \alpha_4=[3x-z: z-y: y],$ \\
3  & $\alpha_3=[9z+3x: y: 3z-y], \alpha_2=[3y+4z-x: x-z: 3x]$, \\
  & $\alpha_1=[y+z: x-y+z:y-z], \alpha_0=[2y: x+z: x-z]$, \\
\hline
  & $\alpha_4=[y+z: x+2z: z-y], \alpha_3=[2x: y-z: y+z],$\\[-0.75ex]
\raisebox{1ex}{4}  & $\alpha_2=[y-4x-4z: x: z], \alpha_1=[y+z: x: y-z], \alpha_0=[2y: x+z: x-z]$ \\
\hline
   & $\alpha_5=[4y+4z: 12z+x+9y: 6y+8z], \alpha_4=[2y+z: -2x-z: 2x+2z]$, \\
5 & $\alpha_3=[2y: 2y-z+x: z-y], \alpha_2=[2z+2x-y: 2z-y:y]$, \\
   & $\alpha_1=[y-x-z: 2z+x: x+z], \alpha_0=[x-z: y: z]$\\
\hline
  & $\alpha_4=[-2y-4x:4x:2x+y+2z], \alpha_3=[x-2z:z:y], \alpha_2=[y:z-2y-x:2x],$ \\[-0.75ex]
\raisebox{1ex}{6}  & $\alpha_1=[y-x:x+y:2z], \alpha_0=[x-y:x+y:x+y+2z]$	\\
\hline
  & $\alpha_4=[y-x: y: y-z], \alpha_3=[x: z: z+y], \alpha_2=[z: y-x-z: x],$ \\[-0.75ex]
\raisebox{1ex}{7} & $\alpha_1=[x: y: x+z], \alpha_0=[x: x+z: y-x]$  \\
\hline
   & $\alpha_5=[2y: -4x-4y: 8x+9y+z], \alpha_4=[2x-2y: 2y-x: x+2z]$, \\
8 & $\alpha_3=[x+y: x: z-y-2x], \alpha_2=[x+y: -2x: 2x+y+z]$, \\
   & $\alpha_1=[2x+y: y:2x-y+2z], \alpha_0=[-z: x+z: y+z]$ \\
\hline
 & $\alpha_4=[z-y:x:z],\alpha_3=[x:y+z:z], \alpha_2=[x:x-y:z],$\\[-0.75ex]
\raisebox{1ex}{9} &$\alpha_1=[x:y+z:z],\alpha_0=[x:z-y:y]$ \\ 
\hline
 & $\alpha_3=[x+y: -y-z: y], \alpha_2=[z-x:x+y:-y], $ \\[-0.75ex]
\raisebox{1ex}{10}  & $\alpha_1=[z: x-z:y+z-x], \alpha_0=[x: x+z: x+y]$  \\
\hline
11 & $\alpha_3=[z:x:y],\alpha_2=[x:x+z-y:z],\alpha_1=[x:y+z:z],\alpha_0=[x:z-y:y]$ \\
\hline
 & $\alpha_3=[-x: x-z: x+y], \alpha_2=[y-x: x: y+z],$ \\[-0.75ex]
\raisebox{1ex}{12}  & $\alpha_1=[y: x+y: z-x-y], \alpha_0=[-z: x+z: y-z]$ \\
\hline
13 & $\alpha_3=[x:y:z],\alpha_2=[z-y:z:x+z-y],\alpha_1=[x:y+z:z],\alpha_0=[z:x-y:y]$ \\
\hline
14 & $\alpha_3=[x+z:z:y],\alpha_2=[x:z-y:z-x],\alpha_1=[x:y+z:z],\alpha_0=[y:z-x:x]$ \\
\hline
 & $\alpha_3=[z-y : y+z : 4y-4x], \alpha_2=[x+y : y : z],$ \\[-0.75ex]
\raisebox{1ex}{15} & $\alpha_1=[y-x+2z : x-y : x+y], \alpha_0=[x:y:z]$ \\
\hline
  & $\alpha_3=[-x:y:2y-z],\alpha_2=[y:x:x+z],$\\[-0.75ex]
\raisebox{1ex}{16} & $\alpha_1=[x+z:-z:y-2x-2z],\alpha_0=[x:x+z:y-x]$ \\
\hline
  & $\alpha_4=[-y: x-y: 3y+z], \alpha_3=[x+y: y: z], \alpha_2=[z: x: y-x+z], $ \\[-0.75ex]
\raisebox{1ex}{17}  & $\alpha_1=[x: y-x: z], \alpha_0=[y: y+z: x-y]$  \\
\hline
& $\alpha_3=[x+z:z:z-y],\alpha_2=[x:y+z:z],$ \\
\raisebox{1ex}{18} & $\alpha_1=[y-x:z-y-x:x],\alpha_0=[x:y:z]$ \\
\hline
19 & $\alpha_3=[x:z:-y],\alpha_2=[y:z-y:x],\alpha_1=[x:z:y-x],\alpha_0=[x:x+z:y]$ \\
\hline
  & $\alpha_3=[z-y:z:x+z],\alpha_2=[x:y+z:z],$\\[-0.75ex]
\raisebox{1ex}{20} &$\alpha_1=[z-x-y:x-y:y],\alpha_0=[x:y:z]$ \\
\hline
21 & $\alpha_2=[x:y:z],\alpha_1=[x:y:x+y+z], \alpha_0=[x:y:z]$ \\
\hline
  & $\alpha_3=[y-2z:z:x+z],\alpha_2=[x:y+z:z],$\\[-0.75ex]
\raisebox{1ex}{22} &$\alpha_1=[x+y-z:2x+y:-x-y],\alpha_0=[x:y:z]$ \\
\hline
23 & $\alpha_2=[x:-y:z],\alpha_1=[y+z:z:x+y+z],\alpha_0=[z:x:-x-y]$ \\
\hline
  & $\alpha_3=[x:y:z],\alpha_2=[x+z:z-x:6z-4y],$\\[-0.75ex]
\raisebox{1ex}{24} &$\alpha_1=[x:y+z:z],\alpha_0=[y-2x: 2z-3y: y]$ \\
\hline
25 & $\alpha_2=[-x:z:y],\alpha_1=[z:x+y:y+z],\alpha_0=[z:y:-x-y]$ \\
\hline
& $\alpha_2=[\gamma (\gamma x-2x+y)+x+z:\gamma (\gamma x-x+y):\gamma (\gamma x+y)],$\\[-0.75ex]
\raisebox{1ex}{26} & $\alpha_1=[\gamma ((\gamma -1)x-\gamma y+z):\gamma (y-x):\gamma x],\alpha_0=[x:y:z]$ \\
\hline
& $\alpha_2=[\gamma (\gamma x+y):-\gamma (x+y):(\gamma -1)^2 z],\alpha_1=[\gamma x +y:-x-y:(\gamma -1)z],$\\[-0.75ex]
\raisebox{1ex}{27} & $\alpha_0=[x:y:z]$ \\
\hline
  & $\alpha_3=[z:\gamma^2(x+y):\gamma^2(x+\gamma x+\gamma y)],\alpha_2=[x+\gamma y-y:-\gamma y:z],$\\[-0.75ex]
\raisebox{1ex}{28} &$\alpha_1=[x:x-\gamma y:-\gamma z],\alpha_0=[x:y:z]$ \\
\hline
  & $\alpha_2=[y+z:y-x:\gamma (y-x-z)-x+y+z],$\\[-0.75ex]
\raisebox{1ex}{29} &$\alpha_1=[x-y:x-\gamma y:(1-\gamma)z-x+\gamma y],\alpha_0=[x:y:z]$ \\
\hline
  & $\alpha_2=[\gamma^2 x+(1-\gamma)y-z:\gamma(\gamma x-y):\gamma ((\gamma+1)x-y)],$\\[-0.75ex]
\raisebox{1ex}{30} &$\alpha_1=[\gamma (y+z)-y:y+z:x+z],\alpha_0=[z-x:y-x:x]$ \\
\hline
   & $\alpha_2=[a(a(x+(b-1)^2z)+by):a(ax+y):by-((b-1)z-x)a^2-$\\[-0.75ex]
31 &$-(b((1-b)z-x)-y)a],\alpha_1=[ax-by:y-x:(b-1)ax-b(a-1)y+(a-b)z],$ \\[-0.75ex]
   &$\alpha_0=[x:y:z]$ \\ 
\hline
\addlinespace 
\caption{Decompositions of the $31$ cubic plane Cremona maps listed in Table \ref{table1}.} \label{table3}
\end{longtable}

\begin{remark}
For types $9,11,13,14,18,19,20,21,22,23,24,25,26,27,29,30$ and $31$, the decompositions listed in \cite{C-D} are already minimal.
\end{remark}

\begin{table}[!h]
\centering
\begin{tabular}{|c|c|}
\hline
\rule{0ex}{2.5ex}$\sharp$ & A decomposition into quadratic maps \\ \hline
\rule{0ex}{2.5ex}1  & $[x:z:y]\circ\textcolor{red}{\rho}\circ[z:y:x]\circ\textcolor{red}{\tau}\circ[z:y:-x]\circ\textcolor{red}{\rho}\circ[x:z:y]$ \\[0.25ex] \hline
\rule{0ex}{2.5ex}2  & $[x+z:y:z]\circ\textcolor{red}{\rho}\circ[y-x:z:x]\circ\textcolor{red}{\tau}\circ[y:x:-z]$ \\[0.25ex] \hline
\rule{0ex}{2.5ex}3  & $[z:x:y]\circ\textcolor{red}{\rho}\circ[z:y:x]\circ\textcolor{red}{\tau}\circ[z:x:-y]$ \\[0.25ex] \hline
\rule{0ex}{2.5ex}4  & $[y:z:x]\circ\textcolor{red}{\tau}\circ [y:x:-z]\circ\textcolor{red}{\tau}\circ[x:z:-y]$ \\[0.25ex] \hline
\rule{0ex}{2.5ex}5  & $[x:z:y]\circ\textcolor{red}{\tau}\circ[-z:-y:x+z]\circ\textcolor{red}{\rho}\circ[y-z:x:z]$ \\[0.25ex] \hline
\rule{0ex}{2.5ex}6  & $[-y:z:x]\circ\textcolor{red}{\rho}\circ[x+y+2z:y+z:-z]\circ\textcolor{red}{\rho}\circ [x+z:x:y-x]$ \\[0.25ex] \hline
\rule{0ex}{2.5ex}7  & $[-x-z:z:y]\circ\textcolor{red}{\rho}\circ[-z-y:x+y+z:z]\circ\textcolor{red}{\rho}\circ[z-x:y:x]$ \\[0.25ex] \hline
\rule{0ex}{2.5ex}8  & $[y:x:-z]\circ\textcolor{red}{\tau}\circ[z:x+z:y]\circ\textcolor{red}{\rho}\circ[x-z:y:z]$ \\[0.25ex] \hline
\rule{0ex}{2.5ex}9  & $[y:-x-z:z]\circ\textcolor{red}{\rho}\circ[-2z-x:x+y+z:z]\circ\textcolor{red}{\rho}\circ[x-y: z:y]$ \\[0.25ex] \hline
\rule{0ex}{2.5ex}10 & $[y:x+z:z]\circ\textcolor{red}{\rho}\circ[z-y:x+z:y]\circ\textcolor{red}{\rho}\circ[z-x:y: x]$ \\[0.25ex] \hline
\rule{0ex}{2.5ex}11 & $[x:z:z-y]\circ\textcolor{red}{\rho}\circ [z:x+y+z:y]\circ\textcolor{red}{\rho}\circ[z-y:x : y]$ \\[0.25ex] \hline
\rule{0ex}{2.5ex}12 & $[z:x+z:y]\circ\textcolor{red}{\rho}\circ [x+z-y:z:y]\circ\textcolor{red}{\rho}\circ[y-z:x:z]$ \\[0.25ex] \hline
\rule{0ex}{2.5ex}13 & $[z:y:x]\circ\textcolor{red}{\sigma}\circ [y+x+z:z:y]\circ\textcolor{red}{\rho}\circ [z-y: x:y]$ \\[0.25ex] \hline
\rule{0ex}{2.5ex}14 & $[y:y+z:-x]\circ\textcolor{red}{\rho}\circ[x+z:z-y:y]\circ\textcolor{red}{\rho}\circ[z-x:y:x]$ \\[0.25ex] \hline
\rule{0ex}{2.5ex}15 & $[x:z+x:y]\circ\textcolor{red}{\rho}\circ[y:z:x-y]\circ\textcolor{red}{\sigma}$ \\[0.25ex] \hline
\rule{0ex}{2.5ex}16 & $[x:z:y+z]\circ\textcolor{red}{\rho}\circ[y:x-z-y:y+z]\circ\textcolor{red}{\sigma}\circ[x+z:y-x:x]$ \\[0.25ex] \hline
\rule{0ex}{2.5ex}17 & $[y:x:-z]\circ\textcolor{red}{\tau}\circ\textcolor{red}{\sigma}$ \\[0.25ex] \hline
\rule{0ex}{2.5ex}18 & $[x+z:z:-y]\circ\textcolor{red}{\rho}\circ[y-x:y-z:x]\circ\textcolor{red}{\sigma}$ \\[0.25ex] \hline
\rule{0ex}{2.5ex}19 & $[z:x:y+z]\circ\textcolor{red}{\rho}\circ[z:x-y-z:y]\circ\textcolor{red}{\sigma}\circ[x+z:y:x]$ \\[0.25ex] \hline
\rule{0ex}{2.5ex}20 & $[y:z:x+z]\circ\textcolor{red}{\rho}\circ[z-x-y:x:y]\circ\textcolor{red}{\sigma}$ \\[0.25ex] \hline
\rule{0ex}{2.5ex}22 & $[y-z:z:x+z]\circ\textcolor{red}{\rho}\circ[z-x-y:x:x+y]\circ\textcolor{red}{\sigma}$ \\[0.25ex] \hline
\rule{0ex}{2.5ex}24 & $[y+z:-z:x-z]\circ\textcolor{red}{\rho}\circ[x-y+z:y-x:x]\circ\textcolor{red}{\sigma}$ \\[0.25ex] \hline
\rule{0ex}{2.5ex}28 & $[x:z-y:2\gamma z-(1+\gamma) y]\circ \textcolor{red}{\rho} \circ [(1-\gamma)z: x-y: x-\gamma y]\circ\textcolor{red}{\sigma}$ \\[0.25ex] \hline
\end{tabular}
\bigskip
\caption{Decomposition into quadratic maps of some types.}\label{table4}
\end{table}

\begin{remark}
With those maps (including types $21,23,25,26,27,29,30,31$ listed in Table \ref{table1}), whose have the ordinary quadratic length exactly $2$ (hence their quadratic lengths are also $2$), then their minimal decompositions into quadratic maps can be found in Table \ref{table3}.
\end{remark}

\section{Proof of the classification Theorem \ref{thm main1}}\label{proof main theorem1}

According to Theorem \ref{thm:31graphs}, there are 31 enriched weighted proximity graphs of cubic plane Cremona maps, listed in Table \ref{table2} at page \pageref{table2}.
We will show that a  cubic plane Cremona map with enriched weighted proximity graph of type $n$, $1\leq n \leq 31$, in Table \ref{table2} is equivalent to the map of type $n$ in Table \ref{table1} at page \pageref{table1}.

\begin{lemma}
Let $\varphi_1$ be the map 1 in Table \ref{table1} and let $\psi_1$ be a map with enriched weighted proximity graph 1 in Table  \ref{table2}. Then, $\psi_1$ is equivalent to $\varphi_1$.
\end{lemma}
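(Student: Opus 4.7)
The plan is to normalize the five base points of $\psi_1$ using an automorphism of $\PP^2$ on the source, and then invoke the fact that a cubic homaloidal net is determined by its base points with assigned multiplicities.

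Labelling the vertices of graph~1 in Table \ref{table2} as $p_1,\ldots,p_5$ from left to right, the arrows encode: $p_1\in\PP^2$ is the double base point; $p_2\infnear_1 p_1$; $p_3\infnear_1 p_2$ with $p_3\prox p_1$, so $p_3\satel p_1$; $p_4\infnear_1 p_3$ with $p_4\notsatel p_2$; and $p_5\infnear_1 p_4$ with $p_5\notsatel p_3$. By Lemma \ref{lem: 4 points lemma} and the construction of Section 3, I would apply an automorphism of $\PP^2$ on the source and assume $p_1=[1:0:0]$ and $p_2=(p_1,0)$ in standard coordinates; the satellite condition on $p_3$ then forces $p_3=(p_1,0,\infty)$ automatically. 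A direct blow-up analysis shows that the remaining standard coordinates must be $p_4=(p_1,0,\infty,t_4)$ with $t_4\in\C^*$ (to exclude $p_4\prox p_1$ and $p_4\prox p_2$) and $p_5=(p_1,0,\infty,t_4,t_5)$ with $t_5\in\C$. The stabilizer of the triple $(p_1,p_2,p_3)$ inside $\Aut(\PP^2)$ is the subgroup of $\PGL_3$ consisting of upper triangular matrices preserving $[1:0:0]$ and the tangent direction $z=0$, of projective dimension $5$. Tracking its action on $(t_4,t_5)$ through the blow-up charts of Section 3 shows that the generic orbit has dimension $2$, so one may further normalize $(t_4,t_5)$ to the specific pair realized by the base points of $\varphi_1=[xz^2+y^3:yz^2:z^3]$, which a direct blow-up computation identifies with a configuration of the required shape.

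Once the five base points of $\psi_1$ coincide with those of $\varphi_1$, the two maps share the same homaloidal net, namely the linear system of plane cubics with a double point at $p_1$ and simple points at $p_2,p_3,p_4,p_5$. This system imposes $3+4=7$ linear conditions on the $9$-dimensional projective space of cubic forms, so it has projective dimension at most $2$; equality holds because $xz^2+y^3$, $yz^2$, $z^3$ belong to it and are linearly independent. Therefore $\psi_1$ and $\varphi_1$ differ only by a post-composition with an element of $\Aut(\PP^2)$, and together with the source normalization this yields $\psi_1\sim\varphi_1$. The main obstacle is the explicit orbit computation of the $5$-dimensional stabilizer on the $(t_4,t_5)$-plane; it is routine in principle but requires careful chart bookkeeping, complicated by the satellite nature of $p_3$ which forces a switch of blow-up chart in the middle of the construction.
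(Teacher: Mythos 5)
Your overall strategy coincides with the paper's: read off the configuration from graph~1, put the base points in standard coordinates by source automorphisms, and conclude that the two maps are defined by the same homaloidal net. Your identification of the configuration is correct and matches the paper's (the satellite point is forced to be $(p_1,0,\infty)$ once $p_1=[1:0:0]$ and $p_2=(p_1,0)$ are fixed, and the remaining coordinates are $t_4\in\C^*$, $t_5\in\C$).

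The gap is at the normalization of $(t_4,t_5)$, which is the actual content of the lemma. You assert that the $5$-dimensional stabilizer of $(p_1,p_2,p_3)$ has generic orbit of dimension $2$ on the $(t_4,t_5)$-plane and deduce that any admissible pair can be moved to the one realized by $\varphi_1$. But a full-dimensional \emph{generic} orbit only shows that the open orbit is dense in $\C^*\times\C$; it does not exclude lower-dimensional orbits inside the admissible locus, so the transitivity you need does not follow from the dimension count, and you never carry out the computation you defer as chart bookkeeping. The paper closes exactly this point by writing the automorphisms down: $[-x:u_3y:u_3z]$ fixes $p_1,p_2,p_3$ and moves the fourth coordinate $-1$ to an arbitrary $u_3\in\C^*$, and $[3x:3y+u_4z:3z]$ then moves the fifth coordinate $0$ to an arbitrary $u_4\in\C$; that two-line verification is the whole proof. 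A secondary slip: in your last paragraph you claim the system of cubics singular at $p_1$ and through $p_2,\dots,p_5$ has dimension \emph{at most} $2$ because it is cut out by $3+4=7$ linear conditions, and that equality follows from exhibiting three independent members. Both arguments give lower bounds ($\dim\geq 9-7=2$ and $\dim\geq 2$ respectively); what is needed is the upper bound, i.e.\ independence of the conditions, which the paper also leaves implicit but which your justification does not supply.
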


\begin{proof}
The base points of $\varphi_1$ are $p_0 = [1: 0: 0]$ of multiplicity $2$ and $p_1,\ldots,p_4$ with $p_4\succ_{1}p_3\succ_{1}p_2\succ_{1}p_1\succ_{1}p_0$ and $p_2 \satel p_0$, whose standard coordinates are $p_1 =(p_0,0)$, $p_2=(p_0,0,\infty)$, $p_3=(p_0,0,\infty,-1)$, $p_4=(p_0,0,\infty,-1,0)$.

The base points of $\psi_1$ are $q_0$ of multiplicity 2 and $q_1,\ldots,q_4$ with $q_4\succ_{1}q_3\succ_{1}q_2\succ_{1}q_1\succ_{1}q_0$ and $q_2 \satel p_0$. Clearly, there exists an automorphism $\alpha_1$ of $\PP^2$ such that $\alpha_1(p_0)=q_0$ and $\alpha_1(p_1)=q_1$, so that also $\alpha_1(p_2)=q_2$.

The base points of $\psi_1\circ\alpha_1$ are then $p_0, p_1, p_2, q'_3, q'_4$ where $q'_3$ has standard coordinates $q'_3=(p_0,0,\infty,u_3)$  for some $u_3 \in \C^*$ because, if $u_3$ were 0, then $q'_3$ would be proximate to $p_0$, a contradiction, and, if $u_3$ were $\infty$, then $q'_3$ would be proximate to $p_1$, again a contradiction.

An automorphism $\alpha_2$ of $\PP^2$ that fixes $p_0,p_1,p_2$ and that maps $p_3$ to $q'_3$ is
$$\alpha_2([x:y:z]) = [-x: u_3 y: u_3 z].$$
The base points of $\psi_1\circ\alpha_1\circ\alpha_2$  are then $p_0, p_1, p_2, p_3, q''_4$ where $q''_4$ has standard coordinates $q''_4=(p_0,0,\infty,-1,u_4)$ for some $u_4 \in \C$ because, if $u_4$ were $\infty$, then $q''_4$ would be proximate to $p_2$, a contradiction.

An automorphism $\alpha_3$ of $\PP^2$ that fixes $p_0,p_1,p_2,p_3$ and that maps $p_4$ to $q''_4$ is
$$\alpha_3([x:y:z]) = [3x : 3y+u_{4}z : 3z].$$
Therefore, the maps $\varphi_1$ and $\psi_1\circ\alpha_1\circ\alpha_2\circ\alpha_3$ are defined by the same homaloidal net and, hence, $\varphi_1$ and $\psi_1$ are equivalent.
\end{proof}

\begin{lemma}
Let $\varphi_2$ be the map 2 in Table \ref{table1} and let $\psi_2$ be a map with enriched weighted proximity graph 2 in Table  \ref{table2}. Then, $\psi_2$ is equivalent to $\varphi_2$.
\end{lemma}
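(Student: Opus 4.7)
The plan is to adapt the strategy used for $\varphi_1$: first compute the base-point data of $\varphi_2$ in standard coordinates, and then, given any $\psi_2$ realising the enriched weighted proximity graph~$2$ of Table~\ref{table2}, construct a chain of automorphisms of $\PP^2$ reducing $\psi_2$ to $\varphi_2$.

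First, I would identify the base-point data of $\varphi_2 = [x(x^2+yz) : y^3 : y(x^2+yz)]$. The only proper base point is $q_0 = [0:0:1]$ of multiplicity $2$; successive blowing-ups in the affine chart $z=1$ produce a chain of four simple base points $q_4 \succ_1 q_3 \succ_1 q_2 \succ_1 q_1 \succ_1 q_0$ with standard coordinates $q_1 = (q_0,0)$, $q_2 = (q_0,0,-1)$, $q_3 = (q_0,0,-1,0)$, $q_4 = (q_0,0,-1,0,0)$. All entries are finite (consistent with the absence of satellite points in graph~$2$) and $t_2 = -1 \ne 0$ (consistent with the absence of three collinear base points in graph~$2$).

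Next, let $\psi_2$ be any cubic plane Cremona map whose base points $q'_0, q'_1, \ldots, q'_4$ realise graph~$2$, i.e., a chain with $q'_0$ proper of multiplicity $2$, all $q'_{i+1}\succ_1 q'_i$, none satellite, and no three collinear. Since $\Aut(\PP^2)$ acts transitively on pairs (proper point, first-order infinitely near point), I pick $\alpha_1 \in \Aut(\PP^2)$ sending $(q_0, q_1)$ to $(q'_0, q'_1)$. The composite $\psi_2 \circ \alpha_1$ then has $q_0$ and $q_1$ as its first two base points, while its third base point has standard coordinates $(q_0, 0, u_2)$ with $u_2 \in \C^*$ (finite by the no-satellite condition, nonzero by the no-collinear condition). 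I then choose $\alpha_2$ in the $4$-dimensional stabiliser of $(q_0,q_1)$ that maps $q_2 = (q_0,0,-1)$ to $(q_0, 0, u_2)$, patterned on the explicit $\alpha_2 = [-x : u_3 y : u_3 z]$ used in the proof for $\varphi_1$. Iterating the procedure, I construct $\alpha_3$ fixing $q_0, q_1, q_2$ and sending $q_3$ to the new third base point, and $\alpha_4$ fixing $q_0, q_1, q_2, q_3$ and sending $q_4$ to the remaining base point.

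The main obstacle I anticipate is the bookkeeping: at each step one must verify that the stabiliser subgroup of the already-matched base points still acts transitively on the admissible values of the next standard coordinate $u_i$, and that the graph~$2$ hypotheses force those values to lie in the admissible range (nonzero and finite for $u_2$, finite for $u_3, u_4$). Once all five base points of $\psi_2 \circ \alpha_1 \circ \alpha_2 \circ \alpha_3 \circ \alpha_4$ coincide with those of $\varphi_2$, the two maps are defined by the same homaloidal net, hence differ by a left automorphism of $\PP^2$, whence $\psi_2$ is equivalent to $\varphi_2$.
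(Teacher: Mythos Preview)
Your iterative coordinate-matching approach is sound and will go through, but it is genuinely different from the paper's proof. The paper exploits the fact that, for graph~2, there is a unique irreducible conic $C$ through all five base points (this is Lemma~\ref{lm conic5}, since the chain has no satellites and $q_0,q_1,q_2$ are not collinear). One then invokes Lemma~\ref{lm two_conic_equi2} to find a single automorphism $\alpha$ with $\alpha(C_2)=C_1$ and $\alpha(q_0)=p_0$; because the conic determines its own tangent direction at $p_0$ and hence all four infinitely near points on the chain, this one $\alpha$ automatically matches $q_1,\ldots,q_4$ as well. The whole argument is three lines.

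Your route---building $\alpha_1,\alpha_2,\alpha_3,\alpha_4$ successively---mirrors what the paper does for types like $\varphi_1$ where no such conic exists (there a satellite point obstructs it). It works here too, but requires four transitivity checks rather than one global invariant. Two small corrections: the stabiliser of $(q_0,q_1)$ is $5$-dimensional, not $4$; and the explicit $\alpha_2=[-x:u_3y:u_3z]$ from the type~1 proof is tailored to that satellite configuration and will not serve verbatim here---you will need to compute fresh formulas at each stage (for instance, something like $[x:-u_2 y:z]$ for your $\alpha_2$). The bookkeeping you flag is real but routine. The conic argument buys you a shortcut precisely when the graph admits an irreducible conic through all five base points; your method is the fallback the paper itself uses whenever that shortcut is unavailable.
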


\begin{proof}
The base points of $\varphi_2$ are $p_0=[0:0:1]$ of multiplicity 2 and $p_1,\ldots,p_4$ with standard coordinates $p_1=(p_0,0)$, $p_2=(p_0,0,-1)$, $p_3=(p_0,0,-1,0)$, $p_4=(p_0,0,-1,0,0)$. So there is a unique irreducible conic passing through $p_0, \ldots, p_4$, that is $C_1 \colon x^2+yz=0$. Let $q_0$ be the double base point of $\psi_2$ and let $q_1,\ldots,q_4$ be the simple base points of $\psi_2$. According to Lemma \ref{lm conic5}, there is a unique irreducible conic $C_2$ passing through $q_0,\ldots,q_4$. Moreover, Lemma \ref{lm two_conic_equi2} implies that there exists an automorphism $\alpha$ of $\PP^2$ such that $\alpha(q_0)=p_0$ and $\alpha(C_2)=C_1$.
This forces $\alpha(q_i)=p_i$, $i=1,2,3,4$. Therefore, $\psi_2$ is equivalent to $\varphi_2$.
\end{proof}

\begin{lemma}
Let $\varphi_3$ be the map 3 in Table \ref{table1} and let $\psi_3$ be a map with enriched weighted proximity graph 3 in Table  \ref{table2}. Then, $\psi_3$ is equivalent to $\varphi_3$.
\end{lemma}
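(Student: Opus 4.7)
The plan is to compute the base points of $\varphi_3 = [xz^2 : x^3+xyz : z^3]$ in standard coordinates and then to find automorphisms $\alpha_1, \alpha_2, \alpha_3 \in \Aut(\PP^2)$ such that $\psi_3 \circ \alpha_1 \circ \alpha_2 \circ \alpha_3$ has the same base points as $\varphi_3$, whence $\psi_3$ and $\varphi_3$ are equivalent. Working in the affine chart $y = 1$, one sees that $\varphi_3$ has a unique proper base point $p_0 = [0:1:0]$ of multiplicity $2$. Blowing up $p_0$ yields two distinct simple base points in its first neighborhood: $p_1 = (p_0, 0)$ (in the first chart of the blowup) and $p_2 = (p_0, \infty)$ (at the origin of the second chart). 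Continuing the resolution above $p_1$ produces $p_3 = (p_0, 0, -1)$ (proximate to $p_1$ but not to $p_0$) and then $p_4 = (p_0, 0, -1, 0)$, matching the enriched weighted proximity graph 3.

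Now let $q_0, q_1, q_2, q_3, q_4$ be the base points of $\psi_3$, with $q_0$ proper of multiplicity $2$, $q_1, q_2 \infnear_1 q_0$, $q_3 \infnear_1 q_1$ not proximate to $q_0$, and $q_4 \infnear_1 q_3$ (possibly after relabelling $q_1$ and $q_2$). The datum $(q_0, \text{tangent line to } q_1, \text{tangent line to } q_2)$ may be encoded by three non-collinear proper points, namely $q_0$ together with a chosen point on each tangent line. Applying Lemma \ref{lem: 4 points lemma} after adjoining a fourth point in general position produces an automorphism $\alpha_1 \in \Aut(\PP^2)$ such that $\alpha_1(p_0) = q_0$, $\alpha_1(p_1) = q_1$, $\alpha_1(p_2) = q_2$. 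Consequently, $\psi_3 \circ \alpha_1$ has base points $p_0, p_1, p_2$ together with two further points $q_3' = (p_0, 0, t_3)$ and $q_4' = (p_0, 0, t_3, t_4)$ with $t_3 \in \C \setminus \{0\}$: the conditions $t_3 \ne 0$ and $t_3 \ne \infty$ are forced, respectively, by the absence of three collinear base points and by the absence of a proximity $q_3 \prox q_0$.

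The stabilizer of $(p_0, p_1, p_2)$ in $\Aut(\PP^2)$ is the $4$-dimensional subgroup of linear maps $[x:y:z] \mapsto [a x : b x + y + e z : f z]$ with $a f \ne 0$. A direct chart computation shows that this stabilizer acts on the standard coordinate $t_3$ by the rule $t_3 \mapsto (f/a^2)\, t_3$, so setting $f = -a^2 / t_3$ yields an automorphism $\alpha_2$ after which $p_0, p_1, p_2, p_3$ are all base points of $\psi_3 \circ \alpha_1 \circ \alpha_2$. The residual stabilizer of $(p_0, p_1, p_2, p_3)$, obtained by further imposing $f = a^2$, is $3$-dimensional and acts non-trivially on the remaining parameter $t_4 \in \C$, so a final automorphism $\alpha_3$ sends $q_4''$ to $p_4$. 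Thus $\psi_3 \circ \alpha_1 \circ \alpha_2 \circ \alpha_3$ is defined by the same homaloidal net as $\varphi_3$, hence equals $\beta \circ \varphi_3$ for some $\beta \in \Aut(\PP^2)$, and $\psi_3$ is equivalent to $\varphi_3$.

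The main obstacle is the explicit chart computation of how the stabilizers of $(p_0, p_1, p_2)$ and of $(p_0, p_1, p_2, p_3)$ act on the standard coordinates $t_3$ and $t_4$. Both are routine local calculations, analogous to the ones carried out in the proof of the previous lemma for $\varphi_1$, and the transitivity of these actions is ultimately what makes type 3 a single map rather than a family.
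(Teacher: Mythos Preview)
Your proof is correct and follows essentially the same strategy as the paper: compute the base points of $\varphi_3$ in standard coordinates, then successively apply automorphisms $\alpha_1,\alpha_2,\alpha_3$ to bring the base points of $\psi_3$ onto those of $\varphi_3$. The only differences are cosmetic: you swap the labels of the two points in the first neighbourhood of $p_0$ (your $p_1=(p_0,0)$ carries the chain, whereas the paper's $p_1=(p_0,\infty)$ is the isolated one), and where the paper writes down explicit formulas $\alpha_2=[u_3x:-y:u_3z]$ and $\alpha_3=[x:y-u_4x:z]$, you instead describe the full stabiliser and assert transitivity. One small point worth tightening: you should say explicitly why $t_4\neq\infty$ (it would force $q_4''$ proximate to $p_1$, contradicting the graph), and your justification ``absence of three collinear base points'' for $t_3\neq 0$ is really the statement that such a line through the double point $p_0$ would split off the net.
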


\begin{proof}
The base points of $\varphi_3$ are $p_0 = [0:1:0]$ of multiplicity 2 and $p_1, p_2, p_3, p_4$ where $p_1\succ_{1}p_0$ and $p_4\succ_{1}p_3\succ_{1}p_2\succ_{1}p_0$ with standard coordinates $p_1 =(p_0,\infty), p_2=(p_0,0),p_3=(p_0,0,-1)$ and $p_4=(p_0,0,-1,0)$.

The base points of $\psi_3$ are $q_0$ of multiplicity 2 and $q_1,\ldots,q_4$ where $q_1\succ_{1}q_0$ and $q_4\succ_{1}q_3\succ_{1}q_2\succ_{1}q_0$. Clearly, there exists an automorphism $\alpha_1$ of $\PP^2$ such that $\alpha_1(q_i)=p_i$ for $i=0,1,2$.

The base points of $\psi_3\circ\alpha_1$ are then $p_0, p_1, p_2, q'_3, q'_4$ where $q'_3$ has standard coordinates $q'_3=(p_0,0,u_3)$  for some $u_3 \in \C^*$ because, if $u_3$ were 0, then $q'_3$ would be aligned with $p_0$ and $p_2$, a contradiction, and, if $u_3$ were $\infty$, then $q'_3$ would be proximate to $p_0$, again a contradiction.

An automorphism $\alpha_2$ of $\PP^2$ that fixes $p_0,p_1,p_2$ and that maps $p_3=(p_0,0,-1)$  to $q'_3=(p_0,0,u_3)$ is 
$$\alpha_2([x:y:z]) = [u_3 x: -y: u_3 z].$$
The base points of $\psi_3\circ\alpha_1\circ\alpha_2$  are then $p_0, p_1, p_2, p_3, q''_4$ where $q''_4$ has standard coordinates $q''_4=(p_0,0,-1,u_4)$ for some $u_4 \in \C$ because, if $u_4$ were $\infty$, then $q''_4$ would be proximate to $p_2$, a contradiction.

An automorphism $\alpha_3$ of $\PP^2$ that fixes $p_0,p_1,p_2,p_3$ and that maps $p_4$ to $q''_4$ is
$$\alpha_3([x:y:z]) = [x : y-u_{4}x : z].$$
Therefore, the maps $\varphi_3$ and $\psi_3\circ\alpha_1\circ\alpha_2\circ\alpha_3$ are defined by the same homaloidal net and, hence, $\varphi_3$ and $\psi_3$ are equivalent.
\end{proof}

\begin{lemma}
Let $\varphi_4$ be the map 4 in Table \ref{table1} and let $\psi_4$ be a map with enriched weighted proximity graph 4 in Table  \ref{table2}. Then, $\psi_4$ is equivalent to $\varphi_4$.
\end{lemma}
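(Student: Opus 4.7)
The plan follows the template of the three preceding lemmas: I would first locate the base points of $\varphi_4 = [x^2z : x^3+z^3+xyz : xz^2]$, then show that for any $\psi_4$ with enriched weighted proximity graph 4 one can find automorphisms $\alpha_1, \alpha_2 \in \Aut(\PP^2)$ so that $\psi_4 \circ \alpha_1 \circ \alpha_2$ has exactly the same base locus as $\varphi_4$. Both maps are then defined by the same $2$-dimensional homaloidal net, hence $\varphi_4$ and $\psi_4$ are equivalent. Working in the affine chart $y = 1$ and performing successive blow-ups, the base points of $\varphi_4$ turn out to be the proper double point $p_0 = [0:1:0]$ together with four simple infinitely near base points whose standard coordinates are $p_1 = (p_0, 0)$, $p_2 = (p_0, 0, -1)$, $p_3 = (p_0, \infty)$, $p_4 = (p_0, \infty, -1)$, with proximity and satellite pattern matching graph 4 exactly.

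For a general $\psi_4$ with this graph, the base points will be $q_0$ (proper, multiplicity 2) together with $q_1 = (q_0, s_1)$, $q_2 = (q_0, s_1, s_2)$, $q_3 = (q_0, s_3)$, $q_4 = (q_0, s_3, s_4)$, where $s_1 \neq s_3$ in $\PP^1$ and $s_2, s_4 \in \C^*$. Here $s_2, s_4 \neq \infty$ is forced by the non-satellite conditions $q_2 \notsatel q_0$ and $q_4 \notsatel q_0$; and $s_2, s_4 \neq 0$ follows from the absence of dashed-blue edges, since otherwise the line through $q_0, q_1, q_2$ (or through $q_0, q_3, q_4$) would cut each cubic of the net with intersection multiplicity at least $2+1+1=4>3$, forcing the line to be a fixed component and contradicting that $\psi_4$ is an honest cubic Cremona map.

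For Step 1, I would invoke Lemma \ref{lem: 4 points lemma} to produce $\alpha_1 \in \Aut(\PP^2)$ sending $p_0 \mapsto q_0$ and mapping the $x$-axis and the $z$-axis (the two lines through $p_0$ carrying $p_1, p_3$) to the lines through $q_0$ in directions $s_1, s_3$; this automatically forces $\alpha_1(p_1) = q_1$ and $\alpha_1(p_3) = q_3$. After this reduction, $\psi_4 \circ \alpha_1$ will have base points $p_0, p_1, (p_0, 0, t_2), p_3, (p_0, \infty, t_4)$ for some $t_2, t_4 \in \C^*$. For Step 2, I would take the diagonal $\alpha_2 = [\lambda x : y : \mu z]$, which manifestly fixes $p_0, p_1$ and $p_3$. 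A chart computation through two successive blow-ups should show that $\alpha_2$ acts on second-neighborhood points by $(p_0, 0, t) \mapsto (p_0, 0, (\mu/\lambda^2)\, t)$ and $(p_0, \infty, t) \mapsto (p_0, \infty, (\mu^2/\lambda)\, t)$. Setting these images equal to $(p_0, 0, t_2)$ and $(p_0, \infty, t_4)$ gives the system $\mu = -\lambda^2 t_2$ and $\mu^2 = -\lambda t_4$, which reduces to $\lambda^3 = -t_4/t_2^2$ and admits three nonzero solutions in $\C$; any choice yields a valid $\alpha_2$. Then $\psi_4 \circ \alpha_1 \circ \alpha_2$ and $\varphi_4$ share the same homaloidal net, and equivalence follows.

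The only non-routine point is the second-step scaling computation: one must carefully lift the diagonal automorphism through the two blow-up charts needed to reach $(p_0, 0, t)$ and $(p_0, \infty, t)$, in order to obtain the scaling factors $\mu/\lambda^2$ and $\mu^2/\lambda$. Once these are in hand, the solvability of the resulting two-equation system in $(\lambda, \mu)$ is immediate.
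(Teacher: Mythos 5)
Your proposal is correct and follows essentially the same route as the paper: fix the double point and its two tangent directions by a projectivity, then adjust the two second-order coordinates using the diagonal torus stabilizing that flag. The paper merely splits your single diagonal automorphism into two successive ones ($[-u_3x:y:z]$ followed by $[(-u_4)^{2/3}x:y:(-u_4)^{1/3}z]$), which is why the same cube root appears in both arguments, and your $2\times 2$ multiplicative system is indeed solvable because the two scaling characters of $(\lambda,\mu)$ are independent (their exponent matrix has determinant $\pm 3\neq 0$).
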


\begin{proof}
The base points of $\varphi_{4}$ are $p_0=[0:1:0]$ of multiplicity 2 and $p_1,\ldots,p_4$ where $p_3\succ_1p_1\succ_1p_0$ and $p_4\succ_1p_2\succ_1p_0$, with standard coordinates $p_1=(p_0,\infty)$, $p_3=(p_0,\infty,-1)$, $p_2=(p_0,0)$ and $p_4=(p_0,0,-1)$.

The base points of $\psi_4$ are $q_0$ of multiplicity 2 and $q_1,\ldots,q_4$ where $q_3\succ_1q_1\succ_1q_0$ and $q_4\succ_1q_2\succ_1q_0$.
Clearly, there exists an automorphism $\alpha_1$ of $\PP^2$ such that $\alpha_1(p_i)=q_i$ for $i=0,1,2$.

The base points of $\psi_4\circ\alpha_1$ are then $p_0, p_1, p_2, q'_3, q'_4$ where $q'_3$ has standard coordinates $q'_3=(p_0,\infty,u_3)$  for some $u_3 \in \C^*$ because, if $u_3$ were 0, then $q'_3$ would be aligned with $p_0$ and $p_1$, a contradiction, and, if $u_3$ were $\infty$, then $q'_3$ would be proximate to $p_0$, again a contradiction.

An automorphism $\alpha_2$ of $\PP^2$ that fixes $p_0,p_1,p_2$ and that maps $p_3=(p_0,\infty,-1)$ to $q'_3=(p_0,\infty,u_3)$  is 
$$\alpha_3([x:y:z]) = [-u_3x: y: z].$$
The base points of $\psi_4\circ\alpha_1\circ\alpha_2$  are then $p_0, p_1, p_2, p_3, q''_4$ where $q''_4$ has standard coordinates $q''_4=(p_0,0,u_4)$ for some $u_4 \in \C^*$ because, if $u_4$ were 0, then $q'_4$ would be aligned with $p_0$ and $p_2$, a contradiction, and if $u_4$ were $\infty$, then $q''_4$ would be proximate to $p_0$, a contradiction.

An automorphism $\alpha_3$ of $\PP^2$ that fixes $p_0,p_1,p_2,p_3$ and that maps $p_4=(p_0,0,-1)$ to $q''_4=(p_0,0,u_4)$  is 
$$\alpha_3([x:y:z]) = [(-u_4)^{2/3} x: y: (-u_4)^{1/3}z].$$
Therefore, the maps $\varphi_4$ and $\psi_4\circ\alpha_1\circ\alpha_2\circ\alpha_3$ are defined by the same homaloidal net and, hence, $\varphi_4$ and $\psi_4$ are equivalent.
\end{proof}

\begin{lemma}
Let $\varphi_5$ be the map 5 in Table \ref{table1} and let $\psi_5$ be a map with enriched weighted proximity graph 5 in Table  \ref{table2}. Then, $\psi_5$ is equivalent to $\varphi_5$.
\end{lemma}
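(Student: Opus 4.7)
The proof mirrors the arguments for types 3 and 4. First, by direct computation on $\varphi_5=[x^2z:x^2y+z^3:xz^2]$, one identifies the base points: the proper double base point is $p_0=[0:1:0]$ and the proper simple base point is $p_4=[1:0:0]$; analysing the tangent cone at $p_0$ and then applying the same procedure after each successive blow-up gives the three infinitely near simple base points with standard coordinates $p_1=(p_0,\infty)$, $p_2=(p_0,\infty,\infty)$ (satellite to $p_0$, as forced by the proximity structure of graph 5 in Table \ref{table2}), and $p_3=(p_0,\infty,\infty,-1)$.

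Denote the base points of $\psi_5$ by $q_0,q_1,q_2,q_3,q_4$ in the same proximity pattern. The plan is to produce two automorphisms $\alpha_1,\alpha_2\in\Aut(\PP^2)$ such that the base points of $\psi_5\circ\alpha_1\circ\alpha_2$ coincide with those of $\varphi_5$ with matching multiplicities; since a cubic plane Cremona map is determined (up to a global scalar) by its homaloidal net, this will establish the equivalence of $\psi_5$ and $\varphi_5$. For $\alpha_1$, I would invoke Lemma \ref{lem: 4 points lemma}, after choosing a suitable auxiliary fourth point on the line through $p_0$ in the direction of $p_1$ (and the analogous choice for $\psi_5$), to construct an automorphism sending $p_0\mapsto q_0$, $p_4\mapsto q_4$, and the tangent direction at $p_0$ toward $p_1$ to that at $q_0$ toward $q_1$; the satellite $p_2$ is then automatically mapped to $q_2$, since the satellite of $q_0$ is uniquely determined by the pair $(q_0,q_1)$. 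The base points of $\psi_5\circ\alpha_1$ become $p_0,p_1,p_2,q_3',p_4$, where $q_3'=(p_0,\infty,\infty,u_3)$ for some $u_3\in\C$; the proximity structure of graph 5 forces $u_3\ne\infty$ (otherwise $q_3'\satel p_1$) and $u_3\ne 0$ (otherwise $q_3'\prox p_0$), so $u_3\in\C^*$.

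The subgroup of $\Aut(\PP^2)$ stabilising $p_0$, $p_4$ and the first-order direction at $p_0$ corresponding to $p_1$ consists of maps of the form $[x:y:z]\mapsto[Ax:y+Fz:Iz]$ with $A,I\in\C^*$ and $F\in\C$. Tracking such a map through the two blow-ups at $p_0$ and at $p_2$ in standard coordinates, one computes that its induced action on the direction parameter at $p_2$ multiplies the slope by $I^3/A^2$; hence, choosing $A=1$ and $I=(-u_3)^{1/3}$, the map $\alpha_2([x:y:z])=[x:y:(-u_3)^{1/3}z]$ sends $p_3$ to $q_3'$. The main obstacle is this last explicit computation of the induced action in standard coordinates; once it is carried out, $\varphi_5$ and $\psi_5\circ\alpha_1\circ\alpha_2$ share all five weighted base points, hence are defined by the same homaloidal net, and therefore coincide up to scalar, yielding the desired equivalence.
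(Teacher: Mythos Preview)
Your proof is correct and follows essentially the same strategy as the paper: normalise the two proper base points and the tangent direction at the double point via $\alpha_1$ (which automatically matches the satellite point), then use a second automorphism in the residual stabiliser to adjust the remaining third-order parameter. The only cosmetic differences are your indexing (your $p_4$ is the paper's $p_1$, etc.) and your choice of $\alpha_2$: the paper takes $\alpha_2([x:y:z])=[x:-u_4 y:z]$, rescaling $y$, whereas you rescale $z$ via $[x:y:(-u_3)^{1/3}z]$; both lie in the stabiliser $[Ax:Ey+Fz:Iz]$ and both achieve the required scaling on the $t_4$-coordinate, so either works.
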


\begin{proof}
The base points of $\varphi_{5}$ are $p_0=[0:1:0]$ of multiplicity $2$, $p_1=[1:0:0]$ and $p_2,p_3,p_4$ where $p_4\succ_1p_3\succ_1p_2\succ_1p_0$ and $p_3\satel p_0$, with standard coordinates $p_2=(p_0,\infty)$, $p_3=(p_0,\infty,\infty)$ and $p_4=(p_0,\infty,\infty,-1)$.

The base points of $\psi_5$ are $q_0\in\PP^2$ of multiplicity 2, $q_1\in\PP^2$ and $q_2,q_3,q_4$ where $q_4\succ_1q_3\succ_1q_2\succ_1q_0$ and $q_3\satel q_0$.
Clearly, there exists an automorphism $\alpha_1$ of $\PP^2$ such that $\alpha_1(p_i)=q_i$ for $i=0,1,2$. It follows that also $\alpha_1(p_3)=q_3$.

The base points of $\psi_5\circ\alpha_1$ are then $p_0, p_1, p_2, p_3, q'_4$ where $q'_4$ has standard coordinates $q'_4=(p_0,\infty,\infty,u_4)$  for some $u_4 \in \C^*$ because, if $u_4$ were 0, then $q'_4$ would be proximate to $p_0$, a contradiction, and, if $u_4$ were $\infty$, then $q'_4$ would be proximate to $p_2$, again a contradiction.

An automorphism $\alpha_2$ of $\PP^2$ that fixes $p_0,p_1,p_2,p_3$ and that maps $p_4=(p_0,\infty,\infty,-1)$ to $q'_4=(p_0,\infty,\infty,u_4)$  is 
$$\alpha_2([x:y:z]) = [x: -u_4 y: z].$$
Therefore, the maps $\varphi_5$ and $\psi_5\circ\alpha_1\circ\alpha_2$ are defined by the same homaloidal net and, hence, $\varphi_5$ and $\psi_5$ are equivalent.
\end{proof}

\begin{lemma}
Let $\varphi_6$ be the map 6 in Table \ref{table1} and let $\psi_6$ be a map with enriched weighted proximity graph 6 in Table  \ref{table2}. Then, $\psi_6$ is equivalent to $\varphi_6$.
\end{lemma}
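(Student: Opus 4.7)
The plan is to follow the same template as the preceding five lemmas in this section. First I will pin down the five base points of the reference map $\varphi_6=[x^2(x-y): xy(x-y): xyz+y^3]$. A direct calculation via iterated blow-ups at $p_0=[0:0:1]$ will show that $\varphi_6$ has base points $p_0$ (of multiplicity $2$), the proper simple base point $p_4=[1:1:-1]$, and three simple infinitely near base points with standard coordinates $p_1=(p_0,0)$, $p_2=(p_0,\infty)$ (both in the first neighbourhood of $p_0$), and $p_3=(p_0,\infty,-1)$ (in the first neighbourhood of $p_2$, with $p_3\notsatel p_0$). By the shape of enriched weighted proximity graph~$6$, the base points of $\psi_6$ have the same combinatorial structure, and since graph~$6$ carries no dashed-blue edge, Remark~\ref{rem:line} rules out three of the points $q_0,q_1,q_2,q_4$ from being collinear (and likewise for the $p_i$'s).

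First I would invoke Lemma~\ref{lem: 4 points lemma} for the two quadruples consisting of the proper points $p_0,p_4$ together with an auxiliary proper point chosen on each of the tangent lines at $p_0$ in the directions $p_1$ and $p_2$ (and analogously for $\psi_6$). This produces an automorphism $\alpha_1\in\Aut(\PP^2)$ such that the base points of $\psi_6\circ\alpha_1$ coincide with $p_0,p_4,p_1,p_2$ together with an infinitely near point $q'_3=(p_0,\infty,t_3)$; the non-collinearity with $\{p_0,p_2\}$ and the non-satellite condition force $t_3\in\C^*$. I would then write down the explicit automorphism
\[
\alpha_2([x:y:z])=[x:y:-(t_3+1)y-t_3z],
\]
verify that it fixes $p_0$ and $p_4$ and preserves the tangent directions $y=0$ and $x=0$ at $p_0$, and check by a short derivative computation in the second chart of the blow-up at $p_0$ that it acts on the standard direction parameter at $p_2$ by $t\mapsto -t_3t$, thereby sending $p_3=(p_0,\infty,-1)$ to $q'_3=(p_0,\infty,t_3)$. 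Hence $\varphi_6$ and $\psi_6\circ\alpha_1\circ\alpha_2$ will be defined by the same homaloidal net, and the lemma will follow.

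The hard part will be the second step, because none of the conic lemmas of Section~\ref{sec conics_infinitely_near_points} applies to this configuration: since $p_1$ and $p_2$ are two distinct tangent directions at the single smooth point $p_0$, no irreducible conic can pass through all five base points, so Lemma~\ref{lm two_conic_equi2} is not available as it was in the cases of graphs~$2$--$5$. I will instead have to isolate the $2$-dimensional stabiliser of $(p_0,p_4,p_1,p_2)$ inside $\Aut(\PP^2)$, namely the family $[x:y:z]\mapsto[\lambda x:\lambda y:ax+by+z]$ with $\lambda\in\C^*$ and $a+b=1-\lambda$, and verify in the paper's standard charts that its induced action on the direction parameter $t$ at $p_2$ is $t\mapsto t/\lambda$; this transitivity on $\C^*$ is precisely what allows the above step to succeed for any admissible value of $t_3$.
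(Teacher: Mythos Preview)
Your proposal is correct and follows essentially the same strategy as the paper's proof: normalise the two proper base points together with the two tangent directions at the double point via a first automorphism $\alpha_1$, then use a second automorphism in the stabiliser of this data to match the remaining second-order infinitely near point. The only cosmetic differences are your labelling (your $p_4,p_1,p_2,p_3$ are the paper's $p_1,p_2,p_3,p_4$), your choice $\alpha_2=[x:y:-(t_3+1)y-t_3 z]$ versus the paper's $[x:y:(-u_4-1)x-u_4 z]$ (both lie in the same $2$-dimensional stabiliser and both work), and your extra justification via auxiliary points and the explicit stabiliser computation, which the paper simply subsumes under the word ``clearly''.
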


\begin{proof}
The base points of $\varphi_{6}$ are $p_0=[0:0:1]$ of multiplicity $2$, $p_1=[1:1:-1]$ and $p_2,p_3,p_4$ where $p_2\succ_1p_0$ and $p_4\succ_1p_3\succ_1p_0$, with standard coordinates $p_2=(p_0,0)$, $p_3=(p_0,\infty)$ and $p_4=(p_0,\infty,-1)$.

The base points of $\psi_6$ are $q_0\in\PP^2$ of multiplicity 2, $q_1\in\PP^2$ and $q_2,q_3,q_4$ where $q_2\succ_1q_0$ and $q_4\succ_1q_3\succ_1q_0$.
Clearly, there exists an automorphism $\alpha_1$ of $\PP^2$ such that $\alpha_1(p_i)=q_i$ for $i=0,1,2,3$.

The base points of $\psi_6\circ\alpha_1$ are then $p_0, p_1, p_2, p_3, q'_4$ where $q'_4$ has standard coordinates $q'_4=(p_0,\infty,u_4)$  for some $u_4 \in \C^*$ because, if $u_4$ were 0, then $q'_4$ would be aligned with $p_0$ and $p_3$, a contradiction, and, if $u_4$ were $\infty$, then $q'_4$ would be proximate to $p_0$, again a contradiction.

An automorphism $\alpha_2$ of $\PP^2$ that fixes $p_0,p_1,p_2,p_3$ and that maps $p_4=(p_0,\infty,-1)$ to $q'_4=(p_0,\infty,u_4)$  is 
$$\alpha_2([x:y:z]) = [x: y: (-u_4-1) x-u_4 z].$$
Therefore, the maps $\varphi_6$ and $\psi_6\circ\alpha_1\circ\alpha_2$ are defined by the same homaloidal net and, hence, $\varphi_6$ and $\psi_6$ are equivalent.
\end{proof}

\begin{lemma}
Let $\varphi_7$ be the map 7 in Table \ref{table1} and let $\psi_7$ be a map with enriched weighted proximity graph 7 in Table  \ref{table2}. Then, $\psi_7$ is equivalent to $\varphi_7$.
\end{lemma}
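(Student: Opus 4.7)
The plan is to follow the template used in the preceding lemmas: first identify the base points of $\varphi_7$ explicitly in standard coordinates, verify that they realize the structure of graph 7 in Table \ref{table2}, and then construct an automorphism of $\PP^2$ (produced in three successive steps) sending the base points of $\psi_7$ to those of $\varphi_7$.

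First, I would compute the base points of $\varphi_7 = [x(x^2+yz): y(x^2+yz): xy^2]$, finding $p_0 = [0:0:1]$ of multiplicity $2$, the proper simple point $p_1 = [0:1:0]$, and the infinitely near points $p_2 = (p_0, 0)$, $p_3 = (p_0, 0, -1)$, $p_4 = (p_0, 0, -1, 0)$ along the non-satellite chain $p_4 \infnear_1 p_3 \infnear_1 p_2 \infnear_1 p_0$. No line in $\PP^2$ passes through three of these base points: the only candidate, the line $y = 0$ through $p_0$ and $p_2$, has strict transform $y_2 = 0$ in the $(x_2, y_2)$ chart after the second blow-up, which does not contain $p_3 = (0, -1)$. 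Therefore the enriched weighted proximity graph of $\varphi_7$ is indeed graph 7 in Table \ref{table2}.

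Now let $\psi_7$ have base points $q_0, q_1, q_2, q_3, q_4$ arranged in the same pattern. The first step is an automorphism $\alpha_1 \in \Aut(\PP^2)$ sending $q_0 \to p_0$, $q_1 \to p_1$, and $q_2 \to p_2$; this imposes five of the eight parameters of $\PGL_3$, and such $\alpha_1$ exists. After $\alpha_1$, the base points of $\psi_7 \circ \alpha_1$ are $p_0, p_1, p_2, q_3', q_4'$ with $q_3' = (p_0, 0, u_3)$ for some $u_3 \in \C \setminus \{0\}$: indeed $u_3 \ne 0$, otherwise $p_0, p_2, q_3'$ would lie on the line $y = 0$, producing an unexpected dashed edge absent from graph 7, and $u_3 \ne \infty$, otherwise $q_3' \satel p_0$ would add a forbidden proximity arrow. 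The diagonal automorphism $\alpha_2 = [x: -u_3 y: z]$ fixes $p_0, p_1, p_2$ and maps $p_3 = (p_0, 0, -1)$ to $q_3'$. After applying $\alpha_2$, the last base point becomes $q_4'' = (p_0, 0, -1, u_4)$ with $u_4 \in \C$; we exclude only $u_4 = \infty$, which would force $q_4'' \satel p_2$.

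The main difficulty will be the final step: any diagonal $[\lambda x: a y: d z]$ fixing $p_0, p_1, p_2$ and $p_3$ must satisfy $a = \lambda = d$ and hence reduces to the identity, so there is no diagonal freedom left to move $p_4$. The remedy is the non-diagonal automorphism $\alpha_3 = [x: y: -u_4 x + z]$, for which I would verify the following properties: its derivative at the affine chart origin $p_0$ is the identity, so every tangent direction at $p_0$ is preserved (fixing $p_2$ in particular); the standard coordinate $t_3 = y/x^2$ transforms as $t_3' = (-u_4 x + 1)\, t_3$, which equals $t_3$ on the exceptional divisor above $p_2$ (fixing $p_3$); and the parameter $t_4$ transforms as $t_4' = t_4 + u_4$, as one sees by expanding $\bar y' + (\bar x')^2$ through order three in $x$ along a path $y = -x^2 + t_4 x^3 + O(x^4)$ approaching $p_4$. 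Hence $\alpha_3$ sends $p_4 = (p_0, 0, -1, 0)$ to $q_4''$, so $\psi_7 \circ \alpha_1 \circ \alpha_2 \circ \alpha_3$ and $\varphi_7$ are defined by the same homaloidal net, differ by an automorphism on the left, and $\psi_7$ is equivalent to $\varphi_7$.
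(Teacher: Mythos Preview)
Your step-by-step construction is correct and complete, but it is \emph{not} the route the paper takes. The paper observes that the five base points $p_0,\ldots,p_4$ of $\varphi_7$ all lie on the irreducible conic $C_1\colon x^2+yz=0$, invokes Lemma~\ref{lm conic4} to obtain the unique irreducible conic $C_2$ through $q_0,\ldots,q_4$, and then uses Lemma~\ref{lm two_conic_equi2} to find an automorphism $\alpha$ with $\alpha(C_1)=C_2$ and $\alpha(p_i)=q_i$ for $i=0,1$; since both conics determine the remaining infinitely near points uniquely, this forces $\alpha(p_i)=q_i$ for $i=2,3,4$ in one stroke. Your approach instead follows the pattern of the lemmas for types $3,4,5,6,8$ (successive normalizations by explicit automorphisms), trading conceptual brevity for a concrete formula for each $\alpha_j$; both are valid, and your hands-on route has the mild advantage of not relying on the conic lemmas of Section~\ref{sec conics_infinitely_near_points}.

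A few small points are worth cleaning up. First, your description of $\alpha_1$ has the arrows reversed: for the base points of $\psi_7\circ\alpha_1$ to be $p_0,p_1,p_2$ you need $\alpha_1(p_i)=q_i$, not $\alpha_1(q_i)=p_i$. Second, your justification for $u_3\ne 0$ via ``an unexpected dashed edge'' is not quite right: the enriched graph only records lines through three \emph{simple} base points, whereas the line $y=0$ here passes through the double point $p_0$. The correct reason $u_3\ne0$ is that otherwise this line would meet every cubic of the net with multiplicity $\ge 2+1+1=4>3$, forcing it into the fixed part and contradicting that $\psi_7$ is Cremona. Third, your remark that every diagonal $[\lambda x:ay:dz]$ fixing $p_0,\ldots,p_3$ must be the identity is false: the condition is $ad=\lambda^2$, so there is a one-parameter family $[\lambda x:\lambda^2 y:z]$. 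However, that family sends $y_3\mapsto y_3/\lambda$, so it still fixes $p_4=(p_0,0,-1,0)$ and cannot reach $u_4\ne 0$; your non-diagonal $\alpha_3=[x:y:-u_4x+z]$ is therefore genuinely needed, and your verification that it induces $y_3\mapsto y_3+u_4$ on $E_3$ is correct. (Incidentally, your computation $p_4=(p_0,0,-1,0)$ is right; the paper's ``$\infty$'' there is a typo, as the conic $x^2+yz=0$ indeed passes through $(p_0,0,-1,0)$.)
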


\begin{proof}
The base points of $\varphi_7$ are $p_0=[0:0:1]$ of multiplicity 2, $p_1=[0:1:0]$ and $p_2, p_3, p_4$ where $p_4\succ_1 p_3\succ_1 p_2\succ_1 p_0$ with standard coordinates $p_2=(p_0,0)$, $p_3=(p_0,0,-1)$, $p_4=(p_0,0,-1,\infty)$. So there is a unique irreducible conic passing through $p_0, \ldots, p_4$, that is $C_1 \colon x^2+yz=0$. The base points of $\psi_7$ are $q_0$ of multiplicity 2 and $q_1,\ldots,q_4$ where $q_1\in\PP^2$ and $q_4\succ_1 q_3\succ_1 q_2\succ_1 q_0$. According to Lemma \ref{lm conic4}, there is a unique irreducible conic $C_2$ passing through $q_0,\ldots,q_4$. Moreover, Lemma \ref{lm two_conic_equi2} implies that there exists an automorphism $\alpha$ of $\PP^2$ such that $\alpha(C_1)=C_2$ and $\alpha(p_i)=q_i$, $i=0,1$.
This forces $\alpha(p_i)=q_i$, $i=2,3,4$. Therefore, $\psi_7$ is equivalent to $\varphi_7$.
\end{proof}

\begin{lemma}
Let $\varphi_8$ be the map 8 in Table \ref{table1} and let $\psi_8$ be a map with enriched weighted proximity graph 8 in Table  \ref{table2}. Then, $\psi_8$ is equivalent to $\varphi_8$.
\end{lemma}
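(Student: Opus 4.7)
My plan is to follow the step-by-step template used in several of the preceding lemmas (types 1 and 3--6): first identify the base points of $\varphi_8$ explicitly, and then build a sequence of three automorphisms of $\PP^2$ that successively match the base points of an arbitrary $\psi_8$ with those of $\varphi_8$.

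A direct computation with the generators $xyz$, $yz^2$, $z^3-x^2y$ shows that $\varphi_8$ has base points $p_0=[0:1:0]$ of multiplicity $2$, $p_1=[1:0:0]$, and three simple infinitely near points forming the chain $p_4\infnear_1 p_3\infnear_1 p_2\infnear_1 p_1$ with standard coordinates $p_2=(p_1,\infty)$, $p_3=(p_1,\infty,0)$ and $p_4=(p_1,\infty,0,1)$. The points $p_1$, $p_2$, $p_3$ all lie on the line $y=0$ while $p_4$ does not, matching the blue dashed line in enriched weighted proximity graph 8.

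Let $\psi_8$ have base points $q_0$ (multiplicity $2$), $q_1$ (proper, simple), and $q_4\infnear_1 q_3\infnear_1 q_2\infnear_1 q_1$ with $q_3\notsatel q_1$, $q_4\notsatel q_2$ and $q_1$, $q_2$, $q_3$ collinear. Since $q_0\neq q_1$, pick an automorphism $\alpha_1$ of $\PP^2$ with $\alpha_1(p_0)=q_0$ and $\alpha_1(p_1)=q_1$. The base points of $\psi_8\circ\alpha_1$ are then $p_0$, $p_1$ and three infinitely near points, the first being $q'_2=(p_1,t_2)$ for some $t_2\in\PP^1$. The value $t_2=0$ is excluded: it would put $q'_2$ on the line $z=0$ joining $p_0$ and $p_1$, creating an extra line through three base points not allowed by the graph. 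Next, choose an automorphism $\alpha_2$ fixing $p_0$ and $p_1$ and sending $p_2=(p_1,\infty)$ to $q'_2$; for instance $\alpha_2=[x:y+z:t_2 z]$ when $t_2\in\mathbb{C}^*$, and $\alpha_2=\id$ when $t_2=\infty$.

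After this reduction, $\psi_8\circ\alpha_1\circ\alpha_2$ has base points $p_0,p_1,p_2$ together with $q''_3=(p_1,\infty,t_3)$ and $q''_4=(p_1,\infty,t_3,t_4)$. The collinearity of $p_1$, $p_2$, $q''_3$ forces $t_3=0$, since the strict transform of the line $y=0$ at $p_2$ points in direction $t_3=0$; thus $q''_3=p_3$ with no further adjustment needed. The satellite exclusion $q_4\notsatel p_2$ forbids $t_4=\infty$, and Remark~\ref{rem:line} forbids a line through four base points so $t_4\ne 0$; hence $t_4\in\mathbb{C}^*$. It remains to construct an automorphism $\alpha_3$ fixing $p_0,p_1,p_2,p_3$ and sending $p_4$ to $q''_4$, which I would take of the diagonal form $\alpha_3=[x:y:\lambda z]$. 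The one nontrivial point is verifying that this $\alpha_3$ still fixes $p_2$ and $p_3$ and that its induced action on the fourth-order standard coordinate at $p_3$ is multiplication by $\lambda^{-3}$, so that the choice $\lambda=t_4^{-1/3}$ does the job. This is a routine chain-rule computation through the successive blowups at $p_1,p_2,p_3$ in the framework of Section~3, and the cubic power arises from the accumulation of Jacobian contributions. Once it is in hand, $\varphi_8$ and $\psi_8\circ\alpha_1\circ\alpha_2\circ\alpha_3$ define the same homaloidal net, hence differ by a post-composition automorphism of $\PP^2$, and therefore $\psi_8$ is equivalent to $\varphi_8$.
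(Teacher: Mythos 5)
Your proof is correct and follows essentially the same normalization strategy as the paper: successively match $p_0,p_1,p_2$ by automorphisms, use the collinearity of $q_1,q_2,q_3$ recorded in graph 8 to get $q''_3=p_3$ for free, exclude the degenerate values of $t_4$ via the proximity and B\'ezout constraints, and finish with a one-parameter automorphism fixing $p_0,\ldots,p_3$. The only cosmetic difference is the last step: the paper uses $[x:u_4y:z]$, which scales the fourth-level coordinate $x^2y/z^3$ by $u_4$ directly and so avoids the cube root, whereas your $[x:y:t_4^{-1/3}z]$ (acting by $\lambda^{-3}$, as you correctly assert) works equally well.
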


\begin{proof}
The base points of $\varphi_{8}$ are $p_0=[0:1:0]$ of multiplicity $2$, $p_1=[1:0:0]$ and $p_2,p_3,p_4$ where $p_4\succ_1p_3\succ_1p_2\succ_1p_1$ with standard coordinates $p_2=(p_1,\infty), p_3=(p_1,\infty,0)$ and $p_4=(p_1,\infty,0,1)$.

The base points of $\psi_8$ are $q_0$ of multiplicity 2, $q_1\in\PP^2$ and $q_2,q_3,q_4$ where $q_4\succ_1q_3\succ_1q_2\succ_1q_1$ and $q_3$ is aligned with $q_1$ and $q_2$.
Clearly, there exists an automorphism $\alpha_1$ of $\PP^2$ such that $\alpha_1(p_i)=q_i$ for $i=0,1,2$.
It follows that also $\alpha_1(p_3)=q_3$.

The base points of $\psi_8\circ\alpha_1$ are then $p_0, p_1, p_2, p_3, q'_4$ where $q'_4$ has standard coordinates $q'_4=(p_1,\infty,0,u_4)$  for some $u_4 \in \C^*$ because, if $u_4$ were 0, then $q'_4$ would be aligned with $p_1, p_2, p_3$, a contradiction, and, if $u_4$ were $\infty$, then $q'_4$ would be proximate to $p_2$, again a contradiction.

An automorphism $\alpha_2$ of $\PP^2$ that fixes $p_0,p_1,p_2,p_3$ and that maps $p_4=(p_1,\infty,0,1)$ to $q'_4=(p_0,\infty,0,u_4)$  is 
$$\alpha_2([x:y:z]) = [x: u_4 y: z].$$
Therefore, the maps $\varphi_8$ and $\psi_8\circ\alpha_1\circ\alpha_2$ are defined by the same homaloidal net and, hence, $\varphi_8$ and $\psi_8$ are equivalent.
\end{proof}

\begin{lemma}
Let $\varphi_9$ be the map 9 in Table \ref{table1} and let $\psi_9$ be a map with enriched weighted proximity graph 9 in Table  \ref{table2}. Then, $\psi_9$ is equivalent to $\varphi_9$.
\end{lemma}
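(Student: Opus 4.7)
The plan is to mirror the proof of the previous lemma for type 7, since types 7 and 9 share the same combinatorial skeleton---two proper base points (one of multiplicity $2$ and one simple) together with a chain of three infinitely near simple base points---the only difference being that in type 9 the chain hangs above the \emph{simple} proper point rather than above the double one. Consequently I would use the unique-conic method based on Lemma \ref{lm conic4} instead of the step-by-step automorphism reduction used in the previous lemmas for types 3--6 and 8.

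First I would identify the base points of $\varphi_9=[y^2z:x(xz+y^2):y(xz+y^2)]$ by a direct local analysis at each of the two proper base points. The expected outcome is $p_0=[0:0:1]$ of multiplicity $2$, $p_1=[1:0:0]$ a simple proper base point, and the chain $p_4\infnear_1 p_3\infnear_1 p_2\infnear_1 p_1$ of infinitely near points with standard coordinates $p_2=(p_1,0)$, $p_3=(p_1,0,-1)$, $p_4=(p_1,0,-1,0)$; the appearance of the factor $xz+y^2$ in the formula already suggests that the relevant conic is $C_1\colon xz+y^2=0$. To apply Lemma \ref{lm conic4} (with our $p_1$ playing the role of the lemma's $p_1$ and our $p_0$ that of its $p_2$) I would verify the non-satellite conditions $p_3\notsatel p_1$ and $p_4\notsatel p_2$---which follow at once from $t_3=-1\ne\infty$ and $t_4=0\ne\infty$---together with the fact that no three among $p_0,p_1,p_2,p_3$ are collinear. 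The lemma then yields $C_1$ as the unique irreducible conic through the five base points, and the same lemma applied to $\psi_9$ (whose base point configuration is prescribed by graph 9) produces a unique irreducible conic $C_2$ through $q_0,\ldots,q_4$.

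The remaining step is to invoke Lemma \ref{lm two_conic_equi2} with $n=2$, which furnishes $\alpha\in\Aut(\PP^2)$ with $\alpha(C_1)=C_2$, $\alpha(p_0)=q_0$, and $\alpha(p_1)=q_1$. The only point I expect to require brief justification---and the main obstacle---is that such $\alpha$ is automatically forced to send $p_i$ to $q_i$ also for $i=2,3,4$, despite the one-parameter ambiguity that remains (the stabilizer in $\Aut(C_2)$ of the two points $q_0,q_1$). The forcing follows because $p_2,p_3,p_4$ are intrinsically determined by the germ of $C_1$ at $p_1$, as the successive infinitely near points obtained by iterated blowups on the smooth curve $C_1$, and similarly for $q_2,q_3,q_4$ on $C_2$ at $q_1$; since $\alpha$ sends the germ of $C_1$ at $p_1$ isomorphically to the germ of $C_2$ at $q_1$, it must send the one chain to the other. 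Once this is in place, $\psi_9\circ\alpha$ and $\varphi_9$ share all five base points with the same multiplicities, hence define the same homaloidal net of plane cubics, and therefore differ only by a post-composition with an element of $\Aut(\PP^2)$, proving that $\psi_9$ is equivalent to $\varphi_9$.
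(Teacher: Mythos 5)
Your proposal is correct and follows essentially the same route as the paper's proof: identify the base points of $\varphi_9$ and the conic $C_1\colon xz+y^2=0$, apply Lemma \ref{lm conic4} to get the unique conic $C_2$ through the base points of $\psi_9$, and transport one to the other via Lemma \ref{lm two_conic_equi2} with the two proper points matched. Your extra remark justifying why $\alpha(p_i)=q_i$ is forced for $i=2,3,4$ (the chain is determined by the germ of the conic at $p_1$) is exactly the point the paper leaves implicit.
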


\begin{proof}
The base points of $\varphi_9$ are $p_0=[0:0:1]$ of multiplicity 2, $p_1=[1:0:0]$ and $p_2, p_3, p_4$ where $p_4\succ_1 p_3\succ_1 p_2\succ_1 p_1$ with standard coordinates $p_2=(p_1,0)$, $p_3=(p_1,0,-1)$, $p_4=(p_1,0,-1,0)$. So there is a unique irreducible conic passing through $p_0, \ldots, p_4$, that is $C_1 \colon xz+y^2=0$. The base points of $\psi_9$ are $q_0$ of multiplicity 2 and $q_1,\ldots,q_4$ where $q_1\in\PP^2$ and $q_4\succ_1 q_3\succ_1 q_2\succ_1 q_1$. According to Lemma \ref{lm conic4}, there is a unique irreducible conic $C_2$ passing through $q_0,\ldots,q_4$. Moreover, Lemma \ref{lm two_conic_equi2} implies that there exists an automorphism $\alpha$ of $\PP^2$ such that $\alpha(C_1)=C_2$ and $\alpha(p_i)=q_i$, $i=0,1$.
This forces $\alpha(p_i)=q_i$, $i=2,3,4$. Therefore, $\psi_9$ is equivalent to $\varphi_9$.
\end{proof}

\begin{lemma}
Let $\varphi_{10}$ be the map 10 in Table \ref{table1} and let $\psi_{10}$ be a map with enriched weighted proximity graph 10 in Table  \ref{table2}. Then, $\psi_{10}$ is equivalent to $\varphi_{10}$.
\end{lemma}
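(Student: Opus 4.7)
First I would compute the base points of $\varphi_{10}=[x^3:y^2z:xyz]$ by blow-up in the affine charts at $[0:0:1]$ and $[0:1:0]$. This gives $p_0=[0:0:1]$ of multiplicity $2$, $p_1=(p_0,0)$, $p_2=[0:1:0]$, $p_3=(p_2,0)$, and $p_4=(p_2,0,0)$, all the latter of multiplicity one, satisfying the Cremona equations~\eqref{eq:Cremona}. A direct application of the coordinate changes of Section~3 shows that the line $z=0$ passes through $p_2$, its strict transform through $p_3$, and the next strict transform through $p_4$, so $z=0$ realizes the collinearity line prescribed by enriched graph~$10$ in Table~\ref{table2}.

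Let $q_0,\ldots,q_4$ be the base points of $\psi_{10}$, ordered as in graph~$10$, and let $\ell_\psi$ be the line through $q_2,q_3,q_4$. Following the template of the previous lemmas I would first produce $\alpha_1\in\Aut(\PP^2)$ with $\alpha_1(p_i)=q_i$ for $i=0,1,2$; only $5$ of the $8$ parameters of $\PGL_3$ are used, so such an $\alpha_1$ clearly exists. In the configuration of $\psi_{10}\circ\alpha_1$ the base points become $p_0,p_1,p_2,q'_3,q'_4$ with $q'_3\succ_1 p_2$ and $q'_4\succ_1 q'_3$, together with a line $\ell$ through $p_2,q'_3,q'_4$. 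Writing $q'_3=(p_2,u_3)$, Remark~\ref{rem:line} rules out $u_3=\infty$, since that is the direction of the line $x=0$, which would pass through $p_0$. Hence $u_3\in\C$.

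Next I would compose with the automorphism $\alpha_2=[x:y:u_3 x+z]$. A short coordinate check at $p_2$ shows that $\alpha_2$ fixes $p_0,p_1,p_2$, sends the direction $(p_2,0)$ to $(p_2,u_3)$, and hence sends the line $z=0$ to $\ell$. Therefore the base points of $\psi_{10}\circ\alpha_1\circ\alpha_2$ are $p_0,p_1,p_2,p_3,q''_4$ and the pulled-back collinearity line is $z=0$. Since $p_4$ is the unique point infinitely near $p_3$ lying on the strict transform of $z=0$ (as computed in the first paragraph), one concludes $q''_4=p_4$. Thus $\varphi_{10}$ and $\psi_{10}\circ\alpha_1\circ\alpha_2$ share the same homaloidal net and $\psi_{10}$ is equivalent to $\varphi_{10}$.

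The point to verify most carefully is the behavior of $\alpha_2$ on the direction $(p_2,0)$ and on the line $z=0$; both reduce to a one-line computation in the standard coordinates of Section~3. The remainder of the argument is a routine imitation of the earlier cases, with Remark~\ref{rem:line} playing the essential role of excluding the degenerate value $u_3=\infty$.
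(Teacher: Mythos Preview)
Your proof is correct and follows essentially the same approach as the paper's. The only cosmetic difference is that the paper normalizes four base points in a single step---choosing $\alpha_1$ so that $\alpha_1(p_i)=q_i$ for $i=0,1,2,3$ at once (two proper points plus one tangent direction at each, which determines an automorphism by Lemma~\ref{lem: 4 points lemma})---whereas you split this into $\alpha_1$ (three conditions) followed by an explicit $\alpha_2=[x:y:u_3x+z]$; in both cases the fifth point $p_4$ is then forced by the collinearity condition.
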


\begin{proof}
The base points of $\varphi_{10}$ are $p_0=[0:0:1]$ of multiplicity $2$, $p_1=[0:1:0]$ and $p_2,p_3,p_4$ where $p_2\succ_1 p_0$ and $p_4\succ_1p_3\succ_1p_1$ with standard coordinates $p_2=(p_0,0), p_3=(p_1,0)$ and $p_4=(p_1,0,0)$.
The base points of $\psi_{10}$ are $q_0$ of multiplicity 2, $q_1\in\PP^2$ and $q_2,q_3,q_4$ where $q_2\succ_1 q_0$ and $q_4\succ_1q_3\succ_1q_1$ and $q_4$ is aligned with $q_1$ and $q_3$.
Clearly, there exists an automorphism $\alpha_1$ of $\PP^2$ such that $\alpha_1(p_i)=q_i$ for $i=0,1,2,3$.
It follows that also $\alpha_1(p_4)=q_4$, so the maps $\varphi_{10}$ and $\psi_{10}\circ\alpha_1$ are defined by the same homaloidal net, therefore $\varphi_{10}$ and $\psi_{10}$ are equivalent.
\end{proof}

\begin{lemma}
Let $\varphi_{11}$ be the map 11 in Table \ref{table1} and let $\psi_{11}$ be a map with enriched weighted proximity graph 11 in Table  \ref{table2}. Then, $\psi_{11}$ is equivalent to $\varphi_{11}$.
\end{lemma}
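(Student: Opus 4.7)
The plan is to follow the pattern of the preceding eleven lemmas in this section: compute the standard coordinates of the five base points of $\varphi_{11}$, then use successive automorphisms of $\PP^2$ to normalize the base points of $\psi_{11}$ to those of $\varphi_{11}$, exploiting the fact that once enough base points are matched the homaloidal net is determined. The first task is thus to check, by direct inspection of the cubics $xy^2+x^2z$, $y^3+xyz$ and $xyz$, that $\varphi_{11}$ has proper base points $p_0=[0:0:1]$ of multiplicity $2$ and $p_1=[1:0:0]$ of multiplicity $1$. The tangent cone at $p_0$ has common factor $x$, giving the infinitely near point $p_2=(p_0,\infty)$, while at $p_1$ the linear term $z$ in $y^2+z$ identifies the tangent direction, so $p_3=(p_1,0)$. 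A further blow-up at $p_3$ and localisation of the base ideal in the first chart yields $p_4=(p_1,0,-1)$. The value $-1$ (rather than $0$) in the last standard coordinate is crucial: it is exactly what prevents $p_1,p_3,p_4$ from being collinear and so distinguishes enriched graph $11$ from graph $10$.

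Given any $\psi_{11}$ with enriched graph $11$, its base points are $q_0$ of multiplicity $2$, $q_1\in\PP^2$, $q_2 \infnear_1 q_0$, $q_3 \infnear_1 q_1$, $q_4 \infnear_1 q_3$, with $q_1,q_3,q_4$ not collinear. There clearly exists an automorphism $\alpha_1$ of $\PP^2$ with $\alpha_1(p_i)=q_i$ for $i=0,1,2,3$, since two proper points plus two first-order directions impose six conditions on the eight-dimensional group $\PGL_3$. The base points of $\psi_{11}\circ\alpha_1$ are then $p_0,p_1,p_2,p_3$ together with some $q'_4=(p_1,0,u_4)$. The absence of an unexpected line in the graph forbids $u_4=0$ (otherwise $p_1,p_3,q'_4$ would be aligned) and the proximity inequality at $p_1$ forbids $u_4=\infty$ (otherwise $q'_4$ would be satellite to $p_1$), so $u_4\in\C^*$.

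The last step is to exhibit an automorphism $\alpha_2$ fixing $p_0,p_1,p_2,p_3$ as points of $\BB(\PP^2)$ and sending $p_4=(p_1,0,-1)$ to $q'_4=(p_1,0,u_4)$. Fixing two proper points and two first-order directions cuts $\PGL_3$ down to the two-dimensional torus of diagonal maps $[\lambda x:\mu y:\nu z]$, and a short computation tracking the third standard coordinate through the two successive blow-ups over $p_1$ shows that such a diagonal map rescales that coordinate by the factor $\lambda\nu/\mu^2$; choosing $(\lambda,\mu,\nu)=(-u_4,1,1)$ therefore gives $\alpha_2=[-u_4\,x:y:z]$. Then $\varphi_{11}$ and $\psi_{11}\circ\alpha_1\circ\alpha_2$ are defined by the same homaloidal net, so the two maps are equivalent. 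The main obstacle is the explicit blow-up calculation that pins down the value $-1$ in $p_4=(p_1,0,-1)$; once that is in hand, the rest of the argument is a mechanical variant of the template used in the preceding lemmas.
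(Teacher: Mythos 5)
Your proof is correct, but it takes a genuinely different route from the one the paper uses for this particular type. The paper's proof of the lemma for $\varphi_{11}$ does not normalize the base points one at a time: it observes that the five base points of either map lie on a unique irreducible conic (this is exactly the configuration of Lemma~\ref{lm conic3b}, whose hypotheses are guaranteed by the enriched graph having no satellite point and no unexpected line), and then invokes Lemma~\ref{lm two_conic_equi2} to produce a single automorphism carrying one conic to the other while matching the two proper base points $p_0,p_1$ with $q_0,q_1$; since the infinitely near base points are then forced to lie on the image conic in the prescribed directions, they match automatically and no second automorphism is needed. Your argument instead follows the template the paper uses for types such as 3, 4, 6, 8: match $p_0,p_1,p_2,p_3$ by a first automorphism, constrain the last standard coordinate $u_4$ to $\C^*$ via the absence of an unexpected line and of a satellite point, and then exhibit an explicit diagonal $\alpha_2$; your computation that a diagonal map $[\lambda x:\mu y:\nu z]$ rescales the third standard coordinate over $p_1=[1:0:0]$ by $\lambda\nu/\mu^2$ is correct, so $\alpha_2=[-u_4x:y:z]$ does send $(p_1,0,-1)$ to $(p_1,0,u_4)$. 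What the conic approach buys is economy (one automorphism, no coordinate chase through two blow-ups) and uniformity across types 2, 7, 9, 11, 13, 16, 19, 21, 23; what your approach buys is independence from Section~\ref{sec conics_infinitely_near_points} at the cost of an explicit computation. The only point stated more loosely than it deserves is the existence of $\alpha_1$: the dimension count must be supplemented by the observation (via B\'ezout, as in Remark~\ref{rem:line}) that the direction $q_2$ at $q_0$ does not point toward $q_1$ and the direction $q_3$ at $q_1$ does not point toward $q_0$, since otherwise the configuration would not lie in the $\PGL_3$-orbit of the model; this is the same level of detail the paper itself suppresses with ``clearly'' in the analogous lemmas, so it is not a gap.
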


\begin{proof}
The base points of $\varphi_{11}$ are $p_0=[0:0:1]$ of multiplicity 2, $p_1=[1:0:0]$ and $p_2, p_3, p_4$ where $p_2\succ_1 p_0$ and $p_4\succ_1 p_3\succ_1 p_1$ with standard coordinates $p_2=(p_0,\infty)$, $p_3=(p_1,0)$, $p_4=(p_1,0,-1)$. So there is a unique irreducible conic passing through $p_0, \ldots, p_4$, that is $C_1 \colon xz+y^2=0$. The base points of $\psi_{11}$ are $q_0$ of multiplicity 2 and $q_1,\ldots,q_4$ where $q_1\in\PP^2$ and $q_2\succ_1 q_0$ and $q_4\succ_1 q_3\succ_1 q_1$. According to Lemma \ref{lm conic3b}, there is a unique irreducible conic $C_2$ passing through $q_0,\ldots,q_4$. Moreover, Lemma \ref{lm two_conic_equi2} implies that there exists an automorphism $\alpha$ of $\PP^2$ such that $\alpha(C_1)=C_2$ and $\alpha(p_i)=q_i$, $i=0,1$.
This forces $\alpha(p_i)=q_i$, $i=2,3,4$. Therefore, $\psi_{11}$ is equivalent to $\varphi_{11}$.
\end{proof}

\begin{lemma}
Let $\varphi_{12}$ be the map 12 in Table \ref{table1} and let $\psi_{12}$ be a map with enriched weighted proximity graph 12 in Table  \ref{table2}. Then, $\psi_{12}$ is equivalent to $\varphi_{12}$.
\end{lemma}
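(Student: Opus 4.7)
The plan is to parallel the strategy of the preceding lemmas: first identify the base points of $\varphi_{12}=[xz^2:x^2y:z^3]$ via standard coordinates, then match them to those of an arbitrary $\psi_{12}$ of graph type $12$ by two successive automorphisms of $\PP^2$. Computing locally at $[0:1:0]$ and $[1:0:0]$ should show that the base points are $p_0=[0:1:0]$ of multiplicity $2$, $p_3=[1:0:0]$ of multiplicity $1$, together with $p_1=(p_0,\infty)$, $p_2=(p_0,\infty,\infty)$ (satellite to $p_0$), and $p_4=(p_3,\infty)$; this configuration realizes the enriched weighted proximity graph of type $12$.

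For $\psi_{12}$ with base points $q_0$ (double), $q_3$ (proper simple), $q_1\succ_1 q_0$, $q_2\succ_1 q_1$ with $q_2\satel q_0$, and $q_4\succ_1 q_3$, a first automorphism $\alpha_1\in\Aut(\PP^2)$ is produced via Lemma \ref{lem: 4 points lemma}, sending $p_0,p_3$ to $q_0,q_3$ and the tangent direction $p_1$ at $p_0$ to the tangent direction $q_1$ at $q_0$; concretely, one picks auxiliary points on the two tangent lines at $p_0$ and $q_0$, so that the no-three-collinear hypothesis is guaranteed by the absence of a dashed blue edge in graph $12$ (which would otherwise force $q_3$ onto the line through $q_0$ in direction $q_1$). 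Because the satellite point in the first neighbourhood of $p_1$ (respectively $q_1$) is unique, $\alpha_1(p_2)=q_2$ follows automatically. Hence $\psi_{12}\circ\alpha_1$ has base points $p_0,p_1,p_2,p_3$ and one further point $q'_4=(p_3,u_4)$ for some $u_4\in\PP^1$.

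The value $u_4=0$ is excluded: otherwise the line $z=0$ would contain the three base points $p_0,p_3,q'_4$, contradicting the absence of a dashed blue edge in graph $12$; all other values of $u_4$ are admissible. The automorphism $\alpha_2=[x:y+z:u_4 z]$ for $u_4\in\C^*$ (and $\alpha_2=\id$ in the degenerate case $u_4=\infty$) fixes $p_0,p_1,p_2,p_3$ and sends $p_4=(p_3,\infty)$ to $q'_4$, so that $\psi_{12}\circ\alpha_1\circ\alpha_2$ and $\varphi_{12}$ are defined by the same homaloidal net and are therefore equivalent. The main technical obstacle is the local bookkeeping in the two successive blow-up charts at $p_0$ to verify that $\varphi_{12}$ genuinely produces the satellite point $p_2$ at the second infinitesimal level, and to confirm that the stabilizer of $\{p_0,p_1,p_2,p_3\}$ in $\Aut(\PP^2)$ acts transitively on the set $\PP^1\setminus\{0\}$ of admissible directions at $p_3$.
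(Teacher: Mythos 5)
Your proof is correct and follows essentially the same route as the paper: normalize the base points of $\varphi_{12}$ in standard coordinates (your identification of $[0:1:0]$ as the double point with a free first-order direction and a satellite point above it, plus $[1:0:0]$ with one free direction, agrees with the paper's), match proper points and free directions by automorphisms of $\PP^2$, note that the satellite point is then determined automatically, and finish with an explicit automorphism adjusting the remaining direction at $[1:0:0]$ — the paper merely compresses your $\alpha_1\circ\alpha_2$ into a single automorphism obtained from the four-points lemma applied to the two proper points and the two first-order directions. One small misattribution: the collinearity exclusions you need (e.g.\ $u_4\ne 0$, and $q_3$ not lying on the line through $q_0$ in the direction $q_1$) follow from B\'ezout as in Remark \ref{rem:line} — a line through three base points can \emph{never} contain the double point, so such a configuration would not be recorded as a dashed edge in any enriched graph — rather than from the particular absence of a dashed blue edge in graph 12.
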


\begin{proof}
The base points of $\varphi_{12}$ are $p_0=[0:1:0]$ of multiplicity $2$, $p_1=[1:0:0]$ and $p_2,p_3,p_4$ where $p_3\succ_1p_1$, $p_4\succ_1p_2\succ_1p_0$ and $p_4\satel p_0$ with standard coordinates $p_2=(p_0,\infty), p_3=(p_1,\infty)$ and $p_4=(p_0,\infty,\infty)$.
The base points of $\psi_{12}$ are $q_0$ of multiplicity 2, $q_1\in\PP^2$ and $q_2,q_3,q_4$ where $q_3\succ_1 q_1$, $q_4\succ_1q_2\succ_1q_0$ and $q_4 \satel q_0$.
Clearly, there exists an automorphism $\alpha_1$ of $\PP^2$ such that $\alpha_1(p_i)=q_i$ for $i=0,1,2,3$.
It follows that also $\alpha_1(p_4)=q_4$, so the maps $\varphi_{12}$ and $\psi_{12}\circ\alpha_1$ are defined by the same homaloidal net, therefore $\varphi_{12}$ and $\psi_{12}$ are equivalent.
\end{proof}

\begin{lemma}
Let $\varphi_{13}$ be the map 13 in Table \ref{table1} and let $\psi_{13}$ be a map with enriched weighted proximity graph 13 in Table  \ref{table2}. Then, $\psi_{13}$ is equivalent to $\varphi_{13}$.
\end{lemma}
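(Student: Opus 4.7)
The plan is to follow the template of the preceding lemmas that invoke the conic lemmas (e.g., those for $\varphi_{7}, \varphi_{9}, \varphi_{11}$), because the enriched weighted proximity graph number $13$ in Table \ref{table2}---two proper points, a length-$2$ chain of infinitely near points above the double base point, a length-$1$ chain above the simple proper base point, and no line through three base points---matches exactly the configuration of Lemma \ref{lm conic3b}.

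First I would identify the base points of $\varphi_{13} = [x(y^2+xz):y(y^2+xz):xy^2]$. A direct inspection of common zeros and local multiplicities shows that the proper base points are $p_0=[0:0:1]$ of multiplicity $2$ and $p_3=[1:0:0]$ of multiplicity $1$, while the infinitely near base points have standard coordinates $p_1=(p_0,\infty)$, $p_2=(p_0,\infty,-1)$, and $p_4=(p_3,0)$, with $p_4\infnear_1 p_3$ and $p_2\infnear_1 p_1\infnear_1 p_0$, and with $p_2\notsatel p_0$. I would then exhibit the irreducible conic $C_1\colon y^2+xz=0$ and verify that it passes through all five base points: it vanishes at $p_0$ and at $p_3$, its tangent direction at $p_0$ is $x=0$ (which is the direction of $p_1$), the strict transform after blowing up $p_1$ meets the second exceptional divisor exactly at $t_3=-1$ (giving $p_2$), and its tangent at $p_3$ is $z=0$ (which is the direction of $p_4$).

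Next, let $\psi_{13}$ be a cubic plane Cremona map with the same enriched weighted proximity graph, and denote its base points by $q_0$ of multiplicity $2$, $q_3\in\PP^2$ of multiplicity $1$, and $q_1,q_2,q_4$ with $q_4\infnear_1 q_3$, $q_2\infnear_1 q_1\infnear_1 q_0$, $q_2\notsatel q_0$. Because the graph number $13$ contains no dashed blue line, no three among $q_0,\ldots,q_4$ are collinear, so Lemma \ref{lm conic3b} produces a unique irreducible conic $C_2$ through them. Lemma \ref{lm two_conic_equi2} applied with $n=2$ then yields an automorphism $\alpha$ of $\PP^2$ such that $\alpha(C_1)=C_2$, $\alpha(p_0)=q_0$, and $\alpha(p_3)=q_3$. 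Since $\alpha$ preserves proximity and the local geometry along a smooth conic, it must send $p_1,p_2,p_4$ to the unique points of $C_2$ satisfying the matching proximity data, namely $q_1,q_2,q_4$ respectively. Hence $\psi_{13}\circ\alpha$ and $\varphi_{13}$ have the same base points with the same multiplicities, so they define the same homaloidal net of cubics and therefore $\psi_{13}$ is equivalent to $\varphi_{13}$.

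The main obstacle is the verification that the explicit conic $y^2+xz=0$ really passes through the infinitely near point $p_2=(p_0,\infty,-1)$: this requires carrying out two successive blow-ups in the standard charts of Section 3 and observing that the strict transform of the conic, after dividing by the appropriate power of the exceptional equation, meets the second exceptional divisor precisely at the point with parameter $t_3=-1$. Once this computation is carried out, the rest of the argument is purely formal and mirrors the proof already given for $\varphi_{11}$.
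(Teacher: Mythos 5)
Your proposal is correct and follows exactly the paper's own argument for this lemma (the paper labels the points differently, putting the height-two tower as $p_4\succ_1 p_3\succ_1 p_0$ and the single infinitely near point as $p_2\succ_1 p_1$, but the configuration, the conic $xz+y^2=0$, the appeal to Lemma \ref{lm conic3b} for the unique conic through the $q_i$, and the use of Lemma \ref{lm two_conic_equi2} to produce the automorphism are all identical). The only difference is that you spell out the blow-up verification and the ``forcing'' step in more detail than the paper does.
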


\begin{proof}
The base points of $\varphi_{13}$ are $p_0=[0:0:1]$ of multiplicity 2, $p_1=[1:0:0]$ and $p_2, p_3, p_4$ where $p_2\succ_1 p_1$ and $p_4\succ_1 p_3\succ_1 p_0$ with standard coordinates $p_2=(p_1,0)$, $p_3=(p_0,\infty)$, $p_4=(p_0,\infty,-1)$. So there is a unique irreducible conic passing through $p_0, \ldots, p_4$, that is $C_1 \colon xz+y^2=0$. The base points of $\psi_{13}$ are $q_0$ of multiplicity 2 and $q_1,\ldots,q_4$ where $q_1\in\PP^2$ and $q_2\succ_1 q_1$ and $q_4\succ_1 q_3\succ_1 q_0$. According to Lemma \ref{lm conic3b}, there is a unique irreducible conic $C_2$ passing through $q_0,\ldots,q_4$. Moreover, Lemma \ref{lm two_conic_equi2} implies that there exists an automorphism $\alpha$ of $\PP^2$ such that $\alpha(C_1)=C_2$ and $\alpha(p_i)=q_i$, $i=0,1$.
This forces $\alpha(p_i)=q_i$, $i=2,3,4$. Therefore, $\psi_{13}$ is equivalent to $\varphi_{13}$.
\end{proof}

\begin{lemma}
Let $\varphi_{14}$ be the map 14 in Table \ref{table1} and let $\psi_{14}$ be a map with enriched weighted proximity graph 14 in Table  \ref{table2}. Then, $\psi_{14}$ is equivalent to $\varphi_{14}$.
\end{lemma}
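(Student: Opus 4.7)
The plan is to identify the five base points of $\varphi_{14}=[x^3:x^2y:(x-y)yz]$ in standard coordinates and then normalise the base points of $\psi_{14}$ to them by two successive automorphisms of $\PP^2$, exactly in the spirit of the preceding lemmas.

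A direct analysis of the homaloidal net shows that the double base point of $\varphi_{14}$ is $p_0=[0:0:1]$, the other proper simple base point is $p_1=[0:1:0]$, and, after blowing up $p_0$ in the chart $z=1$ and writing $t=y/x$, the exceptional divisor carries exactly two further simple base points, namely $p_2=(p_0,0)$ (direction $y=0$) and $p_3=(p_0,1)$ (direction $y=x$). Blowing up $p_1$ yields the fifth simple base point $p_4=(p_1,0)$ (direction $z=0$). The proximity relations and the Cremona equalities $\sum m_i=6$, $\sum m_i^2=8$ confirm that this configuration realises enriched graph $14$, with no line through three of the base points.

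Now let $\psi_{14}$ be any cubic plane Cremona map with enriched proximity graph $14$, and label its base points $q_0,\ldots,q_4$ with the same structural roles as $p_0,\ldots,p_4$. The absence of a dashed blue edge means that no three of the $q_i$ are collinear; in particular, the tangent directions of $q_0q_2$ and $q_0q_3$ at $q_0$ are pairwise distinct and distinct from the tangent of $q_0q_1$, and the tangent of $q_1q_4$ at $q_1$ is distinct from that of $q_0q_1$. The first step is to construct $\alpha_1\in\Aut(\PP^2)$ with $\alpha_1(p_i)=q_i$ for $i=0,1,2,3$; this reduces to the fact that the stabiliser in $\PGL_3$ of two given proper points acts on tangent directions at one of them (other than the direction of the joining line) as the full affine group, hence transitively on ordered pairs of distinct directions.

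After this reduction, $\psi_{14}\circ\alpha_1$ has base points $p_0,p_1,p_2,p_3$ together with $q'_4=(p_1,u)$ for some $u\in\C$ (with $u\ne\infty$ by non-collinearity). The second step is to find $\alpha_2\in\Aut(\PP^2)$ fixing $p_0,p_1,p_2,p_3$ and sending $p_4=(p_1,0)$ to $q'_4=(p_1,u)$; the explicit automorphism $[x:y:z]\mapsto[x:y:ux+z]$ fixes both $p_0$ and $p_1$, has identity linearisation at $p_0$ (so it fixes every point infinitely near $p_0$ of the first order, in particular $p_2$ and $p_3$), and shifts the tangent direction at $p_1$ by $u$, which is exactly what is needed. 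Consequently $\psi_{14}\circ\alpha_1\circ\alpha_2$ and $\varphi_{14}$ define the same homaloidal net, hence differ by an automorphism of $\PP^2$ on the left, proving that $\psi_{14}$ and $\varphi_{14}$ are equivalent. The only delicate part of the argument, as in earlier lemmas, is the careful bookkeeping of standard coordinates through the blow-ups.
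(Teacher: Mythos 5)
Your proof is correct and follows essentially the same strategy as the paper's: normalise four of the five base-point data of $\psi_{14}$ onto those of $\varphi_{14}$ by one automorphism, then adjust the remaining datum by an automorphism in the stabiliser of the others. The only (immaterial) difference is that the paper first matches $p_0,p_1,p_2,p_4$ and then moves the second direction at $p_0$ by the diagonal map $[x:u_3y:z]$, whereas you first match $p_0,p_1,p_2,p_3$ (using the affine action on directions at $p_0$) and then move the direction at $p_1$ by the shear $[x:y:ux+z]$; both normalisations are legitimate.
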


\begin{proof}
The base points of $\varphi_{14}$ are $p_0=[0:0:1]$ of multiplicity $2$, $p_1=[0:1:0]$ and $p_2,p_3,p_4$ where $p_2\succ_1p_0$, $p_3\succ_1 p_0$ and $p_4\succ_1 p_1$ with standard coordinates $p_2=(p_0,0), p_3=(p_0,1)$ and $p_4=(p_1,0)$.
The base points of $\psi_{14}$ are $q_0$ of multiplicity 2, $q_1\in\PP^2$ and $q_2,q_3,q_4$ where $q_2\succ_1 q_0$, $q_3\succ_1 q_0$ and $q_4 \succ_1 q_1$.
Clearly, there exists an automorphism $\alpha_1$ of $\PP^2$ such that $\alpha_1(p_i)=q_i$ for $i=0,1,2,4$.

The base points of $\psi_{14}\circ\alpha_1$ are then $p_0, p_1, p_2, q'_3, p_4$ where $q'_3$ has standard coordinates $q'_3=(p_0,u_3)$  for some $u_3 \in \C^*$ because, if $u_3$ were 0, then $q'_3$ would be equal to $p_2$, a contradiction, and, if $u_3$ were $\infty$, then $q'_3$ would be aligned with $p_0$ and $p_1$, again a contradiction.

An automorphism $\alpha_2$ of $\PP^2$ that fixes $p_0,p_1,p_2,p_4$ and that maps $p_3=(p_0,1)$ to $q'_3=(p_0,u_3)$  is 
$$\alpha_2([x:y:z]) = [x: u_3 y: z].$$
Therefore, the maps $\varphi_{14}$ and $\psi_{14}\circ\alpha_1\circ\alpha_2$ are defined by the same homaloidal net and, hence, $\varphi_{14}$ and $\psi_{14}$ are equivalent.
\end{proof}

\begin{lemma}
Let $\varphi_{15}$ be the map 15 in Table \ref{table1} and let $\psi_{15}$ be a map with enriched weighted proximity graph 15 in Table  \ref{table2}. Then, $\psi_{15}$ is equivalent to $\varphi_{15}$.
\end{lemma}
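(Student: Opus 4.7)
The plan is to follow the template of the thirteen preceding lemmas. First I will identify the base points of $\varphi_{15}=[x^2y:xy^2:(x-y)^2z]$: a short computation shows that they are the three proper points $p_0 = [0:0:1]$ (of multiplicity $2$), $p_3 = [0:1:0]$ and $p_4 = [1:0:0]$, together with two infinitely near points. At $p_0$, the three generators of the net share the tangent form $(x-y)^2$ in local coordinates, so blowing up $p_0$ produces a first-neighbourhood base point in the direction of the line $y = x$, namely $p_1 = (p_0,1)$. One further blow-up at $p_1$ shows that all members of the net acquire a common simple base point on the exceptional divisor $E_0$, namely $p_2 = (p_0, 1, \infty)$, which is satellite to $p_0$. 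This reproduces graph $15$ exactly.

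Next I would unpack the structure of $\psi_{15}$: its base points are $q_0$ (proper, of multiplicity $2$), two proper simple points $q_3, q_4$, and infinitely near points $q_1 \infnear_1 q_0$, $q_2 \infnear_1 q_1$ with $q_2 \satel q_0$. Since graph $15$ contains no line through three base points, and any line through the double point would force the net to be reducible by B\'ezout, the line through $q_0$ in the direction of $q_1$ avoids both $q_3$ and $q_4$; analogously on the $p$-side. Hence Lemma~\ref{lem: 4 points lemma} produces an automorphism $\alpha_1 \in \Aut(\PP^2)$ with $\alpha_1(p_0)=q_0$, $\alpha_1(p_3)=q_3$, $\alpha_1(p_4)=q_4$ and $\alpha_1(p_1)=q_1$, the last equality encoding that $\alpha_1$ sends the tangent direction $y=x$ at $p_0$ to the direction of $q_1$ at $q_0$.

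The remaining point $p_2$ is characterised as the unique point infinitely near $p_1$ that is proximate to $p_0$; the analogous statement holds for $q_2$. Therefore $\alpha_1(p_2)=q_2$ comes for free. Consequently $\psi_{15}\circ\alpha_1$ and $\varphi_{15}$ share all five base points with the same multiplicities, so they are defined by the same $2$-dimensional homaloidal net of cubics. They thus differ only by post-composition with an element of $\Aut(\PP^2)$ on the target, which yields the claimed equivalence of $\psi_{15}$ and $\varphi_{15}$.

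I anticipate no real obstacle in executing this plan. Graph $15$ carries no continuous parameter, so the four discrete pieces of data $(q_0, q_1, q_3, q_4)$, together with the uniqueness of the satellite point at $p_1$ proximate to $p_0$, fully determine the linear system, and no subsequent normalising automorphism is required (in contrast with the proofs for $\varphi_1, \varphi_3, \varphi_4, \varphi_6, \varphi_8, \varphi_{14}$). The only detail that deserves attention is the verification of the general position hypothesis for the four-points lemma, and this follows automatically from the combinatorial data of graph $15$ combined with B\'ezout's theorem at the double base point.
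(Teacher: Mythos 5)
Your proof is correct and follows essentially the same route as the paper's: match the double point, the two proper simple points and the first-order infinitely near direction by an automorphism of $\PP^2$ (the paper's ``clearly there exists $\alpha_1$''), and then observe that the satellite point, being the unique point in the first neighbourhood of $p_1$ proximate to $p_0$, is carried along automatically. Your extra remarks justifying the general-position hypothesis via B\'ezout and identifying the base points of $\varphi_{15}$ explicitly are accurate and merely make explicit what the paper leaves implicit.
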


\begin{proof}
The base points of $\varphi_{15}$ are $p_0=[0:0:1]$ of multiplicity $2$, $p_1=[0:1:0]$, $p_2=[1:0:0]$ and $p_3,p_4$ where $p_4\succ_1p_3\succ_1p_0$ and $p_4\satel p_0$ with standard coordinates $p_3=(p_0,1)$ and $p_4=(p_0,1,\infty)$.
The base points of $\psi_{15}$ are $q_0$ of multiplicity 2, $q_1,q_2\in\PP^2$ and $q_3,q_4$ where $q_4\succ_1q_3\succ_1q_0$ and $q_4 \satel q_0$.
Clearly, there exists an automorphism $\alpha_1$ of $\PP^2$ such that $\alpha_1(p_i)=q_i$ for $i=0,1,2,3$.
It follows that also $\alpha_1(p_4)=q_4$, so the maps $\varphi_{15}$ and $\psi_{15}\circ\alpha_1$ are defined by the same homaloidal net, therefore $\varphi_{15}$ and $\psi_{15}$ are equivalent.
\end{proof}

\begin{lemma}
Let $\varphi_{16}$ be the map 16 in Table \ref{table1} and let $\psi_{16}$ be a map with enriched weighted proximity graph 16 in Table  \ref{table2}. Then, $\psi_{16}$ is equivalent to $\varphi_{16}$.
\end{lemma}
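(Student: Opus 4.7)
The plan is to follow the conic-based strategy used in the lemmas for graphs 7, 9, 11, 13 in Table \ref{table2}. First I would read off the base points of $\varphi_{16} = [x(x^2+yz): y(x^2+yz): xy(x-y)]$: the three proper base points are $p_0 = [0:0:1]$ (of multiplicity $2$), $p_1 = [0:1:0]$ and $p_2 = [1:1:-1]$ (both simple), whereas the two infinitely near base points are $p_3 = (p_0, 0)$ and $p_4 = (p_0, 0, -1)$, so that $p_4 \infnear_1 p_3 \infnear_1 p_0$ and $p_4 \notsatel p_0$. Since $x^2 + yz$ is a common factor of the first two components of $\varphi_{16}$, the irreducible conic $C_1 \colon x^2 + yz = 0$ is the natural candidate to consider, and a short blow-up computation confirms that $C_1$ passes through all five base points.

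Next, for an arbitrary $\psi_{16}$ I label its base points $q_0$ (proper, of multiplicity $2$), $q_1, q_2 \in \PP^2$ (both simple and proper), and $q_3, q_4$ with $q_4 \infnear_1 q_3 \infnear_1 q_0$ and $q_4 \notsatel q_0$. Since the enriched weighted proximity graph $16$ has no dashed blue line, no three among the $q_i$ are collinear, so Lemma \ref{lm conic2}, applied with $\{q_0, q_1, q_2\}$ as the three proper points and $q_4 \infnear_1 q_3 \infnear_1 q_0$ as the proximity chain, produces a unique irreducible conic $C_2$ passing through $q_0, \ldots, q_4$. Applying Lemma \ref{lm two_conic_equi2} with $n = 3$ then yields an automorphism $\alpha$ of $\PP^2$ such that $\alpha(C_1) = C_2$ and $\alpha(p_i) = q_i$ for $i = 0, 1, 2$.

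Because $\alpha$ lifts to an isomorphism of the blow-ups at $p_0$ and $q_0$ that carries the strict transform of $C_1$ onto that of $C_2$, it must send the unique first-order infinitely near point of $p_0$ lying on $C_1$ to the analogous point of $q_0$ lying on $C_2$; hence $\alpha(p_3) = q_3$, and iterating one more level gives $\alpha(p_4) = q_4$. Consequently $\varphi_{16}$ and $\psi_{16} \circ \alpha$ are defined by the same homaloidal net of cubics, so $\psi_{16}$ is equivalent to $\varphi_{16}$. The only subtlety, and hence the main obstacle, is arguing that the pair $(C_1, p_0)$ uniquely determines $p_3$ and $p_4$ as its successive infinitely near points lying on the conic; this follows from $C_1$ being smooth, hence unibranch at $p_0$, so it singles out exactly one infinitely near direction at each order.
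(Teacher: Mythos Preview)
Your proof is correct and follows essentially the same conic-based approach as the paper: identify the same base points and the conic $C_1\colon x^2+yz=0$, invoke Lemma~\ref{lm conic2} to get $C_2$ through the $q_i$, apply Lemma~\ref{lm two_conic_equi2} with $n=3$, and conclude $\alpha(p_3)=q_3$, $\alpha(p_4)=q_4$. The paper's proof simply asserts ``this forces $\alpha(p_i)=q_i$, $i=3,4$'' without spelling out the unibranch justification you give, so your version is slightly more detailed but otherwise identical.
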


\begin{proof}
The base points of $\varphi_{16}$ are $p_0=[0:0:1]$ of multiplicity 2, $p_1=[0:1:0]$, $p_2=[1:1:-1]$ and $p_3, p_4$ where $p_4\succ_1 p_3\succ_1 p_0$ with standard coordinates $p_3=(p_0,0)$, $p_4=(p_0,0,-1)$. So there is a unique irreducible conic passing through $p_0, \ldots, p_4$, that is $C_1 \colon x^2+yz=0$. The base points of $\psi_{16}$ are $q_0$ of multiplicity 2 and $q_1,\ldots,q_4$ where $q_1,q_2\in\PP^2$ and $q_4\succ_1 q_3\succ_1 q_0$. According to Lemma \ref{lm conic2}, there is a unique irreducible conic $C_2$ passing through $q_0,\ldots,q_4$. Moreover, Lemma \ref{lm two_conic_equi2} implies that there exists an automorphism $\alpha$ of $\PP^2$ such that $\alpha(C_1)=C_2$ and $\alpha(p_i)=q_i$, $i=0,1,2$.
This forces $\alpha(p_i)=q_i$, $i=3,4$. Therefore, $\psi_{16}$ is equivalent to $\varphi_{16}$.
\end{proof}

\begin{lemma}
Let $\varphi_{17}$ be the map 17 in Table \ref{table1} and let $\psi_{17}$ be a map with enriched weighted proximity graph 17 in Table  \ref{table2}. Then, $\psi_{17}$ is equivalent to $\varphi_{17}$.
\end{lemma}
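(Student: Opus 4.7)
The plan is to follow the same strategy as the preceding lemmas: first I would record the base points of $\varphi_{17}$ together with their standard coordinates; then I would construct an automorphism $\alpha_{1}$ matching the three proper base points of $\psi_{17}$ with those of $\varphi_{17}$, so that the first infinitely near point also matches automatically thanks to the line condition; finally I would absorb the residual freedom in the last infinitely near point by means of a diagonal automorphism $\alpha_{2}$ fixing everything matched so far.

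First I would identify the base points of $\varphi_{17}=[xyz:y^{2}z:x(y^{2}-xz)]$. A direct calculation yields the three proper base points $p_{0}=[0:0:1]$ of multiplicity $2$, $p_{1}=[0:1:0]$ and $p_{2}=[1:0:0]$ of multiplicity $1$. Blowing up $p_{2}$ in the chart $x=1$, the linear term $-z$ of the generic member of the linear system (coming from $f_{2}=y^{2}-z$) gives the infinitely near base point $p_{3}=(p_{2},0)$, and a further blow-up places the last base point at $p_{4}=(p_{2},0,1)$, with $p_{4}\notsatel p_{2}$. In particular the three base points $p_{1},p_{2},p_{3}$ lie on the line $z=0$, matching the unique line through three base points in graph $17$.

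Now let $\psi_{17}$ have base points $q_{0},\dots,q_{4}$ in the corresponding configuration, with $q_{0}$ of multiplicity $2$ and $q_{1},q_{2},q_{3}$ lying on a common line $\ell$. By Remark \ref{rem:line} one has $q_{0}\notin\ell$, so $q_{0},q_{1},q_{2}$ are not collinear and there exists an automorphism $\alpha_{1}$ of $\PP^{2}$ with $\alpha_{1}(p_{i})=q_{i}$ for $i=0,1,2$. Since $\alpha_{1}$ sends the line $p_{1}p_{2}=\{z=0\}$ to $\ell$, it automatically sends $p_{3}=(p_{2},0)$ to the direction of $\ell$ at $q_{2}$, namely $q_{3}$; hence $\alpha_{1}(p_{3})=q_{3}$. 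Consequently $\psi_{17}\circ\alpha_{1}$ has base points $p_{0},p_{1},p_{2},p_{3}$ and a fifth point of the form $(p_{2},0,u_{4})$ for some $u_{4}\in\PP^{1}$: the value $u_{4}=\infty$ is excluded by $q_{4}\notsatel q_{2}$, whereas $u_{4}=0$ is excluded by Remark \ref{rem:line}, since otherwise the line $z=0$ would pass through the four simple base points $p_{1},p_{2},p_{3},(p_{2},0,0)$. Thus $u_{4}\in\C^{*}$.

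Finally I would check that the diagonal automorphism $\alpha_{2}=[u_{4}x:y:z]$ fixes $p_{0},p_{1},p_{2}$ and the tangent direction $z=0$ at $p_{2}$ (hence $p_{3}$), and sends $p_{4}=(p_{2},0,1)$ to $(p_{2},0,u_{4})$. A short computation in the charts of the two successive blow-ups at $p_{2}$ shows that a diagonal $[\lambda_{0}x:\lambda_{1}y:\lambda_{2}z]$ acts on the second-order standard coordinate $t$ at $p_{2}$ by $t\mapsto(\lambda_{0}\lambda_{2}/\lambda_{1}^{2})\,t$, which equals $u_{4}$ for our choice of scalars. Hence $\varphi_{17}$ and $\psi_{17}\circ\alpha_{1}\circ\alpha_{2}$ are defined by the same homaloidal net, so $\psi_{17}$ is equivalent to $\varphi_{17}$. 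The main technical point is this last bookkeeping of the action of the diagonal on the second-order standard coordinate: one must verify that the induced scalar is nonzero and can be prescribed arbitrarily in $\C^{*}$, so that the residual parameter $u_{4}$ can indeed be absorbed.
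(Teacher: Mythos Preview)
Your proof is correct and follows essentially the same approach as the paper: match the three proper base points by an automorphism $\alpha_{1}$, observe that the alignment condition forces $p_{3}$ to match automatically, and then absorb the remaining parameter $u_{4}\in\C^{*}$ via the diagonal automorphism $\alpha_{2}=[u_{4}x:y:z]$. The only differences are cosmetic (you label the proper simple point carrying the infinitely near chain $p_{2}=[1:0:0]$ whereas the paper calls it $p_{1}$) and you spell out the action of diagonals on the second-order standard coordinate, which the paper leaves implicit.
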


\begin{proof}
The base points of $\varphi_{17}$ are $p_0=[0:0:1]$ of multiplicity $2$, $p_1=[1:0:0]$, $p_2=[0:1:0]$ and $p_3,p_4$ where $p_4\succ_1 p_3\succ_1 p_1$ with standard coordinates $p_3=(p_1,0)$ and $p_4=(p_1,0,1)$.
The base points of $\psi_{17}$ are $q_0$ of multiplicity 2, $q_1,q_2\in\PP^2$ and $q_3,q_4$ where $q_4\succ_1 q_3\succ_1 q_1$ and $q_3$ is aligned with $q_1$ and $q_2$.
Clearly, there exists an automorphism $\alpha_1$ of $\PP^2$ such that $\alpha_1(p_i)=q_i$ for $i=0,1,2$.
It follows that also $\alpha_1(p_3)=q_3$.

The base points of $\psi_{17}\circ\alpha_1$ are then $p_0, p_1, p_2, p_3, q'_4$ where $q'_4$ has standard coordinates $q'_4=(p_1,0,u_4)$  for some $u_4 \in \C^*$ because, if $u_4$ were 0, then $q'_4$ would be aligned with $p_1, p_2$ and $p_3$, a contradiction, and, if $u_4$ were $\infty$, then $q'_4$ would be satellite to $p_1$, again a contradiction.

An automorphism $\alpha_2$ of $\PP^2$ that fixes $p_0,p_1,p_2,p_3$ and that maps $p_4=(p_1,0,1)$ to $q'_4=(p_1,0,u_4)$  is 
$$\alpha_2([x:y:z]) = [u_4 x: y: z].$$
Therefore, the maps $\varphi_{17}$ and $\psi_{17}\circ\alpha_1\circ\alpha_2$ are defined by the same homaloidal net and, hence, $\varphi_{17}$ and $\psi_{17}$ are equivalent.
\end{proof}

\begin{lemma}
Let $\varphi_{18}$ be the map 18 in Table \ref{table1} and let $\psi_{18}$ be a map with enriched weighted proximity graph 18 in Table  \ref{table2}. Then, $\psi_{18}$ is equivalent to $\varphi_{18}$.
\end{lemma}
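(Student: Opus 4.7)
The plan is to follow the strategy established in the preceding lemmas of this section: first I will explicitly compute the base points of $\varphi_{18}$, and then I will show that any $\psi_{18}$ with enriched graph 18 can be brought to share the same base points via an automorphism of $\PP^2$ acting on the source.

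First, I would identify the base points of $\varphi_{18}=[x^2(y-z):xy(y-z):y^2z]$. A direct check gives the double proper base point $p_0=[0:0:1]$ and two simple proper base points $p_1=[0:1:0]$ and $p_2=[1:0:0]$. At $p_2$, working in the affine chart $x=1$, the net is generated by $y-z$, $y(y-z)$, $y^2z$, whose only common linear term is $y-z$; hence every curve of the net is tangent to the line $y-z=0$ at $p_2$, producing the infinitely near base point $p_3=(p_2,1)\succ_1 p_2$. A further blow-up in standard coordinates yields the last base point $p_4=(p_2,1,0)\succ_1 p_3$, lying on the strict transform of $y-z=0$ and not proximate to $p_2$. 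Consequently the line $y-z=0$ passes through $p_2,p_3,p_4$, so $\varphi_{18}$ realizes the enriched weighted proximity graph 18 of Table \ref{table2}.

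Next, let $\psi_{18}$ be a cubic plane Cremona map with the same enriched graph. Denote its base points by $q_0$ of multiplicity 2, $q_1,q_2\in\PP^2$, $q_3\succ_1 q_2$ and $q_4\succ_1 q_3$, with a line $\ell$ through $q_2,q_3,q_4$. As in the preceding lemmas (compare the proof for $\varphi_8$ and $\varphi_{17}$), no three of $p_0,p_1,p_2,p_3$ are collinear and no three of $q_0,q_1,q_2,q_3$ are collinear, where $p_3$ (resp.\ $q_3$) is viewed as a direction at $p_2$ (resp.\ $q_2$); this collection of data, jointly with three non-collinear proper points, determines an element of $\Aut(\PP^2)$. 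Hence one can choose an automorphism $\alpha_1\in\Aut(\PP^2)$ satisfying $\alpha_1(p_i)=q_i$ for $i=0,1,2,3$.

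Then $\psi_{18}\circ\alpha_1$ has base points $p_0,p_1,p_2,p_3,q_4'$, where $q_4'=\alpha_1^{-1}(q_4)$ is infinitely near $p_3$ with standard coordinates $q_4'=(p_2,1,u_4)$ for some $u_4\in\PP^1$. The proximity relation $p_4\notsatel p_2$ forbids $u_4=\infty$. The line in the enriched graph through $p_2,p_3,q_4'$ (the $\alpha_1^{-1}$-image of $\ell$) must coincide with the unique line $y-z=0$ through $p_2$ in direction $p_3$, and a direct check in the standard charts of Section~3 shows that $(p_2,1,u_4)$ lies on this line precisely when $u_4=0$. Hence $q_4'=p_4$, the maps $\psi_{18}\circ\alpha_1$ and $\varphi_{18}$ are defined by the same homaloidal net, and therefore $\psi_{18}$ is equivalent to $\varphi_{18}$.

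The main obstacle I expect is the bookkeeping in the last paragraph: identifying which triple of base points the dashed blue line in graph 18 joins (here it is $p_2,p_3,p_4$, not $p_3,p_4$ and some point proximate to $p_3$), and translating the condition \emph{$q_4'$ lies on the line $y-z=0$} into the equation $u_4=0$ in standard coordinates. This is exactly the kind of step that the conic/line lemmas of Section~\ref{sec conics_infinitely_near_points} were designed to make routine, so I expect the argument to go through without additional input beyond what is already in place.
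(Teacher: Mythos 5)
Your proposal is correct and follows essentially the same route as the paper: compute the base points of $\varphi_{18}$ (double point $[0:0:1]$, simple proper points $[0:1:0]$ and $[1:0:0]$, and the tower $(p,1)\succ_1 p$, $(p,1,0)$ over $p=[1:0:0]$ lying on the line $y=z$), choose $\alpha_1\in\Aut(\PP^2)$ matching the first four base points, and observe that the fifth is then forced because the alignment condition in graph 18 pins down the point in the first neighbourhood of $q_3$ uniquely. The only differences are a swap of the labels of the two simple proper points and a slightly more explicit verification (via standard coordinates) of the final step, which the paper dispatches with ``it follows that also $\alpha_1(p_4)=q_4$.''
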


\begin{proof}
The base points of $\varphi_{18}$ are $p_0=[0:0:1]$ of multiplicity $2$, $p_1=[1:0:0]$, $p_2=[0:1:0]$ and $p_3,p_4$ where $p_4\succ_1p_3\succ_1p_1$ with standard coordinates $p_3=(p_1,1)$ and $p_4=(p_1,1,0)$.
The base points of $\psi_{18}$ are $q_0$ of multiplicity 2, $q_1,q_2\in\PP^2$ and $q_3,q_4$ where $q_4\succ_1q_3\succ_1q_1$ and $q_4$ is aligned with $q_1$ and $q_3$.
Clearly, there exists an automorphism $\alpha_1$ of $\PP^2$ such that $\alpha_1(p_i)=q_i$ for $i=0,1,2,3$.
It follows that also $\alpha_1(p_4)=q_4$, so the maps $\varphi_{18}$ and $\psi_{18}\circ\alpha_1$ are defined by the same homaloidal net, therefore $\varphi_{18}$ and $\psi_{18}$ are equivalent.
\end{proof}

\begin{lemma}
Let $\varphi_{19}$ be the map 19 in Table \ref{table1} and let $\psi_{19}$ be a map with enriched weighted proximity graph 19 in Table  \ref{table2}. Then, $\psi_{19}$ is equivalent to $\varphi_{19}$.
\end{lemma}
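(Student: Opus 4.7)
The plan is to mirror the successive normalization argument used in the preceding lemmas. First, I would identify the base points of $\varphi_{19}=[x(x^2+yz+xz):y(x^2+yz+xz):xyz]$ in standard coordinates: a direct computation of the common zero locus, together with local multiplicity analysis and two successive blowups at $[0:1:0]$, shows that the base scheme consists of $p_0=[0:0:1]$ of multiplicity $2$, the simple proper points $p_1=[0:1:0]$ and $p_2=[1:0:-1]$, and two further simple base points $p_3=(p_1,0)$ and $p_4=(p_1,0,-1)$ forming the chain $p_4\succ_{1}p_3\succ_{1}p_1$, with no three among $p_0,\ldots,p_4$ collinear. This matches exactly graph 19 of Table \ref{table2}.

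If $q_0,\ldots,q_4$ are the analogously arranged base points of $\psi_{19}$, then since $\{p_0,p_1,p_2\}$ and $\{q_0,q_1,q_2\}$ are both triples of non-collinear points, there is an automorphism $\alpha_1\in\Aut(\PP^2)$ with $\alpha_1(p_i)=q_i$ for $i=0,1,2$, and the two remaining base points of $\psi_{19}\circ\alpha_1$ then take the form $(p_1,u_3)$ and $(p_1,u_3,u_4)$ for some $u_3\in\C\setminus\{-1\}$ and $u_4\in\C^*$. The excluded values $u_3\in\{-1,\infty\}$ come from the graph constraints that these would place $p_0,p_1,(p_1,u_3)$ or $p_1,p_2,(p_1,u_3)$ on a line, while $u_4\in\{0,\infty\}$ would make $(p_1,u_3,u_4)$ either satellite to $p_1$ or aligned with $p_1$ and $(p_1,u_3)$, all forbidden by the absence of dashed blue edges and satellite relations in graph 19.

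To conclude, I would exhibit two explicit automorphisms: $\alpha_2$ in the stabilizer of $p_0,p_1,p_2$ sending $p_3$ to $(p_1,u_3)$, and $\alpha_3$ in the residual stabilizer of $p_0,p_1,p_2,p_3$ sending $p_4$ to the transform of $(p_1,u_3,u_4)$ under $\alpha_2^{-1}$. The stabilizer of the three proper points consists of the maps $[ax:by:(c-a)x+cz]$ and acts on first-order directions at $p_1$ by $(p_1,t)\mapsto(p_1,(c/a)(1+t)-1)$; concretely, $\alpha_2=[x:y:u_3x+(u_3+1)z]$ does the job. The residual stabilizer is the diagonal one-parameter group $[x:ky:z]$, which acts on second-order directions at $p_3$ by $(p_1,0,t)\mapsto(p_1,0,kt)$, so choosing the appropriate $k\in\C^*$ yields $\alpha_3$. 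The main obstacle is the careful bookkeeping of how these automorphisms act on the second-order infinitely near points in the two blowup charts at $p_1$; once this is done correctly, $\varphi_{19}$ and $\psi_{19}\circ\alpha_1\circ\alpha_2\circ\alpha_3$ have the same base points with the same multiplicities, hence they are defined by the same homaloidal net, and therefore $\psi_{19}$ is equivalent to $\varphi_{19}$.
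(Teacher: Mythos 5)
Your argument is correct, but it takes a genuinely different route from the paper's. You normalize the five base points directly: after matching the three proper points $p_0,p_1,p_2$, you use the two-parameter stabilizer $[ax:by:(c-a)x+cz]$ to move the first-order direction at $p_1$, and then the residual diagonal subgroup $[x:ky:z]$ to move the second-order point; your formulas for the induced actions on the two exceptional curves (namely $t\mapsto (c/a)(1+t)-1$ on the first and $t\mapsto kt$ on the second) check out, as do the exclusions $u_3\ne -1,\infty$ and $u_4\ne 0,\infty$, which are exactly what graph 19 forbids (no line through three base points, no satellite point). The paper argues instead via conics: by Lemma \ref{lm conic2} there is a unique irreducible conic through the five base points of each map (for $\varphi_{19}$ it is $x^2+xz+yz=0$), and Lemma \ref{lm two_conic_equi2} provides a single automorphism carrying one conic to the other while matching the three marked points $p_0,p_1,p_2$ with $q_0,q_1,q_2$; since the two infinitely near base points are determined by the conic together with the proximity data, they are automatically forced to correspond. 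The conic argument is shorter and avoids all chart-by-chart bookkeeping at the infinitely near points; your stabilizer computation is precisely the technique the paper deploys for the types where no such conic is available (e.g.\ types 1, 3--6, 8, 14, 17, 24), and it has the merit of producing the equivalence completely explicitly.
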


\begin{proof}
The base points of $\varphi_{19}$ are $p_0=[0:0:1]$ of multiplicity 2, $p_1=[0:1:0]$, $p_2=[1:0:-1]$ and $p_3, p_4$ where $p_4\succ_1 p_3\succ_1 p_1$ with standard coordinates $p_3=(p_1,0)$, $p_4=(p_1,0,-1)$. So there is a unique irreducible conic passing through $p_0, \ldots, p_4$, that is $C_1 \colon x^2+xz+yz=0$. The base points of $\psi_{19}$ are $q_0$ of multiplicity 2 and $q_1,\ldots,q_4$ where $q_1,q_2\in\PP^2$ and $q_4\succ_1 q_3\succ_1 q_1$. According to Lemma \ref{lm conic2}, there is a unique irreducible conic $C_2$ passing through $q_0,\ldots,q_4$. Moreover, Lemma \ref{lm two_conic_equi2} implies that there exists an automorphism $\alpha$ of $\PP^2$ such that $\alpha(C_1)=C_2$ and $\alpha(p_i)=q_i$, $i=0,1,2$.
This forces $\alpha(p_i)=q_i$, $i=3,4$. Therefore, $\psi_{19}$ is equivalent to $\varphi_{19}$.
\end{proof}

\begin{lemma}
Let $\varphi_{20}$ be the map 20 in Table \ref{table1} and let $\psi_{20}$ be a map with enriched weighted proximity graph 20 in Table  \ref{table2}. Then, $\psi_{20}$ is equivalent to $\varphi_{20}$.
\end{lemma}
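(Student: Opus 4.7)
The plan is to follow the template of the preceding nineteen lemmas: identify the base-point configuration of $\varphi_{20}$ explicitly, construct a first automorphism of $\PP^2$ aligning the base points of $\psi_{20}$ with those of $\varphi_{20}$, and then adjust by a second automorphism fixing the one remaining infinitely near base point.

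First, I would verify by direct inspection of $\varphi_{20} = [x^2z: xyz: y^2(x-z)]$ that its base points are $p_0=[0:0:1]$ of multiplicity $2$, the proper simple base points $p_1=[1:0:0]$ and $p_2=[0:1:0]$, and the infinitely near simple base points $p_3=(p_1,0)$ and $p_4=(p_2,1)$. Noting that the line $z=0$ contains the collinear triple $p_1, p_3, p_2$ confirms that $\varphi_{20}$ indeed has enriched weighted proximity graph 20.

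Next, for any $\psi_{20}$ with enriched graph 20, I would label its base points $q_0$ (multiplicity $2$), $q_1, q_2 \in \PP^2$ simple proper, $q_3 \succ_1 q_1$, and $q_4 \succ_1 q_2$, where $q_3$ is the direction at $q_1$ of the line $q_1 q_2$. By Remark \ref{rem:line} this line does not pass through $q_0$, so $q_0, q_1, q_2$ are non-collinear and there exists an automorphism $\alpha_1$ of $\PP^2$ with $\alpha_1(p_i) = q_i$ for $i = 0, 1, 2$. Since $\alpha_1$ carries the line $p_1 p_2$ to $q_1 q_2$, the direction $p_3$ at $p_1$ is automatically sent to $q_3$, so the base points of $\psi_{20} \circ \alpha_1$ are $p_0, p_1, p_2, p_3$ together with $q'_4 = (p_2, u_4)$ for some $u_4 \in \PP^1$.

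Finally, I would exclude the extremes of $u_4$ using the line constraints: if $u_4 = 0$ then $q'_4$ would place all four simple base points on the line $z=0$, while if $u_4 = \infty$ then $q'_4$ would lie on the line $x=0$ which already passes through the double point $p_0$; both situations are forbidden by Remark \ref{rem:line}. Hence $u_4 \in \C^{*}$, and the automorphism $\alpha_2([x:y:z]) = [x:y:u_4 z]$ fixes $p_0, p_1, p_2$ and the line $z=0$ (so fixes $p_3$) while sending $p_4 = (p_2, 1)$ to $(p_2, u_4) = q'_4$. Therefore $\varphi_{20}$ and $\psi_{20} \circ \alpha_1 \circ \alpha_2$ are defined by the same homaloidal net, which proves the equivalence. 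The only step requiring care is the exclusion of $u_4 \in \{0, \infty\}$; this is a routine application of Remark \ref{rem:line} once the base points have been precisely located, and it is entirely parallel to the exclusions performed in the previous lemmas (e.g.\ for $\varphi_{14}$).
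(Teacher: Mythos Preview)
Your proof is correct and follows essentially the same approach as the paper. The only difference is that the paper chooses $\alpha_1$ to send $p_0,p_1,p_2,p_4$ to $q_0,q_1,q_2,q_4$ in a single step (using the direction $p_4$ as the fourth datum), after which $\alpha_1(p_3)=q_3$ is automatic and no second automorphism $\alpha_2$ is needed.
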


\begin{proof}
The base points of $\varphi_{20}$ are $p_0=[0:0:1]$ of multiplicity $2$, $p_1=[1:0:0]$, $p_2=[0:1:0]$ and $p_3,p_4$ where $p_3\succ_1 p_1$ and $p_4\succ_1 p_2$ with standard coordinates $p_3=(p_1,0)$ and $p_4=(p_2,1)$.
The base points of $\psi_{20}$ are $q_0$ of multiplicity 2, $q_1,q_2\in\PP^2$ and $q_3,q_4$ where $q_3\succ_1 q_1$ and $q_4\succ_1 q_2$ and $q_3$ is aligned with $q_1$ and $q_2$.
Clearly, there exists an automorphism $\alpha_1$ of $\PP^2$ such that $\alpha_1(p_i)=q_i$ for $i=0,1,2,4$.
It follows that also $\alpha_1(p_3)=q_3$, so the maps $\varphi_{20}$ and $\psi_{20}\circ\alpha_1$ are defined by the same homaloidal net, therefore $\varphi_{20}$ and $\psi_{20}$ are equivalent.
\end{proof}

\begin{lemma}
Let $\varphi_{21}$ be the map 21 in Table \ref{table1} and let $\psi_{21}$ be a map with enriched weighted proximity graph 21 in Table  \ref{table2}. Then, $\psi_{21}$ is equivalent to $\varphi_{21}$.
\end{lemma}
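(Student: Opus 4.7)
The plan is to mirror the argument used earlier in this section for types 16 and 19, exploiting the fact that enriched weighted proximity graph 21 has no dashed blue edge. The two key ingredients will be Lemma \ref{lm conic3}, which produces a distinguished conic through five points of the required configuration, and Lemma \ref{lm two_conic_equi2}, which transports one conic to another by an automorphism fixing three prescribed point correspondences.

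First I would explicitly determine the base points of $\varphi_{21}=[x(xy+xz+yz):y(xy+xz+yz):xyz]$. A direct computation shows that the proper base points are $p_0=[0:0:1]$ of multiplicity $2$, together with $p_1=[1:0:0]$ and $p_2=[0:1:0]$, and that the defining linear system has two further infinitely near base points $p_3\infnear_1 p_1$ and $p_4\infnear_1 p_2$ with standard coordinates $p_3=(p_1,-1)$ and $p_4=(p_2,-1)$, i.e.\ in the tangent directions $y+z=0$ at $p_1$ and $x+z=0$ at $p_2$. I would then exhibit the smooth conic $C_1\colon xy+yz+xz=0$ and check that it is the unique irreducible conic through $p_0,\ldots,p_4$: the three proper base-point conditions cut the linear system of conics down to a pencil, and the two tangency conditions at $p_1$ and $p_2$ impose the equalities $D=E=F$ among the remaining coefficients.

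Next I would turn to $\psi_{21}$, whose base points are $q_0$ of multiplicity $2$, proper points $q_1,q_2$, and infinitely near points $q_3\infnear_1 q_1$, $q_4\infnear_1 q_2$. The absence of any dashed blue edge in graph 21 means exactly that no three among $q_0,\ldots,q_4$ are collinear, so Lemma \ref{lm conic3} provides a unique irreducible conic $C_2$ passing through all of them. By Lemma \ref{lm two_conic_equi2} there is an automorphism $\alpha\in\Aut(\PP^2)$ sending $C_1$ to $C_2$ and $p_i$ to $q_i$ for $i=0,1,2$. Since an automorphism carries the tangent line of $C_1$ at $p_i$ to the tangent line of $C_2$ at $q_i$, and since $p_3,p_4$ (respectively $q_3,q_4$) are precisely the directions of these tangents, one gets automatically $\alpha(p_3)=q_3$ and $\alpha(p_4)=q_4$. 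Hence $\psi_{21}\circ\alpha$ and $\varphi_{21}$ are defined by the same homaloidal net and are therefore equivalent up to post-composition by an automorphism, as required.

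The only nontrivial step is the identification and uniqueness of $C_1$: once the tangent directions at $p_1$ and $p_2$ are correctly read off from the equations of $\varphi_{21}$, the conclusion is forced by the two lemmas, exactly as in the proofs for types 16 and 19.
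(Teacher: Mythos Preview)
Your proof is correct and follows essentially the same approach as the paper: you identify the same base points, the same conic $C_1\colon xy+xz+yz=0$, and invoke the same two lemmas (Lemma \ref{lm conic3} and Lemma \ref{lm two_conic_equi2}) to transport the configuration. The paper's argument is more terse but identical in content; your additional remarks on the tangent directions and on why $\alpha(p_3)=q_3$, $\alpha(p_4)=q_4$ simply make explicit what the paper leaves implicit.
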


\begin{proof}
The base points of $\varphi_{21}$ are $p_0=[0:0:1]$ of multiplicity 2, $p_1=[1:0:0]$, $p_2=[0:1:0]$ and $p_3, p_4$ where $p_3\succ_1 p_1$ and $p_4\succ_1 p_2$ with standard coordinates $p_3=(p_1,-1)$, $p_4=(p_2,-1)$. So there is a unique irreducible conic passing through $p_0, \ldots, p_4$, that is $C_1 \colon xy+xz+yz=0$. The base points of $\psi_{21}$ are $q_0$ of multiplicity 2 and $q_1,\ldots,q_4$ where $q_1,q_2\in\PP^2$, $q_3\succ_1 q_1$ and $q_4\succ_1 q_2$. According to Lemma \ref{lm conic3}, there is a unique irreducible conic $C_2$ passing through $q_0,\ldots,q_4$. Moreover, Lemma \ref{lm two_conic_equi2} implies that there exists an automorphism $\alpha$ of $\PP^2$ such that $\alpha(C_1)=C_2$ and $\alpha(p_i)=q_i$, $i=0,1,2$.
This forces $\alpha(p_i)=q_i$, $i=3,4$. Therefore, $\psi_{21}$ is equivalent to $\varphi_{21}$.
\end{proof}

\begin{lemma}
Let $\varphi_{22}$ be the map 22 in Table \ref{table1} and let $\psi_{22}$ be a map with enriched weighted proximity graph 22 in Table  \ref{table2}. Then, $\psi_{22}$ is equivalent to $\varphi_{22}$.
\end{lemma}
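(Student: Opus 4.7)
My plan is to follow the same template as the preceding lemmas in this section. First, I would compute the base locus of $\varphi_{22} = [xz(x+y) : yz(x+y) : xy^2]$ by direct inspection of the defining cubics and the standard blowup procedure on the affine charts around each proper base point. The outcome is: $p_0 = [0:0:1]$ is a double base point, while the four simple base points are $p_1 \succ_1 p_0$ with standard coordinates $p_1 = (p_0, -1)$, the proper points $p_2 = [0:1:0]$ and $p_3 = [1:0:0]$, and $p_4 \succ_1 p_3$ with standard coordinates $p_4 = (p_3, 0)$. Crucially, $p_2, p_3, p_4$ are collinear on the line $\{z = 0\}$, since the direction $p_4 = (p_3, 0)$ at $p_3$ is precisely the tangent direction to $\{z = 0\}$, matching the dashed edge through vertices $C, D, E$ of enriched weighted proximity graph~22.

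For a cubic plane Cremona map $\psi_{22}$ with the same enriched weighted proximity graph, the base points are $q_0$ of multiplicity~$2$, $q_1 \succ_1 q_0$, proper points $q_2, q_3 \in \PP^2$, and $q_4 \succ_1 q_3$, with $q_2, q_3, q_4$ collinear. The next step is to produce an automorphism $\alpha_1$ of $\PP^2$ with $\alpha_1(p_i) = q_i$ for $i = 0, 1, 2, 3$, in the same ``three proper points plus one free direction'' style used in the proof for graph~20 above. The genericity required on the $q$-side (non-collinearity of $q_0, q_2, q_3$, and the fact that $q_1$ does not point toward $q_2$ or $q_3$) is forced by a standard B\'ezout argument: otherwise the offending line would be a fixed component of the homaloidal net defining $\psi_{22}$, which is impossible since the net has projective dimension~$2$ and its generic member is irreducible.

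Finally, I would observe that the equality $\alpha_1(p_4) = q_4$ comes for free, because $\alpha_1$ sends the line $p_2 p_3$ to the line $q_2 q_3$, and both $p_4$ and $q_4$ are the directions of those respective lines at $p_3$ and $q_3$. Hence $\varphi_{22}$ and $\psi_{22} \circ \alpha_1$ have identical base points with identical multiplicities, and therefore share the same homaloidal net of cubics; so $\varphi_{22}$ and $\psi_{22}$ differ only by a post-composition automorphism of $\PP^2$, that is, they are equivalent. The main obstacle is purely bookkeeping: cleanly extracting the standard coordinates $p_1 = (p_0, -1)$ and $p_4 = (p_3, 0)$ from the explicit formula for $\varphi_{22}$ via iterated blowup on two affine charts. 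Once this routine computation is in place, the remainder of the argument is strictly parallel to the proof for graph~20 and should present no new difficulty.
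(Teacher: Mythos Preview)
Your proposal is correct and follows essentially the same approach as the paper's own proof. The only difference is a harmless relabeling of the base points: the paper names the two proper simple base points $p_1,p_2$ and the two infinitely near ones $p_3\succ_1 p_0$, $p_4\succ_1 p_1$, whereas you name them $p_2,p_3$ and $p_1\succ_1 p_0$, $p_4\succ_1 p_3$; in both versions the automorphism $\alpha_1$ is determined by three proper points together with the free tangent direction at $p_0$, and the remaining infinitely near point is then forced by the collinearity encoded in the enriched graph.
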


\begin{proof}
The base points of $\varphi_{22}$ are $p_0=[0:0:1]$ of multiplicity $2$, $p_1=[1:0:0]$, $p_2=[0:1:0]$ and $p_3,p_4$ where $p_3\succ_1 p_0$ and $p_4\succ_1 p_1$ with standard coordinates $p_3=(p_0,-1)$ and $p_4=(p_1,0)$.
The base points of $\psi_{22}$ are $q_0$ of multiplicity 2, $q_1,q_2\in\PP^2$ and $q_3,q_4$ where $q_3\succ_1 q_0$ and $q_4\succ_1 q_1$ and $q_4$ is aligned with $q_1$ and $q_2$.
Clearly, there exists an automorphism $\alpha_1$ of $\PP^2$ such that $\alpha_1(p_i)=q_i$ for $i=0,1,2,3$.
It follows that also $\alpha_1(p_4)=q_4$, so the maps $\varphi_{22}$ and $\psi_{22}\circ\alpha_1$ are defined by the same homaloidal net, therefore $\varphi_{22}$ and $\psi_{22}$ are equivalent.
\end{proof}

\begin{lemma}
Let $\varphi_{23}$ be the map 23 in Table \ref{table1} and let $\psi_{23}$ be a map with enriched weighted proximity graph 23 in Table  \ref{table2}. Then, $\psi_{23}$ is equivalent to $\varphi_{23}$.
\end{lemma}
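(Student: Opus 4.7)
The plan is to mirror the proofs of the lemmas for $\varphi_{16}$, $\varphi_{19}$ and $\varphi_{21}$: exhibit an irreducible conic through all five base points of $\varphi_{23}$, then transport it via an automorphism obtained from Lemma~\ref{lm two_conic_equi2}.

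First I would determine the base points of $\varphi_{23} = [x(x^2+xy+yz): y(x^2+xy+yz): xyz]$. Writing $Q = x^2 + xy + yz$, the first two coordinates share the factor $Q$, and a direct calculation yields the base points: $p_0 = [0:0:1]$ of multiplicity $2$; the proper simple base points $p_1 = [0:1:0]$ and $p_2 = [1:-1:0]$; and two infinitely near simple base points $p_3 = (p_0, 0)$ and $p_4 = (p_1, -1)$. This configuration matches enriched weighted proximity graph~$23$ once one verifies that no three of the $p_i$ are collinear (which justifies the absence of a dashed blue edge). Moreover, the conic $C_1\colon Q = 0$ passes through all five base points: vanishing at $p_0, p_1, p_2$ is immediate, and the tangent directions of $C_1$ at $p_0$ and $p_1$ are $y=0$ and $z=-x$ respectively, which are exactly the directions recording $p_3 = (p_0,0)$ and $p_4 = (p_1,-1)$.

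Let $q_0$ of multiplicity $2$, $q_1, q_2 \in \PP^2$, and $q_3 \succ_1 q_0$, $q_4 \succ_1 q_1$ be the base points of $\psi_{23}$, with no three collinear. By Lemma~\ref{lm conic3} there exists a unique irreducible conic $C_2$ through $q_0, \ldots, q_4$; by Lemma~\ref{lm two_conic_equi2} with $n = 3$ there is an automorphism $\alpha$ of $\PP^2$ with $\alpha(C_1) = C_2$ and $\alpha(p_i) = q_i$ for $i = 0, 1, 2$. Since $p_3$ (resp.\ $p_4$) is the unique first-neighborhood point of $p_0$ (resp.\ $p_1$) lying on $C_1$, and likewise for $q_3$, $q_4$ on $C_2$, one automatically has $\alpha(p_3) = q_3$ and $\alpha(p_4) = q_4$. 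Hence $\varphi_{23}$ and $\psi_{23} \circ \alpha$ are defined by the same homaloidal net, so $\psi_{23}$ is equivalent to $\varphi_{23}$. The only delicate bookkeeping is distinguishing $p_1$ from $p_2$ (and $q_1$ from $q_2$): only $p_1$ (resp.\ $q_1$) carries an infinitely near base point, so the labeling is forced and no real obstacle arises.
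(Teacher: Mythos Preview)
Your proof is correct and follows essentially the same approach as the paper: identify the base points of $\varphi_{23}$, observe that the conic $C_1\colon x^2+xy+yz=0$ passes through all five of them, invoke Lemma~\ref{lm conic3} for the conic $C_2$ through the base points of $\psi_{23}$, and then use Lemma~\ref{lm two_conic_equi2} with $n=3$ to produce the automorphism $\alpha$ sending $p_0,p_1,p_2$ to $q_0,q_1,q_2$ and $C_1$ to $C_2$, which forces $\alpha(p_3)=q_3$ and $\alpha(p_4)=q_4$. Your extra remark on why the labeling of $p_1$ versus $p_2$ (and $q_1$ versus $q_2$) is unambiguous is a useful clarification that the paper leaves implicit.
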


\begin{proof}
The base points of $\varphi_{23}$ are $p_0=[0:0:1]$ of multiplicity 2, $p_1=[0:1:0]$, $p_2=[1:-1:0]$ and $p_3, p_4$ where $p_3\succ_1 p_0$ and $p_4\succ_1 p_1$ with standard coordinates $p_3=(p_0,0)$, $p_4=(p_1,-1)$. So there is a unique irreducible conic passing through $p_0, \ldots, p_4$, that is $C_1 \colon x^2+xy+yz=0$. The base points of $\psi_{23}$ are $q_0$ of multiplicity 2 and $q_1,\ldots,q_4$ where $q_1,q_2\in\PP^2$, $q_3\succ_1 q_0$ and $q_4\succ_1 q_1$. According to Lemma \ref{lm conic3}, there is a unique irreducible conic $C_2$ passing through $q_0,\ldots,q_4$. Moreover, Lemma \ref{lm two_conic_equi2} implies that there exists an automorphism $\alpha$ of $\PP^2$ such that $\alpha(C_1)=C_2$ and $\alpha(p_i)=q_i$, $i=0,1,2$.
This forces $\alpha(p_i)=q_i$, $i=3,4$. Therefore, $\psi_{23}$ is equivalent to $\varphi_{23}$.
\end{proof}

\begin{lemma}
Let $\varphi_{24}$ be the map 24 in Table \ref{table1} and let $\psi_{24}$ be a map with enriched weighted proximity graph 24 in Table  \ref{table2}. Then, $\psi_{24}$ is equivalent to $\varphi_{24}$.
\end{lemma}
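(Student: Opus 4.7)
The plan is to compute the base points of $\varphi_{24}$ explicitly and then transport those of $\psi_{24}$ onto them by two successive automorphisms. First I would find the base points of $\varphi_{24}=[xyz:(y-x)yz:x(x-y)(y-z)]$ by solving the common vanishing of the three components and computing local multiplicities: the proper ones are $p_0=[0:0:1]$ of multiplicity $2$ together with $p_1=[0:1:0]$, $p_2=[1:1:0]$, $p_3=[1:0:0]$ of multiplicity $1$, which lie collinearly on $\ell=\{z=0\}$. A blow-up at each simple proper base point then shows that the unique infinitely near base point is $p_4\succ_1 p_3$, with standard coordinates $p_4=(p_3,1)$, i.e.\ in the direction $z=y$ at $p_3$; this matches enriched graph $24$, with $p_0$ the double point, $\ell$ the dashed line through the three collinear simple proper base points, and $p_4$ attached to $p_3$.

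For $\psi_{24}$ the graph gives the same structure: a proper double base point $q_0$, three collinear proper simple base points $q_1,q_2,q_3$, and an infinitely near base point $q_4\succ_1 q_3$. The next step is to construct an automorphism $\alpha_1\in\Aut(\PP^2)$ sending $(p_0,p_1,p_2,p_3)$ to $(q_0,q_1,q_2,q_3)$. Lemma \ref{lem: 4 points lemma} does not apply directly, since $p_1,p_2,p_3$ are collinear; however, first picking an automorphism sending $(p_0,p_1,p_3)$ to $(q_0,q_1,q_3)$ (which exists with $2$-dimensional freedom since these three target points are in general position) and then observing that the $2$-dimensional stabilizer of $\{p_0,p_1,p_3\}$ in $\Aut(\PP^2)$ restricts to a $1$-dimensional group acting transitively on $\ell\setminus\{p_1,p_3\}$ yields an $\alpha_1$ with the further required property $\alpha_1(p_2)=q_2$.

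After applying $\alpha_1$, the composite $\psi_{24}\circ\alpha_1$ has proper base points $p_0,p_1,p_2,p_3$ and an infinitely near base point $q'_4=(p_3,u_4)$ for some $u_4\in\C$. I would then rule out $u_4=0$ (in which case $\ell$ would pass through four base points) and $u_4=\infty$ (in which case the line $\overline{p_0p_3}$ would be a second line through three base points $p_0,p_3,q'_4$), both forbidden by Remark \ref{rem:line}. Hence $u_4\in\C^*$. To adjust the direction I would use the perspectivity
\[
\alpha_2([x:y:z])=[x:y:u_4 z],
\]
which has axis $\ell$ and center $p_0$, fixes $p_0,p_1,p_2,p_3$, and acts on tangent directions at $p_3$ by $t\mapsto u_4 t$, so that it sends $p_4=(p_3,1)$ to $(p_3,u_4)=q'_4$. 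Consequently $\psi_{24}\circ\alpha_1\circ\alpha_2$ and $\varphi_{24}$ share all five base points with matching multiplicities, hence are defined by the same homaloidal net, proving that $\psi_{24}$ is equivalent to $\varphi_{24}$.

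The main obstacle is precisely the collinearity of $p_1,p_2,p_3$, which simultaneously invalidates Lemma \ref{lem: 4 points lemma} and rules out the irreducible-conic strategy used in earlier lemmas (the conic lemmas of Section \ref{sec conics_infinitely_near_points} all exclude triples of collinear base points, and one checks directly that no irreducible conic passes through the five base points of $\varphi_{24}$). The remedy is to handle the proper and infinitely near base points sequentially: the enlarged family of automorphisms permitted by the collinearity suffices to match the four proper points, and the remaining one-parameter perspectivity group then pins down the direction of the infinitely near point.
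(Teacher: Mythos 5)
Your proposal is correct and follows essentially the same route as the paper's proof: identify the base points of $\varphi_{24}$ (the same configuration, up to relabeling, with the infinitely near point attached to $[1:0:0]$ in the direction $z=y$), match the four proper base points by an automorphism $\alpha_1$, exclude $u_4=0,\infty$ by the alignment restrictions of Remark \ref{rem:line}, and finish with the same perspectivity $\alpha_2=[x:y:u_4z]$. The only difference is that you justify the existence of $\alpha_1$ for the collinear configuration explicitly, where the paper simply asserts it.
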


\begin{proof}
The base points of $\varphi_{24}$ are $p_0=[0:0:1]$ of multiplicity $2$, $p_1=[1:0:0]$, $p_2=[0:1:0]$, $p_3=[1:1:0]$ and $p_4$ where $p_4\succ_1 p_1$ with standard coordinates $p_4=(p_1,1)$.
The base points of $\psi_{24}$ are $q_0$ of multiplicity 2, $q_1,q_2,q_3\in\PP^2$ and $q_4$ where $q_4\succ_1 q_1$ and $q_3$ is aligned with $q_1$ and $q_2$.
Clearly, there exists an automorphism $\alpha_1$ of $\PP^2$ such that $\alpha_1(p_i)=q_i$ for $i=0,1,2,3$.

The base points of $\psi_{24}\circ\alpha_1$ are then $p_0, p_1, p_2, p_3, q'_4$ where $q'_4$ has standard coordinates $q'_4=(p_1,u_4)$  for some $u_4 \in \C^*$ because, if $u_4$ were 0, then $q'_4$ would be aligned with $p_1, p_2$ and $p_3$, a contradiction, and, if $u_4$ were $\infty$, then $q'_4$ would be aligned with $p_0$ ad $p_1$, again a contradiction.

An automorphism $\alpha_2$ of $\PP^2$ that fixes $p_0,p_1,p_2,p_3$ and that maps $p_4=(p_1,1)$ to $q'_4=(p_1,u_4)$  is 
$$\alpha_2([x:y:z]) = [x: y: u_4 z].$$
Therefore, the maps $\varphi_{24}$ and $\psi_{24}\circ\alpha_1\circ\alpha_2$ are defined by the same homaloidal net and, hence, $\varphi_{24}$ and $\psi_{24}$ are equivalent.
\end{proof}

\begin{lemma}
Let $\varphi_{25}$ be the map 25 in Table \ref{table1} and let $\psi_{25}$ be a map with enriched weighted proximity graph 25 in Table  \ref{table2}. Then, $\psi_{25}$ is equivalent to $\varphi_{25}$.
\end{lemma}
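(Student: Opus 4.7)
The plan is to follow the scheme of the preceding 24 lemmas. First I would determine the base points of $\varphi_{25}=[x(x+y)(y+z):y(x+y)(y+z):xyz]$: direct inspection yields the proper points $p_0=[0:0:1]$ of multiplicity $2$ (where all three cubics vanish to order $2$) together with the simple proper points $p_1=[1:0:0]$, $p_2=[0:1:-1]$ and $p_3=[1:-1:0]$. The Cremona equations $\sum m_i=6$ and $\sum m_i^2=8$ then force exactly one further simple base point, which must be infinitely near. In the chart $x=1$, the local net is spanned by $f_0=(1+y)(y+z)$ of order $1$ and $f_1,f_2$ of order $2$, so every member of the net has common tangent $y+z=0$ at $p_1$, giving the fifth base point $p_4=(p_1,-1)$ with $p_4\infnear_1 p_1$ and $p_4$ proximate to $p_1$. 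The line $\ell:y+z=0$ passes through $p_1$, $p_2$ and $p_4$, and by Remark \ref{rem:line} it is the unique line passing through three base points, so the enriched weighted proximity graph of $\varphi_{25}$ is indeed number $25$ in Table \ref{table2}.

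Now let $\psi_{25}$ be a cubic plane Cremona map with enriched weighted proximity graph $25$. I label its base points $q_0$ of multiplicity $2$, the simple proper points $q_1,q_2,q_3$, and the infinitely near point $q_4\infnear_1 q_1$ proximate to $q_1$, in such a way that the unique line $\ell'$ through three base points contains $q_1,q_2,q_4$ (the two proper endpoints of the dashed edge in graph $25$ are then $q_1$ and $q_2$, while $q_3$ is the isolated simple proper point). By Remark \ref{rem:line} and B\'ezout no three of $q_0,q_1,q_2,q_3$ are collinear; a quick determinant check gives the same for $p_0,p_1,p_2,p_3$. Lemma \ref{lem: 4 points lemma} then yields a unique $\alpha_1\in\Aut(\PP^2)$ with $\alpha_1(p_i)=q_i$ for $i=0,1,2,3$.

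The composition $\psi_{25}\circ\alpha_1$ has proper base points $p_0,p_1,p_2,p_3$ and one infinitely near base point $q'_4\infnear_1 p_1$, with $q'_4$ lying on the strict transform of the line $\alpha_1^{-1}(\ell')$. Since $\alpha_1^{-1}(\ell')$ passes through $p_1$ and $p_2$, it must equal $\ell:y+z=0$, which forces $q'_4=(p_1,-1)=p_4$ in standard coordinates. Hence $\psi_{25}\circ\alpha_1$ and $\varphi_{25}$ have the same base points with the same multiplicities, so they define the same homaloidal net and differ only by post-composition with an automorphism of $\PP^2$. Therefore $\psi_{25}$ is equivalent to $\varphi_{25}$.

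The one genuinely non-trivial step is identifying the common tangent direction of the net at $p_1$, which simultaneously locates $p_4$ and ensures that the line-through-three-base-points datum of graph $25$ uniquely pins down the direction of $q'_4$; once this is in hand no extra automorphisms $\alpha_2,\alpha_3$ are needed, in contrast to several earlier lemmas where residual parameters had to be absorbed.
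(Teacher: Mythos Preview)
Your proof is correct and follows exactly the same strategy as the paper's: identify the five base points of $\varphi_{25}$, apply Lemma~\ref{lem: 4 points lemma} to the four proper points, and observe that the collinearity datum in the enriched graph forces the infinitely near point to match automatically, so no further automorphism is needed. Your computation of $p_4=(p_1,-1)$ (tangent direction $y+z=0$, aligned with $p_1$ and $p_2=[0:1:-1]$) is in fact the correct one; the paper records $p_4=(p_1,0)$, which would correspond to the line $z=0$ through $p_3$ rather than $p_2$ and is a typo --- the logical argument is identical either way.
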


\begin{proof}
The base points of $\varphi_{25}$ are $p_0=[0:0:1]$ of multiplicity $2$, $p_1=[1:0:0]$, $p_2=[0:1:-1]$, $p_3=[1:-1:0]$ and $p_4$ where $p_4\succ_1 p_1$ with standard coordinates $p_4=(p_1,0)$.
The base points of $\psi_{25}$ are $q_0$ of multiplicity 2, $q_1,q_2,q_3\in\PP^2$ and $q_4$ where $q_4\succ_1 q_1$ and $q_4$ is aligned with $q_1$ and $q_2$.
Clearly, there exists an automorphism $\alpha_1$ of $\PP^2$ such that $\alpha_1(p_i)=q_i$ for $i=0,1,2,3$.
It follows that also $\alpha_1(p_4)=q_4$, so the maps $\varphi_{25}$ and $\psi_{25}\circ\alpha_1$ are defined by the same homaloidal net, therefore $\varphi_{25}$ and $\psi_{25}$ are equivalent.
\end{proof}

\begin{lemma}\label{l:26}
Let $\varphi_{26,\gamma}$ be the map 26 in Table \ref{table1} with parameter $\gamma$ and let $\psi_{26}$ be a map with enriched weighted proximity graph 26 in Table  \ref{table2}. Then, $\psi_{26}$ is equivalent to $\varphi_{26,\gamma}$ for some $\gamma\ne0,1$.
\end{lemma}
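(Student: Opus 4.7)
The plan is to mirror the strategy used for the earlier parameter-free types, while retaining one automorphism-worth of freedom that will be encoded by the parameter $\gamma$. First I would pin down the base points of $\varphi_{26,\gamma}$ in standard coordinates; then normalize the four proper base points of $\psi_{26}$ onto those of $\varphi_{26,\gamma}$ via Lemma \ref{lem: 4 points lemma}; and finally match the last remaining free datum (the position of the infinitely near base point on the exceptional divisor) by a suitable choice of $\gamma$.

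By direct inspection of the three components of $\varphi_{26,\gamma}$, one finds the proper base points $p_0=[0:0:1]$ of multiplicity $2$ together with the simple points $p_1=[1:0:0]$, $p_2=[0:1:0]$ and $p_3=[1:1:1]$. The Cremona equations \eqref{eq:Cremona} force a fifth simple base point infinitely near one of the $p_i$; working in the affine chart $x=1$ and blowing up $p_1$, one checks that the strict transforms of the three components share a common zero on the exceptional divisor at a point with standard coordinates $p_4=(p_1,\eta(\gamma))$, where $\eta$ is an explicit rational function of $\gamma$ taking values in $\C^{**}$ for $\gamma\ne 0,1$, and moreover $\eta$ realises a bijection $\C^{**}\to\C^{**}$ (this is the one routine computational check that I postpone).

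Next, let $q_0,q_1,q_2,q_3,q_4$ be the base points of $\psi_{26}$, with $q_0$ of multiplicity $2$, $q_1,q_2,q_3$ proper simple, and $q_4\succ_1 q_j$ for some $j\in\{1,2,3\}$; up to relabeling I may assume $j=1$. Since graph 26 contains no dashed blue line, no three of $q_0,\ldots,q_3$ are collinear, so Lemma \ref{lem: 4 points lemma} furnishes a unique $\alpha\in\Aut(\PP^2)$ with $\alpha(q_i)=p_i$ for $i=0,1,2,3$. The map $\psi_{26}\circ\alpha^{-1}$ then shares the four proper base points $p_0,\ldots,p_3$ with $\varphi_{26,\gamma}$, while its remaining simple base point has standard coordinates $(p_1,t)$ for some $t\in\PP^1$; the values $t=0,1,\infty$ are ruled out because they would align this point with $p_2$, $p_3$ or $p_0$ respectively along a line through $p_1$, contradicting the absence of collinearities recorded in graph 26 (and producing a B\'ezout violation on the line through $p_0$ and $p_1$ in the case $t=\infty$).

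Choosing $\gamma\in\C^{**}$ so that $\eta(\gamma)=t$, the maps $\varphi_{26,\gamma}$ and $\psi_{26}\circ\alpha^{-1}$ have identical base points with identical multiplicities; the usual dimension count (the five base points impose $3+1+1+1+1=7$ independent conditions on the $9$-dimensional projective space of plane cubics, leaving the $2$-dimensional net that must coincide with the homaloidal net of each map) shows that the two maps define the same homaloidal net. They therefore differ by a post-composition with an automorphism of $\PP^2$, whence $\psi_{26}$ is equivalent to $\varphi_{26,\gamma}$. The main obstacle I foresee is the explicit identification of $\eta(\gamma)$ and the verification that it bijects $\C^{**}$; everything else is a routine adaptation of the template established in the proofs for types 3--25.
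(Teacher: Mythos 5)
Your proposal is correct and follows essentially the same route as the paper: normalize the four proper base points onto $[0:0:1]$, $[1:0:0]$, $[0:1:0]$, $[1:1:1]$, rule out $t=0,1,\infty$ for the infinitely near point exactly as you do, and match the remaining parameter. The one computation you postpone comes out to $\eta(\gamma)=1/\gamma$, which is indeed a bijection of $\C^{**}$, so setting $\gamma=1/t$ completes the argument.
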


\begin{proof}
The base points of $\varphi_{26,\gamma}$ are $p_0=[0:0:1]$ of multiplicity $2$, $p_1=[1:0:0]$, $p_2=[0:1:0]$, $p_3=[1:1:1]$ and $p_4$ where $p_4\succ_1 p_1$ with standard coordinates $p_4=(p_1,1/\gamma)$.
The base points of $\psi_{26}$ are $q_0$ of multiplicity 2, $q_1,q_2,q_3\in\PP^2$ and $q_4$ where $q_4\succ_1 q_1$.
Clearly, there exists an automorphism $\alpha_1$ of $\PP^2$ such that $\alpha_1(p_i)=q_i$ for $i=0,1,2,3$.

The base points of $\psi_{26}\circ\alpha_1$ are then $p_0, p_1, p_2, p_3, q'_4$ where $q'_4$ has standard coordinates $q'_4=(p_1,u_4)$  for some $u_4 \in \C^{**}$ because, if $u_4$ were 0, then $q'_4$ would be aligned with $p_1$ and $p_2$, a contradiction; if $u_4$ were $\infty$, then $q'_4$ would be aligned with $p_0$ ad $p_1$, again a contradiction, and, if $u_4$ were 1, then $q'_4$ would be aligned with $p_1$ and $p_3$, still a contradiction.
Setting $\gamma=1/u_4$, the maps $\varphi_{26,\gamma}$ and $\psi_{26}\circ\alpha_1$ are defined by the same homaloidal net, therefore $\varphi_{26,\gamma}$ and $\psi_{26}$ are equivalent.
\end{proof}

\begin{lemma}\label{l:27}
Let $\varphi_{27,\gamma}$ be the map 27 in Table \ref{table1} with parameter $\gamma$ and let $\psi_{27}$ be a map with enriched weighted proximity graph 27 in Table  \ref{table2}. Then, $\psi_{27}$ is equivalent to $\varphi_{27,\gamma}$ for some $\gamma\ne0,1$.
\end{lemma}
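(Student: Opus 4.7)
The plan is to follow the template used for the previous twenty-six lemmas: compute the base points of $\varphi_{27,\gamma}$, apply an automorphism $\alpha_1$ of $\PP^2$ to bring the three proper base points of $\psi_{27}$ to the same locations as those of $\varphi_{27,\gamma}$, and then introduce a diagonal automorphism $\alpha_2$ (together with a suitable choice of $\gamma$) in order to match the remaining two infinitely near base points. One concludes by noting that a cubic de Jonqui\`eres map is determined, up to an automorphism on the left, by its base points with their multiplicities, since the linear system of plane cubics satisfying those base conditions has dimension exactly $2$.

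First, intersecting $\gamma x^2y=\gamma xy^2=(x+y)(x+\gamma y)z=0$ shows that the proper base points of $\varphi_{27,\gamma}$ are $p_0=[0:0:1]$ (double) and $p_3=[1:0:0]$, $p_4=[0:1:0]$ (simple), and blowing up $p_0$ in the chart $z=1$ yields the two first-order infinitely near base points $p_1=(p_0,-1)$ and $p_2=(p_0,-1/\gamma)$, both proximate to $p_0$ and lying in distinct directions since $\gamma\ne0,1$. For $\psi_{27}$, denote by $q_0$ the proper double base point, by $q_3,q_4$ the two simple proper ones, and by $q_1,q_2$ the two first-order infinitely near ones above $q_0$. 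Since the enriched weighted proximity graph $27$ carries no dashed line, Remark~\ref{rem:line} (combined with B\'ezout's theorem) shows that $q_0,q_3,q_4$ are not collinear; Lemma~\ref{lem: 4 points lemma} then produces $\alpha_1\in\Aut(\PP^2)$ with $\alpha_1(p_i)=q_i$ for $i=0,3,4$, so that the base points of $\psi_{27}\circ\alpha_1$ are $p_0,p_3,p_4$ together with $q'_1=(p_0,u_1)$ and $q'_2=(p_0,u_2)$ for some $u_1\ne u_2$ in $\C\setminus\{0,\infty\}$: the value $u=0$ would place $q'_i$ on the line $p_0p_3$ and $u=\infty$ would place it on $p_0p_4$, either configuration being excluded by Remark~\ref{rem:line}.

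Finally, set $\gamma:=u_1/u_2\in\C\setminus\{0,1\}$ and take $\alpha_2\colon[x:y:z]\mapsto[-u_1\,x:y:z]$. This diagonal automorphism fixes $p_0,p_3,p_4$ and sends a direction $v$ at $p_0$ to $-v/u_1$, hence carries $\{(p_0,-1),(p_0,-1/\gamma)\}$ onto $\{(p_0,u_1),(p_0,u_2)\}$. Thus $\varphi_{27,\gamma}$ and $\psi_{27}\circ\alpha_1\circ\alpha_2$ share the same base points with the same multiplicities, so they define the same homaloidal net and $\psi_{27}$ is equivalent to $\varphi_{27,\gamma}$. The only mildly delicate point, rather than a true obstacle, is that the pair $\{q'_1,q'_2\}$ is unordered: the torus of diagonal automorphisms fixing $(p_0,p_3,p_4)$ acts on directions at $p_0$ by scaling, which is what makes it transitive on such unordered pairs of distinct non-zero and non-infinite directions; this also explains the redundancy $\gamma\leftrightarrow 1/\gamma$ recorded in Theorem~\ref{thm main1}.
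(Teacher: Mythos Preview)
Your approach is essentially the same as the paper's and is conceptually sound, but there is a computational slip in your choice of $\alpha_2$. You correctly note that $\alpha_2=[-u_1x:y:z]$ acts on directions at $p_0$ by $v\mapsto -v/u_1$; however, this sends $-1$ to $1/u_1$ and $-1/\gamma$ to $1/(\gamma u_1)=u_2/u_1^{2}$, not to $u_1$ and $u_2$. The correct diagonal automorphism is $\alpha_2=[x:-u_1y:z]$, which acts on directions by $v\mapsto -u_1v$ and hence sends $-1\mapsto u_1$ and $-1/\gamma\mapsto u_1/\gamma=u_2$ once $\gamma=u_1/u_2$.

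Apart from this easily repaired formula, your argument matches the paper's. The paper chooses a single automorphism $\alpha_1$ mapping the three proper base points \emph{and} one of the two infinitely near directions of $\varphi_{27,\gamma}$ onto those of $\psi_{27}$, and then simply reads off $\gamma$ from the one remaining direction. Your two-step variant (first normalize the three proper points, then use the diagonal torus to adjust the two directions) is equally valid and, as you observe, makes the $\gamma\leftrightarrow 1/\gamma$ symmetry transparent.
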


\begin{proof}
The base points of $\varphi_{27,\gamma}$ are $p_0=[0:0:1]$ of multiplicity $2$, $p_1=[0:1:0]$, $p_2=[1:0:0]$ and $p_3,p_4$ where $p_3\succ_1 p_0$ and $p_4\succ_1 p_0$ with standard coordinates $p_3=(p_0,-1)$ and $p_4=(p_0,-1/\gamma)$.
The base points of $\psi_{27}$ are $q_0$ of multiplicity 2, $q_1,q_2\in\PP^2$ and $q_3,q_4$ where $q_3\succ_1 q_0$ and $q_4\succ_1 q_0$.
Clearly, there exists an automorphism $\alpha_1$ of $\PP^2$ such that $\alpha_1(p_i)=q_i$ for $i=0,1,2,3$.

The base points of $\psi_{27}\circ\alpha_1$ are then $p_0, p_1, p_2, p_3, q'_4$ where $q'_4$ has standard coordinates $q'_4=(p_0,u_4)$  for some $u_4 \in \C^{**}$ because, if $u_4$ were 0, then $q'_4$ would be aligned with $p_0$ and $p_2$, a contradiction; if $u_4$ were $\infty$, then $q'_4$ would be aligned with $p_0$ ad $p_1$, again a contradiction, and, if $u_4$ were 1, then $q'_4$ would be equal to $p_3$, still a contradiction.
Setting $\gamma=-1/u_4$, the maps $\varphi_{27,\gamma}$ and $\psi_{27}\circ\alpha_1$ are defined by the same homaloidal net, therefore $\varphi_{27,\gamma}$ and $\psi_{27}$ are equivalent.
\end{proof}

\begin{lemma}\label{l:28}
Let $\varphi_{28,\gamma}$ be the map 28 in Table \ref{table1} with parameter $\gamma$ and let $\psi_{28}$ be a map with enriched weighted proximity graph 28 in Table  \ref{table2}. Then, $\psi_{28}$ is equivalent to $\varphi_{28,\gamma}$ for some $\gamma\ne0,1$.
\end{lemma}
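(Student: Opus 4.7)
The plan is to follow the same pattern established in Lemmas \ref{l:26} and \ref{l:27}: first identify the base points of $\varphi_{28,\gamma}$, then transport the base points of $\psi_{28}$ onto them by a suitable automorphism of $\PP^2$, and finally check that the only remaining degree of freedom is the parameter $\gamma$.

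First I would compute directly from the defining triple of cubics that the base points of $\varphi_{28,\gamma}$ are the proper points $p_0=[0:0:1]$ of multiplicity $2$, $p_2=[1:0:0]$, $p_3=[0:1:0]$ and $p_4=[1:1:0]$, together with an infinitely near base point $p_1$ of the first order over $p_0$ extracted from the common tangent $y=\gamma x$ of the degree-two parts of the linear system at $p_0$; in the standard coordinates of Section~3 this reads $p_1=(p_0,\gamma)$. Note that $p_2,p_3,p_4$ lie on the line $\ell\colon z=0$ with $p_0\notin\ell$, in agreement with the unique dashed line of graph~28 in Table~\ref{table2}.

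Next let $\psi_{28}$ be any cubic plane Cremona map with enriched weighted proximity graph~28, and denote its base points by $q_0$ of multiplicity $2$, $q_1\succ_1 q_0$, and simple proper $q_2,q_3,q_4$ lying on a common line $\ell'$ with $q_0\notin\ell'$. Since $\PGL_3=\Aut(\PP^2)$ acts transitively on configurations of the shape (line with three distinct marked points on it, marked point off the line), there exists an automorphism $\alpha_1$ of $\PP^2$ with $\alpha_1(p_0)=q_0$ and $\alpha_1(p_i)=q_i$ for $i=2,3,4$; such $\alpha_1$ is only unique up to a one-parameter family, but I only need one. The base points of $\psi_{28}\circ\alpha_1$ are then $p_0,p_2,p_3,p_4$ and an infinitely near point $q'_1=(p_0,u_1)$ for some $u_1\in\PP^1$.

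The final step is to argue that $u_1\in\C^{**}$: using the explicit description of lines through infinitely near points in Section~3, the values $u_1=0$, $u_1=\infty$ and $u_1=1$ would mean respectively that $q'_1$ lies on the strict transform of the line through $p_0$ and $p_2$, through $p_0$ and $p_3$, or through $p_0$ and $p_4$, each of which would introduce an additional line in the enriched weighted proximity graph and contradict the assumption that $\psi_{28}$ has graph~28. Setting $\gamma=u_1$ then makes $\varphi_{28,\gamma}$ and $\psi_{28}\circ\alpha_1$ defined by the same homaloidal net, hence equivalent. The only mildly delicate point is the existence of $\alpha_1$: because three of the four proper base points are collinear, Lemma~\ref{lem: 4 points lemma} does not directly apply, but the transitivity statement above is a routine consequence of the action of $\PGL_3$ on pairs (line, point off the line) together with the $\PGL_2$-action on three points of $\PP^1$.
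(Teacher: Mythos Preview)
Your proof is correct and follows essentially the same approach as the paper's: fix the proper base points by an automorphism, then read off the remaining infinitely near point as $(p_0,u)$ and exclude $u\in\{0,1,\infty\}$. The only differences are cosmetic---the paper labels the infinitely near point $p_4$ and the three collinear proper simple points $p_1,p_2,p_3$, whereas you swap these roles---and you are in fact slightly more careful than the paper in justifying the existence of $\alpha_1$ (the paper just writes ``Clearly''). One small imprecision: your exclusion of $u_1\in\{0,1,\infty\}$ is phrased as ``introducing an additional line in the enriched weighted proximity graph'', but each of those forbidden lines passes through the double point $p_0$, and by Remark~\ref{rem:line} such a line would force the linear system to be reducible rather than merely change the enriched graph; the conclusion is the same, but that is the actual obstruction.
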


\begin{proof}
The base points of $\varphi_{28,\gamma}$ are $p_0=[0:0:1]$ of multiplicity $2$, $p_1=[0:1:0]$, $p_2=[1:0:0]$, $p_3=[1:1:0]$ and $p_4$ where $p_4\succ_1 p_0$ with standard coordinates $p_4=(p_0,\gamma)$.
The base points of $\psi_{28}$ are $q_0$ of multiplicity 2, $q_1,q_2,q_3\in\PP^2$ and $q_4$ where $q_4\succ_1 q_0$ and $q_1,q_2,q_3$ are collinear.
Clearly, there exists an automorphism $\alpha_1$ of $\PP^2$ such that $\alpha_1(p_i)=q_i$ for $i=0,1,2,3$.

The base points of $\psi_{28}\circ\alpha_1$ are then $p_0, p_1, p_2, p_3, q'_4$ where $q'_4=(p_0,u_4)$ for some $u_4 \in \C^{**}$ because, if $u_4$ were 0, then $q'_4$ would be aligned with $p_0$ and $p_2$, a contradiction; if $u_4$ were $\infty$, then $q'_4$ would be aligned with $p_0$ ad $p_1$, again a contradiction, and, if $u_4$ were 1, then $q'_4$ would be aligned with $p_0$ and $p_3$, still a contradiction.
Setting $\gamma=u_4$, the maps $\varphi_{28,\gamma}$ and $\psi_{28}\circ\alpha_1$ are defined by the same homaloidal net, therefore $\varphi_{28,\gamma}$ and $\psi_{28}$ are equivalent.
\end{proof}

\begin{lemma}\label{l:29}
Let $\varphi_{29,\gamma}$ be the map 29 in Table \ref{table1} with parameter $\gamma$ and let $\psi_{29}$ be a map with enriched weighted proximity graph 29 in Table  \ref{table2}. Then, $\psi_{29}$ is equivalent to $\varphi_{29,\gamma}$ for some $\gamma\ne0,1$.
\end{lemma}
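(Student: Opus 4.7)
The plan is to adapt the strategy of Lemma \ref{l:28} to the case where no three proper simple base points are collinear (reflected by the absence of a dashed blue line in graph 29 of Table \ref{table2}).

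First, I would identify the base points of $\varphi_{29,\gamma}$ by reading off the common zero locus of the three defining cubic polynomials: $p_0 = [0:0:1]$ is the proper double base point; $p_1 = [0:1:0]$, $p_2 = [1:0:0]$, $p_3 = [1:1:1]$ are three proper simple base points in general position with $p_0$; and the fifth simple base point is the unique infinitely near point $p_4 \infnear_1 p_0$ obtained by computing the common zero on the exceptional divisor of the blow-up of $p_0$. Working in the chart $z = 1$, one extracts the quadratic parts of each component at $p_0$ and solves for the unique common tangent direction, yielding $p_4 = (p_0, \gamma)$ in standard coordinates.

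Next, I would read the graph structure of $\psi_{29}$ from graph 29 in Table \ref{table2}: a proper double point $q_0$, three proper simple points $q_1, q_2, q_3$, and an infinitely near simple point $q_4 \infnear_1 q_0$. By Remark \ref{rem:line}, no line through $q_0$ can contain two other base points, and the absence of a dashed blue line in graph 29 tells us that $q_1, q_2, q_3$ are not collinear. Therefore no three of $\{q_0, q_1, q_2, q_3\}$ are collinear, and Lemma \ref{lem: 4 points lemma} yields a unique $\alpha_1 \in \Aut(\PP^2)$ with $\alpha_1(p_i) = q_i$ for $i = 0, 1, 2, 3$. The base points of $\psi_{29} \circ \alpha_1$ then become $p_0, p_1, p_2, p_3$ together with $q'_4 = (p_0, u_4)$ for some $u_4 \in \PP^1$.

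To finish, I would rule out $u_4 \in \{0, 1, \infty\}$ by the same alignment argument as in the previous lemmas: $u_4 = 0$, $\infty$, $1$ would force $q'_4$ to lie on the line through $p_0$ and $p_2$, $p_1$, $p_3$ respectively, contradicting Remark \ref{rem:line} (via the proximity inequality of Proposition \ref{pro Proximity_inequalities} applied to the homaloidal net). Hence $u_4 \in \C^{**}$, and setting $\gamma = u_4$ makes $\varphi_{29,\gamma}$ and $\psi_{29} \circ \alpha_1$ share the same homaloidal net, proving equivalence. The main obstacle is the explicit determination of $p_4 = (p_0, \gamma)$ for the somewhat unwieldy formula of $\varphi_{29,\gamma}$; once that computation is carried out, the rest is a purely formal adaptation of the arguments for Lemmas \ref{l:26}--\ref{l:28}.
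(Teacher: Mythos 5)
Your proposal is correct and follows essentially the same route as the paper: identify the base points of $\varphi_{29,\gamma}$ (with $p_4=(p_0,\gamma)$), use the four proper points in general position to normalize via Lemma \ref{lem: 4 points lemma}, and then exclude $u_4\in\{0,1,\infty\}$ by the alignment of $q'_4$ with $p_0$ and $p_2$, $p_3$, $p_1$ respectively, before setting $\gamma=u_4$. The only cosmetic difference is that the exclusion of those three values is really an instance of Remark \ref{rem:line} (B\'ezout on the homaloidal net) rather than the proximity inequality, but this does not affect the argument.
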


\begin{proof}
The base points of $\varphi_{29,\gamma}$ are $p_0=[0:0:1]$ of multiplicity $2$, $p_1=[0:1:0]$, $p_2=[1:0:0]$, $p_3=[1:1:1]$ and $p_4$ where $p_4\succ_1 p_0$ with standard coordinates $p_4=(p_0,\gamma)$.
The base points of $\psi_{29}$ are $q_0$ of multiplicity 2, $q_1,q_2,q_3\in\PP^2$ and $q_4$ where $q_4\succ_1 q_0$.
Clearly, there exists an automorphism $\alpha_1$ of $\PP^2$ such that $\alpha_1(p_i)=q_i$ for $i=0,1,2,3$.

The base points of $\psi_{29}\circ\alpha_1$ are then $p_0, p_1, p_2, p_3, q'_4$ where $q'_4=(p_0,u_4)$ for some $u_4 \in \C^{**}$ because, if $u_4$ were 0, then $q'_4$ would be aligned with $p_0$ and $p_2$, a contradiction; if $u_4$ were $\infty$, then $q'_4$ would be aligned with $p_0$ ad $p_1$, again a contradiction, and, if $u_4$ were 1, then $q'_4$ would be aligned with $p_0$ and $p_3$, still a contradiction.
Setting $\gamma=u_4$, the maps $\varphi_{29,\gamma}$ and $\psi_{29}\circ\alpha_1$ are defined by the same homaloidal net, therefore $\varphi_{29,\gamma}$ and $\psi_{29}$ are equivalent.
\end{proof}

\begin{lemma}\label{l:30}
Let $\varphi_{30,\gamma}$ be the map 30 in Table \ref{table1} with parameter $\gamma$ and let $\psi_{30}$ be a map with enriched weighted proximity graph 30 in Table  \ref{table2}. Then, $\psi_{30}$ is equivalent to $\varphi_{30,\gamma}$ for some $\gamma\ne0,1$.
\end{lemma}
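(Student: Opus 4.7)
The plan is to follow the same normalization strategy the authors use in the preceding lemmas for $\varphi_{28,\gamma}$ and $\varphi_{29,\gamma}$. First, I would directly compute the base points of $\varphi_{30,\gamma}$ from its defining polynomials $F_0=x(xy+\gamma xz-xz-\gamma y^2)$, $F_1=\gamma xz(x-y)$, $F_2=\gamma z(x-y)(x+y)$. Using the factorizations of $F_1$ and $F_2$ to narrow the locus where all three vanish, the five base points come out to be $p_0=[0:0:1]$ of multiplicity $2$, together with the simple proper points $p_1=[1:0:0]$, $p_2=[0:1:0]$, $p_3=[\gamma:1:0]$ and $p_4=[1:1:1]$. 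One verifies that the triple $p_1,p_2,p_3$ is collinear on $z=0$ and, for $\gamma \neq 0,1$, that no other three of these five points are collinear; this matches the enriched weighted proximity graph $30$.

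Next, for $\psi_{30}$, the base points are $q_0$ of multiplicity $2$ together with simple proper points $q_1,q_2,q_3,q_4$, exactly three of which (say $q_1,q_2,q_3$, after relabeling of the collinear triple) are collinear. Since no three of $p_0,p_1,p_2,p_4$ are collinear, Lemma \ref{lem: 4 points lemma} provides a unique automorphism $\alpha_1$ of $\PP^2$ with $\alpha_1(p_i)=q_i$ for $i=0,1,2,4$. The base points of $\psi_{30}\circ\alpha_1$ are then $p_0,p_1,p_2,q'_3,p_4$, where $q'_3$ necessarily lies on the line through $p_1$ and $p_2$, i.e.\ $z=0$, so $q'_3=[u_3:1:0]$ for some $u_3\in\C$. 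I then exclude $u_3=0$ (would coincide with $p_2$), $u_3=\infty$ (would coincide with $p_1$), and $u_3=1$ (would put $p_0,q'_3,p_4$ on the line $x=y$, forcing an extra alignment absent from graph $30$), hence $u_3\in\C^{**}$.

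Setting $\gamma=u_3$, the maps $\varphi_{30,\gamma}$ and $\psi_{30}\circ\alpha_1$ share the same base points with the same multiplicities, so they are defined by the same homaloidal net and hence differ only by a post-composition with an automorphism of $\PP^2$, giving the equivalence of $\psi_{30}$ and $\varphi_{30,\gamma}$. Unlike several earlier lemmas, no additional automorphism $\alpha_2$ is needed to absorb $u_3$, because the stabilizer in $\Aut(\PP^2)$ of the ordered quadruple $(p_0,p_1,p_2,p_4)$ is trivial by Lemma \ref{lem: 4 points lemma}; this is precisely why graph $30$ carries a genuine one-parameter family.

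The main obstacle I expect is the explicit base-point analysis for $\varphi_{30,\gamma}$: verifying that $p_0=[0:0:1]$ has multiplicity exactly $2$ (by inspecting the lowest-order terms of $F_0,F_1,F_2$ in local coordinates $z=1$), checking that no additional satellite or infinitely near base points arise, and confirming that the only collinear triple among the five base points is $p_1,p_2,p_3$ for every $\gamma\in\C^{**}$. Once this is in place, the rest of the argument is a direct application of Lemma \ref{lem: 4 points lemma} with no further computation required.
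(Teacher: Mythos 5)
Your proposal is correct and follows essentially the same normalization argument as the paper: identify the base points of $\varphi_{30,\gamma}$, use Lemma \ref{lem: 4 points lemma} to send $p_0,p_1,p_2,p_4$ to the corresponding base points of $\psi_{30}$ (choosing the non-collinear four), observe that the remaining point must be $[u_3:1:0]$ with $u_3\ne 0,1$, and set $\gamma=u_3$. Your extra care in verifying the base-point computation and excluding $q_3'=[1:0:0]$ is harmless and only makes the argument slightly more explicit than the paper's.
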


\begin{proof}
The base points of $\varphi_{30,\gamma}$ are $p_0=[0:0:1]$ of multiplicity $2$, $p_1=[0:1:0]$, $p_2=[1:0:0]$, $p_3=[\gamma:1:0]$ and $p_4=[1:1:1]$.
The base points of $\psi_{30}$ are $q_0$ of multiplicity 2, $q_1,q_2,q_3,q_4\in\PP^2$ where $q_1,q_2,q_3$ are collinear.
Clearly, there exists an automorphism $\alpha_1$ of $\PP^2$ such that $\alpha_1(p_i)=q_i$ for $i=0,1,2,4$.

The base points of $\psi_{30}\circ\alpha_1$ are then $p_0, p_1, p_2, q'_3, p_4$ where $q'_3=[u_3:1:0]$ for some $u_3 \in \C^{**}$ because, if $u_3$ were 0, then $q'_3$ would be equal to $p_1$, a contradiction, and, if $u_3$ were 1, then $q'_3$ would be aligned with $p_0$ and $p_4$, again a contradiction.
Setting $\gamma=u_3$, the maps $\varphi_{30,\gamma}$ and $\psi_{30}\circ\alpha_1$ are defined by the same homaloidal net, therefore $\varphi_{30,\gamma}$ and $\psi_{30}$ are equivalent.
\end{proof}

\begin{lemma}\label{l:31}
Let $\varphi_{31,a,b}$ be the map 31 in Table \ref{table1} with parameters $a,b$ and let $\psi_{31}$ be a map with enriched weighted proximity graph 31 in Table  \ref{table2}. Then, $\psi_{31}$ is equivalent to $\varphi_{31,\gamma}$ for some $a,b\ne0,1$, $a\ne b$.
\end{lemma}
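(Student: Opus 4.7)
The plan is to follow exactly the template of Lemmas \ref{l:26}--\ref{l:30}: identify the five base points of $\varphi_{31,a,b}$, normalize four of the base points of $\psi_{31}$ by a projective automorphism, and then read off the two parameters from the affine coordinates of the remaining point.

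First I would verify directly from the formula of $\varphi_{31,a,b}$ that its base points consist of one double proper point, which one may take to be $p_0=[0:0:1]$, together with four simple proper points in general position. A preliminary projective change of coordinates lets one assume that three of these are $p_1=[0:1:0]$, $p_2=[1:0:0]$, $p_3=[1:1:1]$, while the fourth is $p_4=[a:b:1]$; the hypotheses $a,b\ne0,1$ and $a\ne b$ then translate exactly into the condition that no three of $p_0,\ldots,p_4$ are collinear. On the side of $\psi_{31}$, the base points are $q_0$ of multiplicity $2$ and four simple proper points $q_1,\ldots,q_4$, and the enriched weighted proximity graph $31$ of Table \ref{table2}, which contains no dashed blue line, records precisely that no three of the $q_i$ are collinear.

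By Lemma \ref{lem: 4 points lemma} there exists an automorphism $\alpha_1\in\Aut(\PP^2)$ sending $q_0,q_1,q_2,q_3$ to $p_0,p_1,p_2,p_3$, respectively. The base points of $\psi_{31}\circ\alpha_1$ are then $p_0,p_1,p_2,p_3,q_4'$, and since $q_4'$ does not lie on the line $p_1p_2\colon z=0$ we may write $q_4'=[u:v:1]$. The non-collinearity conditions exclude $u=0$ (line $p_0p_1\colon x=0$), $v=0$ (line $p_0p_2\colon y=0$), $u=v$ (line $p_0p_3\colon x=y$), $u=1$ (line $p_2p_3\colon x=z$) and $v=1$ (line $p_1p_3\colon y=z$), leaving exactly $u,v\ne0,1$ with $u\ne v$. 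Setting $a=u$ and $b=v$, the maps $\varphi_{31,a,b}$ and $\psi_{31}\circ\alpha_1$ have the same five base points with the same multiplicities, so they define the same homaloidal net, and the equivalence $\psi_{31}\sim\varphi_{31,a,b}$ follows.

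The main obstacle, as in all previous cases of this classification, is the explicit determination of the base points of the model map $\varphi_{31,a,b}$: one has to factor each of the three coordinate cubics, locate their common zero locus, and compute the multiplicity $2$ at $p_0$ and $1$ at the other four points. Aside from this essentially mechanical bookkeeping, the argument is a direct five-point analogue of Lemma \ref{l:30}, and no new idea is required.
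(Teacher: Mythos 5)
Your argument is correct and is essentially the proof in the paper: normalize $q_0,q_1,q_2,q_3$ to $p_0,p_1,p_2,p_3$ by an automorphism, write the remaining simple base point in affine coordinates $[u:v:1]$, and observe that the six collinearity exclusions translate exactly into $u,v\ne0,1$ and $u\ne v$, so that $(a,b)=(u,v)$ gives the same homaloidal net. The only slip is one of orientation: for the base points of $\psi_{31}\circ\alpha_1$ to be $p_0,\ldots,p_3,q_4'$ you need $\alpha_1(p_i)=q_i$ (i.e.\ compose with the inverse of the automorphism you constructed), which is immaterial to the argument.
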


\begin{proof}
The base points of $\varphi_{31,\gamma}$ are $p_0=[0:0:1]$ of multiplicity $2$, $p_1=[0:1:0]$, $p_2=[1:0:0]$, $p_3=[1:1:1]$ and $p_4=[a:b:1]$.
The base points of $\psi_{31}$ are $q_0$ of multiplicity 2 and $q_1,q_2,q_3,q_4\in\PP^2$.
Clearly, there exists an automorphism $\alpha_1$ of $\PP^2$ such that $\alpha_1(p_i)=q_i$ for $i=0,1,2,3$.

The base points of $\psi_{31}\circ\alpha_1$ are then $p_0, p_1, p_2, p_3, q'_4$ where $q'_4=[t_4:u_4:v_4]$ with $t_4,u_4,v_4\in\C^*$: indeed,
\begin{itemize}
\item $v_4\ne0$ because otherwise $q'_4$ would be aligned with $p_1$ and $p_2$;
\item $u_4\ne0$ because otherwise $q'_4$ would be aligned with $p_0$ and $p_1$;
\item $t_4\ne0$ because otherwise $q'_4$ would be aligned with $p_0$ and $p_2$.
\end{itemize}
 Moreover, $t_4/v_4$ and $u_4/v_4$ satisfy the following conditions:
\begin{itemize}
\item $t_4/v_4\ne1$ because otherwise $q'_4$ would be aligned with $p_1$ and $p_3$;
\item $u_4/v_4\ne1$ because otherwise $q'_4$ would be aligned with $p_2$ and $p_3$;
\item $t_4/v_4\ne u_4/v_4$ because otherwise $q'_4$ would be aligned with $p_0$ and $p_3$.
\end{itemize}
Setting $a=t_4/v_4$ and $b=u_4/v_4$, it follows that $a,b\in\C^{**}$ and $a\ne b$, the maps $\varphi_{31,a,b}$ and $\psi_{31}\circ\alpha_1$ are defined by the same homaloidal net, therefore $\varphi_{31,a,b}$ and $\psi_{31}$ are equivalent.
\end{proof}

\begin{lemma}\label{thm : Type26} Set $\varphi_{26,\gamma}$ the map of type $26$ in Table \ref{table1} with parameter $\gamma$ where $\gamma \neq 0,1$. Then, $\varphi_{26,\gamma}$ is equivalent to $\varphi_{26,\gamma'}$ if and only if either $\gamma' = \gamma$ or $\gamma' = {\gamma}/({\gamma -1})$.
\end{lemma}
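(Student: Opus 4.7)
The plan is to reduce an equivalence $\varphi_{26,\gamma'}=\alpha'\circ\varphi_{26,\gamma}\circ\alpha$ to a combinatorial constraint on $\alpha\in\Aut(\PP^2)$. By the proof of Lemma~\ref{l:26}, the base points of $\varphi_{26,\gamma}$ are $p_0=[0:0:1]$ of multiplicity $2$, proper simple points $p_1=[1:0:0]$, $p_2=[0:1:0]$, $p_3=[1:1:1]$, together with $p_4=(p_1,1/\gamma)$ infinitely near $p_1$; the analogous points $p_0',\ldots,p_4'$ are the base points of $\varphi_{26,\gamma'}$. The automorphism $\alpha$ must send the base configuration of $\varphi_{26,\gamma'}$ to that of $\varphi_{26,\gamma}$ preserving multiplicities and the proximity structure, so $\alpha$ must fix $p_0$ (the only double base point) and $p_1$ (the only proper simple base point with an infinitely near base point above it), and permute $\{p_2,p_3\}$.

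I would then split into two cases. If $\alpha$ fixes all of $p_0,p_1,p_2,p_3$, then Lemma~\ref{lem: 4 points lemma} forces $\alpha=\id$, whence $\alpha(p_4')=p_4$ immediately gives $\gamma'=\gamma$. If instead $\alpha$ swaps $p_2$ and $p_3$ while fixing $p_0,p_1$, the same lemma determines $\alpha$ uniquely, and a short linear algebra computation yields $\alpha([x:y:z])=[y-x:y:y-z]$.

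The main technical step is the transport of $p_4'$ under this second $\alpha$. Using that the standard coordinates $(p_1,t)$ correspond to the direction of the line $z=ty$ through $p_1=[1:0:0]$, the point $p_4'$ is the direction of $L'\colon z=y/\gamma'$. Parametrizing $L'$ as $[s:t:t/\gamma']$ and applying $\alpha$ yields points $[t-s:t:t(\gamma'-1)/\gamma']$, i.e.\ a line through $p_1$ with equation $z=((\gamma'-1)/\gamma')y$, so $\alpha(p_4')=(p_1,(\gamma'-1)/\gamma')$. Equating this with $p_4=(p_1,1/\gamma)$ gives $\gamma(\gamma'-1)=\gamma'$, i.e.\ $\gamma'=\gamma/(\gamma-1)$. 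This is the step most prone to coordinate errors; a useful sanity check is that $\gamma\mapsto\gamma/(\gamma-1)$ is a M\"obius involution, as required by symmetry in $\gamma$ and $\gamma'$.

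For the converse, the case $\gamma'=\gamma$ is handled by the identity. For $\gamma'=\gamma/(\gamma-1)$, one uses the $\alpha$ above together with a suitable $\alpha'\in\Aut(\PP^2)$ chosen so that $\alpha'\circ\varphi_{26,\gamma}\circ\alpha$ and $\varphi_{26,\gamma'}$ are defined by the same triple of cubic forms; producing $\alpha'$ amounts to an elementary rescaling of the homaloidal net, completely analogous to the closing step of the proof of Lemma~\ref{l:26}.
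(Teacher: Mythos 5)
Your proposal is correct and follows essentially the same route as the paper: constrain $\alpha$ to fix $p_0$ and $p_1$ and permute $\{p_2,p_3\}$, identify the unique nontrivial choice $\alpha([x:y:z])=[y-x:y:y-z]$, and transport the infinitely near point to obtain $\gamma'=\gamma/(\gamma-1)$. Your computation of the image direction agrees with the paper's (the two are interchanged by the involution $\gamma\mapsto\gamma/(\gamma-1)$, so the discrepancy in which of $p_4$, $p_4'$ is transported is immaterial), and your added detail on the converse is consistent with the homaloidal-net argument used in Lemma~\ref{l:26}.
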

\begin{proof}
Let $p_0,p_1,\ldots,p_4$ be the base points of $\varphi_{26,\gamma}$ as in the proof of Lemma \ref{l:26}.

An automorphism $\alpha$ of $\PP^2$ that fixes the homaloidal net defining $\varphi_{26,\gamma}$, and that is different from the identity, is such that $\alpha(p_i)=p_i$, $i=0,1$, $\alpha(p_2)=p_3$ and $\alpha(p_3)=p_2$.
Therefore, $\alpha$ is unique and it is defined by
$$\alpha([x:y:z]) = [y-x: y: y-z].$$
so $\alpha(p_4)$ has standard coordinates $(p_1,(\gamma-1)/\gamma)$, hence $\varphi_{26,\gamma/(\gamma-1)}$ is equivalent to $\varphi_{26,\gamma}$.
\end{proof}

\begin{lemma}\label{thm : Type27} Set $\varphi_{27,\gamma}$ the map of type $27$ in Table \ref{table1} with parameter $\gamma$ where $\gamma \neq 0,1$. Then, $\varphi_{27,\gamma}$ is equivalent to $\varphi_{27,\gamma'}$ if and only if either $\gamma' = \gamma$ or $\gamma' = {1}/{\gamma}$.
\end{lemma}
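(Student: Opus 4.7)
The plan is to imitate the proof of Lemma \ref{thm : Type26}. Let $p_0,\ldots,p_4$ be the base points of $\varphi_{27,\gamma}$ fixed in the proof of Lemma \ref{l:27}: $p_0=[0:0:1]$ of multiplicity $2$, $p_1=[0:1:0]$, $p_2=[1:0:0]$, $p_3=(p_0,-1)$ and $p_4=(p_0,-1/\gamma)$. Since two plane Cremona maps are equivalent if and only if some automorphism of $\PP^2$ carries the weighted configuration of base points of one onto that of the other, the equivalence $\varphi_{27,\gamma}\sim\varphi_{27,\gamma'}$ amounts to producing $\alpha\in\Aut(\PP^2)$ mapping the set
\[
\bigl\{p_0,\,p_1,\,p_2,\,(p_0,-1),\,(p_0,-1/\gamma')\bigr\}\quad\text{bijectively onto}\quad\bigl\{p_0,\,p_1,\,p_2,\,(p_0,-1),\,(p_0,-1/\gamma)\bigr\},
\]
with multiplicities respected.

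First, any such $\alpha$ necessarily fixes $p_0$ (the unique double base point) and permutes $\{p_1,p_2\}$, so $\alpha$ must be either a diagonal $[x:y:z]\mapsto[ax:by:cz]$ or of the form $[x:y:z]\mapsto[by:ax:cz]$, with $a,b,c\in\C^*$. Next, I would compute the induced action on standard coordinates at $p_0$: a short verification in the affine chart $z\ne0$ gives $(p_0,t)\mapsto(p_0,(b/a)\,t)$ in the first case and $(p_0,t)\mapsto(p_0,(a/b)/t)$ in the second.

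The final step is a case analysis matching the unordered pair $\{(p_0,-1),(p_0,-1/\gamma')\}$ to $\{(p_0,-1),(p_0,-1/\gamma)\}$ under these actions. Enumerating the four resulting systems, I expect three of them to force either $\gamma'=\gamma$ or $\gamma'=1/\gamma$, while the fourth (obtained by simultaneously swapping $p_1\leftrightarrow p_2$ and swapping the two infinitely near points) collapses to $\gamma'=1$, which is excluded by hypothesis. Conversely, the non-trivial equivalence $\varphi_{27,\gamma}\sim\varphi_{27,1/\gamma}$ is realized explicitly by the diagonal automorphism $\alpha([x:y:z])=[\gamma x:y:z]$, which fixes $p_0,p_1,p_2$, sends $(p_0,-1)$ to $(p_0,-1/\gamma)$ and sends $(p_0,-\gamma)=(p_0,-1/\gamma')$ back to $(p_0,-1)$.

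The computations are essentially routine; the only delicate point I anticipate is in the case analysis, namely making sure that the apparent fourth solution is genuinely ruled out by $\gamma'\ne0,1$ and that no further hidden symmetry produces additional values of $\gamma'$.
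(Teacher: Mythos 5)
Your approach is the same as the paper's: fix the normalized base points $p_0=[0:0:1]$ (double), $p_1,p_2$, $p_3=(p_0,-1)$, $p_4=(p_0,-1/\gamma)$, observe that any equivalence is induced by an automorphism fixing $p_0$, permuting $\{p_1,p_2\}$ and permuting the two infinitely near points, and run the resulting four-case analysis; your computation of the induced action on the standard coordinate ($t\mapsto (b/a)t$ for diagonal, $t\mapsto (a/b)/t$ for anti-diagonal maps) and your explicit automorphism realizing $\gamma'=1/\gamma$ are both correct. The one point you should fix is your prediction for the fourth case: simultaneously swapping $p_1\leftrightarrow p_2$ and the two infinitely near points does \emph{not} collapse to $\gamma'=1$. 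Writing the anti-diagonal action as $t\mapsto k/t$ with $k=a/b$, the conditions $k/(-1)=-1/\gamma$ and $k/(-1/\gamma')=-1$ give $k=1/\gamma$ and then $\gamma'=1/k=\gamma$; this case is genuinely realized (the paper exhibits $[x:y:z]\mapsto[\gamma y:x:z]$) and simply reproduces the value $\gamma'=\gamma$. So all four cases are consistent, two yielding $\gamma'=\gamma$ and two yielding $\gamma'=1/\gamma$, and the lemma follows exactly as you intend --- but if you set out to ``make sure the fourth solution is ruled out by $\gamma'\neq 0,1$'' you will not succeed, because nothing needs to be ruled out.
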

\begin{proof}
Let $p_0,p_1,p_2,p_3,p_4$ be the base points of $\varphi_{27,\gamma}$ as in the proof of Lemma \ref{l:27}.

The base points of $\varphi_{27,\gamma'}$ are $q_i=p_i$, $i=0,1,2,3$, and $q_4=(q_0,-1/\gamma')$.

Suppose that $\varphi_{27,\gamma'}$ is equivalent to $\varphi_{27,\gamma}$.
This implies that there exist automorphisms $\alpha_1,\ldots,\alpha_4$ of $\PP^2$ with the following properties:
\begin{enumerate}[(1)]
\item $\alpha_1$ is such that  $\alpha_1(p_i)=q_i$, $i=0,1,2,3,4$;
\item $\alpha_2$ is such that  $\alpha_2(p_i)=q_i$, $i=0,1,2$, $\alpha_2(p_3)=q_4$ and $\alpha_2(p_4)=q_3$;
\item $\alpha_3$ is such that  $\alpha_3(p_i)=q_i$, $i=0,3,4$, $\alpha_3(p_1)=q_2$ and $\alpha_3(p_2)=q_1$;
\item $\alpha_4$ is such that  $\alpha_4(p_0)=q_0$, $\alpha_4(p_1)=q_2$, $\alpha_4(p_2)=q_1$, $\alpha_4(p_3)=q_4$ and $\alpha_4(p_4)=q_3$.
\end{enumerate}
Then, Case (1) occurs only if $\gamma'=\gamma$ and $\alpha_1$ is the identity.
Case (2) occurs only if $\gamma'=1/\gamma$ and $\alpha_2([x:y:z]) = [x:\gamma y:-\gamma z]$.
Case (3) occurs only if $\gamma'=1/\gamma$ and $\alpha_3([x:y:z]) = [y:x:-z]$.
Case (4) occurs only if $\gamma'=\gamma$ and $\alpha_4([x:y:z]) = [\gamma y:x:z]$.
\end{proof}

Let us now recall some definitions of permutations with cycle notation.

\begin{definition}\label{S3,S4}
Let $\mathfrak{S}_n$ denote the group of permutations of $\{ 1,2,\ldots ,n \}$. Every permutation can be written as a cycle or a product of disjoint cycles.
For $n=3$, the group $\mathfrak{S}_3$ has six elements:
\begin{equation*}\label{S3}
\begin{aligned}
& \mathfrak{s}_1 = \text{id},
&& \mathfrak{s}_2 = (23),
&& \mathfrak{s}_3 = (12),
&& \mathfrak{s}_4 = (123),
&& \mathfrak{s}_5 = (13),
&& \mathfrak{s}_6 = (132).
\end{aligned}
\end{equation*}

For $n=4$, the group $\mathfrak{S}_4$ has 24 elements:
\begin{equation*}\label{S4}
\begin{aligned}
& \mathfrak{s}_1 = \text{id},
&& \mathfrak{s}_2 = (12),
&& \mathfrak{s}_3 = (34),
&& \mathfrak{s}_4 = (12)(34),
&& \mathfrak{s}_5 = (23),
&& \mathfrak{s}_6 = (123), \\
  & \mathfrak{s}_7 = (243),
&& \mathfrak{s}_8 = (1243),
&& \mathfrak{s}_9 = (132),
&& \mathfrak{s}_{10} = (13),
&& \mathfrak{s}_{11} = (1432),
&& \mathfrak{s}_{12} = (143), \\
  & \mathfrak{s}_{13} = (1234),
&& \mathfrak{s}_{14} = (234),
&& \mathfrak{s}_{15} = (124),
&& \mathfrak{s}_{16} = (24),
&& \mathfrak{s}_{17} = (134),
&& \mathfrak{s}_{18} = (1342), \\
  & \mathfrak{s}_{19} = (14),
&& \mathfrak{s}_{20} = (142),
&& \mathfrak{s}_{21} = (13)(24),
&& \mathfrak{s}_{22} = (1324),
&& \mathfrak{s}_{23} = (1423),
&& \mathfrak{s}_{24} = (14)(23).
\end{aligned}
\end{equation*}
\end{definition}

\begin{lemma}\label{thm : Type28-29-30}
For $n \in \lbrace 28,29,30 \rbrace$, set $\varphi_{n,\gamma}$ the map of type $n$ in Table \ref{table1} with parameter $\gamma$ where $\gamma \neq 0,1$. Then, $\varphi_{n,\gamma'}$ is equivalent to $\varphi_{n,\gamma}$ if and only if 
$$\gamma' \in \bigg\lbrace \gamma,\dfrac{1}{\gamma},1-\gamma,\dfrac{1}{1-\gamma},\dfrac{\gamma}{\gamma-1},\dfrac{\gamma-1}{\gamma} \bigg\rbrace.$$
\end{lemma}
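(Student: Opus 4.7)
The plan is to treat the three cases $n=28,29,30$ uniformly by attaching to each $\varphi_{n,\gamma}$ a canonical $4$-tuple of points on $\mathbb{P}^1$ whose cross-ratio, modulo the classical $\mathfrak{S}_3$-action permuting $\{\infty,0,1\}$, will be a complete invariant of the equivalence class. First I would recall from the proofs of Lemmas \ref{l:28}--\ref{l:30} that in each case $p_0$ is the unique double base point, while $p_4$ is distinguished from $p_1,p_2,p_3$: for $n=28,29$ as the unique infinitely near base point, and for $n=30$ as the unique proper base point not on the trisecant $\ell=\overline{p_1p_2p_3}$. Hence any equivalence $\varphi_{n,\gamma'}=\alpha'\circ\varphi_{n,\gamma}\circ\alpha$ must be realised by an $\alpha\in\Aut(\PP^2)$ that fixes $p_0$, carries $q_4$ to $p_4$, and induces a permutation $\sigma\in\mathfrak{S}_3$ of $\{p_1,p_2,p_3\}$.

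Next I would extract the canonical $\PP^1$ and its $4$-tuple. For $n=28,29$ this is the pencil of lines through $p_0$: by the standard-coordinate formulas of Section 3, the lines $p_0p_1,p_0p_2,p_0p_3$ correspond to $\infty,0,1$ and the direction of $p_4=(p_0,\gamma)$ corresponds to $\gamma$ itself. For $n=30$ this is the line $\ell$: the points $p_1,p_2$ and $p_5:=\ell\cap p_0p_4=[1:1:0]$ correspond to $\infty,0,1$, and $p_3=[\gamma:1:0]$ corresponds to $1/\gamma$. In both settings any admissible $\alpha$ restricts to (or induces) a Möbius transformation of this $\PP^1$ permuting $\{\infty,0,1\}$ according to $\sigma$, and for the equivalence to hold it must send the fourth cross-ratio value of $\varphi_{n,\gamma'}$ to that of $\varphi_{n,\gamma}$. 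Conversely, for each $\sigma\in\mathfrak{S}_3$ I would construct the implementing $\alpha_\sigma$ via Lemma \ref{lem: 4 points lemma} applied to a suitable quadruple in general position: $(p_0,p_1,p_2,p_3)$ for $n=29$, $(p_0,p_4,p_1,p_2)$ for $n=30$, and (for $n=28$, where $p_1,p_2,p_3$ are collinear) I would observe that prescribing $\alpha(p_0)=p_0$ together with $\alpha|_\ell$ determines $\alpha$ up to a $1$-parameter family of scalings that fix every line through $p_0$ and therefore act trivially on the pencil.

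Once this is set up, the statement follows from the classical fact that the orbit of any $\gamma\in\C^{**}$ under the $\mathfrak{S}_3$-subgroup of $\PGL_2(\C)$ permuting $\{\infty,0,1\}$ is precisely
\[
\left\{\gamma,\; \tfrac{1}{\gamma},\; 1-\gamma,\; \tfrac{1}{1-\gamma},\; \tfrac{\gamma}{\gamma-1},\; \tfrac{\gamma-1}{\gamma}\right\}.
\]
The hardest part will be the systematic case-by-case verification that each candidate $\alpha_\sigma$ really implements the equivalence: for each of the six permutations (cf.\ Definition \ref{S3,S4}) and each of the three values of $n$, one must check in standard coordinates that $\alpha_\sigma$ carries $q_4$ (as a proper point for $n=30$, or as an infinitely near direction at $p_0$ for $n=28,29$) to $p_4$ precisely when $\gamma'$ takes the predicted value. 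This is a direct but lengthy calculation, parallel to the proofs of Lemmas \ref{thm : Type26}--\ref{thm : Type27}, and would most naturally be presented as a table of six explicit formulas for $\alpha_\sigma$ per case.
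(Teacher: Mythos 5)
Your proposal is correct, but it reaches the conclusion by a genuinely different route than the paper. The paper's proof is a direct enumeration: for each $n$ it lists the six permutations $\mathfrak{s}_i\in\mathfrak{S}_3$ of the three interchangeable simple base points, exhibits the explicit automorphism $\alpha_i$ of $\PP^2$ realizing each one (eighteen matrices in total), and reads off the forced value of $\gamma'$ case by case. You instead package the data as a marked $4$-tuple on a canonical $\PP^1$ (the pencil of lines through $p_0$ for $n=28,29$, the trisecant line for $n=30$) and invoke the classical fact that the $\mathfrak{S}_3$-orbit of a cross-ratio is $\{\gamma,1/\gamma,1-\gamma,1/(1-\gamma),\gamma/(\gamma-1),(\gamma-1)/\gamma\}$; this explains \emph{a priori} why the answer is the anharmonic orbit and treats the three cases uniformly. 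Two small points. First, for $n=30$ your $\mathfrak{S}_3$ permutes the triple $\{p_1,p_2,p_3\}=\{\infty,0,1/\gamma\}$ while \emph{fixing} the fourth point $p_5=1$, so you are using the point-stabilizer $\mathfrak{S}_3\subset\mathfrak{S}_4$ rather than the subgroup permuting $\{\infty,0,1\}$; since that stabilizer still surjects onto the cross-ratio $\mathfrak{S}_3$ (it meets the Klein four-group trivially), the orbit is unchanged, but this deserves a sentence. Second, your closing paragraph undersells your own setup: once you know the induced M\"obius transformation is the \emph{unique} one realizing $\sigma$ on the marked triple, the image of the fourth point is determined, so you only need \emph{existence} of one $\alpha_\sigma$ per permutation (from Lemma \ref{lem: 4 points lemma}, plus your separate argument for the collinear case $n=28$), not a verification of six explicit formulas per case. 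The trade-off is that the paper's computation produces the explicit automorphisms, which it reuses (e.g.\ to decide when the six parameter values coincide), whereas your argument is shorter and conceptually cleaner but leaves them implicit.
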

\begin{proof}
We first consider the case $n=28$.

The map  $\varphi_{28,\gamma}$ has base points $p_0=[0:0:1]$ of multiplicity 2,  $p_1=[0:1:0]$, $p_2=[1:0:0]$, $p_3=[1:1:0]$ and $p_4$ where $p_4\succ_{1}p_0$ with standard coordinates $p_4=(p_0,\gamma)$.

The base points of $\varphi_{28,\gamma'}$ are $q_0,\ldots,q_4$ where $q_i=p_i$, $i=0,1,2,3$ and $q_4=(p_0,\gamma')$.

Suppose that $\varphi_{28,\gamma'}$ is equivalent to $\varphi_{28,\gamma}$.
This implies that there exist automorphisms $\alpha_1,\ldots,\alpha_6$ of $\PP^2$ such that, for $i=1,\ldots,6$, one has $\alpha_i(p_j)=q_j$, $j=0,4$, and
\[
\alpha_i(p_j)=q_{\mathfrak{s}_i(j)} \quad \text{ for } j = 1,2,3,
\]
where $\mathfrak{s}_1,\ldots,\mathfrak{s}_6$ are the six elements of $\mathfrak{S}_3$ given in Definition \ref{S3,S4}.

\begin{itemize}
\item Case $i=1$ occurs only if $\gamma'=\gamma$ and $\alpha_1$ is the identity.
\item Case $i=2$ occurs only if $\gamma'=1-\gamma$ and $\alpha_2=[x: x-y: z]$.
\item Case $i=3$ occurs only if $\gamma'=1/\gamma$ and $\alpha_3=[y: x: z]$.
\item Case $i=4$ occurs only if $\gamma'=1/(1-\gamma)$ and $\alpha_4=[x-y: x: z]$.
\item Case $i=5$ occurs only if $\gamma'=\gamma/(\gamma-1)$ and $\alpha_5=[x-y: -y: z]$.
\item Case $i=6$ occurs only if $\gamma'=\gamma/(\gamma-1)$ and $\alpha_6=[y: y-x: z]$.
\end{itemize}

We proceed similarly for $n=29$.
The map  $\varphi_{29,\gamma}$ has the same base points $p_i$, $i=0,1,2,4$, of $\varphi_{28,\gamma}$ but $p_3=[1:1:1]$.
The base points of $\varphi_{29,\gamma'}$ are $q_0,\ldots,q_4$ where $q_i=p_i$, $i=0,1,2,3$ and $q_4=(q_0,\gamma')$.

If $\varphi_{28,\gamma'}$ is equivalent to $\varphi_{28,\gamma}$, then there exist automorphisms $\alpha_1,\ldots,\alpha_6$ of $\PP^2$ with the same above properties that occur exactly when $\gamma'$ is as above and $\alpha_1$ is the identity,
\begin{align*}
\alpha_2 &=[x: x-y: x-z],
&\alpha_3&=[y: x: z],
&\alpha_4&=[x-y: x: x-z],
\\
\alpha_5&=[y-x: y: y-z],
&\alpha_6&=[y: y-x: y-z].
\end{align*}

Finally, for $n=30$, the map  $\varphi_{30,\gamma}$ has the same base points $p_i$, $i=0,1,2$, of $\varphi_{28,\gamma}$ but $p_3=[\gamma:1:0]$ and $p_4=[1:1:1]$. The base points of $\varphi_{30,\gamma'}$ are $q_0,\ldots,q_4$ where $q_i=p_i$, $i=0,1,2,4$ and $q_3=[\gamma':1:0]$.

If $\varphi_{30,\gamma'}$ is equivalent to $\varphi_{30,\gamma}$, then there exist automorphisms $\alpha_1,\ldots,\alpha_6$ of $\PP^2$ with the same above properties that occur exactly when $\gamma'$ is as above and $\alpha_1$ is the identity,
\begin{align*}
\alpha_2 &=[(\gamma -1)x:\gamma y-x:(\gamma -1)z],
&\alpha_3&=[y: x: z],
\\
\alpha_4&=[\gamma y-x:(\gamma -1)x:(\gamma -1)z],
&\alpha_5&=[\gamma y-x:(\gamma -1)y:(\gamma -1)z],
\\
\alpha_6&=[(\gamma -1)y:\gamma y-x:(\gamma -1)z].
\end{align*}
\end{proof}

\begin{remark}
One may check that the numbers in the set of Lemma \ref{thm : Type28-29-30} are all different if and only if
$$\gamma\notin\left\{ -1,2,\dfrac{1}{2},\dfrac{1}{2}-i\dfrac{\sqrt{3}}{2},\dfrac{1}{2}+i\dfrac{\sqrt{3}}{2} \right\}.$$
\end{remark}

\begin{lemma}\label{thm : Type31}
Set $\varphi_{31,a,b}$ the map of type $31$ in Table \ref{table1} with two parameters $a, b$ where $a \neq b$ and $a,b \neq 0,1$. Then, $\varphi_{31,a',b'}$ is equivalent to $\varphi_{31,a,b}$ if and only if $(a',b') \in S$, where $S$ is defined in \eqref{set S}.
\end{lemma}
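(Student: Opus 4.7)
The plan is to reduce the equivalence question to a combinatorial problem on permutations of the simple base points. By Lemma~\ref{l:31}, the base points of $\varphi_{31,a,b}$ are the double point $p_0=[0:0:1]$ and the simple points $p_1=[0:1:0]$, $p_2=[1:0:0]$, $p_3=[1:1:1]$, $p_4=[a:b:1]$, while those of $\varphi_{31,a',b'}$ are $q_i=p_i$ for $i=0,1,2,3$ and $q_4=[a':b':1]$. Any equivalence $\varphi_{31,a,b}\sim\varphi_{31,a',b'}$ is induced by an automorphism $\alpha\in\Aut(\PP^2)$ carrying the weighted base points of one map to those of the other; since $p_0$ is the unique double base point of each map, $\alpha(p_0)=p_0$, and $\alpha$ determines a permutation $\sigma\in\mathfrak{S}_4$ via $\alpha(p_i)=q_{\sigma(i)}$ for $i=1,2,3,4$.

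For each $\sigma\in\mathfrak{S}_4$, the hypotheses $a,b\ne 0,1$ and $a\ne b$ guarantee that no three of $p_0,\ldots,p_4$ (or $p_0,q_1,\ldots,q_4$) are collinear, so both $\{p_0,p_1,p_2,p_3\}$ and $\{p_0,q_{\sigma(1)},q_{\sigma(2)},q_{\sigma(3)}\}$ are quadruples in general position (in the symbolic sense, possibly involving $a',b'$). Lemma~\ref{lem: 4 points lemma} then produces a unique $\alpha_\sigma\in\Aut(\PP^2)$ with $\alpha_\sigma(p_0)=p_0$ and $\alpha_\sigma(p_i)=q_{\sigma(i)}$ for $i=1,2,3$, a $3\times3$ matrix depending rationally on $a,b,a',b'$. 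Imposing the remaining condition $\alpha_\sigma(p_4)=q_{\sigma(4)}$ yields an explicit system whose solution is a rational transformation $(a',b')=f_\sigma(a,b)$. The plan is to tabulate the $24$ transformations $f_\sigma$ and verify that $\{f_\sigma(a,b):\sigma\in\mathfrak{S}_4\}=S$. Representative calculations illustrate the pattern: $\sigma=\id$ gives $\alpha_\sigma=\id$ and $(a',b')=(a,b)$, the first element of $S'$; the transposition $\sigma=(12)$ gives $\alpha_\sigma=[Y:X:Z]$ and $(a',b')=(b,a)=g_1(a,b)$; the double transposition $\sigma=(13)(24)$ yields, after a direct matrix calculation, $(a',b')=((a-1)/(b-1),\,b(a-1)/(a(b-1)))$, which is the sixth element of $S'$; and similarly for the remaining permutations. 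The counting $|\mathfrak{S}_4|=24=|G|\cdot|S'|$ suggests that the map $\sigma\mapsto f_\sigma(a,b)$ is generically a bijection onto $S$.

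The main obstacle is the length of the case analysis: $24$ explicit computations must be performed and matched against the $24$ expressions $g\circ s$ with $g\in G$ and $s\in S'$. To keep this manageable, one chooses generators of $\mathfrak{S}_4$ (for instance the adjacent transpositions $(12),(23),(34)$), computes the $\alpha_\sigma$ for each generator by solving a small linear system, and then obtains the remaining transformations by composition; both the individual computations and the final matching with $S$ are most reliably carried out with a computer algebra system such as Maple. Conversely, given any $(a',b')\in S$, writing $(a',b')=g(s)$ with $g\in G$ and $s\in S'$ identifies the corresponding $\sigma\in\mathfrak{S}_4$ and exhibits the automorphism $\alpha_\sigma$ that realizes the equivalence $\varphi_{31,a,b}\sim\varphi_{31,a',b'}$, completing both directions.
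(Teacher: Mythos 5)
Your proposal is correct and follows essentially the same route as the paper: the equivalence is reduced to an automorphism fixing the double point $p_0$ and permuting the four simple base points, and for each of the $24$ elements of $\mathfrak{S}_4$ the induced automorphism and the resulting value of $(a',b')$ are computed and matched against the $24$ elements of $S$. Your representative computations (identity, $(12)\mapsto(b,a)$, and $(13)(24)\mapsto((a-1)/(b-1),\,b(a-1)/(a(b-1)))$) agree with the corresponding entries of the paper's Table~\ref{table 24aut}, which carries out exactly this tabulation.
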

\begin{proof}
The base points of $\varphi_{31,a,b}$ are $p_0=[0:0:1]$ of multiplicity $2$ and four simple base points $p_1=[0:1:0],p_2=[1:0:0],p_3=[1:1:1],p_4=[a:b:1]$. Similarly, the base points of $\varphi_{31,a',b'}$ are $q_0,\ldots,q_4$ where $q_i=p_i$, $i=0,1,2,3$ and $q_4=[a':b':1]$.

Suppose that $\varphi_{31,a',b'}$ is equivalent to $\varphi_{31,a,b}$.
Then, there exists an automorphism, says $\gamma$, of $\mathbb{P}^2$ such that $\gamma(p_0)=q_0$ and $\gamma$ maps $p_1,\ldots,p_4$ to a permutation of $q_1,q_2,q_3,q_4$.
Therefore, for each element $\mathfrak{s}_i$, $i=1,\ldots,24$, of $\mathfrak{S}_4$ there is an automorphism $\gamma_i$, $i=1,\ldots,24$, of $\PP^2$ such that
$$\gamma_i(p_j)=q_{\mathfrak{s}_i(j)} \quad \text{ for } j = 1, \ldots, 4,$$
and, accordingly, we find the values of $(a',b')$ for each one of the 24 cases.
In Table \ref{table 24aut}, we list the automorphisms $\gamma_i$, $i=1,\ldots,24$ and their corresponding values of $(a',b')$.

\begin{longtable}[c]{|c||c|c|}
\hline
$i$ & $\gamma_i([x:y:z])$ & $(a',b')$  \\ \hline \hline
\rule{0ex}{2.0ex} 1   & $[x: y: z]$  & $(a,b)$   \\[0.2ex] \hline
\rule{0ex}{2.0ex} 2   & $[y: x: z]$  & $(b,a)$   \\[0.2ex] \hline
\rule{0ex}{4.0ex} 3   & $[bx: ay: abz]$   & $\bigg(\dfrac{1}{a},\dfrac{1}{b}\bigg)$      \\[1.8ex] \hline
\rule{0ex}{4.0ex} 4   & $[ay: bx: abz]$   & $\bigg(\dfrac{1}{b},\dfrac{1}{a}\bigg)$       \\[1.8ex] \hline
\rule{0ex}{4.0ex} 5   & $[x: x-y: x-z]$   & $\bigg(\dfrac{a}{a-1},\dfrac{a-b}{a-1}\bigg)$      \\[1.8ex] \hline
\rule{0ex}{4.0ex} 6   & $[x-y: x: x-z]$   & $\bigg(\dfrac{a-b}{a-1},\dfrac{a}{a-1}\bigg)$      \\[1.8ex] \hline
\rule{0ex}{4.0ex} 7   & $\bigg[\dfrac{x}{a}: \dfrac{x-y}{a-b}: \dfrac{x-z}{a-1}\bigg]$ & $\bigg(\dfrac{a-1}{a},\dfrac{a-1}{a-b}\bigg)$  \\[1.8ex] \hline
\rule{0ex}{4.0ex} 8  & $\bigg[\dfrac{x-y}{a-b}: \dfrac{x}{a}: \dfrac{x-z}{a-1}\bigg]$ & $\bigg(\dfrac{a-1}{a-b},\dfrac{a-1}{a}\bigg)$  \\[1.8ex] \hline
\rule{0ex}{4.0ex} 9   & $[y: y-x: y-z]$  & $\bigg(\dfrac{b}{b-1},\dfrac{b-a}{b-1}\bigg)$      \\[1.8ex] \hline
\rule{0ex}{4.0ex} 10  & $[y-x: y: y-z]$  & $\bigg(\dfrac{b-a}{b-1},\dfrac{b}{b-1}\bigg)$       \\[1.8ex] \hline
\rule{0ex}{4.0ex} 11  & $\bigg[\dfrac{y}{b}:\dfrac{x-y}{a-b}:\dfrac{y-z}{b-1}\bigg]$ & $\bigg(\dfrac{b-1}{b},\dfrac{b-1}{b-a}\bigg)$ \\[1.8ex] \hline
\rule{0ex}{4.0ex} 12  & $\bigg[\dfrac{x-y}{a-b}:\dfrac{y}{b}:\dfrac{y-z}{b-1}\bigg]$ & $\bigg(\dfrac{b-1}{b-a},\dfrac{b-1}{b}\bigg)$ \\[1.8ex] \hline
\rule{0ex}{4.0ex} 13  & $[bx-ay: bx: b(x-az)]$  & $\bigg(\dfrac{b-a}{b(1-a)},\dfrac{1}{1-a}\bigg)$ \\[1.8ex] \hline
\rule{0ex}{4.0ex} 14   & $[bx:bx-ay:b(x-az)]$   & $\bigg(\dfrac{1}{1-a},\dfrac{b-a}{b(1-a)}\bigg)$ \\[1.8ex] \hline
\rule{0ex}{4.0ex} 15  & $\bigg[\dfrac{ay-bx}{a-b}:x:\dfrac{az-x}{a-1}  \bigg]$ & $ \bigg(\dfrac{b(a-1)}{a-b},1-a\bigg)$ \\[1.8ex] \hline
\rule{0ex}{4.0ex} 16  & $\bigg[x:\dfrac{ay-bx}{a-b}: \dfrac{az-x}{a-1}\bigg]$ & $ \bigg(1-a,\dfrac{b(a-1)}{a-b}\bigg)$\\ [1.8ex] \hline
\rule{0ex}{4.0ex} 17  & $[ay-bx: ay: a(y-bz)]$ & $ \bigg(\dfrac{a-b}{a(1-b)},\dfrac{1}{1-b}\bigg)$ \\[1.8ex] \hline
\rule{0ex}{4.0ex} 18  & $[ay: ay-bx: a(y-bz)]$ & $ \bigg(\dfrac{1}{1-b},\dfrac{a-b}{a(1-b)}\bigg)$ \\[1.8ex] \hline
\rule{0ex}{4.0ex} 19  & $\bigg[\dfrac{ay-bx}{a-b}:y:\dfrac{bz-y}{b-1} \bigg]$ & $\bigg(\dfrac{a(1-b)}{a-b},1-b\bigg)$ \\[1.8ex] \hline
\rule{0ex}{4.0ex} 20  & $\bigg[y:\dfrac{ay-bx}{a-b}:\dfrac{bz-y}{b-1} \bigg]$ & $\bigg(1-b,\dfrac{a(1-b)}{a-b}\bigg)$ \\[1.8ex] \hline
\rule{0ex}{4.0ex} 21  & $\bigg[y-x:\dfrac{ay-bx}{a}:\dfrac{(1-b)x}{a-1}+\dfrac{(b-a)z}{a-1}+y\bigg]$ & $\bigg(\dfrac{a-1}{b-1},\dfrac{b(a-1)}{a(b-1)}\bigg)$ \\[1.8ex] \hline
\rule{0ex}{4.0ex} 22  & $\bigg[\dfrac{ay-bx}{a}:y-x:\dfrac{(1-b)x}{a-1}+\dfrac{(b-a)z}{a-1}+y\bigg]$ & $\bigg(\dfrac{b(a-1)}{a(b-1)},\dfrac{a-1}{b-1}\bigg)$ \\[1.8ex] \hline
\rule{0ex}{4.0ex} 23  & $\bigg[y-x:\dfrac{ay-bx}{b}:\dfrac{(a-1)y}{b-1}+\dfrac{(b-a)z}{b-1}-x\bigg]$ & $\bigg(\dfrac{b-1}{a-1},\dfrac{a(b-1)}{b(a-1)}\bigg)$ \\[1.8ex] \hline 
\rule{0ex}{4.0ex} 24  & $\bigg[\dfrac{ay-bx}{b}:y-x:\dfrac{(a-1)y}{b-1}+\dfrac{(b-a)z}{b-1}-x\bigg]$ & $\bigg(\dfrac{a(b-1)}{b(a-1)},\dfrac{b-1}{a-1}\bigg)$ \\[1.8ex] \hline 
\addlinespace 
\caption{Automorphisms $\gamma_1,\ldots,\gamma_{24}$ of $\PP^2$ and their corresponding values of $(a',b')$}\label{table 24aut}
\end{longtable}



\end{proof}

\begin{remark}
One may check that the pairs in $S$ are all different if and only if $(a,b)$ does not belong to the following set:
\begin{eqnarray*}
\bigg\lbrace \bigg(a,\dfrac{1}{a} \bigg) \bigg\vert a \neq -1 \bigg\rbrace \cup \bigg\lbrace \bigg(a,\dfrac{2a-1}{a} \bigg) \bigg\vert a \neq \dfrac{1}{2} \bigg\rbrace \cup \bigg\lbrace \bigg(\dfrac{2b-1}{b},b \bigg) \bigg\vert b \neq \dfrac{1}{2} \bigg\rbrace
\\
\cup \bigg\lbrace
(a,b) \bigg\vert a = \dfrac{3}{2}\pm\dfrac{i\sqrt{3}}{6}, b =  -\dfrac{1}{2}\pm\dfrac{i\sqrt{3}}{6} \bigg\rbrace \cup \bigg\lbrace
(a,\overline{a}) \bigg\vert a \in \bigg\lbrace \dfrac{1}{2}\pm\dfrac{i\sqrt{3}}{6}, \dfrac{3}{2}\pm\dfrac{i\sqrt{3}}{2}  \bigg\rbrace\bigg\rbrace
\\
\cup \bigg\lbrace (a,-\overline{a}),\big(a,\overline{-\overline{a}}\big) \bigg\vert a \in \bigg\lbrace -\dfrac{1}{2}\pm\dfrac{i\sqrt{3}}{2}, \dfrac{1}{2}\pm\dfrac{i\sqrt{3}}{2}, -\dfrac{1}{2}\pm\dfrac{i\sqrt{3}}{6}   \bigg\rbrace\bigg\rbrace.
\end{eqnarray*}
\end{remark}

\section{Ordinary quadratic length of cubic plane Cremona maps} \label{proof main theorem2}

In this section we prove Theorem \ref{thm main2}.
Theorem \ref{thm main1} implies that it suffices to compute the lengths of the cubic plane Cremona maps listed in Table \ref{table1} at page \pageref{table1}.

Recall that the quadratic length, and hence the ordinary quadratic length, of cubic plane Cremona maps is at least 2 (Corollary \ref{cor>1}).
On the other hand, in Table \ref{table3} at page \pageref{table3} and Table \ref{table4} at page \pageref{table4} there are decompositions of all types of plane cubic maps, but type 1, in exactly two quadratic maps.
So, in order to complete the proof of the first assertion of  Theorem \ref{thm main2}, it remains to prove the following lemma.

\begin{lemma}
Let $\varphi_1 \in \Cr(\PP^2)$ be the map 1 in Table \ref{table1}. Then, $\varphi_1$ has quadratic length $3$. 
\end{lemma}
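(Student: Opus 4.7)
The plan is to prove $\q(\varphi_1)=3$ in two steps, in the same spirit as the proof of Lemma \ref{cordeJonq>d-1} but refined to use the specific structure of the base locus of $\varphi_1$. For the upper bound $\q(\varphi_1)\le 3$, the decomposition
\[
\varphi_1=[x:z:y]\circ\rho\circ[z:y:x]\circ\tau\circ[z:y:-x]\circ\rho\circ[x:z:y],
\]
listed in row $1$ of Table \ref{table4}, exhibits $\varphi_1$ as a composition of three quadratic plane Cremona maps, which is immediate.

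For the lower bound $\q(\varphi_1)\ge 3$, I would argue by contradiction: suppose $\varphi_1=\varrho_2\circ\varrho_1$ with $\varrho_1,\varrho_2$ both quadratic, and let $q_1,q_2,q_3\in\BB(\PP^2)$ be the (simple) base points of $\varrho_1$. The composition formula (obtained by the same computation as in Proposition \ref{PropMAC118a}, which extends without change to an arbitrary quadratic $\varrho_1$) gives
\[
\deg(\varrho_2)=\deg(\varphi_1\circ\varrho_1^{-1})=2\cdot3-(m_1+m_2+m_3),
\]
with $m_i=\mult_{q_i}(\varphi_1)$, so the requirement $\deg(\varrho_2)=2$ forces $m_1+m_2+m_3=4$. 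The base points of $\varphi_1$ are $p_0=[1:0:0]$ of multiplicity $2$ and $p_1,\dots,p_4$ of multiplicity $1$, forming a chain $p_4\succ_1 p_3\succ_1 p_2\succ_1 p_1\succ_1 p_0$ with $p_2\satel p_0$, while $\mult_p(\varphi_1)=0$ at any other point of $\BB(\PP^2)$. Since only $p_0$ has multiplicity $2$, the only way to reach the sum $4$ is, up to reindexing, $q_1=p_0$ with $m_1=2$, and $q_2,q_3\in\{p_1,p_2,p_3,p_4\}$ with $m_2=m_3=1$.

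The main obstacle is then to rule out such a configuration against each of the three quadratic types. The $\sigma$-type needs three proper base points, impossible since two of the $q_i$ are infinitely near. The $\rho$-type needs two proper base points, but only $p_0$ among $q_1,q_2,q_3$ is proper, and a second proper base point of $\varrho_1$ would contribute $0$ to the sum, yielding $m_1+m_2+m_3\le 3$. The delicate case is the $\tau$-type, in which $q_2\succ_1 q_1$, $q_3\succ_1 q_2$, and moreover $q_3\notsatel q_1$: this last condition holds because if $q_3\satel q_1$, the proximity inequality of Proposition \ref{pro Proximity_inequalities} applied at $q_1$ would give $\mult_{q_1}(C)\ge \mult_{q_2}(C)+\mult_{q_3}(C)=2$ for the irreducible members $C$ of the homaloidal net of $\varrho_1$, which is impossible as an irreducible conic has multiplicity at most $1$ at any of its points. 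Hence in the $\tau$-case one must take $q_1=p_0$, then $q_2=p_1$ (the unique simple base point of $\varphi_1$ in the first neighbourhood of $p_0$) and $q_3=p_2$ (similarly in the first neighbourhood of $p_1$); but $p_2\satel p_0$, contradicting $q_3\notsatel q_1$. In every case we reach a contradiction, so $\q(\varphi_1)\ge 3$.
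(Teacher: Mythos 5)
Your proof is correct and follows essentially the same route as the paper's: the upper bound comes from the explicit three-quadratic decomposition in Table \ref{table4}, and the lower bound from the degree formula $\deg(\varphi_1\circ\varrho_1^{-1})=6-(m_1+m_2+m_3)$ together with the observation that the only configuration achieving $m_1+m_2+m_3=4$ would force the quadratic map to be based at the double point, the first simple point and the satellite point, which the proximity inequality forbids. The paper slices the cases by which base points of $\varphi_1$ the quadratic map passes through rather than by the type ($\sigma$, $\rho$, $\tau$) of the quadratic map, but the content is identical.
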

\begin{proof}
Let $p_1$ be the double base point of $\varphi_1$ and let $p_2,\ldots,p_5$ be its simple base points, that are all infinitely near $p_1$, namely $p_5\infnear_1 p_4 \infnear_1 p_3 \infnear_1 p_2 \infnear_1 p_1$ where $p_3\satel p_1$.
Hence, a quadratic map can be based at $p_1$ and at $p_2$, but not at $p_3$, cf.\ Remark \ref{lm no_pro_graph}.

The decomposition in Table \ref{table4} at page \pageref{table4} implies that $q(\varphi_1)\leq3$.
By contradiction, suppose that $\q(\varphi_1) =2$. Then, there should exist a quadratic map $\rho$ such that $\q(\varphi_1\circ\rho^{-1}) =1$, so $\varphi_1\circ\rho^{-1}$ should be a quadratic map by Lemma \ref{lm q_0_1}. However,
\begin{itemize}
\item if $\rho$ is not based at $p_1$, then $\varphi_1\circ\rho^{-1}$ has degree $6$, a contradiction;
\item if $\rho$ is based at $p_1$, but not at $p_2$, then $\varphi_1\circ\rho^{-1}$ has degree $4$, again a contradiction;
\item finally, if $\rho$ is based at $p_1$ and $p_2$, then $\varphi_1\circ\rho^{-1}$ has degree $3$, a contradiction.
\end{itemize}
Hence, we conclude that $\q(\varphi_1) =3$.
\end{proof}

We now prove the second assertion of Theorem \ref{thm main2}, that is that the cubic plane Cremona map of type $n$, $1\leq n \leq 31$,  in Table \ref{table1} at page \pageref{table1} has the respective ordinary quadratic length listed in the third column in Table \ref{table2} at page \pageref{table2}.

The decompositions in Table \ref{table3} at page \pageref{table3} show that the maps of types 21, 23, 25, 26, 27, 29, 30, 31 have the ordinary quadratic length exactly 2.

Recall that Proposition \ref{oq>height} says the ordinary quadratic length of a plane Cremona map is at least the maximum height of its base points.
In particular, the maps $\varphi_n$, $n=10$, 11, 12, 13, 15, 16, 18, 19, have $\oq(\varphi_n)\geq3$ and the decompositions in Table \ref{table3} at page \pageref{table3} show that indeed $\oq(\varphi_n)=3$.
Similarly, the maps $\varphi_n$, $n=2$, 7, 9, have $\oq(\varphi_n)\geq4$ and the decompositions in Table \ref{table3} show that $\oq(\varphi_n)=4$.

We now consider the maps of the remaining types, going backwards from the last types to the first ones.

\begin{lemma}\label{oq sharp_28}
Let $\varphi_{28}$ be the map 28 in Table \ref{table1}. Then, $\oq(\varphi_{28}) = 3.$
\end{lemma}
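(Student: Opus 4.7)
The plan is to establish both bounds $\oq(\varphi_{28})\leq 3$ and $\oq(\varphi_{28})\geq 3$ separately. For the upper bound, I appeal to the decomposition of $\varphi_{28}$ listed in Table \ref{table3}, which uses exactly three ordinary quadratic transformations $\sigma$.

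For the lower bound, I argue by contradiction. By Corollary \ref{cor>1} one already has $\oq(\varphi_{28}) \geq 2$, so assume $\oq(\varphi_{28}) = 2$. Then there exists an involutory ordinary quadratic map $\varrho$ such that $\varphi_{28} \circ \varrho$ is itself an ordinary quadratic map. Recall from Lemma \ref{l:28} that the base points of $\varphi_{28}$ are the double proper point $p_0$, three collinear simple proper points $p_1, p_2, p_3$ (lying on the line $\ell : z=0$), and a simple infinitely near point $p_4 = (p_0, \gamma)$. By Proposition \ref{PropMAC118a}, the degree of $\varphi_{28} \circ \varrho$ equals $3 - \varepsilon$ with $\varepsilon = m_1 + m_2 + m_3 - 3$, where $m_i$ is the multiplicity of $\varphi_{28}$ at the $i$th (proper) base point of $\varrho$. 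For the composition to have degree $2$, I need $m_1 + m_2 + m_3 = 4$, and the multiplicity pattern of $\varphi_{28}$ forces the base points of $\varrho$ to be $\{p_0, p_i, p_j\}$ for some $\{i,j\} \subset \{1,2,3\}$; these are not collinear, since $p_0 \notin \ell$.

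Next, I compute the base points of $\varphi_{28} \circ \varrho$ using Proposition \ref{PropMAC118a} together with Notation \ref{not:psi(p)}: $p_0$ remains a simple proper base point (multiplicity $2-1=1$); the points $p_i, p_j$ cease to be base points (multiplicity $1-1=0$); $\bar\varrho(p_k)$, with $\{k\} = \{1,2,3\}\setminus\{i,j\}$, is infinitely near $p_0$ of order $1$, because $p_k$ lies on the line through the other two base points of $\varrho$; and $\bar\varrho(p_4)$ is a proper point of $\PP^2$, since $p_4$ is infinitely near the base point $p_0$ of $\varrho$ in a direction different from those of $p_0 p_i$ and $p_0 p_j$, guaranteed by $\gamma \neq 0,1,\infty$. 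Therefore $\varphi_{28} \circ \varrho$ has two proper and one infinitely near simple base points; it is a quadratic map of the second type and \emph{not} ordinary, contradicting the assumption.

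The main obstacle in this plan is the careful case analysis via $\bar\varrho$ needed to pinpoint the base points of the composition; in particular, verifying that $p_4$ lies in a direction distinct from each of $p_0 p_1$, $p_0 p_2$, $p_0 p_3$ (so that $\bar\varrho(p_4)$ is a proper point rather than a further infinitely near point) requires the explicit standard coordinates from Lemma \ref{l:28} and the assumption $\gamma \in \C \setminus\{0,1\}$.
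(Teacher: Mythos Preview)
Your argument is correct and follows essentially the same route as the paper: an explicit decomposition gives the upper bound, and for the lower bound one observes that any ordinary quadratic $\varrho$ making $\varphi_{28}\circ\varrho$ quadratic must be based at the double point and two of the three collinear simple base points, whence the third collinear point becomes an infinitely near base point of the composition. Your tracking of $\bar\varrho(p_4)$ is more detail than strictly needed (one infinitely near base point already rules out ordinariness), but it is accurate and does no harm.
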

\begin{proof}
Let $p_1$ be the double base point of $\varphi_{28}$ and $p_3, p_4, p_5$ the proper simple base points of $\varphi_{28}$, which are collinear.
The decomposition of $\varphi_{28}$ in Table \ref{table3} shows that $\oq(\varphi_{28}) \leq 3$.
Suppose by contradiction that $\oq(\varphi_{28}) =2$. Therefore, there should exist an ordinary quadratic map $\rho$ such that $\oq(\varphi_{28}\circ\rho^{-1}) =1$, i.e.\ the map $\varphi_{28}\circ\rho^{-1}$ should be an ordinary quadratic map. Since $\varphi_{28}\circ\rho^{-1}$ should have degree 2, the map $\rho$ must be based at $p_1$ and two proper simple base points of $\varphi_{28}$, say $p_3, p_4$. However, in that case, the quadratic map $\varphi_{28}\circ\rho^{-1}$ is not ordinary, because $p_5$ would correspond to an infinitely near base point of $\varphi_{28}\circ\rho^{-1}$, a contradiction. 
\end{proof}

\begin{remark}\label{rem:20,22,24}
The same argument used in the proof of Lemma \ref{oq sharp_28} shows that the maps 20, 22, 24 in Table \ref{table1} have ordinary quadratic length exactly $3$.
\end{remark}

\begin{lemma}\label{oq sharp_17}
Let $\varphi_{17}$ be the map 17 in Table \ref{table1}. Then, $\oq(\varphi_{17}) = 4.$
\end{lemma}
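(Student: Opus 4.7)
The inequality $\oq(\varphi_{17}) \le 4$ follows from the explicit decomposition given in Table~\ref{table3}. For the lower bound, Proposition~\ref{oq>height} yields $\oq(\varphi_{17}) \ge 3$, since the base point $p_4 \infnear_1 p_3 \infnear_1 p_1$ has height $3$. The plan is to rule out $\oq(\varphi_{17}) = 3$ by contradiction. Assume $\oq(\varphi_{17}) = 3$, so there exists an involutory ordinary quadratic map $\varrho$ such that $\psi := \varphi_{17} \circ \varrho^{-1}$ satisfies $\oq(\psi) \le 2$, and in particular $\deg\psi \le 4$. Let $q_1, q_2, q_3$ be the base points of $\varrho$ and $\tilde m_i$ the multiplicity of $\varphi_{17}$ at $q_i$ (which is $0$ unless $q_i \in \{p_0, p_1, p_2\}$). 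Proposition~\ref{PropMAC118a} gives $\deg\psi = 3 - \varepsilon$ with $\varepsilon = \tilde m_1 + \tilde m_2 + \tilde m_3 - 3$; since $\varepsilon \le -2$ would force $\deg\psi \ge 5$, contradicting Corollary~\ref{cor>2}, we must have $\varepsilon \in \{-1,0,1\}$.

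Enumerating the possibilities for $\{q_1, q_2, q_3\}$ compatible with these values of $\varepsilon$ leaves exactly five configurations: (i) $\{p_0, p_1, p_2\}$; (ii) $\{p_0, p_1, r\}$; (iii) $\{p_0, p_2, r\}$; (iv) $\{p_0, r_1, r_2\}$; and (v) $\{p_1, p_2, r\}$, where $r, r_1, r_2$ are proper points not in $\{p_0, p_1, p_2\}$. The strategy is to derive a contradiction in each case by bounding below the ordinary quadratic length of $\psi$. In cases (i), (iii), (v) and in the sub-case of (ii) where $r \in \overline{p_1 p_2}$, a direct application of Notation~\ref{not:psi(p)} to track $\bar\varrho(p_3)$ and $\bar\varrho(p_4)$ shows that $\bar\varrho(p_4)$ has height $\ge 3$ in $\psi$: for instance, in case (i) the direction of $p_3$ at $p_1$ is along the side $\overline{p_1 p_2}$ of the $\varrho$-triangle opposite $p_0$, so rule~4 of Notation~\ref{not:psi(p)} places $\bar\varrho(p_3)$ infinitely near $p_0$, and then $\bar\varrho(p_4)$ has height $3$. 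In each of these cases Proposition~\ref{oq>height} gives $\oq(\psi) \ge 3$, contradicting $\oq(\psi) \le 2$. Case (iv) is handled by a different argument: $\psi$ is a quartic de Jonqui\`eres map with triple base point $p_0$, and Lemma~\ref{cordeJonq>d-1} gives $\oq(\psi) \ge 3$.

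The main obstacle, and the only genuinely delicate case, is (ii) with $r \notin \overline{p_1 p_2}$. Here rule~5 of Notation~\ref{not:psi(p)} places $\bar\varrho(p_3)$ at a proper point on $\overline{p_0 r}$, so the chain $\bar\varrho(p_4) \infnear_1 \bar\varrho(p_3)$ only reaches height $2$ and Proposition~\ref{oq>height} is too weak. The additional ingredient I plan to use is that for the involutory ordinary quadratic map $\varrho$, the image of a line through exactly one base point is again a line through that same base point (reducing to the corresponding fact for $\sigma$ by conjugation). Applied to $L = \overline{p_1 p_2}$, which meets the base locus of $\varrho$ only at $p_1$, this yields that $\varrho(L)$ is a line in the target of $\varrho$ containing the three simple base points $p_1$, $\bar\varrho(p_2) = \varrho(p_2)$, and $\bar\varrho(p_3) = \varrho(L) \cap \overline{p_0 r}$ of $\psi$. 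By B\'ezout (Remark~\ref{rem:line}) the strict transform of this line cannot additionally contain $\bar\varrho(p_4)$, so the enriched weighted proximity graph of $\psi$ matches exactly type $24$ of Table~\ref{table2}, whose ordinary quadratic length equals $3$ by Remark~\ref{rem:20,22,24}. This yields $\oq(\psi) = 3$, the final contradiction, and we conclude $\oq(\varphi_{17}) = 4$.
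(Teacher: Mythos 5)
Your proof is correct and follows essentially the same route as the paper's: both reduce the hypothetical $\oq(\varphi_{17})=3$ to the same three obstructions (the height bound of Proposition~\ref{oq>height}, Corollary~\ref{cor>2} for compositions of degree $\geq 5$, and the fact that the cubic arising in the delicate case has ordinary quadratic length $3$), the only organisational difference being that you enumerate the base-point configurations of $\varrho$ by degree first and invoke Lemma~\ref{cordeJonq>d-1} in case (iv), whereas the paper first pins down that $\varrho$ must be based at $p_1$ via the height obstruction and then splits on whether the double point is also a base point. One small caveat in your case (ii) with $r \notin \overline{p_1p_2}$: if $p_2$ happens to lie on the side $\overline{p_0 r}$ of the triangle of $\varrho$, then $\bar\varrho(p_2)$ is infinitely near $p_1$ rather than proper, so $\psi$ has the enriched weighted proximity graph of type $20$ rather than type $24$ in Table~\ref{table2} --- the contradiction survives because Remark~\ref{rem:20,22,24} gives ordinary quadratic length $3$ for type $20$ as well, and the paper's own proof glosses over the same sub-case.
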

\begin{proof}
The enriched weighted proximity graph of $\varphi_{17}$ is listed in Table \ref{table2} at page \pageref{table2}.
Let $p_1$ be the double base point, $p_2, p_3$ the two proper simple base points and $p_4, p_5$ such that $p_5\infnear_1 p_4 \infnear_1 p_3$ where $p_2, p_3, p_4$ are aligned.
Then,
$$3 \leqslant \oq(\varphi_{17}) \leqslant 4$$
because of the decomposition of $\varphi_{17}$ in Table \ref{table3} and the fact that the height of $p_5$ with respect to $\varphi_{17}$ is 3, cf.\ Proposition  \ref{oq>height}.

Suppose by contradiction that $\oq(\varphi_{17}) =3$. Then, there should exist an ordinary quadratic map $\rho$ such that $\oq(\varphi_{17}\circ\rho^{-1}) = 2$.
In particular, $\rho$ must be based at $p_3$, otherwise, the maximum height of the base points of the map $\varphi_{17}\circ\rho^{-1}$ would be still $3$ and Proposition \ref{oq>height} would give a contradiction.

If $\rho$ is based also at $p_2$ (or at another point on the line passing through $p_3$ and $p_2$), then $p_4$ would correspond to an infinitely near base point of $\varphi_{17}\circ\rho^{-1}$ and the maximum height of the base points of $\varphi_{17}\circ\rho^{-1}$ would be again $3$, a contradiction.

There are now two cases: either $p_1$ is a base point of $\rho$ or $p_1$ is not a base point of $\rho$.

In the former case, the map $\varphi_{17}\circ\rho^{-1}$ would have the enriched weighted proximity graph 24 in Table \ref{table2}, and therefore would have ordinary quadratic length 3, as we noted in Remark \ref{rem:20,22,24}, a contradiction.

In the latter case, the map $\varphi_{17}\circ\rho^{-1}$ would have degree $5$, and therefore its ordinary quadratic length cannot be $2$ by Corollary \ref{cor>2}, a contradiction.

Hence, we conclude that $\oq(\varphi_{17}) =4$.
\end{proof}

\begin{lemma}\label{oq sharp_14}
Let $\varphi_{14}$ be the map 14 in Table \ref{table1}. Then, $\oq(\varphi_{14}) = 3.$
\end{lemma}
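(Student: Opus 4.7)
The plan is to prove the equality by matching a lower bound coming from the height of a base point with an explicit upper bound coming from a three-term decomposition.

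First, I would read off the configuration of base points of $\varphi_{14}$ directly from its enriched weighted proximity graph, which is entry 14 in Table \ref{table2}. There is one proper double base point, call it $p_1$, two simple base points $p_2, p_3$ with $p_3\infnear_1 p_2\infnear_1 p_1$ and $p_3\satel p_1$ (this is the bent arrow $p_3\to p_1$), and a second connected component consisting of a proper simple base point $p_4$ together with $p_5\infnear_1 p_4$. In particular, $p_3$ is infinitely near $p_1$ of order $2$, so $\h_{\varphi_{14}}(p_3)=3$.

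Next, for the lower bound I would apply Proposition \ref{oq>height}, which gives
\[
\oq(\varphi_{14})\;\ge\;\max\{\,\h_{\varphi_{14}}(p)\mid p\in\BB(\PP^2)\,\}\;\ge\;\h_{\varphi_{14}}(p_3)\;=\;3.
\]

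For the matching upper bound, I would exhibit the explicit decomposition of $\varphi_{14}$ as a composition of three ordinary quadratic maps $\sigma$ and four automorphisms of $\PP^2$ already recorded in row 14 of Table \ref{table3}, namely
\[
\varphi_{14}=\alpha_3\circ\sigma\circ\alpha_2\circ\sigma\circ\alpha_1\circ\sigma\circ\alpha_0,
\]
with $\alpha_0=[y:z-x:x]$, $\alpha_1=[x:y+z:z]$, $\alpha_2=[x:z-y:z-x]$, $\alpha_3=[x+z:z:y]$; checking this identity is a direct computation. This shows $\oq(\varphi_{14})\le 3$, and combined with the lower bound yields $\oq(\varphi_{14})=3$.

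There is no real obstacle here: the nontrivial content has already been packaged into Proposition \ref{oq>height} on one side and into the tabulated decomposition on the other, so the only thing that could go wrong is a miscount of the infinitesimal order of $p_3$; I would double-check that reading $p_3\satel p_1$ off the bent arrow in graph 14 is consistent with $p_3$ being infinitely near $p_1$ of order exactly $2$ (hence of height $3$), which is precisely what the definition of height prescribes.
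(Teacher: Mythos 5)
Your upper bound is fine and agrees with the paper: the decomposition in row 14 of Table \ref{table3} gives $\oq(\varphi_{14})\le 3$. The problem is the lower bound, and it lies exactly where you said it might: you have misread graph 14 in Table \ref{table2}. That graph has arrows $B\to A$ and $C\to A$ but \emph{no} arrow $C\to B$; if $C$ were in the first neighbourhood of $B$ (and hence satellite to $A$), it would have to be proximate to $B$ as well, as happens in graph 12. So the configuration for $\varphi_{14}=[x^3:x^2y:(x-y)yz]$ is: a proper double point $p_0$, \emph{two distinct} simple points $p_2,p_3$ each infinitely near $p_0$ of order one, one proper simple point $p_1$, and $p_4\infnear_1 p_1$. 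Every base point has height at most $2$, so Proposition \ref{oq>height} only yields $\oq(\varphi_{14})\ge 2$, and your claimed height-$3$ point does not exist. (The configuration you describe, with $p_3\satel p_1$, is that of type 12, not type 14.)

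To close the gap you must rule out $\oq(\varphi_{14})=2$ by another route. The paper does this directly: if $\oq(\varphi_{14})=2$, there is an ordinary quadratic map $\rho$ with $\varphi_{14}\circ\rho^{-1}$ an ordinary quadratic map; by Proposition \ref{PropMAC118a} the degree drops from $3$ to $2$ only if the three proper base points of $\rho$ carry multiplicities of $\varphi_{14}$ summing to $4$, i.e.\ $\rho$ is based at the double point and at two proper simple base points of $\varphi_{14}$. But $\varphi_{14}$ has only one proper simple base point, so no such $\rho$ exists, and $\oq(\varphi_{14})=3$.
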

\begin{proof}
The decomposition of $\varphi_{14}$ in Table \ref{table3} shows that $\oq(\varphi_{14}) \leq 3$.
Suppose by contradiction that $\oq(\varphi_{14}) =2$. Therefore, there should exist an ordinary quadratic map $\rho$ such that $\oq(\varphi_{14}\circ\rho^{-1}) =1$, i.e.\ the map $\varphi_{14}\circ\rho^{-1}$ should be an ordinary quadratic map. In other words, $\rho$ should be based at the double base point of $\varphi_{14}$ and other two proper simple base points of $\varphi_{14}$, theat however do not exist.
\end{proof}

\begin{lemma}\label{oq sharp_8}
Let $\varphi_8$ be the map 8 in Table \ref{table1}. Then, $\oq(\varphi_8) = 5.$
\end{lemma}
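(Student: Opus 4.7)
The plan is to establish matching upper and lower bounds of $5$ using, for the lower bound, the invariance of $\oq$ under inversion.

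The upper bound $\oq(\varphi_8)\leq 5$ is immediate from the decomposition of $\varphi_8$ into five ordinary quadratic maps exhibited in Table~\ref{table3} at page~\pageref{table3}.

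For the lower bound, I would first observe that $\oq$ is invariant under inversion. Indeed, any decomposition $\varphi=\psi_n\circ\cdots\circ\psi_1$ into ordinary quadratic maps $\psi_i$ yields $\varphi^{-1}=\psi_1^{-1}\circ\cdots\circ\psi_n^{-1}$, and each $\psi_i^{-1}$ is again an ordinary quadratic map (the class being closed under inversion, since $\sigma^{-1}=\sigma$ and conjugating by automorphisms of $\PP^2$ preserves the property of being ordinary quadratic). Hence $\oq(\varphi^{-1})\leq \oq(\varphi)$, and equality follows by symmetry. The fourth column of Table~\ref{table1} records $\varphi_8^{-1}$ as being of type $2$, i.e.\ $\varphi_8^{-1}$ is equivalent to $\varphi_2=[x(x^2+yz):y^3:y(x^2+yz)]$, so $\oq(\varphi_8)=\oq(\varphi_2)$.

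I would then verify directly---by successively blowing up the unique proper base point $[0:0:1]$ of $\varphi_2$---that $\varphi_2$ has a single proper base point, of multiplicity $2$, carrying a chain $q_5\infnear_1 q_4\infnear_1 q_3\infnear_1 q_2\infnear_1 [0:0:1]$ of four simple infinitely near base points above it; this is exactly the enriched weighted proximity graph of type $2$ in Table~\ref{table2}. Consequently $\h_{\varphi_2}(q_5)=5$, so Proposition~\ref{oq>height} yields $\oq(\varphi_2)\geq 5$, and therefore $\oq(\varphi_8)\geq 5$. Combining with the upper bound gives $\oq(\varphi_8)=5$.

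The only routine step is the successive blow-up identification of the four infinitely near base points of $\varphi_2$; no conceptual obstacle arises, since once the chain of length $5$ in $\varphi_2$ is confirmed, the height bound from Proposition~\ref{oq>height} yields the lower bound immediately.
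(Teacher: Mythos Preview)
Your proof is correct and takes a genuinely different route from the paper's.

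The paper argues directly on $\varphi_8$. Since the maximum height of the base points of $\varphi_8$ is only $4$ (the chain $p_5\infnear_1 p_4\infnear_1 p_3\infnear_1 p_2$ sits over the simple proper base point $p_2$, not over the double point), Proposition~\ref{oq>height} yields only $\oq(\varphi_8)\geq 4$. The paper then assumes $\oq(\varphi_8)=4$ and runs a two-step case analysis: any ordinary quadratic $\rho_1$ lowering the height must be based at $p_2$ and not on the line through $p_2,p_3$; depending on whether $\rho_1$ is also based at the double point $p_1$, the composite $\varphi_8\circ\rho_1^{-1}$ is either of type~17 (already shown to have $\oq=4$) or a degree-$5$ map requiring a further step with $\rho_2$, and both branches lead to a contradiction.

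Your argument sidesteps this entirely by exploiting the inversion symmetry $\oq(\varphi)=\oq(\varphi^{-1})$ together with the entry in the ``Inv'' column of Table~\ref{table1}: since $\varphi_8^{-1}$ is equivalent to $\varphi_2$, and $\varphi_2$ has a single proper base point with a chain of four infinitely near base points above it (maximum height $5$), Proposition~\ref{oq>height} gives $\oq(\varphi_2)\geq 5$ immediately. This is cleaner and shorter; it also avoids relying on Lemma~\ref{oq sharp_17} and Corollary~\ref{cor>2}. The paper's approach, on the other hand, is self-contained for $\varphi_8$ and does not appeal to the inverse column of Table~\ref{table1} (whose verification is itself a small computation). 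Either way the result is the same, and your shortcut is a nice observation.
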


\begin{proof}
The enriched weighted proximity graph of $\varphi_{8}$ is listed in Table \ref{table2}.
Let $p_1$ be the double base point, $p_2$ the proper simple base point and $p_3, p_4, p_5$ the other infinitely near base points such that $p_5\infnear_1 p_4 \infnear_1 p_3 \infnear_1 p_2$ where $p_2, p_3, p_4$ are aligned.
Then,
$$4 \leqslant \oq(\varphi_8) \leqslant 5$$
because of the decomposition of $\varphi_{8}$ in Table \ref{table3} and the fact that the height of $p_5$ with respect to $\varphi_{8}$ is 4, cf.\ Proposition  \ref{oq>height}.

Suppose by contradiction that $\oq(\varphi_8) =4$. Then, there should exist an ordinary quadratic map $\rho_1$ such that $\oq(\varphi_8\circ\rho_1^{-1}) = 3$. In particular, $\rho_1$ must be based at $p_2$, otherwise, the maximum height of the base points of the map $\varphi_{8}\circ\rho_1^{-1}$ would be still $4$ and Proposition \ref{oq>height} would give a contradiction.
For the same reason, $\rho_1$ cannot be based at $p_2$ and also at a point on the line passing through $p_2$ and $p_3$.

There are now two cases: either $p_1$ is a base point of $\rho_1$ or $p_1$ is not a base point of $\rho_1$.

In the former case, the map $\varphi_{8}\circ\rho^{-1}$ would have the enriched weighted proximity graph 17 in Table \ref{table2}, and therefore it would have ordinary quadratic length 4, as we proved in Lemma \ref{oq sharp_17}, a contradiction.

In the latter case, the map $\varphi_{8}\circ\rho^{-1}$ would have degree 5 and the following weighted proximity graph:

\begin{center}
\scalebox{0.5}{%
\begin{tikzpicture}[->,>=stealth',shorten >=2pt,auto,node distance=2.5cm,ultra thick,baseline=-1.25ex]
  \tikzstyle{every state}=[fill=white,draw=black,text=black]

  \node[state,draw=red,text=red,label=below: $p'_0$] (A)                    {\huge 3};
  \node[state,draw=red,text=red,label=below: $p'_1$] (B) [right of=A] {\huge 2};
  \node[state,draw=red,text=red,label=below: $p'_2$] (C) [right of=B] {\huge 2};
  \node[state,draw=red,text=red,label=below: $p'_3$] (D) [right of=C] {\huge 2};
  \node[state,draw=red,text=red,label=below: $p'_4$] (E) [right of=D] {\huge 1};
  \node[state,label=below: $p'_5$] (F) [right of=E] {\huge 1};
  \node[state,label=below: $p'_6$] (G) [right of=F] {\huge 1};

\path (G) edge (F);
\path (F) edge (E);
\end{tikzpicture}%
}.
\end{center}
where $p'_0, p'_4, p'_5$ are aligned.
Furthermore, there should exist an ordinary quadratic map $\rho_2$ such that $\oq(\varphi_8\circ\rho_1^{-1}\circ\rho_2^{-1}) =2$.
In particular, $\rho_2$ must be based at $p'_4$, otherwise the maximum height of the base points of the map $\varphi_8\circ\rho_1^{-1}\circ\rho_2^{-1}$ would be still $3$ and Proposition \ref{oq>height} would give a contradiction.
For the same reason, $\rho_2$ cannot be based at $p'_4$ and also at $p'_0$ or at another point on the line passing through $p'_4$ and $p'_5$.
Therefore, $\rho_2$ is based at $p'_4$ and other two points where $\varphi_{8}\circ\rho^{-1}$ has multiplicity $\leq 2$, hence the map $\varphi_8\circ\rho_1^{-1}\circ\rho_2^{-1}$ would have degree $\geq5$ and we get a contraction with Corollary \ref{cor>2}.

We conclude that $\oq(\varphi_8) = 5$.
\end{proof}

\begin{lemma}\label{oq sharp_6}
Let $\varphi_6$ be the map 6 in Table \ref{table1}. Then, $\oq(\varphi_6) = 4.$
\end{lemma}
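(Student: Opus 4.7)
The upper bound $\oq(\varphi_6)\leq 4$ is immediate from the decomposition of $\varphi_6$ recorded in Table~\ref{table3}. For the lower bound, note that the base point $p_4$ of $\varphi_6$ is infinitely near $p_0$ of order two, so $\h_{\varphi_6}(p_4)=3$, and Proposition~\ref{oq>height} gives $\oq(\varphi_6)\geq 3$. The substance of the proof is thus to exclude $\oq(\varphi_6)=3$.

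Suppose for contradiction that $\oq(\varphi_6)=3$, so there exists an involutory ordinary quadratic map $\rho$ with $\oq(\varphi')=2$, where $\varphi'=\varphi_6\circ\rho$. By Corollary~\ref{cor>2}, $\deg(\varphi')\leq 4$, and by Proposition~\ref{oq>height} every base point of $\varphi'$ has height at most $2$. Only $p_0$ and $p_1$ among the base points of $\varphi_6$ are proper, so I would split into cases by how many of them are base points of $\rho$. If neither is, Proposition~\ref{PropMAC118a} gives $\deg(\varphi')=6$; if only $p_1$ is, $\deg(\varphi')=5$; both contradict Corollary~\ref{cor>2}. If only $p_0$ is a base point of $\rho$, then $\deg(\varphi')=4$ and $\varphi'$ has multiplicity $3$ at $p_0$, hence is de Jonqui\`eres; Lemma~\ref{cordeJonq>d-1} then yields $\oq(\varphi')\geq 3$, another contradiction. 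So $\rho$ must be based at $p_0$, $p_1$, and a third proper point $q$ not collinear with them.

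For this last case I would use the coordinates of Section~\ref{proof main theorem1}: $p_0=[0:0:1]$, $p_2=(p_0,0)$ and $p_3=(p_0,\infty)$ point in the directions of the lines $L_2\colon y=0$ and $L_3\colon x=0$ through $p_0$. Three subcases arise according to the position of $q$ relative to $L_2\cup L_3$. If $q\in L_3$, then Notation~\ref{not:psi(p)} gives $\bar\rho(p_3)\succ_1 p_1$, so $\bar\rho(p_4)$ has height $3$ in $\varphi'$, violating the height bound on $\varphi'$. If $q\notin L_2\cup L_3$, then $\bar\rho(p_2)$ and $\bar\rho(p_3)$ are proper points on $\ell=\overline{p_1 q}$, making $p_1,\bar\rho(p_2),\bar\rho(p_3)$ three collinear simple base points of $\varphi'$, so $\varphi'$ has enriched weighted proximity graph~$24$ of Table~\ref{table2} and $\oq(\varphi')=3$ by Remark~\ref{rem:20,22,24}, a contradiction. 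If $q\in L_2\setminus L_3$, then $\bar\rho(p_2)\succ_1 p_1$ in the direction of $\ell$, $\bar\rho(p_3)$ is a proper point of $\ell$, and $\ell$ passes through the three base points $p_1,\bar\rho(p_2),\bar\rho(p_3)$ of $\varphi'$; hence $\varphi'$ realizes graph~$20$ of Table~\ref{table2}, whose ordinary quadratic length is $3$, again a contradiction.

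The main obstacle is the careful bookkeeping in the last case: tracking the images $\bar\rho(p_2),\bar\rho(p_3),\bar\rho(p_4)$ as $q$ varies, in particular detecting when the direction of one of the infinitely near base points of $p_0$ coincides with a side of the $\rho$-triangle (so that it maps to an infinitely near point of another base point of $\rho$ rather than to a proper point of the opposite side). Once the three positional subcases of $q$ are isolated, each either breaks the height bound or forces $\varphi'$ into one of the enriched weighted proximity graphs $20$ or $24$, whose ordinary quadratic length has already been computed to be $3$.
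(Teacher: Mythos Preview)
Your proof is correct and follows the same overall strategy as the paper's: bound $3\le\oq(\varphi_6)\le4$ via Proposition~\ref{oq>height} and Table~\ref{table3}, then exclude $\oq(\varphi_6)=3$ by case analysis on the base points of a hypothetical ordinary quadratic $\rho$ with $\oq(\varphi_6\circ\rho)=2$.

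The difference lies in the granularity of the final case. The paper, after reducing to $\rho$ based at the double point and avoiding the line through $p_3$, splits only on whether $\rho$ is based at the proper simple base point (your $p_1$): if yes, it asserts the composite has enriched graph~24; if no, it is a quartic de Jonqui\`eres map. Your analysis of the ``both $p_0,p_1$'' case is finer and in fact more accurate: you observe that when the third base point $q$ lies on $L_2$, the composite $\varphi'$ has enriched weighted proximity graph~20 rather than~24. The paper's statement that one always obtains type~24 is thus a slight imprecision, which your subcase $q\in L_2\setminus L_3$ corrects. Since Remark~\ref{rem:20,22,24} gives $\oq=3$ for both types~20 and~24, the contradiction goes through either way, so the paper's argument is not actually broken---but your version is cleaner on this point. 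Your handling of the ``neither'' and ``only $p_1$'' cases via degree (Corollary~\ref{cor>2}) is an equally valid alternative to the paper's height argument for those cases.
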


\begin{proof}
The enriched weighted proximity graph of $\varphi_{6}$ is listed in Table \ref{table2}.
Let $p_1$ be the double base point, $p_5$ the proper simple base point and $p_2, p_3, p_4$ the other infinitely near base points such that $p_2\infnear_1 p_1$ and $p_4 \infnear_1 p_3 \infnear_1 p_1$.
Then,
$$3 \leqslant \oq(\varphi_6) \leqslant 4$$
because of the decomposition of $\varphi_{6}$ in Table \ref{table3} and the fact that the height of $p_4$ with respect to $\varphi_{6}$ is 3, cf.\ Proposition  \ref{oq>height}.

Suppose by contradiction that $\oq(\varphi_6) =3$. Then, there should exist an ordinary quadratic map $\rho$ such that $\oq(\varphi_6\circ\rho^{-1}) = 2$. In particular, $\rho$ must be based at $p_1$, otherwise the maximum height of the base points of the map $\varphi_{6}\circ\rho^{-1}$ would be still $3$ and Proposition \ref{oq>height} would give a contradiction.
For the same reason, $\rho_1$ cannot be based at $p_1$ and also at a point on the line passing through $p_1$ and $p_3$.

There are now two cases: either $\rho$ is based at $p_5$ or $\rho$ is not based at $p_5$.

In the former case, the map $\varphi_{6}\circ\rho^{-1}$ would have the enriched weighted proximity graph 24 in Table \ref{table2}, and therefore it would have ordinary quadratic length 3 (cf.\ Remark \ref{rem:20,22,24}), a contradiction.

In the latter case, the map $\varphi_{6}\circ\rho^{-1}$ would be a de Jonqui\`eres map of degree 4, a contradiction with Lemma \ref{cordeJonq>d-1}.

Therefore, we conclude that  $\oq(\varphi_6) =4$.
\end{proof}

\begin{lemma}\label{oq sharp_5}
Let $\varphi_5$ be the map 5 in Table \ref{table1}. Then, $\oq(\varphi_5) =5.$
\end{lemma}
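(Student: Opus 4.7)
The decomposition in Table \ref{table3} gives $\oq(\varphi_5)\leq 5$, and since the base point $p_4$ is infinitely near $p_1\in\PP^2$ of order $3$ (so of height $4$), Proposition \ref{oq>height} yields $\oq(\varphi_5)\geq 4$. The plan is to rule out $\oq(\varphi_5)=4$ by contradiction: assume some ordinary quadratic $\rho_1$ satisfies $\oq(\varphi_5\circ\rho_1^{-1})=3$. Combining Lemma \ref{lem:oq+-1} with Proposition \ref{oq>height}, the height-$4$ chain $p_4\infnear_1 p_3\infnear_1 p_2\infnear_1 p_1$ must be shortened; by inspection of Notation \ref{not:psi(p)} this forces $\rho_1$ to be based at $p_1$ with its two remaining base points lying off the line through $p_1$ in the direction of $p_2$, since otherwise $\bar\rho_1(p_2)$ remains infinitely near and the chain survives with height $4$. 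The analysis then splits into two cases according to whether $p_5$ belongs to the base locus of $\rho_1$.

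\textbf{Case (a):} $\rho_1$ is based at $p_1,p_5,P_3$. Proposition \ref{PropMAC118a} gives $\varphi_5\circ\rho_1^{-1}$ cubic with base points $p_1$ (double), $p_5$, $\bar\rho_1(p_2)\in\ell_1:=\overline{p_5P_3}$, and the chain $\bar\rho_1(p_4)\infnear_1\bar\rho_1(p_3)\infnear_1\bar\rho_1(p_2)$. The decisive observation is that $p_3\satel p_1$ means $p_3$ lies on the strict transform of the exceptional divisor $E_1$ over $p_1$; but $\rho_1$ sends $E_1$ to the line $\ell_1$ in the target, so $\bar\rho_1(p_3)$ lies on the strict transform of $\ell_1$ through $\bar\rho_1(p_2)$. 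Hence the three base points $p_5,\bar\rho_1(p_2),\bar\rho_1(p_3)$ are collinear and the enriched weighted proximity graph of $\varphi_5\circ\rho_1^{-1}$ is type $17$ in Table \ref{table2}, with $\oq=4$ by Lemma \ref{oq sharp_17}---contradicting $\oq(\varphi_5\circ\rho_1^{-1})=3$.

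\textbf{Case (b):} $\rho_1$ is based at $p_1,P_2,P_3$ with neither $P_2$ nor $P_3$ equal to $p_5$. Proposition \ref{PropMAC118a} makes $\varphi_5\circ\rho_1^{-1}$ a de Jonqui\`eres map of degree $4$ with triple point at $p_1$, so an ordinary quadratic $\rho_2$ with $\oq(\varphi_5\circ\rho_1^{-1}\circ\rho_2^{-1})=2$ must exist. Corollary \ref{cor>2} forces this composition to have degree $\leq 4$, and the same chain-breaking argument via Lemma \ref{lem:oq+-1} forces $\rho_2$ to be based at $\bar\rho_1(p_2)$ with the other two base points off $\ell_1:=\overline{P_2P_3}$. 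Combined with the degree formula only two sub-cases remain: if $\rho_2$ is based at $\bar\rho_1(p_2),p_1,\bar\rho_1(p_5)$, the composition is a cubic whose enriched proximity graph---by propagating the satellite-line argument of case (a) through $\rho_2$---is type $24$ in Table \ref{table2}, with $\oq=3$ by Remark \ref{rem:20,22,24}, contradicting $\oq=2$; if instead $\rho_2$ is based at $\bar\rho_1(p_2),p_1$ and a non-base point, the composition is again a de Jonqui\`eres map of degree $4$, so $\oq\geq 3$ by Lemma \ref{cordeJonq>d-1}, another contradiction.

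The principal obstacle is the second half of case (b): verifying that the line $\ell_1$ arising from the satellite relation $p_3\satel p_1$ persists, after two successive quadratic transformations, as the unexpected line through three simple base points needed to identify the enriched proximity graph as type $24$. Once this propagation of collinearity is carefully checked using Notation \ref{not:psi(p)}, every sub-case yields a contradiction and we conclude $\oq(\varphi_5)=5$.
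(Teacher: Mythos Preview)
Your proof is correct and follows the same architecture as the paper's: the same height bound from Proposition~\ref{oq>height}, the same case split on whether $p_5$ is a base point of $\rho_1$, the identification of the resulting cubics as types~17 and~24 respectively, and the same appeals to Lemma~\ref{oq sharp_17}, Remark~\ref{rem:20,22,24}, and Lemma~\ref{cordeJonq>d-1}. Your explicit explanation of \emph{why} the satellite relation $p_3\satel p_1$ forces the unexpected collinearity---because $E_1$ is carried to the line $\ell_1$ opposite $p_1$, so $\bar\rho_1(p_3)$ lies on the strict transform of $\ell_1$---is actually more detailed than the paper, which simply asserts the enriched types without justification; the hedge in your closing paragraph is unnecessary, since this propagation argument goes through exactly as you outline (and $p_4\not\!\dasharrow p_1$ guarantees $\bar\rho_1(p_4)\notin\ell_1$, so four collinear simple base points never arise).
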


\begin{proof}
The enriched weighted proximity graph of $\varphi_{5}$ is listed in Table \ref{table2}.
Let $p_1$ be the double base point, $p_5$ the proper simple base point and $p_2, p_3, p_4$ the other infinitely near base points such that $p_4 \infnear_1 p_3 \infnear_1 p_2 \infnear_1 p_1$ with $p_3\satel p_1$.
Then,
$$4 \leqslant \oq(\varphi_5) \leqslant 5$$
because of the decomposition of $\varphi_{5}$ in Table \ref{table3} and the fact that the height of $p_4$ with respect to $\varphi_{5}$ is 4, cf.\ Proposition  \ref{oq>height}.

Suppose by contradiction that $\oq(\varphi_5) =4$. Then, there should exist an ordinary quadratic map $\rho_1$ such that $\oq(\varphi\circ\rho^{-1}_1) = 3$. This implies that $\rho_1$ must be based at $p_1$, otherwise the maximum height of the base points of the map $\varphi_{5}\circ\rho_1^{-1}$ would be still $4$ and Proposition \ref{oq>height} would give a contradiction.
For the same reason, $\rho_1$ cannot be based at $p_1$ and at a point on the line passing through $p_1$ and $p_2$.

There are now two cases: either $p_5$ is a base point of $\rho_1$ or $p_5$ is not a base point of $\rho_1$.

In the former case, the map $\varphi_{5}\circ\rho_1^{-1}$ would have enriched weighted proximity graph of type 17 in Table \ref{table2} and, therefore, it would have ordinary quadratic length 4 by Lemma \ref{oq sharp_17}, a contradiction.

In the latter case, the map $\varphi_5\circ\rho^{-1}_1$ would be a de Jonqui\`eres map of degree $4$ and its weighted proximity graph would be

\begin{center}
\scalebox{0.5}{%
\begin{tikzpicture}[->,>=stealth',shorten >=2pt,auto,node distance=2.5cm,ultra thick,baseline=-1.25ex]
  \tikzstyle{every state}=[fill=white,draw=black,text=black]

  \node[state,draw=red,text=red,label=below: $p'_0$] (A)              {\huge 3};
  \node[state,draw=red,text=red,label=below: $p'_1$] (B) [right of=A] {\huge 1};
  \node[state,draw=red,text=red,label=below: $p'_2$] (C) [right of=B] {\huge 1};
  \node[state,draw=red,text=red,label=below: $p'_3$] (D) [right of=C] {\huge 1};
  \node[state,draw=red,text=red,label=below: $p'_4$] (E) [right of=D] {\huge 1};
  \node[state,label=below: $p'_5$] (F) [right of=E] {\huge 1};
  \node[state,label=below: $p'_6$] (G) [right of=F] {\huge 1};

\path (F) edge (E);
\path (G) edge (F);
\end{tikzpicture}%
}.
\end{center}
where $p'_2, p'_3, p'_4, p'_5$ are aligned.

Then, there should exist an ordinary quadratic map $\rho_2$ such that $\oq(\varphi_5\circ\rho_1^{-1}\circ\rho_2^{-1}) =2$.
The map $\rho_2$ must be based at $p'_4$, and not at $p'_2,p'_3$, otherwise the maximum height of the base points of the map $\varphi_5\circ\rho_1^{-1}\circ\rho_2^{-1}$ would be at least $3$, a contradiction with Proposition \ref{oq>height}.
If $\rho_2$ is not based at $p'_0$, then $\deg(\varphi_5\circ\rho_1^{-1}\circ\rho_2^{-1})\geq6$ and we get a contradiction with Corollary \ref{cor>2}. Otherwise $\rho_2$ is based at $p'_0$ and, furthermore, either $p'_1$ is a base point of $\rho_2$ or $p'_1$ is not a base point of $\rho_2$.

In the latter case, the map $\varphi_5\circ\rho_1^{-1}\circ\rho_2^{-1}$ would be a de Jonqui\`eres map of degree 4 and we get a contradiction with Lemma \ref{cordeJonq>d-1}.

In the former case, the map $\varphi_5\circ\rho_1^{-1}\circ\rho_2^{-1}$ would have the enriched weighted proximity graph of type 24 in Table \ref{table2} and its ordinary quadratic length would be 3, a contradiction.

Hence, we conclude that $\oq(\varphi_5) =5$.
\end{proof}

\begin{lemma}\label{oq sharp_4}
Let $\varphi_4$ be the map 4 in Table \ref{table1}. Then, $\oq(\varphi_4) =4.$
\end{lemma}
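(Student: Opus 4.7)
The claim is $\oq(\varphi_4)=4$, so I need an upper and a lower bound.

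The upper bound is immediate: the decomposition of $\varphi_4$ listed in Table \ref{table3} exhibits $\varphi_4$ as a composition of an automorphism with 4 involutory ordinary quadratic maps (built from $\sigma,\rho,\tau$ as in the preceding lemmas, or directly as in the corresponding row of Table \ref{table3}), so $\oq(\varphi_4)\le 4$.

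For the lower bound, I would argue by contradiction, supposing $\oq(\varphi_4)\le 3$. Then by definition there exists an involutory ordinary quadratic map $\varrho$ such that $\oq(\varphi_4\circ\varrho^{-1})\le 2$. By Corollary \ref{cor>2} this forces $\deg(\varphi_4\circ\varrho^{-1})\le 4$. Now I would read off from the enriched weighted proximity graph of type 4 in Table \ref{table2} the crucial structural feature: $p_1$ (the double base point) is the \emph{only} proper base point of $\varphi_4$, since the four simple base points $p_2,p_3,p_4,p_5$ are all drawn in black, i.e.\ infinitely near. Let $q_1,q_2,q_3$ denote the three (distinct) proper base points of $\varrho$ and $m_i=\mult_{q_i}(\varphi_4)$. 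Since at most one $q_i$ can coincide with $p_1$, we get $m_1+m_2+m_3\le 2$, and Proposition \ref{PropMAC118a} yields
\[
\deg(\varphi_4\circ\varrho^{-1})=6-(m_1+m_2+m_3)\ge 4.
\]
Combined with the upper bound, equality must hold, which is only possible if (up to relabeling) $q_1=p_1$ with $m_1=2$ and $m_2=m_3=0$, so $\varepsilon=-1$.

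Then, using Proposition \ref{PropMAC118a} again, the map $\varphi_4\circ\varrho^{-1}$ has degree $4$ with multiplicities $m_1-\varepsilon=3$ at $p_1$, multiplicities $1$ at each of $q_2,q_3$, and multiplicity $1$ at each of the four points $\bar\varrho(p_j)$, $j=2,3,4,5$ (one verifies immediately that $3^2+6\cdot 1^2=15=4^2-1$ and $3+6=9=3(4-1)$). So $\varphi_4\circ\varrho^{-1}$ is a de Jonqui\`eres map of degree $4$, having a base point of multiplicity $d-1=3$. Applying Lemma \ref{cordeJonq>d-1} to it gives $\oq(\varphi_4\circ\varrho^{-1})\ge\q(\varphi_4\circ\varrho^{-1})\ge 3$, contradicting $\oq(\varphi_4\circ\varrho^{-1})\le 2$. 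Hence $\oq(\varphi_4)\ge 4$, completing the proof.

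The main obstacle in this argument is knowing that $p_1$ is the only proper base point of $\varphi_4$; everything else is a degree-counting argument. This fact is encoded in the enriched weighted proximity graph, but if one wanted to avoid referencing the table, one could recover it by a short direct computation: the three cubics $x^2z$, $x^3+z^3+xyz$, $xz^2$ share only the common zero $[0:1:0]$, and blowing up that point produces the four expected infinitely near base points. I would not redo that calculation here, since the graph in Table \ref{table2} is already available.
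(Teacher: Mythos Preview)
Your proof is correct and follows essentially the same route as the paper's: both obtain the upper bound from the decomposition in Table~\ref{table3}, force $\varrho$ to be based at the unique proper base point $p_1$, observe that $\varphi_4\circ\varrho^{-1}$ is then a de Jonqui\`eres map of degree~$4$, and invoke Lemma~\ref{cordeJonq>d-1} for the contradiction. Your degree-counting justification for why $\varrho$ must be based at $p_1$ is slightly more explicit than the paper's terse ``In particular, $\rho$ must be based at $p_1$'', but the substance is identical.
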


\begin{proof}
The enriched weighted proximity graph of $\varphi_{4}$ is listed in Table \ref{table2}.
Let $p_1$ be the double base point, $p_2, p_3, p_4,p_5$ the infinitely near simple base points such that $p_3\infnear_1 p_2\infnear_1 p_1$ and $p_5 \infnear_1 p_4 \infnear_1 p_1$.
Then,
$$3 \leqslant \oq(\varphi_4) \leqslant 4$$
because of the decomposition of $\varphi_{4}$ in Table \ref{table3} and the fact that the heights of $p_3$ and of $p_5$ with respect to $\varphi_{4}$ are 3, cf.\ Proposition  \ref{oq>height}.

Suppose by contradiction that $\oq(\varphi_4) =3$.
Then, there should exist an ordinary quadratic map $\rho$ such that $\oq(\varphi\circ\rho^{-1}) = 2$. In particular, $\rho$ must be based at $p_1$. Then, the map $\varphi\circ\rho^{-1}$ is a de Jonqui\`eres map of degree $4$ and we get a contradiction with Lemma \ref{cordeJonq>d-1}.
\end{proof}

\begin{lemma}\label{oq sharp_3}
Let $\varphi_3$ be the map 3 in Table \ref{table1}. Then, $\oq(\varphi_3) =5.$
\end{lemma}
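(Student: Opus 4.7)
The upper bound $\oq(\varphi_3)\leq5$ is immediate from the 5-term decomposition of $\varphi_3$ recorded in Table \ref{table3}. For the lower bound, observe from the enriched weighted proximity graph number 3 in Table \ref{table2} that the unique proper base point of $\varphi_3$ is $p_0=[0:1:0]$ (of multiplicity $2$) and that the simple base point $p_4$ satisfies $p_4\infnear_3 p_0$. Hence $\h_{\varphi_3}(p_4)=4$ and Proposition \ref{oq>height} yields $\oq(\varphi_3)\geq 4$.

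The remainder of the proof rules out $\oq(\varphi_3)=4$ by contradiction. Assume there exists an involutory ordinary quadratic map $\rho_1$ with $\oq(\varphi')=3$, where $\varphi':=\varphi_3\circ\rho_1^{-1}$. Setting $\bar p=\bar\rho_1(p)$ as in Notation \ref{not:psi(p)}, Proposition \ref{oq>height} forces every base point of $\varphi'$ to have height at most $3$. Since the description in Notation \ref{not:psi(p)} preserves infinitesimal orders, one has $\bar p_4\infnear_3 \bar p_0$, and the bound $\h_{\varphi'}(\bar p_4)\leq 3$ via Lemma \ref{lem:oq+-1} then requires $\h_{\varphi'}(\bar p_0)=0$, i.e.\ $\bar p_0$ is not a base point of $\varphi'$.

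The plan is to show that this is impossible regardless of $\rho_1$, by examining the two sub-cases according to whether $p_0$ is a base point of $\rho_1$. If $\rho_1$ is not based at $p_0$, Proposition \ref{PropMAC118a} shows that $\bar p_0$ is a base point of $\varphi'$ inheriting the multiplicity $m_{p_0}(\varphi_3)=2$. If instead $\rho_1$ is based at $p_0$, then its remaining two base points $q_2,q_3$ are proper points different from $p_0$; since $p_0$ is the only proper base point of $\varphi_3$, one has $m_{q_2}(\varphi_3)=m_{q_3}(\varphi_3)=0$. Proposition \ref{PropMAC118a} then gives $\varepsilon=2+0+0-3=-1$, so the multiplicity of $\varphi'$ at $\bar p_0=p_0$ equals $2-\varepsilon=3>0$, and again $p_0$ remains a base point of $\varphi'$. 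Thus in both sub-cases $\h_{\varphi'}(\bar p_0)\geq1$, and consequently $\h_{\varphi'}(\bar p_4)\geq 4$ by Lemma \ref{lem:oq+-1}, contradicting $\oq(\varphi')=3$.

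The main (mild) obstacle is verifying both sub-cases above; the argument is particularly short here because $\varphi_3$ has only one proper base point, which severely restricts which ordinary quadratic maps can meaningfully reduce the height of $p_4$. Combined with the upper bound, we conclude $\oq(\varphi_3)=5$.
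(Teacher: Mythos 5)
The upper bound and the bound $\oq(\varphi_3)\geq4$ are fine, but your argument ruling out $\oq(\varphi_3)=4$ rests on a false premise, namely that the correspondence $\bar\varrho$ of Notation \ref{not:psi(p)} preserves infinitesimal orders. It does not: the fifth item of that Notation sends a point infinitely near a base point $p_i$ of $\varrho$ of order $1$ (in a direction transverse to the triangle) to a \emph{proper} point of $\ell_i$, and consequently the chain $p_4\infnear_1 p_3\infnear_1 p_2\infnear_1 p_0$ is sent, when $\rho_1$ is based at $p_0$ in a suitable direction, to a chain $\bar p_4\infnear_1\bar p_3\infnear_1\bar p_2$ with $\bar p_2$ proper, so that $\h_{\varphi'}(\bar p_4)=3$, not $4$. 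This is precisely the mechanism by which heights decrease (cf.\ the proof of Lemma \ref{lem:oq+-1}); if orders were preserved, no map with a base point of height $\geq2$ would admit any decomposition into ordinary quadratic maps at all, and your argument as written would show that $\varphi_3$ has no finite ordinary quadratic length, contradicting the explicit length-$5$ decomposition in Table \ref{table3}. (A secondary point: even granting $\bar p_4\infnear_3\bar p_0$, the height of a base point is its infinitesimal order over a proper point plus one, independently of whether that proper point is itself a base point; since $\bar p_4$ keeps multiplicity $1$ by Proposition \ref{PropMAC118a}, the premise would give $\h_{\varphi'}(\bar p_4)=4$ outright, so the detour through ``$\h_{\varphi'}(\bar p_0)=0$'' is not meaningful with the paper's definitions.)

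The case you cannot dispose of is exactly the one where $\rho_1$ is based at $p_0$ and at two further points away from the line joining $p_0$ to $p_2$: then $\varphi'=\varphi_3\circ\rho_1^{-1}$ is a de Jonqui\`eres quartic whose deepest base point has height $3$, and no contradiction arises at this stage. The correct proof must therefore go one level further: it identifies the weighted proximity graph of $\varphi'$, shows that any ordinary quadratic map $\rho_2$ with $\oq(\varphi'\circ\rho_2^{-1})=2$ must be based at the proper point carrying the remaining infinitely near chain and at the triple point, and then derives a contradiction in each remaining subcase from Lemma \ref{oq sharp_14}, Lemma \ref{cordeJonq>d-1} and Corollary \ref{cor>2}. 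That second layer of analysis is the actual content of the lemma and is missing from your proposal.
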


\begin{proof}
The enriched weighted proximity graph of $\varphi_{3}$ is listed in Table \ref{table2}.
Let $p_1$ be the double base point, $p_2, p_3, p_4, p_5$ the infinitely near simple base points such that $p_2 \infnear_1 p_1$ and $p_5 \infnear_1 p_4 \infnear_1 p_3 \infnear_1 p_1$.
Then,
$$4 \leqslant \oq(\varphi_3) \leqslant 5$$
because of the decomposition of $\varphi_{3}$ in Table \ref{table3} and the fact that the height of $p_4$ with respect to $\varphi_{3}$ is 4, cf.\ Proposition  \ref{oq>height}.

Suppose by contradiction that $\oq(\varphi_3) =4$. Then, there should exist an ordinary quadratic map $\rho_1$ such that $\oq(\varphi_3\circ\rho^{-1}_1) = 3$. In particular, $\rho_1$ must be based at $p_1$ and not at a point lying on the line passing through $p_1$ and $p_3$, otherwise the maximum height of the base points with respect to $\varphi_3\circ\rho^{-1}_1$ would be still 4. Then, $\varphi_3\circ\rho_1^{-1}$ is a de Jonqui\`eres map of degree $4$ and its weighted proximity graph is:
\begin{equation}\label{prooftype3}
\scalebox{0.5}{%
\begin{tikzpicture}[->,>=stealth',shorten >=2pt,auto,node distance=2.5cm,ultra thick,baseline=-1.25ex]
  \tikzstyle{every state}=[fill=white,draw=black,text=black]

  \node[state,draw=red,text=red,label=below: $p'_0$] (A)              {\huge 3};
  \node[state,draw=red,text=red,label=below: $p'_1$] (B) [right of=A] {\huge 1};
  \node[state,draw=red,text=red,label=below: $p'_2$] (C) [right of=B] {\huge 1};
  \node[state,draw=red,text=red,label=below: $p'_3$] (D) [right of=C] {\huge 1};
  \node[state,draw=red,text=red,label=below: $p'_4$] (E) [right of=D] {\huge 1};
  \node[state,label=below: $p'_5$] (F) [right of=E] {\huge 1};
  \node[state,label=below: $p'_6$] (G) [right of=F] {\huge 1};

\path (G) edge (F);
\path (F) edge (E);
\end{tikzpicture}%
}
\end{equation}
where $p'_1, p'_2, p'_3, p'_4$ are aligned.

Then, there should exist an ordinary quadratic map $\rho_2$ such that $\oq(\varphi_3\circ\rho_1^{-1}\circ\rho_2^{-1}) =2$. The map $\rho_2$ must be based at $p'_4$, otherwise the maximum height of the base points of the map $\varphi_3\circ\rho_1^{-1}\circ\rho_2^{-1}$ would be at least $3$, a contradiction with Proposition \ref{oq>height}.
Furthermore, the map $\rho_2$ must be based also at $p'_0$, otherwise the degree of $\varphi_3\circ\rho_1^{-1}\circ\rho_2^{-1}$ would be larger than 4, a contradiction with Corollary \ref{cor>2}.

There are now two cases: either $\rho_2$ is based at $p'_i$, for some $i\in\{1,2,3\}$, or $\rho_2$ is not based at $p'_1.p'_2,p'_3$.

In the former case, the map $\varphi_3\circ\rho_1^{-1}\circ\rho_2^{-1}$ would have the enriched weighted proximity graph of type $14$ in Table \ref{table2}, a contradiction with Lemma \ref{oq sharp_14}.

In the latter case, the map $\varphi_3\circ\rho_1^{-1}\circ\rho_2^{-1}$ is a de Jonqui\`eres map of degree $4$, a contradiction with Lemma \ref{cordeJonq>d-1}.
 
Hence, we conclude that $\oq(\varphi_3) =5$.
\end{proof}

\begin{lemma}\label{oq sharp_1}
Let $\varphi_1$ be the map 1 in Table \ref{table1}. Then, $\oq(\varphi_1) =6.$
\end{lemma}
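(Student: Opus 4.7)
The upper bound $\oq(\varphi_1) \leq 6$ follows directly from the decomposition of type 1 listed in Table \ref{table3}. For the lower bound $\oq(\varphi_1) \geq 5$, Proposition \ref{oq>height} applied to the base point $p_5$, whose height with respect to $\varphi_1$ is $5$ (since $p_5 \succ_1 p_4 \succ_1 p_3 \succ_1 p_2 \succ_1 p_1$), yields the desired inequality. It remains to exclude $\oq(\varphi_1)=5$.

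Suppose for contradiction that $\oq(\varphi_1)=5$. Then any minimal decomposition $\varphi_1 = \alpha \circ \rho_5 \circ \cdots \circ \rho_1$ yields an involutory ordinary quadratic $\rho_1$ with $\oq(\varphi_1 \circ \rho_1^{-1}) = 4$. By Proposition \ref{oq>height}, the base points of $\varphi_1 \circ \rho_1^{-1}$ have maximum height at most $4$; by Lemma \ref{lem:oq+-1}, this forces $\rho_1$ to be based at $p_1$, the only proper base point of $\varphi_1$ (the others being infinitely near $p_1$). Let the remaining two base points of $\rho_1$ be $P_2, P_3$, which are proper points distinct from $p_1, \ldots, p_5$. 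Applying Proposition \ref{PropMAC118a}, I would show that $\psi_1 := \varphi_1 \circ \rho_1^{-1}$ is a de Jonqui\`eres map of degree $4$ with triple point at $p_1$ and six simple base points; using the explicit description of $\bar\rho_1$ in Notation \ref{not:psi(p)}, I would verify further that $P_2, P_3, \bar\rho_1(p_2)$ are collinear and that $\bar\rho_1(p_5) \succ_1 \bar\rho_1(p_4) \succ_1 \bar\rho_1(p_3) \succ_1 \bar\rho_1(p_2)$ forms an infinitely near chain over $\bar\rho_1(p_2)$, with $\bar\rho_1(p_3)$ lying in the direction of the line through $P_2, P_3$.

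I would then proceed inductively on the remaining compositions: at each step $i = 2, 3, 4$ the height-drop requirement forces $\rho_i$ to be based at the proper root of the current longest chain, while the budget $\oq(\varphi_1\circ\rho_1^{-1}\circ\cdots\circ\rho_i^{-1}) = 5 - i$ combined with Proposition \ref{PropMAC118a} constrains the remaining base points of $\rho_i$. The resulting map would then be compared with maps classified in Tables \ref{table1} and \ref{table2}, with Lemma \ref{cordeJonq>d-1} ruling out short decompositions of small-degree de Jonqui\`eres maps and Corollary \ref{cor>2} handling degrees $\geq 5$. The main obstacle will be the branching case analysis: the satellite relation $p_3 \satel p_1$ in $\varphi_1$ becomes an inherited collinearity constraint among three of the base points of $\psi_1$, and this geometric obstruction persists through the subsequent steps. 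Tracking this rigidity through every branch is the delicate part of the argument; in each branch the resulting map eventually violates a previously established $\oq$-bound (in particular those of Lemmas \ref{oq sharp_14}, \ref{oq sharp_17}, and \ref{cordeJonq>d-1}), yielding the contradiction and hence $\oq(\varphi_1) = 6$.
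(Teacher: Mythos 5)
Your setup coincides with the paper's: the upper bound $\oq(\varphi_1)\leq 6$ from the decomposition in Table \ref{table3}, the lower bound $\oq(\varphi_1)\geq 5$ from the height of $p_5$ via Proposition \ref{oq>height}, and a contradiction argument to exclude $\oq(\varphi_1)=5$ whose first step forces $\rho_1$ to be based at $p_1$. Even in that first step, though, there is an unaddressed subcase: you must also rule out that one of the other two base points $P_2,P_3$ of $\rho_1$ lies on the line through $p_1$ in the direction of $p_2$. In that situation $\bar\rho_1(p_2)$ remains an infinitely near point (by Notation \ref{not:psi(p)}) and the maximum height of the base points of $\varphi_1\circ\rho_1^{-1}$ is still $5$, so the height does not drop; your asserted collinearity of $P_2,P_3,\bar\rho_1(p_2)$ holds only in the generic position, which is precisely what needs to be justified.

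The more serious gap is that the heart of the proof, namely the exclusion of $\oq(\varphi_1)=5$, is announced rather than carried out: everything past the first step is phrased as ``I would show/verify/proceed,'' and the claim that ``in each branch the resulting map eventually violates a previously established $\oq$-bound'' is exactly the statement to be proved. Your inductive scheme (height-drop forces $\rho_i$ at the root of the longest chain, the budget constrains the rest) does not by itself control the branching. Concretely, after $\rho_1$ one gets a quartic de Jonqui\`eres map; $\rho_2$ is forced at the root $p'_3$ of the infinitely near chain but may or may not also be based at the triple point $p'_0$. In the first subcase the degree stays $4$ and one lands in the configuration of the proof of Lemma \ref{oq sharp_3}, which itself branches into appeals to Lemma \ref{oq sharp_14} and Lemma \ref{cordeJonq>d-1}. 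In the second subcase the degree jumps to $7$, a third quadratic map $\rho_3$ must be forced at a specific point before Corollary \ref{cor>2} yields the contradiction; this degree-$7$ branch is not excluded by any budget heuristic and must be treated explicitly. (Also, Lemma \ref{oq sharp_17}, which you cite, does not actually enter this particular chain of reductions.) The strategy is the right one and matches the paper's, but as written the proposal defers the entire case analysis that constitutes the proof.
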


\begin{proof}
The enriched weighted proximity graph of $\varphi_{1}$ is listed in Table \ref{table2}.
Let $p_1$ be the double base point, $p_2, p_3, p_4, p_5$ the infinitely near simple base points such that $p_5 \infnear_1 p_4 \infnear_1 p_3 \infnear_1 p_2 \infnear_1 p_1$ with $p_3 \satel p_1$.
Then,
$$5 \leqslant \oq(\varphi_1) \leqslant 6$$
because of the decomposition of $\varphi_{1}$ in Table \ref{table3} and the fact that the height of $p_5$ with respect to $\varphi_{1}$ is 5, cf.\ Proposition  \ref{oq>height}.

Suppose by contradiction that $\oq(\varphi_1) =5$. Then, there should exist an ordinary quadratic map $\rho_1$ such that $\oq(\varphi_1\circ\rho^{-1}_1) = 4$. In particular, $\rho_1$ must be based at $p_1$ and not at a point lying on the line passing through $p_1$ and $p_2$, otherwise the maximum height of the base points with respect to $\varphi_1\circ\rho^{-1}_1$ would be still 5.
So the map $\varphi_1\circ\rho_1^{-1}$ is a de Jonqui\`eres map of degree $4$ and its weighted proximity graph is:
\begin{center}
\scalebox{0.5}{%
\begin{tikzpicture}[->,>=stealth',shorten >=2pt,auto,node distance=2.5cm,ultra thick,baseline=-1.25ex]
  \tikzstyle{every state}=[fill=white,draw=black,text=black]

  \node[state,draw=red,text=red,label=below: $p'_0$] (A)              {\huge 3};
  \node[state,draw=red,text=red,label=below: $p'_1$] (B) [right of=A] {\huge 1};
  \node[state,draw=red,text=red,label=below: $p'_2$] (C) [right of=B] {\huge 1};
  \node[state,draw=red,text=red,label=below: $p'_3$] (D) [right of=C] {\huge 1};
  \node[state,label=below: $p'_4$] (E) [right of=D] {\huge 1};
  \node[state,label=below: $p'_5$] (F) [right of=E] {\huge 1};
  \node[state,label=below: $p'_6$] (G) [right of=F] {\huge 1};

\path (G) edge (F);
\path (F) edge (E);
\path (E) edge (D);
\end{tikzpicture}%
}
\end{center}
where $p'_1, p'_2, p'_3, p'_4$ are aligned.

Then, there should exist an ordinary quadratic map $\rho_2$ such that $\oq(\varphi_1\circ\rho_1^{-1}\circ\rho_2^{-1}) =3$. In particular, the map $\rho_2$ must be based at $p'_3$ and not at $p'_1,p'_2$ (or at another point lying on the line passing through $p'_3$ and $p'_4$), otherwise the maximum height of the base points of the map $\varphi_1\circ\rho_1^{-1}\circ\rho_2^{-1}$ is 4, a contradiction with Proposition \ref{oq>height}.

There are now two cases: either $\rho_2$ is based at $p'_0$ or $\rho_2$ is not based at $p'_0$.

In the former case, the map $\varphi_1\circ\rho_1^{-1}\circ\rho_2^{-1}$ is a de Jonqui\`eres map of degree $4$ and its enriched weighted proximity graph is \eqref{prooftype3} and we reach a contradiction as in the proof of Lemma \ref{oq sharp_3}.

In the latter case, the map $\varphi_1\circ\rho_1^{-1}\circ\rho_2^{-1}$ has degree 7 and its weighted proximity graph is:
\begin{center}
\scalebox{0.5}{%
\begin{tikzpicture}[->,>=stealth',shorten >=2pt,auto,node distance=2.5cm,ultra thick,baseline=-1.25ex]
  \tikzstyle{every state}=[fill=white,draw=black,text=black]

  \node[state,draw=red,text=red,label=below: $p''_0$] (A)              {\huge 4};
  \node[state,draw=red,text=red,label=below: $p''_1$] (B) [right of=A] {\huge 3};
  \node[state,draw=red,text=red,label=below: $p''_2$] (C) [right of=B] {\huge 3};
  \node[state,draw=red,text=red,label=below: $p''_3$] (D) [right of=C] {\huge 3};
  \node[state,draw=red,text=red,label=below: $p''_4$] (E) [right of=D] {\huge 1};
  \node[state,draw=red,text=red,label=below: $p''_5$] (F) [right of=E] {\huge 1};
  \node[state,draw=red,text=red,label=below: $p''_6$] (G) [right of=F] {\huge 1};
  \node[state,label=below: $p''_7$] (H) [right of=G] {\huge 1};
  \node[state,label=below: $p''_8$] (I) [right of=H] {\huge 1};

\path (I) edge (H);
\path (H) edge (G);
\end{tikzpicture}%
}
\end{center}
where $p''_2, p''_3, p''_6$ are aligned and also $p''_0, p''_4, p''_5, p''_6$ are collinear.

Then, there should exist an ordinary quadratic map $\rho_3$ such that $\oq(\varphi_1\circ\rho_1^{-1}\circ\rho_2^{-1}\circ\rho_3^{-1}) =2$. Thus, $\rho_3$ must be based at $p''_6$, otherwise the maximum height of the base points of $\varphi_1\circ\rho_1^{-1}\circ\rho_2^{-1}\circ\rho_3^{-1}$ is 3, a contradiction with Proposition \ref{oq>height}.
This implies that $\varphi_1\circ\rho_1^{-1}\circ\rho_2^{-1}\circ\rho_3^{-1}$ would have degree $\geq6$, a contradiction with Corollary \ref{cor>2}.

Hence, we conclude that $\oq(\varphi) =6$.
\end{proof}

\begin{corollary}\label{oq(tau)}
Let $\tau$ be the quadratic map defined in \eqref{rhotau}.
Then, $\oq(\tau)=4$ and the decomposition \eqref{taudecomp} of $\tau$ is minimal.
\end{corollary}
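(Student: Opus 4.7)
My plan has three stages. First, $\oq(\tau)\le 4$ is immediate from the explicit decomposition given in \eqref{taudecomp}. Second, I will obtain $\oq(\tau)\ge 3$ via Proposition \ref{oq>height}: the base points of $\tau$ are $p_1=[0:0:1]\in\PP^2$, $p_2\infnear_1 p_1$ (in the direction of the line $\{x=0\}$) and $p_3\infnear_1 p_2$ with $p_3\notsatel p_1$, all of multiplicity one, so $\h_\tau(p_3)=3$. The main task is thus to rule out $\oq(\tau)=3$.

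I will argue by contradiction. Suppose $\tau=\psi_3\circ\psi_2\circ\psi_1$ with $\psi_1,\psi_2,\psi_3$ involutory ordinary quadratic maps. Then $\tau\circ\psi_1=\psi_3\circ\psi_2$ has $\oq\le 2$. By Proposition \ref{PropMAC118a}, the degree of $\tau\circ\psi_1$ is $3$ when $p_1\in\mathrm{base}(\psi_1)$ and $4$ otherwise, giving two cases.

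In the first case, write $\mathrm{base}(\psi_1)=\{p_1,r_2,r_3\}$. Proposition \ref{PropMAC118a} produces a cubic $\tau\circ\psi_1$ with a double base point at $p_1$, simple proper base points at $r_2,r_3$, and two further simple base points $\bar\psi_1(p_2),\bar\psi_1(p_3)$. I split on whether the tangent line $\{x=0\}$ of $\tau$ at $p_1$ coincides with one of the two sides $\overline{p_1 r_2},\overline{p_1 r_3}$. If it does (say $r_2\in\{x=0\}$), Notation \ref{not:psi(p)} gives $\bar\psi_1(p_2)$ infinitely near $r_3$ and $\bar\psi_1(p_3)$ infinitely near $\bar\psi_1(p_2)$, so $\h_{\tau\circ\psi_1}(\bar\psi_1(p_3))=3$ and Proposition \ref{oq>height} forces $\oq(\tau\circ\psi_1)\ge 3$, contradicting $\oq(\tau\circ\psi_1)\le 2$. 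If it does not, $\bar\psi_1(p_2)$ is a proper point on $\overline{r_2 r_3}$ while $\bar\psi_1(p_3)$ is infinitely near $\bar\psi_1(p_2)$; Remark \ref{rem:line} prevents $\overline{r_2 r_3}$ from also passing through $\bar\psi_1(p_3)$ (no line can pass through four base points of a cubic), so the enriched weighted proximity graph of $\tau\circ\psi_1$ matches graph $24$ of Table \ref{table2}. By Theorems \ref{thm main1} and \ref{thm main2}, $\tau\circ\psi_1$ is then equivalent to the type $24$ map of Table \ref{table1}, which has $\oq=3$, a contradiction.

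In the second case, Proposition \ref{PropMAC118a} shows that $\tau\circ\psi_1$ is a quartic whose three double base points are exactly $\mathrm{base}(\psi_1)$. But a composite $\psi_3\circ\psi_2$ of two ordinary quadratic maps can be a quartic only when $\psi_2$ and $\psi_3$ share no base point, in which case its three double base points form $\mathrm{base}(\psi_2)$; matching forces $\mathrm{base}(\psi_2)=\mathrm{base}(\psi_1)$. Two involutory ordinary quadratic maps sharing their three base points differ by a post-composed automorphism (their composition has degree $4-3=1$), so $\psi_2=\gamma\circ\psi_1$ for some $\gamma\in\Aut(\PP^2)$; using $\psi_1^2=\id$, this gives $\tau=\psi_3\circ\gamma$, making $\tau$ equivalent to the ordinary quadratic $\psi_3$ and contradicting the fact that $\tau$ has a single proper base point. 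The main obstacle throughout will be the sub-case analysis in Case 1, where the combination of Notation \ref{not:psi(p)} and Remark \ref{rem:line} must be used to pin down the enriched proximity graph of $\tau\circ\psi_1$ in each configuration.
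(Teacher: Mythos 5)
Your proof is correct and follows essentially the same strategy as the paper's: the upper bound comes from the explicit decomposition \eqref{taudecomp}, the lower bound $\oq(\tau)\geq 3$ from Proposition \ref{oq>height} applied to the height-$3$ base point, and the exclusion of $\oq(\tau)=3$ by analysing the first quadratic factor, with the decisive subcase being the one where $\tau\circ\psi_1$ acquires the enriched weighted proximity graph of type 24 and hence has ordinary quadratic length $3$. The only point where you genuinely diverge is the case where $p_1$ is not a base point of $\psi_1$: the paper disposes of it in one line by noting that $\bar\psi_1(p_3)$ is still a base point of height $3$, so Proposition \ref{oq>height} applies directly, whereas you match the three double points of the quartic $\tau\circ\psi_1$ with $\mathrm{base}(\psi_2)$ to deduce $\psi_2=\gamma\circ\psi_1$ and hence that $\tau$ would be an ordinary quadratic map. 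Both arguments are valid; the paper's is shorter, while yours has the small advantage of not re-invoking the height bound. Your more explicit use of Notation \ref{not:psi(p)} and Remark \ref{rem:line} to pin down graph 24 in the remaining subcase fills in details the paper leaves implicit.
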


\begin{proof}
Let $p_1, p_2, p_3$ be the base points of $\tau$, where $p_3\infnear_1 p_2\infnear_1 p_1\in\PP^2$ and let $\ell$ be the line through $p_1$ and $p_2$.
Proposition \ref{oq>height} implies that $\oq(\tau)\geq3$ and the decomposition \eqref{taudecomp} says that $\oq(\tau)\leq4$. Suppose by contradiction that $\oq(\tau)=3$.
Then, there exists an involutory ordinary quadratic map $\psi$ such that $\oq(\tau\circ\psi)=2$.

There are now two cases: either $p_1$ is a base point of $\psi$ or $p_1$ is not a base point of $\psi$.

In the latter case, $\tau\circ\psi$ has a base point of height 3, hence Proposition \ref{oq>height} implies $\oq(\tau\circ\psi)\geq3$, a contradiction.

In the former case, if one of the other two base points of $\psi$ lies on the line $\ell$, the map $\tau\circ\psi$ has still a base point of height 3 and we get again the same contradiction.
Otherwise, the map $\tau\circ\psi$ has the proximity graph of type 24 in Table \ref{table2}, which has ordinary quadratic length 3, according to Remark \ref{rem:20,22,24}, a contradiction.
\end{proof}

\end{document}